\theoremstyle{plain}
\newtheorem{thm}{Theorem}[section]
\newtheorem{prop}[thm]{Proposition}
\newtheorem{defi}{Definition}[section]
\newtheorem{lem}[thm]{Lemma}
\newtheorem{cor}[thm]{Corollary}
\newtheorem{rmq}[thm]{Remark}
\newcommand{\R}{\mathbb{R}}
\numberwithin{equation}{section}
\newcommand{\N}{\mathbb{N}}
\newcommand{\Cc}{\mathbb{C}}
\newcommand{\U}{\mathcal{U}}
\newcommand{\Lb}{\mathcal{L}}
\newcommand{\Ic}{\mathcal{I}}
\newcommand{\Lc}{\mathbb{L}}
\newcommand{\Hc}{\mathbb{H}}
\newcommand{\Rc}{\mathcal{R}}
\newcommand{\loc}{\text{loc}}
\begin{document}\title[Mixed 4NLS]{Orbitally stable standing waves of a mixed dispersion nonlinear Schr\"odinger equation}

\author[Bonheure, Casteras, Moreira dos Santos  and Nascimento]{Denis Bonheure \and Jean-Baptiste Casteras \and Ederson Moreira dos Santos \and Robson Nascimento}

\address{Denis Bonheure, Jean-Baptiste Casteras \and Robson Nascimento
\newline \indent D\'epartement de Math\'ematiques, Universit\'e Libre de Bruxelles,
\newline \indent CP 214, Boulevard du triomphe, B-1050 Bruxelles, Belgium,
\newline \indent and INRIA- team MEPHYSTO.}
\email{Denis.Bonheure@ulb.ac.be}
\email{jeanbaptiste.casteras@gmail.com}
\email{Robson.Nascimento@ulb.ac.be}

\address{Ederson Moreira dos Santos 
\newline \indent Instituto de Ci{\^e}ncias Matem{\'a}ticas e de Computa\c{c}{\~a}o --- Universidade de S{\~a}o Paulo \newline \indent
Caixa Postal 668, CEP 13560-970 - S\~ao Carlos - SP - Brazil}
\email{ederson@icmc.usp.br}

\begin{abstract}
We study the mixed dispersion fourth order nonlinear Schr\" odinger equation
\begin{equation*}
i \partial_t \psi -\gamma \Delta^2 \psi +\beta \Delta \psi +|\psi|^{2\sigma} \psi =0\ \text{in}\ \R \times\R^N,
\end{equation*}
where $\gamma,\sigma>0$ and $\beta \in \R$. We focus on standing wave solutions, namely solutions of the form $\psi (x,t)=e^{i\alpha t}u(x)$, for some $\alpha \in \R$. This ansatz yields the fourth-order elliptic equation
\begin{equation*}
\gamma \Delta^2 u -\beta \Delta u +\alpha u =|u|^{2\sigma} u.
\end{equation*}
We consider two associated constrained minimization problems: one with a constraint on the $L^2$-norm and the other on the $L^{2\sigma +2}$-norm. Under suitable conditions, we establish existence of minimizers and we investigate their qualitative properties, namely their sign, symmetry and decay at infinity as well as their uniqueness, nondegeneracy and orbital stability. 
\end{abstract}

\maketitle

\tableofcontents

\section{Introduction}

The canonical model for propagation of intense laser beams in a bulk $2d$-medium with Kerr nonlinearity is given by the cubic nonlinear Schr\"odinger equation
\begin{equation}
\label{2nls}
i\partial_t \psi +\Delta \psi +|\psi|^2 \psi=0,\ \psi (0,x)=\psi_0 (x),\ (t,x)\in \R\times \R^2.
\end{equation}
This equation is a particular case of the Schr\"odinger equation in arbitrary dimension with a general pure power nonlinearity
\begin{equation}
\label{nlsgen}
i\partial_t \psi +\Delta \psi +|\psi|^{2\sigma} \psi=0,\ \psi (0,x)=\psi_0 (x),\ (t,x)\in \R\times\R^N,
\end{equation}
where $\sigma$ is a given positive real number. The equation \eqref{nlsgen} is one of the most studied PDE and its importance in mathematical physics is a source of many interesting mathematical problems. It is well known that global existence in time and stability of standing waves (in nonlinear optics, those are usually called waveguides) of \eqref{nlsgen}, i.e. solutions of the form $\psi (t,x)=e^{i \alpha t}u(x)$ for some $\alpha\in \R$, depend on the sign of $\sigma N -2$. When $\sigma N<2$, all solutions to \eqref{nlsgen} exist globally in time and standing waves are orbitally stable (see Definition \ref{defiorb}), whereas if $\sigma N\geq 2$, then finite time blow-up may occur and the waveguide solutions become unstable. We refer for instance to \cite{Weinstein83,Caz,SulemSulem}.
This means $\sigma N=2$ is critical and depending on the point of view, one either says $\sigma=2/N$ is a critical exponent in dimension $N$ or dimension $2/\sigma$ is critical for the nonlinear Schr\"odinger equation \eqref{nlsgen}. Observe that for $N=2$ and $N=3$, the Kerr nonlinearity is respectively critical and supercritical. 
 
In order to regularize and stabilize the solutions to \eqref{nlsgen}, Karpman and later Karpman and Shagalov, see \cite{MR1779828} and the references therein, have proposed to add higher order dispersive terms in the model as an alternative to stabilization by saturation of the nonlinearity, see for instance \cite{LSSJMP,PhysRevA.30.3279}. Namely they considered the equation
\begin{equation}
\label{4nlsdis}
i \partial_t \psi -\gamma \Delta^2 \psi +\beta\Delta \psi +|\psi|^{2\sigma} \psi =0, 
\end{equation}
for some $\gamma >0$ which is assumed to be small, and with $\beta=1$. Thanks to this fourth order term, using a combination of stability analysis and numerical simulations, they showed that when $0<N\sigma <4$ and ($\gamma$ is small enough if $2\leq N\sigma < 4$), waveguide solutions are stable and when $N\sigma>4$, they become unstable. Their results highlight the existence of a second critical value $N\sigma=4$, which results from the presence of the biharmonic term. We observe that the Kerr nonlinearity is now subcritical in dimension $2$ and $3$ in this extended model. A phenomenological motivation for the introduction of the small fourth-order dissipation term is given in \cite{MR1898529}. In nonlinear optics, \eqref{2nls} is derived from the scalar nonlinear Helmhotz equation through the so-called paraxial approximation \cite{MR1898529}.  The fact that solutions to \eqref{2nls} can blow-up at finite-time suggests that some terms neglected in the paraxial approximation should prevent the blow-up. Since a small biharmonic term arises as (part of) the nonparaxial correction to NLS, it is natural to consider this term as small but nonzero and study its effect on the blow-up.  The authors of \cite{MR1898529} show how the new critical threshold $N\sigma=4$ arises. By extending the approach of Weinstein \cite{Weinstein83}, they prove that when $\sigma N<4$, all solutions to \eqref{4nlsdis} exist globally in time. On the other hand, they mention that existence of blowing-up solutions for $\sigma N\ge4$ is a difficult open problem (which has now been recently partially solved in \cite{boulengerlenzmann}).

The extended model \eqref{4nlsdis} has attracted less attention than its classical counterpart \eqref{nlsgen} though with an increasing interest more recently. One should distinguish \eqref{4nlsdis} often called mixed dispersion NLS, from the biharmonic NLS or $4$NLS
\begin{equation}
\label{4nls}
i \partial_t \psi -\gamma \Delta^2 \psi +|\psi|^{2\sigma} \psi =0,
\end{equation}
which might behave quite differently. In \eqref{4nlsdis}, the driving dispersive term is the Laplacian when $\gamma$ is small while the biharmonic NLS corresponds to the limit case $\gamma\to\infty$ (after a rescaling in the variable $x$, as that performed to relate \eqref{eq4gamma} to \eqref{eq4beta}). Both models have been considered in \cite{MR1898529} where various properties of the equations are described, relying in part on numerical results. We refer to the works of Ben-Artzi, Koch and Saut \cite{MR1745182} and Pausader \cite{MR2353631,MR2502523,MR2505703} for well-posedness and scattering, see also \cite{MR2515176,MR2746203,MR3078112,MR3462127} and to the recent work of Boulenger and Lenzmann \cite{boulengerlenzmann} and the references therein concerning finite-time blow-up. We also mention that the one-dimensional stationary mixed dispersion NLS arises in the theory of water waves \cite{Buffoni1995109,Buffoni1996}.

\medbreak

In this paper, we focus on standing wave solutions to \eqref{4nlsdis}. 
The ansatz $\psi (t,x)=e^{i \alpha t}u(x)$ yields the fourth-order semilinear elliptic equation 
\begin{equation}\label{eq4gamma}
\gamma \Delta^2 u - \beta\Delta u +\alpha u= |u|^{2\sigma} u\ \text{in}\ \R^N.
\end{equation}
Setting  $v(x)=u(\gamma^{\frac{1}{4}}x)$ with $\theta=\frac{\beta}{\sqrt{\gamma}}$, we see that $v$ solves 
\begin{equation}\label{eq4beta}
\Delta^2 v -\theta \Delta v+\alpha v=|v|^{2\sigma}v\ \text{in}\ \R^N .
\end{equation}
Observe that the effect of this change of variables on the $L^2$-norm is given by $\|v\|^2_{L^2}= {\gamma^{-\frac{N}{4}}}\|u\|^2_{L^2}$. This relation will be relevant in subsequent discussions. 

\medbreak

We deal simultaneously with one or the other of those equations \eqref{eq4gamma} and \eqref{eq4beta} except in Section $4$, where we investigate the case when $\gamma \rightarrow 0$ which models the effect of a small fourth order dispersion (or the effect of a large second order dispersion as seen by scaling). More precisely, we consider two constrained minimization problems which naturally arise as for \eqref{nlsgen}. Namely, since the $L^2$-norm is conserved along the flow for \eqref{4nlsdis}, it is natural to look first for standing waves having a prescribed $L^2$-norm. Such solutions were built by Cazenave and Lions \cite{Caz} for \eqref{nlsgen}. 
Their construction consists in minimizing the functional $E_{0}:H^1(\R^N)\to \R$ defined by
\begin{equation}\label{E_0}
 {E}_{0}(u)=\frac{1}{2}\int_{\R^N}|\nabla u|^2\, dx-\frac{1}{2\sigma+2}\int_{\R^N}|u|^{2\sigma+2}\, dx
\end{equation} 
under the constraint $\|u\|_{L^2}^2=\mu$. If $0<N\sigma<2$, ${E}_0$ achieves its infimum and any associated minimizer solves 
\begin{equation}\label{nlsalpha}
- \Delta u +\alpha u= |u|^{2\sigma} u\ \text{in}\ \R^N,
\end{equation}
with the Lagrange multiplier 
\begin{equation}
\label{lagmult2nls}
\alpha = \frac{1}{\mu}\left(\int_{\R^N}|u|^{2\sigma+2}\, dx - \int_{\R^N}|\nabla u|^2\, dx\right) = 
\frac{1}{\mu}\left(\dfrac{(2-\sigma N) + 2\sigma}{2\sigma +2}\right)\int_{\R^N}|u|^{2\sigma +2}dx,
\end{equation}
where the last equality follows from the Derrick-Pohozahev identity \cite{MR0174304,MR0192184}. Moreover, Cazenave and Lions  \cite[Theorem II.2]{Caz} showed that those standing waves minimizing $E_0$ are orbitally stable for \eqref{nlsgen} whereas standing waves built for instance in \cite{MR695535,MR695536} are unstable for $2/N<\sigma<2/(N-2)$ as arbitrarily close initial conditions lead to blowing up solutions, see \cite[Remark II.2]{Caz} .  

\medbreak

For \eqref{4nlsdis}, we obtain the following counterpart. Define
\begin{equation}\label{MinL2fixed}
 I_{\gamma,\beta}({\mu})=\inf_{u\in M_{\mu}}E_{\gamma,\beta}(u)
\end{equation}
where
\begin{equation}\label{de:Mmu}
 M_{\mu}=\{u\in H^2(\R^N):\int_{\R^N}|u|^{2}\, dx=\mu\}
\end{equation}
and
\begin{equation}\label{def:Egammabeta}
 E_{\gamma,\beta}(u)=\frac{\gamma}{2}\int_{\R^N}|\Delta u|^2\, dx+\frac{\beta}{2}\int_{\R^N}|\nabla u|^2\, dx-\frac{1}{2\sigma+2}\int_{\R^N}|u|^{2\sigma+2}\, dx.
\end{equation}
If $I_{\gamma,\beta}(\mu)$ is achieved, then any associated minimizer solves 
\eqref{eq4gamma}
with the Lagrange multiplier 
\begin{align}
\label{defalphaintro}
\alpha  & =  \frac{1}{\mu}\left(\int_{\R^N}|u|^{2\sigma+2}\, dx - {\gamma}\int_{\R^N}|\Delta u|^2\, dx-{\beta}\int_{\R^N}|\nabla u|^2\, dx \right)\nonumber \\
& =  \frac{1}{\mu}\left(-2 E_{\gamma,\beta}(u) + \dfrac{\sigma}{\sigma+1}\int_{\R^N}  |u|^{2\sigma +2}\, dx\right).
\end{align}
This implies $\alpha>0$ if $E_{\gamma,\beta}(u)<0$. The next theorem is our main result concerning this minimization problem.

\begin{thm}\label{Compact-Min-Sol}
Assume $\gamma>0$ and $\beta\ge 0$. If $0<\sigma<2/N$, then $ I_{\gamma,\beta}({\mu})$ is achieved for every $\mu>0$. If ${2}/{N}\le \sigma<{4}/{N}$, then there exists a critical mass $\mu_c(\gamma,\beta,\sigma)$ such that 
\begin{enumerate}[(i)]
\item $ I_{\gamma,\beta}({\mu})$ is not achieved if $\mu< \mu_c$;
\item $ I_{\gamma,\beta}({\mu})$ is achieved if $\mu > \mu_c$ and $\sigma = 2/N$;
\item $ I_{\gamma,\beta}({\mu})$ is achieved if $\mu \ge \mu_c$ and $\sigma \neq 2/N$;
\end{enumerate}
\begin{equation}\label{lim-mu}
\lim_{\gamma\to 0} \mu_c(\gamma,\beta,\sigma) =  
0
\end{equation}
and 
\begin{equation}
\lim_{\beta\to 0} \mu_c(\gamma,\beta,\sigma) 
= \mu_c(\gamma,0,\sigma) = 0.
\end{equation}
If moreover $2\sigma \in \N$ and $I_{\gamma,\beta}({\mu})$ is achieved, then there exists at least one radially symmetric minimizer. Finally, if $\sigma> 4/N$, then $ I_{\gamma,\beta}({\mu})= -\infty$ for every $\mu>0$.
\end{thm}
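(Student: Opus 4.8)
The plan is to combine dilation (scaling) arguments with the concentration--compactness principle of P.-L.~Lions. The starting point is the fourth order Gagliardo--Nirenberg inequality $\int_{\R^N}|u|^{2\sigma+2}\,dx\le C_{N,\sigma}\|\Delta u\|_{L^2}^{N\sigma/2}\|u\|_{L^2}^{2\sigma+2-N\sigma/2}$. When $\sigma<4/N$ one has $N\sigma/2<2$, so (using $\beta\ge0$ to discard the gradient term) the functional \eqref{def:Egammabeta} controls $\tfrac\gamma2\|\Delta u\|_{L^2}^2$ minus a sublinear power of it on each set \eqref{de:Mmu}; hence $I_{\gamma,\beta}(\mu)$ is finite and every minimizing sequence is bounded in $H^2(\R^N)$. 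Plugging the mass preserving dilations $u_\lambda(x)=\lambda^{N/2}u(\lambda x)\in M_\mu$ into $E_{\gamma,\beta}$ gives
\[
E_{\gamma,\beta}(u_\lambda)=\tfrac\gamma2\lambda^4\|\Delta u\|_{L^2}^2+\tfrac\beta2\lambda^2\|\nabla u\|_{L^2}^2-\tfrac{\lambda^{N\sigma}}{2\sigma+2}\int_{\R^N}|u|^{2\sigma+2}\,dx ;
\]
letting $\lambda\to\infty$ shows $I_{\gamma,\beta}(\mu)=-\infty$ for every $\mu>0$ when $\sigma>4/N$ (the last assertion), while letting $\lambda\to0$ shows $I_{\gamma,\beta}(\mu)\le0$ for every $\mu>0$ when $\sigma<4/N$.

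Next I would minimize $\lambda\mapsto E_{\gamma,\beta}(u_\lambda)$ over $\lambda>0$ for fixed $u\in M_\mu$. If $\sigma<2/N$, or if $\beta=0$, this function is already negative for small $\lambda$ (the nonlinearity survives, respectively only the biharmonic term survives, as $\lambda\to0$), so $I_{\gamma,\beta}(\mu)<0$ for every $\mu>0$ and one sets $\mu_c:=0$. If $\beta>0$ and $2/N\le\sigma<4/N$, the minimum over $\lambda$ is negative exactly when $\int|u|^{2\sigma+2}\,dx$ exceeds a fixed multiple, depending on $\gamma,\beta$, of $\|\Delta u\|_{L^2}^{N\sigma-2}\|\nabla u\|_{L^2}^{4-N\sigma}$ when $\sigma>2/N$, respectively of $\|\nabla u\|_{L^2}^2$ when $\sigma=2/N$. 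The associated quotient is invariant under $u\mapsto u_\lambda$, scales like a power of the mass under $u\mapsto u(\cdot/s)$ (which lies in $M_{s^N\mu}$), and is bounded on $M_1$ --- by interpolating between $L^{2+4/N}$ and $L^{2+8/N}$ if $\sigma>2/N$, and it is the $L^2$ critical Gagliardo--Nirenberg inequality if $\sigma=2/N$. Hence $I_{\gamma,\beta}(\mu)<0$ precisely when $\mu>\mu_c=\mu_c(\gamma,\beta,\sigma)>0$, with an explicit value of $\mu_c$ from which the limits \eqref{lim-mu} and the identity $\mu_c(\gamma,0,\sigma)=0$ follow. Finally, whenever some $u\in M_\mu$ satisfies $E_{\gamma,\beta}(u)=0$, differentiating $s\mapsto E_{\gamma,\beta}(u(\cdot/s))$ at $s=1$ and using $E_{\gamma,\beta}(u)=0$ gives the value $-2\gamma\|\Delta u\|_{L^2}^2-\beta\|\nabla u\|_{L^2}^2<0$, so $I_{\gamma,\beta}(s^N\mu)<0$ for $s>1$ near $1$, which forces $\mu\ge\mu_c$; in particular, for $\mu<\mu_c$ one has $I_{\gamma,\beta}(\mu)=0$ and it is not attained, which is item~(i).

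For the existence statements, assume $I_{\gamma,\beta}(\mu)<0$ and apply Lions' concentration--compactness lemma to $\rho_n=|u_n|^2$ for a bounded minimizing sequence $(u_n)\subset M_\mu$. Vanishing is excluded, since it forces $u_n\to0$ in $L^{2\sigma+2}$ and hence $\liminf E_{\gamma,\beta}(u_n)\ge0>I_{\gamma,\beta}(\mu)$. Dichotomy is excluded by the strict subadditivity $I_{\gamma,\beta}(\mu)<I_{\gamma,\beta}(\alpha)+I_{\gamma,\beta}(\mu-\alpha)$ for $0<\alpha<\mu$, which I would deduce from the scaling inequality $I_{\gamma,\beta}(\theta\mu)<\theta I_{\gamma,\beta}(\mu)$ for $\theta>1$: comparing $E_{\gamma,\beta}\big(u(\cdot/\theta^{1/N})\big)$ with $\theta E_{\gamma,\beta}(u)$ gives a gain proportional to $\|\Delta u\|_{L^2}^2$, which is uniform along a minimizing sequence because $I_{\gamma,\beta}(\mu)<0$ keeps $\|\Delta u_n\|_{L^2}$ bounded away from $0$. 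Thus, up to translations, $u_n\to u$ strongly in $L^2$, hence in $L^{2\sigma+2}$ by interpolation with the uniform $H^2$ bound (the exponent $2\sigma+2$ being strictly below $2^{**}$ since $\sigma<4/N$); by weak lower semicontinuity $u\in M_\mu$ is a minimizer, solving \eqref{eq4gamma} with the Lagrange multiplier \eqref{defalphaintro}, so $\alpha>0$. This gives existence for $0<\sigma<2/N$ and all $\mu>0$, and for $2/N\le\sigma<4/N$ and $\mu>\mu_c$, that is item~(ii).

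It remains to treat $\mu=\mu_c$ when $\sigma\ne2/N$, and the radial symmetry. Using the arguments above one first checks that $\mu\mapsto I_{\gamma,\beta}(\mu)$ is continuous, so $I_{\gamma,\beta}(\mu_c)=0$; for $2/N<\sigma<4/N$ the Gagliardo--Nirenberg quotient from the second paragraph carries positive powers of \emph{both} $\|\Delta u\|_{L^2}$ and $\|\nabla u\|_{L^2}$, and its extremal --- which exists by concentration--compactness applied to the scale invariant quotient --- rescaled to mass $\mu_c$ realizes $\min_{\lambda>0}E_{\gamma,\beta}(u_\lambda)=0$ at a \emph{finite} dilation $\lambda$, hence is a minimizer for $I_{\gamma,\beta}(\mu_c)$ (item~(iii)); for $\sigma=2/N$ the extremal of the $L^2$ critical inequality makes the quadratic bracket vanish and the minimizing dilation is pushed to $\lambda=0$, so the mass spreads out, leaves $M_{\mu_c}$, and $I_{\gamma,\beta}(\mu_c)$ is not attained. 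For the radial minimizer, Schwarz symmetrization is unavailable because it need not decrease $\|\Delta u\|_{L^2}$; instead I would use the Fourier rearrangement $u\mapsto\mathcal F^{-1}\big((\widehat u)^{*}\big)$, which preserves $\|u\|_{L^2}$, does not increase $\|\nabla u\|_{L^2}$ or $\|\Delta u\|_{L^2}$, and does not decrease $\int|u|^{2\sigma+2}\,dx$ when $2\sigma\in\N$, hence maps a minimizer to a radially symmetric one. The hardest points are the strict subadditivity used to exclude dichotomy --- delicate because $E_{\gamma,\beta}$ is not homogeneous --- and, above all, the borderline mass $\mu=\mu_c$, where the distinction between $\sigma=2/N$ and $\sigma\ne2/N$ must be read off from whether the optimal dilation parameter of a near minimizer stays bounded away from $0$ and $\infty$ or degenerates.
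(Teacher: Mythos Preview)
Your scheme is largely the paper's: Gagliardo--Nirenberg for coercivity, the mass-preserving dilation $u_\lambda(x)=\lambda^{N/2}u(\lambda x)$ to detect the sign of $I_{\gamma,\beta}(\mu)$ and to locate $\mu_c$, Lions' concentration--compactness with strict subadditivity when $I_{\gamma,\beta}(\mu)<0$, and the Fourier rearrangement $u\mapsto\mathcal F^{-1}((\widehat u)^*)$ for the radial statement. Your computation for item~(i) (differentiating $s\mapsto E_{\gamma,\beta}(u(\cdot/s))$ at a zero-energy state to push the mass above $\mu_c$) is a clean variant of the paper's test function $\sqrt{t}\,u$, and your route to strict subadditivity via $u(\cdot/\theta^{1/N})$ together with the lower bound on $\|\Delta u_n\|_{L^2}$ along minimizing sequences is correct and more explicit than the paper, which simply refers to Lions.

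The genuine divergence is at the threshold $\mu=\mu_c$ for $2/N<\sigma<4/N$. You want to produce the minimizer directly as an extremal of the three-term interpolation inequality
\[
\int_{\R^N}|u|^{2\sigma+2}\,dx\ \le\ C\,\|u\|_{L^2}^{2\sigma}\,\|\nabla u\|_{L^2}^{4-N\sigma}\,\|\Delta u\|_{L^2}^{N\sigma-2},
\]
and then observe (correctly) that for $2<N\sigma<4$ the optimal dilation parameter is finite. But the existence of that extremal is precisely the hard point: the quotient is invariant under both amplitude scaling and dilation, so concentration--compactness must rule out an extra non-compact direction, and you have not said how. The paper does \emph{not} establish this inequality is attained. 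Instead it approximates from above: take $\mu_k=\mu_c+1/k$, use the already-proved minimizers $u_k$ at $\mu_k$, and show they do not vanish. The key non-vanishing input is structural: the Derrick--Pohozaev identity lets one express $\int|u_k|^{2\sigma+2}$, $\|\nabla u_k\|_{L^2}^2$, $\|\Delta u_k\|_{L^2}^2$ as affine functions of $I_{\gamma,1}(\mu_k)$ and of the Lagrange multiplier $\alpha_k$ (the paper's Lemma before the final proposition of Section~2); feeding these expressions back into the three-term Gagliardo--Nirenberg inequality forces $\liminf_k\alpha_k\mu_k>0$, hence $\int|u_k|^{2\sigma+2}\ge\delta>0$. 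With vanishing excluded, a subsequence converges weakly in $H^2$ and strongly in $L^2_{\mathrm{loc}}$ to a minimizer at $\mu_c$.

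In short: your plan is sound everywhere except at $\mu=\mu_c$, where you have traded the problem for one of equal difficulty (existence of the interpolation extremal) without indicating how to solve it. The paper's approximation-from-above together with the Pohozaev bookkeeping is the missing idea; it exploits the Euler--Lagrange structure of the nearby minimizers, which a bare maximizing sequence for the Gagliardo--Nirenberg quotient does not carry.
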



To the best of our knowledge, this is the first result in the literature concerning the existence of standing waves of \eqref{4nlsdis} with a prescribed $L^2$-mass. Observe that for \eqref{4nls}, the result mirrors that for \eqref{nlsgen}, since $\mu_c(\gamma,0,\sigma) = 0$, whereas there is a critical mass for existence in the case of \eqref{4nlsdis}. This is due to a lack of homogeneity when $\beta>0$. Indeed, all the terms of the functional to be minimized scale differently. Such a behaviour is present in other models like the Schr\"odinger-Poisson equation, see \cite{dolb,jeanjean}. One also notices that for $N=2$ and $N=3$ ($\sigma=1$), the Kerr nonlinearty which is the more relevant case in optics is subcritical for both \eqref{4nls} and \eqref{4nlsdis}. Let us also emphisize that if $\sigma> 4/N$, then $ I_{\gamma,\beta}({\mu})$ cannot be achieved since $ I_{\gamma,\beta}({\mu})= -\infty$ for every $\mu>0$. Existence results of solutions with prescribed mass when $\sigma \geq 4/N$ are given in \cite{BCGJ} by the first two authors in collaboration with Gou and Jeanjean.

\medbreak

We now turn to the second natural variational problem associated with \eqref{4nls} and \eqref{4nlsdis}. Indeed, if one looks for time independent solutions, it is natural to ask whether there exists a stationary solution which minimizes the action functional $A:H^2(\R^N)\to \R$ defined by 
\begin{equation}
\label{A-funct}
A(u)= \frac{1}{2}J_{\gamma,\beta,\alpha}(u) - \frac{1}{2\sigma+2 }\int_{\R^N} |u|^{2\sigma+2} dx,
\end{equation}
where $J_{\gamma,\beta,\alpha}$ is the quadratic form defined by 
\begin{equation}
\label{J-quadr}
J_{\gamma,\beta,\alpha}(u)= \gamma\int_{\R^N} |\Delta u|^2\, dx+\beta \int_{\R^N}|\nabla u|^2\, dx+\alpha \int_{\R^N}|u|^2\, dx
\end{equation}
on $H^2(\R^N)$. Observe that when $\alpha,\gamma>0$ and $\beta>-2\sqrt{\gamma \alpha}$, we have that $J_{\gamma,\beta,\alpha}$ is the square of a norm on $H^2(\R^N)$.
A \emph{ground state} is then a least energy critical point of $A$ and therefore a stationary solution which minimizes the action within the set of nontrivial (nonzero) solutions. It is standard to check that if
\begin{equation}\label{MinLpintro}
m=\inf_{u \in M}J_{\gamma,\beta,\alpha} (u),
\end{equation}
where 
\begin{equation}
\label{lpconsint}
M= \{u\in H^2 (\R^N):\int_{\R^N} |u|^{2\sigma +2}dx=1\},
\end{equation}
is achieved by some $u\in M$, then $v=m^{\frac{1}{2\sigma}}u$ is a least energy critical point of $A$.
The following result is proved in \cite{bonnas}. 
\begin{thm}[{\cite[Theorem 1.1]{bonnas}}]
\label{thm1.1bonnas}
Assume $\alpha,\gamma, \sigma>0$, $\beta>-2\sqrt{\gamma \alpha}$ and $\sigma < 4/(N-4)$ if $N\ge 5$. Then problem \eqref{MinLpintro} has a ground state. If $\beta\ge 2\sqrt{\gamma\alpha}$, then any ground state $u$ is such that $|u|$ is positive, radially symmetric around some point and strictly radially decreasing.  
\end{thm}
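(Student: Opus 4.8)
The proof splits into the existence statement, valid whenever $\alpha,\gamma>0$ and $\beta>-2\sqrt{\gamma\alpha}$, and the qualitative properties, which use the stronger assumption $\beta\ge 2\sqrt{\gamma\alpha}$; once \eqref{MinLpintro} is solved, the fact that $v=m^{1/2\sigma}u$ is a ground state of $A$ is the observation recorded just before the statement.

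\emph{Existence.} Since $\alpha,\gamma>0$ and $\beta>-2\sqrt{\gamma\alpha}$, the Fourier symbol $\gamma|\xi|^4+\beta|\xi|^2+\alpha$ is bounded below by a positive multiple of $1+|\xi|^4$ --- its minimum over $s=|\xi|^2\ge 0$ equals $\alpha$ if $\beta\ge 0$ and $\alpha-\beta^2/(4\gamma)>0$ if $-2\sqrt{\gamma\alpha}<\beta<0$ --- so $J_{\gamma,\beta,\alpha}^{1/2}$ is an equivalent norm on $H^2(\R^N)$. The hypothesis on $\sigma$ is exactly what makes the embedding $H^2(\R^N)\hookrightarrow L^{2\sigma+2}(\R^N)$ continuous and locally compact, i.e.\ subcritical. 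Hence $0<m<\infty$, every minimizing sequence $(u_n)\subset M$ for \eqref{MinLpintro} is bounded in $H^2$, and the only loss of compactness comes from the invariance of $J_{\gamma,\beta,\alpha}$ and of $M$ under translations. I would therefore run Lions' concentration--compactness principle on the densities $|u_n|^{2\sigma+2}$. Vanishing is excluded because it forces $u_n\to 0$ in $L^{2\sigma+2}$ (subcritical vanishing lemma), contradicting $\|u_n\|_{L^{2\sigma+2}}=1$. Dichotomy is excluded by strict subadditivity: the scaling $u\mapsto\tau^{1/(2\sigma+2)}u$ gives, for the problem at constraint level $\tau$, $m(\tau)=\tau^{1/(\sigma+1)}m$, and since $1/(\sigma+1)\in(0,1)$ one has $m<m(\vartheta)+m(1-\vartheta)$ for every $\vartheta\in(0,1)$, which is incompatible with a dichotomous splitting. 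Thus, after a translation, a subsequence converges weakly in $H^2$ and strongly in $L^{2\sigma+2}$ to some $u\in M$, and weak lower semicontinuity of $J_{\gamma,\beta,\alpha}$ gives $J_{\gamma,\beta,\alpha}(u)\le m$, so $u$ is a minimizer. Standard elliptic regularity and decay estimates for \eqref{eq4beta} (after the rescaling relating it to \eqref{MinLpintro}) then justify the pointwise conclusions.

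\emph{Qualitative properties when $\beta\ge 2\sqrt{\gamma\alpha}$.} Here the discriminant $\beta^2-4\gamma\alpha$ is nonnegative, so $\gamma|\xi|^4+\beta|\xi|^2+\alpha=\gamma(|\xi|^2+a)(|\xi|^2+b)$ with $0<a\le b$, $a+b=\beta/\gamma$, $ab=\alpha/\gamma$. Consequently the positive square root $\mathcal{G}^{1/2}$ of $\mathcal{G}=(\gamma\Delta^2-\beta\Delta+\alpha)^{-1}$, an isomorphism of $L^2(\R^N)$ onto $H^2(\R^N)$, is convolution with a kernel $K=\gamma^{-1/2}\mathcal{B}_a\ast\mathcal{B}_b$, where $\mathcal{B}_a,\mathcal{B}_b$ are the (Yukawa-type) kernels of $(-\Delta+a)^{-1/2}$ and $(-\Delta+b)^{-1/2}$; hence $K$ is everywhere positive, radially symmetric, strictly radially decreasing and integrable. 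Writing $u=\mathcal{G}^{1/2}h$ one has $J_{\gamma,\beta,\alpha}(u)=\|h\|_{L^2}^2$, so \eqref{MinLpintro} is equivalent to
\[
m^{-1/2}=\sup\{\|K\ast h\|_{L^{2\sigma+2}}:\ \|h\|_{L^2}=1\}.
\]
The rearrangement inequality for convolutions --- $\|K\ast h\|_{L^{2\sigma+2}}\le\|K\ast|h|\|_{L^{2\sigma+2}}\le\|K\ast(|h|)^{*}\|_{L^{2\sigma+2}}$, the first bound from $|K\ast h|\le K\ast|h|$ and the second because the $L^{2\sigma+2}$-norm of $K\ast g$ (for $K,g\ge 0$) does not decrease under symmetric-decreasing rearrangement of $g$, as one sees by testing against nonnegative $L^{(2\sigma+2)'}$ functions and applying Riesz's inequality --- shows both that this supremum is attained at a nonnegative, radially symmetric, nonincreasing $h$ (so that, when $\beta\ge 2\sqrt{\gamma\alpha}$, existence and symmetry actually come out together, whereas for general $\beta$ the concentration--compactness detour seems unavoidable since $J_{\gamma,\beta,\alpha}$ is not compatible with rearrangement), and that for an \emph{arbitrary} ground state $u$ the associated $h$, defined by $\mathcal{G}^{1/2}h=u$, must --- chasing the equality cases --- have constant sign (using $K>0$) and satisfy $|h|=(|h|)^{*}$ up to a translation (using the strict form of Riesz's inequality, legitimate because $K$ is strictly radially decreasing). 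Therefore $|u|=K\ast|h|$ with $|h|$ nonnegative, radial and nonincreasing about some point, whence $|u|$ is positive (since $K>0$ and $h\not\equiv 0$), radially symmetric about that point, and strictly radially decreasing (by the strict monotonicity of the convolution of a nontrivial nonnegative function with a strictly symmetric-decreasing kernel).

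\emph{Main obstacle.} The delicate part is the symmetry, not the existence. In the second-order analogue one simply replaces $u$ by the Schwarz symmetrization of $|u|$ and invokes P\'{o}lya--Szeg\H{o}; here that route is doubly blocked, since $|u|$ need not belong to $H^2(\R^N)$ and $\int|\Delta u^{*}|^2\le\int|\Delta u|^2$ is false in general (Talenti). The assumption $\beta\ge 2\sqrt{\gamma\alpha}$ is exactly what factors the fourth-order operator into two second-order operators with positive, radially decreasing Green kernels, and it is this structure --- reached through the substitution $u=\mathcal{G}^{1/2}h$ --- that makes rearrangement arguments available again; the real work then lies in the equality analysis of the rearrangement inequalities together with the strict positivity and strict monotonicity of $K$. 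In the existence step, by contrast, the usual sticking point --- strict subadditivity ruling out dichotomy --- is immediate thanks to the exact homogeneity of both $J_{\gamma,\beta,\alpha}$ and the constraint.
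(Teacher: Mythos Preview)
Your argument is correct. For the existence part the paper simply cites \cite{bonnas}, and your concentration--compactness sketch with the homogeneity-based strict subadditivity $m(\tau)=\tau^{1/(\sigma+1)}m$ is the expected route.

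For the qualitative properties your approach is genuinely different from both the original source and the paper. The result in \cite{bonnas} is proved via the moving-plane machinery of \cite{MR1755067}; the present paper offers (in Theorem~\ref{Radial-LES}) an alternative that stays at the level of the second-order factorisation but works with the functions themselves: one introduces a parameter $\lambda\in(\beta,\beta+\sqrt{\beta^2-4\alpha})$, rewrites $J$ so that the quantity $-\Delta U+\tfrac{\lambda}{2}U$ appears explicitly, defines $z$ by $-\Delta z+\tfrac{\lambda}{2}z=(-\Delta U+\tfrac{\lambda}{2}U)^*$, and then combines Hardy--Littlewood, P\'olya--Szeg\H{o} and the Talenti-type comparison \cite[Lemma 3.4]{BoMSRa} to show $z=U^*$ is also a minimizer; the equality case of P\'olya--Szeg\H{o} via Brothers--Ziemer \cite{BrZi} then forces $U=U^*$. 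You instead pass to the dual variable $h=\mathcal G^{-1/2}u$, identify $\mathcal G^{1/2}$ as convolution with a strictly positive, strictly symmetric-decreasing kernel $K$ (thanks to the real factorisation available when $\beta\ge 2\sqrt{\gamma\alpha}$), and reduce everything to the Riesz rearrangement inequality for convolutions and its strict equality case. Your route is arguably cleaner --- it needs only properties of Bessel kernels and the Riesz/Lieb strict inequality, and it handles the borderline $\beta=2\sqrt{\gamma\alpha}$ without modification --- whereas the paper's argument, written for $\beta>2\sqrt{\alpha}$, trades the kernel analysis for a more hands-on comparison at the PDE level and relies on the Brothers--Ziemer characterisation of equality in P\'olya--Szeg\H{o}. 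Both exploit the same structural fact you identify as the main obstacle: only under $\beta\ge 2\sqrt{\gamma\alpha}$ does the fourth-order operator split into second-order pieces compatible with rearrangement.
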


Those qualitative properties of ground states are well-known for $\gamma=0$. In addition, it is well-known \cite{Kwong} that $\psi(t,x) = \exp(i\alpha t)v(x)$ gives a standing wave solution to \eqref{nlsgen} which is a ground state if and only if there exists $x_0\in\R^N$ such that
$$\alpha^{-\frac{1}{2\sigma}}v(\alpha^{-\frac12}x) = u(x+x_0),$$
where $u$ is the unique, up to translation, positive $H^1$-solution to
\begin{equation}\label{nlsunique}
- \Delta u +u= |u|^{2\sigma} u\ \text{in}\ \R^N.
\end{equation} 
Obviously $u$ is radially symmetric. From a phenomenological point of view, it is important to understand if the standing waves of \eqref{4nlsdis} differ qualitatively from this (essentially) unique standing wave of \eqref{nlsgen}. Moreover, the main concern of Karpman and Shagalov \cite{MR1779828} or in the small nonparaxial correction in nonlinear optics is to understand this when the fourth order dissipation coefficient $\gamma$ is small. The effect of a small fourth order perturbation on ground states has been considered in \cite{bonnas} when the problem is $H^1$ subcritical under some restriction on the dimension and the power nonlinearity. Here we complete this study for any $H^1$ subcritical power and any dimension and we extend it to minimizers with a prescribed mass. Before stating our results, we recall that a solution $u$ of \eqref{eq4gamma}
is nondegenerate in $H^2(\R^N)$ if for any solution $v$ of the linearized equation
\begin{equation}\label{4nlsgamma-beta-alpha-linearized}
\gamma\Delta^2 v-\beta \Delta v +\alpha v=(2\sigma +1) |u|^{2\sigma}v,
\end{equation} 
there exists $\xi\in \R^N$ such that $v(x) = \xi\cdot\nabla u(x)$. In other words, the kernel of the linearized operator defined by
\begin{equation}\label{L-operator-Linear}
Lv=\gamma\Delta^2 v-\beta \Delta v +\alpha v-(2\sigma +1) |u|^{2\sigma}v
\end{equation}
is
\begin{equation*}
\ker L=\text{span}\{\partial_{x_1}u,\cdots,\partial_{x_N}u\}.
\end{equation*}
\begin{thm}
Assume $0<\sigma<2/N$ and $\beta,\mu>0$. Then there exists $\gamma_0>0$ such that if $0<\gamma<\gamma_0$, then \eqref{MinL2fixed} is achieved by a unique minimizer (up to translations and multiplication by $-1$). This minimizer is a nondegenerate solution to \eqref{eq4gamma}. Fixing its positive maximum at the origin, this solution is radially symmetric and strictly radially decreasing. \end{thm}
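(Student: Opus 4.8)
The plan is to treat equation \eqref{eq4gamma} as a singular perturbation, as $\gamma\to 0$, of the second order equation $-\beta\Delta U+\alpha U=U^{2\sigma+1}$, whose positive solution is unique up to translation \cite{Kwong} and nondegenerate \cite{Weinstein83}. Since $0<\sigma<2/N$, existence of a minimizer for \eqref{MinL2fixed} is already granted by Theorem~\ref{Compact-Min-Sol}, so only uniqueness, nondegeneracy and symmetry for small $\gamma$ are at stake. First I would record two elementary facts. Testing $E_{\gamma,\beta}$ on $t^{N/2}u(t\,\cdot)\in M_\mu$ and letting $t\to 0^+$ — this is where $N\sigma<2$ is used — gives $I_{\gamma,\beta}(\mu)<0$, hence by \eqref{defalphaintro} the Lagrange multiplier $\alpha_\gamma$ of any minimizer is positive. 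Next, let $I_{0,\beta}(\mu)$ be the infimum, over $\{u\in H^1(\R^N):\|u\|_2^2=\mu\}$, of the limiting functional $u\mapsto\frac{\beta}{2}\|\nabla u\|_2^2-\frac{1}{2\sigma+2}\|u\|_{2\sigma+2}^{2\sigma+2}$, which up to a dilation is the Cazenave--Lions functional \eqref{E_0}; then, using $\frac{\gamma}{2}\|\Delta\cdot\|_2^2\ge 0$ on $M_\mu$ and $U_0$ as a competitor, one gets $I_{0,\beta}(\mu)\le I_{\gamma,\beta}(\mu)\le I_{0,\beta}(\mu)+\frac{\gamma}{2}\|\Delta U_0\|_2^2$, where $U_0$ is the (unique up to translation and sign) positive, radial, strictly decreasing minimizer of $I_{0,\beta}(\mu)$, solving $-\beta\Delta U_0+\alpha_0 U_0=U_0^{2\sigma+1}$ with $\alpha_0>0$ and $\|U_0\|_2^2=\mu$. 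These bounds force any minimizer $u_\gamma$ to satisfy $\|\Delta u_\gamma\|_2\le\|\Delta U_0\|_2$ and $E_{0,\beta}(u_\gamma)\to I_{0,\beta}(\mu)$, so $(u_\gamma)$ is an $H^2$-bounded minimizing sequence for $I_{0,\beta}(\mu)$. The concentration--compactness argument of Cazenave--Lions \cite{Caz} then yields, up to translations and a choice of sign, $u_\gamma\to U_0$ in $H^1$, which the uniform $H^2$ bound and lower semicontinuity of $\|\Delta\cdot\|_2$ upgrade to strong $H^2$ convergence; \eqref{defalphaintro} also gives $\alpha_\gamma\to\alpha_0$.

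The technically central step is nondegeneracy together with a uniform coercivity estimate for the linearized operator \eqref{L-operator-Linear}, both proved by contradiction along a sequence $\gamma_n\to 0$. If $u_n:=u_{\gamma_n}$ (normalized as above) admitted $\phi_n\in H^2$ with $\|\phi_n\|_2=1$, $\phi_n\perp\partial_i u_n$ in $L^2$ and $L_{\gamma_n}\phi_n=0$, then pairing \eqref{4nlsgamma-beta-alpha-linearized} with $\phi_n$ and using $N\sigma<2$ bounds $\|\phi_n\|_{H^1}$ and $\gamma_n\|\Delta\phi_n\|_2^2$, so that $\phi_n\rightharpoonup\phi_0$ in $H^1$ along a subsequence. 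Passing to the limit in the weak formulation — the biharmonic term $\gamma_n\int\Delta\phi_n\,\Delta\psi$ drops by Cauchy--Schwarz, while $\int|u_n|^{2\sigma}\phi_n\psi\to\int U_0^{2\sigma}\phi_0\psi$ by Hölder against the $L^{2\sigma+2}$ norm, local compactness and the decay of $U_0$ — shows $\phi_0$ solves the second order linearized equation at $U_0$; nondegeneracy of $U_0$ \cite{Kwong,Weinstein83} and $\phi_0\perp\partial_i U_0$ give $\phi_0=0$. But then $\alpha_n=(2\sigma+1)\int|u_n|^{2\sigma}\phi_n^2-\gamma_n\|\Delta\phi_n\|_2^2-\beta\|\nabla\phi_n\|_2^2\le(2\sigma+1)\int|u_n|^{2\sigma}\phi_n^2\to 0$, contradicting $\alpha_n\to\alpha_0>0$. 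The same scheme, run on an $H^1$-normalized minimizing sequence of $v\mapsto\langle L_\gamma v,v\rangle$ over $\{u_\gamma,\partial_i u_\gamma\}^\perp$ and combined with the Weinstein coercivity of $-\beta\Delta+\alpha_0-(2\sigma+1)U_0^{2\sigma}$ on $\{U_0,\partial_i U_0\}^\perp$ \cite{Caz,Weinstein83}, produces $c>0$ and $\gamma_0>0$ such that $\langle L_\gamma v,v\rangle\ge c\|v\|_{H^1}^2$ for all $\gamma<\gamma_0$, every minimizer $u_\gamma$, and every $v\in H^2$ with $v\perp u_\gamma$ and $v\perp\partial_i u_\gamma$.

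For uniqueness I would take two minimizers $u_\gamma^1,u_\gamma^2$, normalize them by translation and sign to be $H^2$-close to $U_0$, and translate $u_\gamma^2$ further so that $w:=u_\gamma^1-u_\gamma^2\perp\partial_i u_\gamma^1$ for all $i$ — possible by the finite-dimensional implicit function theorem, the translations of $U_0$ being nondegenerate directions. The mass constraint gives $\langle u_\gamma^1,w\rangle=\langle u_\gamma^2,w\rangle=-\tfrac12\|w\|_2^2$, and subtracting the two instances of \eqref{eq4gamma} yields $L_\gamma^1 w=(\alpha_\gamma^2-\alpha_\gamma^1)u_\gamma^2+N(w)$ with $N(w)$ superlinear. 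Splitting off the component of $w$ along $u_\gamma^1$ (of size $O(\|w\|_2^2)$; using $L_\gamma^1 u_\gamma^1=-2\sigma|u_\gamma^1|^{2\sigma}u_\gamma^1$ the ensuing cross terms are $O(\|w\|_{H^1}^3)$), the coercivity estimate gives $\langle L_\gamma^1 w,w\rangle\ge c\|w\|_{H^1}^2-C\|w\|_{H^1}^3$. On the other side, $|\alpha_\gamma^2-\alpha_\gamma^1|\le C\|w\|_{H^1}$ (read off from \eqref{defalphaintro}) makes $\langle(\alpha_\gamma^2-\alpha_\gamma^1)u_\gamma^2,w\rangle=O(\|w\|_{H^1})\cdot O(\|w\|_2^2)$, while $\langle N(w),w\rangle=o(\|w\|_{H^1}^2)$ thanks to $N\sigma<2$ (with $H^2(\R^N)\hookrightarrow L^\infty$ for $N\le 3$ and $2\sigma<1$ for $N\ge 4$). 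Hence $c\|w\|_{H^1}^2\le o(\|w\|_{H^1}^2)+C\|w\|_{H^1}^3$, which forces $w=0$ for $\gamma$ small; so the minimizer is unique up to translation and multiplication by $-1$, and is a nondegenerate solution of \eqref{eq4gamma} by the previous step.

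It remains to obtain the symmetry. For $R\in O(N)$ the rotation $u_\gamma(R\,\cdot)$ is again a minimizer and is $H^2$-close to $U_0(R\,\cdot)=U_0$, hence equals $u_\gamma(\cdot-\tau_R)$ for some $\tau_R\in\R^N$; since $u_\gamma\to U_0$ in $C^1_{\loc}(\R^N)$ (by the a priori estimates for solutions of \eqref{eq4gamma} and smoothness of $U_0$) and $U_0$ has a unique nondegenerate maximum at the origin, $u_\gamma$ has a unique maximum point for $\gamma$ small, which we place at the origin; then $\tau_R=0$ for all $R$, so $u_\gamma$ is radial. Positivity and strict radial monotonicity I would get by identifying $u_\gamma$, for $\gamma$ small, with a ground state of $A_{\gamma,\beta,\alpha}$: since the action ground states converge to $U_0$ as $(\gamma,\alpha)\to(0,\alpha_0)$, continuity provides a parameter $\alpha_\gamma^\star$ near $\alpha_0$ for which the corresponding ground state has $L^2$-mass $\mu$, and being $H^2$-close to $U_0$ it coincides with $u_\gamma$ by the uniqueness argument (which in fact shows that any two solutions of \eqref{eq4gamma} of mass $\mu$ lying $H^2$-close to $U_0$ agree); Theorem~\ref{thm1.1bonnas} then applies, because $\beta>0$ is fixed while $\gamma\alpha_\gamma\to 0$, so $\beta\ge 2\sqrt{\gamma\alpha_\gamma}$ eventually, and it gives that $|u_\gamma|$ is positive, radial and strictly radially decreasing — and $u_\gamma$, being nowhere zero and close to $U_0>0$, equals $|u_\gamma|$. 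The hard part throughout is the presence of the top-order term $\gamma\Delta^2$: since it does not become small in operator norm, neither the limit $u_\gamma\to U_0$ nor the nondegeneracy and coercivity of $L_\gamma$ uniformly in $\gamma$ can be obtained by a standard perturbation argument, and all of them must be forced by compactness, with the strict positivity of $\alpha_\gamma$ (equivalently $I_{\gamma,\beta}(\mu)<0$) furnishing the quantitative contradiction that closes both the nondegeneracy and the coercivity arguments.
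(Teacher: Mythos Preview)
Your argument is correct in outline and reaches the conclusion, but the route differs from the paper's in several interesting places.

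\medbreak

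\textbf{Strong $H^2$ convergence.} Your energy sandwich $I_{0,\beta}(\mu)\le I_{\gamma,\beta}(\mu)\le I_{0,\beta}(\mu)+\tfrac{\gamma}{2}\|\Delta U_0\|_2^2$ directly yields $\|\Delta u_\gamma\|_2\le\|\Delta U_0\|_2$, and combining with weak lower semicontinuity gives strong $H^2$ convergence immediately. This is sharper and shorter than the paper's route, which only extracts $\gamma_k\|\Delta u_k\|_2^2\to 0$ from the sandwich and then runs a careful elliptic bootstrap (Proposition~\ref{uniflimit}) to obtain a uniform $H^3$ bound and interpolate. The paper's bootstrap, however, also delivers the local $C^{1,\alpha}$ convergence (Remark~\ref{rem:last}) that you later invoke for the unique maximum point; in your presentation this is just asserted ``by the a priori estimates'', so you should either carry out that regularity step or cite it explicitly.

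\medbreak

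\textbf{Nondegeneracy and uniqueness.} The paper proves nondegeneracy by a variational eigenvalue characterization (Lemma~\ref{strongH2mai}), shows radial symmetry \emph{first} via the observation that the angular derivatives $x_i\partial_j u_k-x_j\partial_i u_k$ lie in $\ker L_k$, and then obtains uniqueness by the Implicit Function Theorem on $H^2_{\mathrm{rad}}$. You reverse the order: nondegeneracy by a direct contradiction on a kernel sequence, a uniform coercivity estimate on $\{u_\gamma,\partial_i u_\gamma\}^\perp$, and then a two--solution comparison $L_\gamma^1 w=(\alpha_\gamma^2-\alpha_\gamma^1)u_\gamma^2+N(w)$ to force $w=0$. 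Radial symmetry then follows from uniqueness applied to rotations. Both schemes are valid; the IFT argument is more structural and avoids the somewhat delicate bookkeeping around the Lagrange multiplier difference and the Taylor remainder $N(w)$ (note that for $N\ge 4$ one has $2\sigma<1$, so $|t|^{2\sigma}t$ is only $C^{1,2\sigma}$ and you must use the H\"older estimate $|N(w)|\le C|w|^{2\sigma+1}$ rather than a quadratic remainder --- your Gagliardo--Nirenberg control of $\int|w|^{2\sigma+2}$ then closes it). Conversely, your approach is more quantitative and does not require setting up the radial reduction before running the uniqueness argument.

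\medbreak

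\textbf{Positivity and strict decay.} Your identification with a ground state for a suitable $\alpha_\gamma^\star$ and appeal to Theorem~\ref{thm1.1bonnas} is essentially the paper's own mechanism (the Corollary following Theorem~\ref{limitlpfixed}).
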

\medbreak
 
As already mentioned, this theorem is more general than that of \cite{bonnas} for ground states (see also Theorem \ref{limitlpfixed}). Let us point out that these results allow us to show that solutions with prescribed mass obtained in Theorem \ref{Compact-Min-Sol} are ground state solutions (for the frequency corresponding to the Lagrange multiplier) at least when $0<\sigma<2/N$ and $\gamma$ is small enough. This property holds probably without the smallness assumption on $\gamma$ but we have to leave it as a conjecture only. 

\medbreak

Our last aim in this paper is to investigate the orbital stability of both minimizers with prescribed mass and the ground states. Let us recall that, by \cite[Corollary 4.1]{MR2353631}, global existence in time for the initial value problem associated to \eqref{4nlsdis} is granted provided that $0 <\sigma < 4/N$ if $N>4$. As mentioned above, the standing waves to \eqref{nlsgen} obtained through the $L^2$-constrained minimization of the energy are automatically orbitally stable \cite[Theorem II.2]{Caz} while this is not the case for the standing waves obtained as ground state for $2/N<\sigma<2/(N-2)$ \cite[Remark II.2]{Caz}.\label{page-C-L} The same holds true for \eqref{4nlsdis}. Following closely the arguments of Natali and Pastor \cite{natalipastor}, we also prove that orbital stability holds for ground states $u$ as soon as they are nondegenerate and the following condition holds
\begin{equation}
\label{extracond}
if\ v\in H^2 (\R^N)\ is\ a \ solution\ to\ L v=u,\ then\ \int_{\R^N} v u\, dx < 0,  
\end{equation}
where $L$ is defined in \eqref{L-operator-Linear}. Let us point out that Albert \cite{MR1151253} proved numerically that this condition holds if $\beta=1$, $\alpha=4/25$, $N=1$ and $\sigma=1$. 
\begin{thm}\label{thm:orbit}Let $0<\sigma<4/N$. 
\begin{enumerate}
\item The set $\{ U\in H^2 (\R^N): \text{ $U$ is a solution to } \eqref{MinL2fixed}\}$ is stable (see Definition \ref{defiGorb}).
\item Suppose that $u$ is a  nondegenerate minimizer of \eqref{MinLpintro} satisfying \eqref{extracond}. Then the standing wave $\psi(t,x) = \exp(i\alpha t)u(x)$
is orbitally stable (see Definition \ref{defiorb}).
\end{enumerate}
\end{thm}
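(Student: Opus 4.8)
The plan is to establish the two items of Theorem~\ref{thm:orbit} separately, both via the classical Cazenave--Lions concentration-compactness strategy adapted to the biharmonic setting.

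\textbf{Part (1): stability of the set of $L^2$-constrained minimizers.} The natural approach is a concentration-compactness argument on minimizing sequences. First I would verify that the conservation laws for \eqref{4nlsdis} — conservation of the $L^2$-norm and of $E_{\gamma,\beta}$ along the flow (which is global by \cite[Corollary 4.1]{MR2353631} for $0<\sigma<4/N$) — hold on $H^2(\R^N)$; this reduces orbital stability of the minimizing set to the compactness (up to translations) of all minimizing sequences of \eqref{MinL2fixed}. The key step is thus to rule out vanishing and dichotomy for a minimizing sequence $(u_n)$ with $\|u_n\|_{L^2}^2=\mu$ and $E_{\gamma,\beta}(u_n)\to I_{\gamma,\beta}(\mu)$. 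Vanishing is excluded because if $\sup_{y}\int_{B_1(y)}|u_n|^2\to 0$ then, by a Lions-type lemma in $H^2$, $\|u_n\|_{L^{2\sigma+2}}\to 0$, forcing $\liminf E_{\gamma,\beta}(u_n)\ge 0$, which contradicts $I_{\gamma,\beta}(\mu)<0$ (for $0<\sigma<2/N$ this negativity is immediate by scaling; in the critical/supercritical mass regime one invokes the parts of Theorem~\ref{Compact-Min-Sol} guaranteeing that the infimum is achieved and negative, restricting $\mu$ accordingly). Dichotomy is excluded by the strict subadditivity inequality $I_{\gamma,\beta}(\mu)<I_{\gamma,\beta}(\lambda)+I_{\gamma,\beta}(\mu-\lambda)$ for $0<\lambda<\mu$, which I would prove exactly as in \cite{Caz} using that $I_{\gamma,\beta}(\mu)<0$ together with a cut-off/bubble-splitting estimate controlling the biharmonic term (here one must be slightly careful: $\|\Delta(\chi u)\|_{L^2}$ involves second derivatives of the cut-off, but these are lower-order and absorbed). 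Compactness up to translation of $(u_n)$ then follows, and a standard contradiction argument upgrades this to the statement that any sequence $\psi_n(0)\to U_0$ in $H^2$ with $U_0$ in the minimizing set stays, under the flow, uniformly close (in $H^2$, modulo translations) to the minimizing set.

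\textbf{Part (2): orbital stability of a nondegenerate ground state satisfying \eqref{extracond}.} Here I would follow Natali--Pastor \cite{natalipastor}, i.e. the Grillakis--Shatah--Strauss framework. The standing wave $\psi=e^{i\alpha t}u$ is a critical point of the action $A$ from \eqref{A-funct}, whose second variation at $u$ is governed by the two self-adjoint operators on $L^2(\R^N)$: $L_+ := L = \gamma\Delta^2-\beta\Delta+\alpha-(2\sigma+1)|u|^{2\sigma}$ acting on the real part and $L_- := \gamma\Delta^2-\beta\Delta+\alpha-|u|^{2\sigma}$ acting on the imaginary part. One checks $L_-\ge 0$ with kernel spanned by $u$ (since $u>0$ one can take it to be the ground state of $L_-$), and $L_+$ has exactly one negative eigenvalue and, by the assumed nondegeneracy, $\ker L_+=\mathrm{span}\{\partial_{x_1}u,\dots,\partial_{x_N}u\}$. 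The abstract GSS theory then yields orbital stability provided the scalar slope condition $\frac{d}{d\alpha}\|u_\alpha\|_{L^2}^2>0$ holds (equivalently $d''(\alpha)>0$ for the standard $d$-function), and the standard computation identifies the sign of this quantity with $\langle L_+^{-1}u,u\rangle$, which is negative precisely by hypothesis \eqref{extracond}. I would spell out: solve $L_+ v=u$ (solvable since $u\perp\ker L_+$ by oddness/evenness parity of $u$ and $\partial_{x_i}u$), note $\langle v,u\rangle=\langle L_+^{-1}u,u\rangle$, and conclude the convexity/slope condition has the right sign. Finally I would invoke the GSS stability theorem (or its PDE-adapted version in \cite{natalipastor}) to conclude.

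\textbf{Main obstacle.} The genuinely delicate point is, in Part (1), establishing the strict subadditivity $I_{\gamma,\beta}(\mu)<I_{\gamma,\beta}(\lambda)+I_{\gamma,\beta}(\mu-\lambda)$ in the mass-critical and mass-supercritical ranges $2/N\le\sigma<4/N$, where $I_{\gamma,\beta}$ is not homogeneous in $\mu$ (because, as emphasized in the text, the three terms of $E_{\gamma,\beta}$ scale differently when $\beta>0$) and the infimum is only achieved above the critical mass $\mu_c$; one must carefully combine the profile decomposition, the information from Theorem~\ref{Compact-Min-Sol} on when $I_{\gamma,\beta}$ is attained and negative, and a rescaling argument to beat the split energies. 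In Part (2) the main subtlety is verifying that the two spectral hypotheses on $L_\pm$ (one negative eigenvalue for $L_+$, nonnegativity with one-dimensional kernel for $L_-$) are exactly the structure needed, using the nondegeneracy assumption to pin down $\ker L_+$, and translating \eqref{extracond} into the positivity of the slope via the $L_+^{-1}$ pairing — all of which is routine modulo care with the fourth-order operator's spectral theory.
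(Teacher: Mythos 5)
Your overall strategy coincides with the paper's: Part (1) is proved by the Cazenave--Lions contradiction argument, with conservation of mass and energy reducing everything to the precompactness (up to translations) of minimizing sequences, which is exactly the content of the concentration-compactness analysis behind Theorem \ref{Compact-Min-Sol}; and Part (2) follows Natali--Pastor's Lyapunov-function implementation of the Grillakis--Shatah--Strauss framework, with \eqref{extracond} playing precisely the role of the slope condition $\langle L_+^{-1}u,u\rangle<0$ that you identify.

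Two steps in your Part (2), however, would not survive as written. First, your justification that $L_-\ge 0$ with $\ker L_-=\mathrm{span}\{u\}$ ``since $u>0$ one can take it to be the ground state of $L_-$'' is a second-order argument: for the fourth-order operator $\gamma\Delta^2-\beta\Delta+\alpha-|u|^{2\sigma}$ there is no maximum principle, so positivity of an eigenfunction does not identify it as the bottom of the spectrum, and moreover positivity of $u$ itself is only known when $\beta\ge 2\sqrt{\gamma\alpha}$ and is not assumed in the theorem. The paper derives these spectral facts variationally: since $u$ minimizes $J_{\gamma,\beta,\alpha}$ on $M$, one has $(L_+v,v)_{L^2}\ge 0$ on the orthogonal complement of $u$, hence $(L_-v,v)_{L^2}=(L_+v,v)_{L^2}+2\sigma\int_{\R^N}|u|^{2\sigma}v^2\,dx>0$ there, which forces $L_-\ge0$ with simple kernel $\mathrm{span}\{u\}$ (and likewise gives $L_+$ exactly one negative direction). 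Second, phrasing the final hypothesis as $d''(\alpha)>0$ presupposes a differentiable curve $\alpha\mapsto u_\alpha$ of ground states, which is not available here in general (it is only constructed for small $\gamma$ via the implicit function theorem in Section \ref{sec:smalldis}); the paper's Lyapunov-function route uses \eqref{extracond} directly inside the coercivity lemma --- if the constrained infimum of $(L_+\varphi,\varphi)_{L^2}$ over $\{u,\partial_{x_1}u,\ldots,\partial_{x_N}u\}^\perp$ were zero and attained, the minimizer would solve $L_+\varphi=m_2u$ with $m_2\neq0$ and $(\varphi,u)_{L^2}=0$, contradicting \eqref{extracond} --- so no curve and no differentiation in $\alpha$ are needed. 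With these two repairs your argument matches the paper's.
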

\medbreak
It is shown in \cite{amick} that there exists a unique radial solution $u$ to \eqref{eq4gamma} when $N=1$ provided $\beta\geq 2\sqrt{\gamma\alpha}$. Following closely the argument of \cite{buffonitoland}, it is simple to prove that this solution is nondegenerate. As a consequence, we obtain the orbital stability of the ground states in dimension $1$ for any $\alpha$, $\beta$ and $\gamma$ satisfying $\beta\geq 2\sqrt{\gamma\alpha}$ provided that \eqref{extracond} holds. Let us also point out that depending on whether $u$ is a minimizer for \eqref{MinL2fixed} or $u$ is a  nondegenerate minimizer of \eqref{MinLpintro}, we are able to cover different situations. For instance, consider the case $N=1$. As already said, in this case, we know that ground states are orbitally stable provided that $\beta\geq 2\sqrt{\gamma\alpha}$. On the other hand, if $u$ is a minimizer for \eqref{MinL2fixed}, we know that $u$ is a solution to \eqref{eq4gamma} with a Lagrange multiplier $\alpha$ defined as in \eqref{defalphaintro}. It is not clear that the Lagrange multipliers corresponding to admissible masses (for existence of a minimizer) cover the whole range $[0,\beta^2/(4 \gamma)]$.
  
\medbreak

Our manuscript is organized as follows. In Section \ref{sec:eximass}, we consider the minimization problem \eqref{MinL2fixed}. Section \ref{sec:quali} is dedicated to qualitative properties of minimizers for \eqref{MinL2fixed} and ground states. For instance, we show that we can build positive standing waves with a prescribed mass and study when there exist radial minimal standing waves. We also give an alternative proof to the radial symmetry of ground states, when $0<\sigma < 4/(N-4)$ (if $N\geq 5$) and $\beta> 2\sqrt{\gamma\alpha}$, using a purely variational argument. 
We then prove the exponential decay of $H^2$ solutions to \eqref{eq4gamma}. We conclude Section \ref{sec:quali} by proving the nondegeneracy of the unique solution to \eqref{eq4gamma} when $N=1$ and $\beta \geq 2\sqrt{\gamma \alpha}$. In Section \ref{sec:smalldis}, we show the $H^2$ convergence of minimizers of \eqref{MinL2fixed} as $\gamma \rightarrow 0$ to $w_\mu (\beta^{-\frac12}(x+x_0))$ where $w_\mu$ is the unique minimizer of $E_0$ under the constraint $\|w_\mu\|_{L^2}^2=\mu$ with $\max_{x\in\R^N}w_\mu (x)=w_\mu (x_0)$.
This convergence ensures uniqueness (up to translations), nondegeneracy and radial symmetry provided $\gamma$ is small enough. Finally, in Section \ref{sec:orbstab}, we prove our results concerning orbital stability, namely Theorem \ref{thm:orbit}.

\medbreak

To conlude, we would like to mention some questions that we left open. First, it would be interesting to show that the solution obtained in Theorem \ref{thm1.1bonnas} is unique and nondegenerate provided that $\beta \geq 2\sqrt{\gamma \alpha}$ as it is suggested by Theorem \ref{thmnondegen} when $N=1$. We also conjecture that the solutions obtained in Theorem \ref{Compact-Min-Sol} are radially symmetric. 

Concerning stability,  we believe that it should be possible to prove that \eqref{extracond} is satisfied by any minimizer of \eqref{MinLpintro} provided that $\sigma < 4/N$. Notice that when $\sigma \geq 4/N$, the first two author, in collaboration with Gou and Jeanjean proved the instability of ground-states by finite or infinite time blow-up, see \cite{BCGJ1}.

\medbreak

\section{Existence of standing waves with a prescribed mass}\label{sec:eximass}
In this section we study the minimization problem \eqref{MinL2fixed}. By scaling, we can assume either $\gamma=1$ if $\gamma>0$ or $\beta=1$ if $\beta\ne0$. We assume $\gamma>0$ and we fix $\beta=1$. We briefly comment on the case $\beta=0$ below (see Remark \ref{Neg-Minrmq}). 

\medbreak

We begin this section by recalling and proving some Gagliardo-Nirenberg interpolation inequalities. Thanks to them, we are able to estimate the energy functional $E_{\gamma,1}$. In particular, we establish its strict negativity when the mass $\mu$ is sufficiently large. When the energy is strictly negative, which is always the case when $0<\sigma < 2/N$, using the results of \cite{Li84}, we prove the pre-compactness of sequences of minimizers of \eqref{MinL2fixed}, which leads to our existence results. When the energy is not strictly negative, we obtain non-existence results provided the mass is strictly less than a certain threshold value. In the limit case and when $\sigma \neq 2/N$, we are able to prove the non-vanishing of sequences of minimizers which leads to the existence of a minimizer. The case $\sigma = 2/N$ is left open. We conclude the section by showing that at least one minimizer of \eqref{MinL2fixed} is radially symmetric whenever $2\sigma \in\N$. 

\subsection{Gagliardo-Nirenberg interpolation inequalities}

We begin by recalling two well-known Gagliardo-Nirenberg interpolation inequalities for functions $u\in H^2(\R^N)$, namely
\begin{equation}\label{G-N-H2-ineq}
 \|u\|^{2\sigma+2}_{L^{2\sigma+2}}\leq B_N(\sigma)\|\Delta u\|^{\frac{\sigma N}{2}}_{L^2}\|u\|^{2+2\sigma-\frac{\sigma N}{2}}_{L^2},
\end{equation}
where 
\begin{align*}
     \begin{cases}
     0\leq\sigma,& \text{for}\ N\leq 4,\\
     0\leq\sigma\leq\dfrac{4}{N-4},& \text{for}\ N>4,
     \end{cases} 
\end{align*}
and
\begin{equation}\label{G-N-H1-ineq}
 \|u\|^{2\sigma+2}_{L^{2\sigma+2}}\leq C_N(\sigma)\|\nabla u\|^{\sigma N}_{L^2}\|u\|^{2+\sigma(2-N)}_{L^2},
\end{equation}
where 
\begin{align*}
     \begin{cases}
     0\leq\sigma,& \text{for}\ N\leq 2,\\
     0\leq\sigma\leq\dfrac{2}{N-2},& \text{for}\ N>2.     
     \end{cases} 
  \end{align*}
See for instance \cite{Ga-57,Ga-58,Nir-59}. The constants $B_N(\sigma)$ and $C_N(\sigma)$ depend on $\sigma$ and $N$. 
Thanks to these inequalities, we can prove a $2$-parameters Gagliardo-Nirenberg interpolation type inequality involving the $L^2$-norms of $u,\nabla u$ and $\Delta u$. 
\begin{lem}\label{lem:GNI4terms}
Assume $\sigma>0$ and $\sigma < 4/(N-4)$ if $N>4$. Let $0<\delta < \sigma < \tau$ and assume $\tau<4/(N-4)$ if $N>4$ and $\delta<2/(N-2)$ if $N>2$. Then, there exists $C>0$ such that  
\begin{equation}
\label{GNtriplegen}
\int_{\R^N} |u|^{2\sigma +2}\, dx \leq C \left(\int_{\R^N} u^2\, dx \right)^{p} \left(\int_{\R^N} |\nabla u|^2\,  dx\right)^{q}
\left(\int_{\R^N} |\Delta u|^2\, dx\right)^{r},
\end{equation}
for all $u\in H^2(\R^N)$, where $p={1-\frac{\sigma (N-4) + N\delta (1-\lambda)}{4}}$, $q=\frac{\delta N}{2}(1-\lambda)$, $r=\frac{\tau N}{4} \lambda$ and 
$\lambda =(\sigma -\delta)/(\tau - \delta)$. Moreover, we have 
$C\leq (B_N (\tau))^\lambda ( C_N (\delta))^{1-\lambda}.$
\end{lem}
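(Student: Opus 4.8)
The key observation is that the exponent $\sigma$ lies strictly between $\delta$ and $\tau$, so we can write $|u|^{2\sigma+2} = |u|^{2\delta+2} \cdot |u|^{2(\sigma-\delta)}$ and split the exponent $2\sigma+2$ as a convex combination. More precisely, with $\lambda = (\sigma-\delta)/(\tau-\delta) \in (0,1)$, we have the identity $2\sigma + 2 = (1-\lambda)(2\delta+2) + \lambda(2\tau+2)$. So first I would apply H\"older's inequality with exponents $1/(1-\lambda)$ and $1/\lambda$ to get
\begin{equation*}
\int_{\R^N} |u|^{2\sigma+2}\, dx \leq \left(\int_{\R^N} |u|^{2\delta+2}\, dx\right)^{1-\lambda}\left(\int_{\R^N} |u|^{2\tau+2}\, dx\right)^{\lambda}.
\end{equation*}

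**Next I would insert the two classical inequalities into the two factors.** Since $\delta < 2/(N-2)$ when $N>2$ (and $\delta<\sigma$ imposes no constraint for $N\le 2$), the $H^1$-inequality \eqref{G-N-H1-ineq} applies to $\|u\|_{L^{2\delta+2}}^{2\delta+2}$ with constant $C_N(\delta)$, giving a bound by $\|\nabla u\|_{L^2}^{\delta N}\|u\|_{L^2}^{2+\delta(2-N)}$. Since $\tau < 4/(N-4)$ when $N>4$, the $H^2$-inequality \eqref{G-N-H2-ineq} applies to $\|u\|_{L^{2\tau+2}}^{2\tau+2}$ with constant $B_N(\tau)$, giving a bound by $\|\Delta u\|_{L^2}^{\tau N/2}\|u\|_{L^2}^{2+2\tau-\tau N/2}$. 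Raising the first to the power $1-\lambda$ and the second to the power $\lambda$ and multiplying, one collects the exponent of $\|\nabla u\|_{L^2}^2$ as $q = \tfrac{\delta N}{2}(1-\lambda)$, the exponent of $\|\Delta u\|_{L^2}^2$ as $r = \tfrac{\tau N}{4}\lambda$, and the exponent of $\|u\|_{L^2}^2$ as
\begin{equation*}
p = (1-\lambda)\left(1 + \frac{\delta(2-N)}{2}\right) + \lambda\left(1 + \tau - \frac{\tau N}{4}\right),
\end{equation*}
which, after using $\lambda(\tau-\delta) = \sigma-\delta$ to rewrite the $\tau$-terms, simplifies to the stated $p = 1 - \tfrac{\sigma(N-4)+N\delta(1-\lambda)}{4}$. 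This bookkeeping also immediately yields the constant bound $C \le (B_N(\tau))^\lambda (C_N(\delta))^{1-\lambda}$.

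**The main point requiring a little care — rather than a genuine obstacle — is the algebraic simplification of the exponent $p$, plus a sanity check on scaling.** I would verify the formula for $p$ by the scaling relation: testing \eqref{GNtriplegen} against $u_t(x) = u(tx)$ forces $p + q + r$ and the dilation weights to be consistent, which provides an independent confirmation. One should also check that all exponents are nonnegative (needed for the right-hand side to make sense as written): $q, r \ge 0$ is clear since $\lambda \in (0,1)$ and $\delta, \tau > 0$, while $p \ge 0$ follows from the subcriticality hypotheses $\delta < 2/(N-2)$ and $\tau < 4/(N-4)$ (equivalently $\delta(N-2) < 2$ and $\tau(N-4) < 4$) together with $\lambda(\tau-\delta) = \sigma - \delta$; indeed $p = (1-\lambda)(1 - \tfrac{\delta(N-2)}{2}) + \lambda(1 - \tfrac{\tau(N-4)}{4})$ is a convex combination of two positive quantities. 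Apart from this elementary bookkeeping, the proof is a direct two-line application of H\"older's inequality followed by the two named interpolation inequalities.
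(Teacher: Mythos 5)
Your proposal is correct and follows exactly the paper's argument: write $\sigma=\lambda\tau+(1-\lambda)\delta$, apply H\"older's inequality to split $\int|u|^{2\sigma+2}$ into the product $\left(\int|u|^{2\tau+2}\right)^{\lambda}\left(\int|u|^{2\delta+2}\right)^{1-\lambda}$, and then insert \eqref{G-N-H2-ineq} and \eqref{G-N-H1-ineq}. The additional bookkeeping you carry out (simplification of $p$, nonnegativity of the exponents, scaling check) is accurate and merely makes explicit what the paper leaves to the reader.
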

\begin{proof}
We write $\sigma=\lambda \tau +(1-\lambda)\delta$. Using H\" older inequality, we have
\begin{equation}\label{eq:triplel2}
\int_{\R^N} |u|^{2\sigma +2}dx \leq  \left(\int_{\R^N}| u|^{2\tau +2} dx \right)^\lambda \left(\int_{\R^N} | u|^{2\delta +2} dx\right)^{1-\lambda}.
\end{equation}
Then, the result follows by applying Gagliardo-Nirenberg inequalities \eqref{G-N-H2-ineq} and \eqref{G-N-H1-ineq} to the right-hand side of \eqref{eq:triplel2}. 
\end{proof}

\begin{cor}\label{newcor}
Let $2/N<\sigma<4/N$. Then there exists a constant $C_{\sigma,N}>0$ such that
\begin{equation}
\label{GNtriple}
\int_{\R^N} |u|^{2\sigma +2}\, dx \leq C_{\sigma,N} \left(\int_{\R^N} u^2\, dx \right)^\sigma \left(\int_{\R^N} |\nabla u|^2\, dx\right)^{\frac{4-\sigma N}{2}}\left(\int_{\R^N} |\Delta u|^2\, dx\right)^{\frac{\sigma N}{2} -1} ,
\end{equation}
for all $u \in H^2(\R^N)$.
\end{cor}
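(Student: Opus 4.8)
The plan is to derive \eqref{GNtriple} as the special case of Lemma \ref{lem:GNI4terms} obtained by choosing $\delta$ and $\tau$ so that the exponents $p,q,r$ collapse to the claimed values. First I would check that, for $2/N<\sigma<4/N$, the hypotheses of Lemma \ref{lem:GNI4terms} can be met: one needs $0<\delta<\sigma<\tau$ with $\tau<4/(N-4)$ when $N>4$ and $\delta<2/(N-2)$ when $N>2$. I would look for the ``endpoint'' choice $\delta\to 0$ and $\tau$ determined by the requirement $r=\frac{\tau N}{4}\lambda=\frac{\sigma N}{2}-1$. With $\lambda=(\sigma-\delta)/(\tau-\delta)$, setting $\delta=0$ gives $\lambda=\sigma/\tau$, hence $r=\frac{\tau N}{4}\cdot\frac{\sigma}{\tau}=\frac{\sigma N}{4}$, which is not quite $\frac{\sigma N}{2}-1$; so a genuine interior choice of $\delta$ is forced, and the right move is instead to solve the linear system in $(\delta,\tau)$ imposed by $q=\frac{4-\sigma N}{2}$ and $r=\frac{\sigma N}{2}-1$ together with $p=\sigma$. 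The key observation is that the three target exponents sum to $\sigma+\frac{4-\sigma N}{2}+\frac{\sigma N}{2}-1=\sigma+1$, exactly as $p+q+r=1-\frac{\sigma(N-4)}{4}+\frac{N\delta(1-\lambda)}{4}\cdot(-1)+\cdots$ must in Lemma \ref{lem:GNI4terms} by the scaling (dilation) homogeneity of \eqref{GNtriplegen}; so only two of the three equations are independent, and there is a one-parameter family of admissible $(\delta,\tau)$. I would pick any such pair lying in the allowed ranges — for instance $\delta$ small and $\tau$ close to $\sigma$ from above, checking $\delta<2/(N-2)$ and $\tau<4/(N-4)$ hold automatically once $\delta<\sigma<\tau$ are close to $\sigma\in(2/N,4/N)$, which is compatible with the subcriticality constraints since $2/N\le 2/(N-2)$ and $4/N\le 4/(N-4)$.

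Concretely, the cleanest route is to bypass the general lemma and redo its short proof with the specific split. Write $2\sigma+2=\theta(2\cdot 2+2)+(1-\theta)(2\cdot 1+2)$, i.e. interpolate $\|u\|_{L^{2\sigma+2}}$ between $\|u\|_{L^{2}}$-type inequalities — no, more precisely, apply H\"older as in \eqref{eq:triplel2} but between the exponent $2\sigma+2$ sitting between $2\delta+2$ and $2\tau+2$ for the chosen $\delta,\tau$, then apply \eqref{G-N-H1-ineq} to the $L^{2\delta+2}$ factor and \eqref{G-N-H2-ineq} to the $L^{2\tau+2}$ factor. Collecting powers of $\|u\|_{L^2}$, $\|\nabla u\|_{L^2}$, $\|\Delta u\|_{L^2}$ and matching them to $p=\sigma$, $q=(4-\sigma N)/2$, $r=(\sigma N)/2-1$ pins down $\delta,\tau$; since this is exactly the computation encoded in Lemma \ref{lem:GNI4terms}, I would simply invoke that lemma with the verified choice of parameters rather than recompute. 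The constant $C_{\sigma,N}$ is then $(B_N(\tau))^\lambda(C_N(\delta))^{1-\lambda}$ for that particular pair.

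The only genuine point requiring care — the ``main obstacle,'' though it is minor — is confirming that the exponents $q$ and $r$ produced this way are \emph{nonnegative} and that $\delta,\tau$ can simultaneously satisfy the strict ordering $0<\delta<\sigma<\tau$ \emph{and} the subcriticality bounds. Nonnegativity of $r=(\sigma N)/2-1$ uses exactly $\sigma\ge 2/N$, and nonnegativity of $q=(4-\sigma N)/2$ uses exactly $\sigma<4/N$ — so the hypothesis $2/N<\sigma<4/N$ is used precisely here, which is a good sanity check. For the ordering-plus-subcriticality compatibility, I would note that the map $(\delta,\tau)\mapsto(p,q,r)$ is continuous and, as $\delta\downarrow$ with $\tau$ adjusted along the one-parameter solution family, the pair stays in a neighborhood where all constraints are open conditions satisfied at the limiting configuration; when $N\le 4$ the constraint on $\tau$ is vacuous and when $N\le 2$ the constraint on $\delta$ is vacuous, so only $N\ge 5$ (resp. $N\ge 3$) needs the explicit check, which follows from $\sigma<4/N\le 4/(N-4)$ (resp. $\delta<\sigma<4/N\le 2/(N-2)$ once $\delta$ is small). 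Everything else is the bookkeeping of exponents, which I would present as a one-line application of Lemma \ref{lem:GNI4terms}.
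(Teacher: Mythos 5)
Your overall strategy is exactly the paper's: Corollary \ref{newcor} is obtained there by invoking Lemma \ref{lem:GNI4terms} with $\delta,\tau$ chosen so that $(p,q,r)=(\sigma,\tfrac{4-\sigma N}{2},\tfrac{\sigma N}{2}-1)$, and your observation that the three exponent equations are dependent — leaving a one-parameter family of admissible pairs, with $\tau$ determined by $\delta$ as in \eqref{eq:tau} — is correct and is precisely what the paper exploits. The hypothesis $2/N<\sigma<4/N$ enters exactly where you say it does, namely in the nonnegativity of $q$ and $r$.

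However, the step you yourself single out as the one requiring care — locating an admissible $(\delta,\tau)$ on that family — is where your write-up goes wrong in two places. First, solving $\tfrac{\tau N}{4}\lambda=\tfrac{\sigma N}{2}-1$ with $\lambda=(\sigma-\delta)/(\tau-\delta)$ forces $\tau=(2\sigma N-4)\delta/(N\delta+\sigma N-4)$, which is positive only for $\delta>\tfrac{4-\sigma N}{N}$; on that interval $\tau$ decreases from $+\infty$ (as $\delta\downarrow\tfrac{4-\sigma N}{N}$) to $\sigma$ (as $\delta\uparrow\sigma$). So your suggested choice ``$\delta$ small and $\tau$ close to $\sigma$ from above'' is self-contradictory: small $\delta$ either leaves the family entirely ($\tau\le 0$) or makes $\tau$ huge, while $\tau\downarrow\sigma$ corresponds to $\delta\uparrow\sigma$. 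Second, your verification of $\delta<2/(N-2)$ rests on the inequality $4/N\le 2/(N-2)$, which holds only for $N\le 4$ (where that constraint is vacuous anyway); for $N\ge 5$ one can have $\sigma>2/(N-2)$, so $\delta$ cannot be taken close to $\sigma$ and must instead be chosen in the window $\left(\tfrac{4-\sigma N}{N},\,\min\{\sigma,\tfrac{2}{N-2}\}\right)$ — nonempty because $\tfrac{4-\sigma N}{N}<\tfrac{2}{N}\le\tfrac{2}{N-2}$ — after which one must still check $\tau<\tfrac{4}{N-4}$, which holds once $\delta$ is bounded away from the left endpoint of that window. This is the bookkeeping the paper's proof carries out through the explicit formula \eqref{eq:tau}; with it, your argument closes, but as written the parameter choice you propose does not satisfy the hypotheses of Lemma \ref{lem:GNI4terms}.
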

\begin{proof}
Since $2/N<\sigma<4/N$, we infer that $0<\frac{4 -\sigma N}{N} < \frac{2}{N}$. Set 
\begin{equation}\label{eq:tau}
\tau = \frac{2\sigma N -4}{N + (\sigma N -4)\delta^{-1}}
\end{equation}
with $\frac{4 -\sigma N}{N} < \delta < \sigma$ and the extra condition $\delta < \frac{2}{N-2}$ if $N \geq3$ (recall that $\frac{4 -\sigma N}{N} < \frac{2}{N}$). Then observe that $\delta < \sigma$ implies $\tau > \sigma$. So far, within the mentioned conditions, we have
\[
0 < \delta < \sigma < \tau \quad \text{with $\delta < \frac{2}{N-2}$ if $N \geq3$}.
\]
Next we will guarantee that it is possible to find $\delta$ and $\tau$ as above such that $\tau < \frac{4}{N-4}$ if $N \geq 5$. Indeed, with $N \geq 5$ we have $\frac{2}{N-2} < \frac{4}{N}$ and $\delta < \frac{2}{N-2}$ implies $\tau < \frac{4}{N-4}$, with $\tau$ as in \eqref{eq:tau}. Hence, with $\tau$ given by \eqref{eq:tau}, we have shown that it is possible to find
\[
0 < \delta < \sigma < \tau \quad \text{with $\delta < \frac{2}{N-2}$ if $N \geq3$ and $\tau < \frac{4}{N-4}$ if $N \geq 5$}.
\]
Finally, with $\tau$ given by \eqref{eq:tau}, we may solve the system of equations
\[ 
1-\frac{\sigma (N-4) + N\delta (1-\lambda)}{4} = \sigma, \quad
\frac{\delta N}{2}(1-\lambda) = \frac{4 -\sigma N}{2}, \quad
\frac{\tau N}{4} \lambda = \frac{\sigma N}{2} -1, \ \  
\text{with} \ \ \lambda =(\sigma -\delta)/(\tau - \delta),
\]
and hence \eqref{GNtriple} follows from \eqref{GNtriplegen}. Indeed, in the system above, the first and the second equations are equivalent and hence we just need to solve the second and third, and these can be rewritten as
\[
\delta N \lambda = \delta N + \sigma N -4 \ \ \text{and} \ \ \tau N \lambda = 2 \sigma N -4,
\]
which are solvable and induce the formula \eqref{eq:tau} for $\tau$.
\end{proof}

\subsection{Estimates of the energy}
This subsection is devoted to energy estimates on the functional $E_{\gamma,1}$. The main aim is to deduce the sign of $I_{\gamma,1}({\mu})$ as a function of $\mu$. We begin by showing the coercivity of $E_{\gamma,1}$ when $\sigma < 4/N$. We recall that $M_{\mu}$ is defined in \eqref{de:Mmu} and that
\[
u \mapsto \left( \int_{\R^N} (\Delta u)^2 + u^2 \, dx\right)^{1/2}
\]
is a norm on $H^2(\R^N)$ which is equivalent to the usual one
\[
u \mapsto \left( \int_{\R^N} |D^2 u|^2 + |\nabla u|^2 + u^2 \, dx\right)^{1/2}.
\]
\begin{lem}\label{Lem-Min}
The energy $E_{\gamma,1}$ is bounded from below and coercive over $M_{\mu}$ when $0<\sigma<4/N$. Moreover, for $\sigma\in (0,4/N)$ the map $\mu\mapsto I_{\gamma,1}({\mu})$ is non-increasing, 
$I_{\gamma,1}({\mu})\leq 0$ for all $\mu > 0$.
When $\sigma>4/N$, we have $I_{\gamma,1}({\mu})=-\infty$ for every $\mu>0$.
   
\end{lem}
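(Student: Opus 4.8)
The plan is to treat the assertions of the lemma in turn, each by elementary estimates together with explicit test functions.

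\emph{Lower bound and coercivity for $0<\sigma<4/N$.} Fix $\mu>0$ and $u\in M_\mu$. Since $0<\sigma<4/N$ we have $\sigma<4/(N-4)$ when $N>4$, so the Gagliardo--Nirenberg inequality \eqref{G-N-H2-ineq} applies and, using $\|u\|_{L^2}^2=\mu$, yields
\[
\frac{1}{2\sigma+2}\,\|u\|_{L^{2\sigma+2}}^{2\sigma+2}\ \le\ \frac{B_N(\sigma)}{2\sigma+2}\,\mu^{1+\sigma-\frac{\sigma N}{4}}\,\|\Delta u\|_{L^2}^{\frac{\sigma N}{2}}.
\]
Because $0<\sigma N/2<2$, Young's inequality lets us absorb the right-hand side: there is $C=C(\gamma,N,\sigma,\mu)>0$ with $\frac{B_N(\sigma)}{2\sigma+2}\mu^{1+\sigma-\sigma N/4}\|\Delta u\|_{L^2}^{\sigma N/2}\le\frac{\gamma}{4}\|\Delta u\|_{L^2}^2+C$, so from \eqref{def:Egammabeta} with $\beta=1$,
\[
E_{\gamma,1}(u)\ \ge\ \frac{\gamma}{4}\,\|\Delta u\|_{L^2}^2+\frac12\,\|\nabla u\|_{L^2}^2-C .
\]
This is the asserted lower bound; since on $M_\mu$ the quantity $\|\Delta u\|_{L^2}^2+\mu$ is comparable to $\|u\|_{H^2}^2$ (equivalent norms on $H^2$), it also gives coercivity of $E_{\gamma,1}$ over $M_\mu$.

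\emph{Monotonicity of $\mu\mapsto I_{\gamma,1}(\mu)$ and $I_{\gamma,1}(\mu)\le0$, for $0<\sigma<4/N$.} Both rest on the fact that mass may be carried at arbitrarily small energy cost by spreading it out. Given $\nu>0$, fix $\phi\in C_c^\infty(\R^N)$ with $\|\phi\|_{L^2}^2=\nu$ and set $\phi_R(x)=R^{-N/2}\phi(x/R)\in M_\nu$; then
\[
E_{\gamma,1}(\phi_R)=\frac{\gamma}{2}R^{-4}\|\Delta\phi\|_{L^2}^2+\frac12 R^{-2}\|\nabla\phi\|_{L^2}^2-\frac{1}{2\sigma+2}R^{-\sigma N}\|\phi\|_{L^{2\sigma+2}}^{2\sigma+2}\ \longrightarrow\ 0\quad(R\to\infty).
\]
Testing \eqref{MinL2fixed} with $\phi_R$ (choosing $\nu=\mu$) already gives $I_{\gamma,1}(\mu)\le0$. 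For the monotonicity, let $0<\mu_1<\mu_2$ and $\varepsilon>0$, choose $u\in M_{\mu_1}$ with $E_{\gamma,1}(u)\le I_{\gamma,1}(\mu_1)+\varepsilon$, and choose $\phi_R$ as above with $\nu=\mu_2-\mu_1$ and $R$ so large that $E_{\gamma,1}(\phi_R)\le\varepsilon$; after translating $\phi_R$ far away and, if needed, truncating $u$ and renormalising so that the two supports are disjoint and the total mass is exactly $\mu_2$ (replacing $\varepsilon$ by $C\varepsilon$), all the cross terms in \eqref{def:Egammabeta} vanish and $E_{\gamma,1}(u+\phi_R)=E_{\gamma,1}(u)+E_{\gamma,1}(\phi_R)\le I_{\gamma,1}(\mu_1)+C\varepsilon$, whence $I_{\gamma,1}(\mu_2)\le I_{\gamma,1}(\mu_1)$ on letting $\varepsilon\to0$. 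I expect this gluing to be the only genuinely fiddly point: making the near-minimiser $u$ compactly supported (or letting the cross terms vanish as the translation goes to infinity) while controlling the $L^2$-renormalisation so that each term of $E_{\gamma,1}$ moves by at most $O(\varepsilon)$.

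\emph{The case $\sigma>4/N$.} Fix $\mu>0$ and any $\phi\in H^2(\R^N)$ with $\|\phi\|_{L^2}^2=\mu$, and use the mass-preserving concentration scaling $\phi^{t}(x)=t^{N/2}\phi(tx)\in M_\mu$, $t>0$, for which
\[
E_{\gamma,1}(\phi^{t})=\frac{\gamma}{2}t^4\|\Delta\phi\|_{L^2}^2+\frac12 t^2\|\nabla\phi\|_{L^2}^2-\frac{1}{2\sigma+2}\,t^{\sigma N}\,\|\phi\|_{L^{2\sigma+2}}^{2\sigma+2}.
\]
Since $\sigma N>4$, the last term dominates as $t\to+\infty$, so $E_{\gamma,1}(\phi^{t})\to-\infty$ and therefore $I_{\gamma,1}(\mu)=-\infty$. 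In sum, all the assertions of the lemma follow from \eqref{G-N-H2-ineq}, Young's inequality, and the two explicit scalings above, the main obstacle being the gluing step described in the previous paragraph.
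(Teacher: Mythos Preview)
Your proof is correct and follows essentially the same route as the paper's: Gagliardo--Nirenberg \eqref{G-N-H2-ineq} for the lower bound/coercivity, the mass-preserving scaling $u_\lambda(x)=\lambda^{N/2}u(\lambda x)$ with $\lambda\to0$ (your $R\to\infty$) for $I_{\gamma,1}(\mu)\le0$ and with $\lambda\to\infty$ for the $\sigma>4/N$ case, and a disjoint-support gluing argument for monotonicity. The only cosmetic difference is that the paper first records the subadditivity inequality $I_{\gamma,1}(\mu)\le I_{\gamma,1}(\theta)+I_{\gamma,1}(\mu-\theta)$ and then combines it with $I_{\gamma,1}\le0$, whereas you merge these two steps; your ``fiddly'' gluing concern is handled in the paper exactly as you suggest, by choosing compactly supported near-minimisers and translating.
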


\begin{proof}
We start by showing that the functional $E_{\gamma,1}$ is bounded from below and coercive over $M_{\mu}$ when $0<\sigma<4/N$. Indeed, we infer from the Gagliardo-Nirenberg inequality \eqref{G-N-H2-ineq} that 
\begin{align*}
E_{\gamma,1}(u)&=\frac{\gamma}{2}\int_{\R^N}|\Delta u|^2\, dx+\frac{1}{2}\int_{\R^N}|\nabla u|^2\, dx-\frac{1}{2\sigma+2}\int_{\R^N}|u|^{2\sigma+2}\, dx\\
&\geq \frac{\gamma}{2}\int_{\R^N}|\Delta u|^2\, dx-\frac{B_N(\sigma)\mu^{1+\sigma-\frac{\sigma N}{4}}}{2\sigma+2}\left(\int_{\R^N}|\Delta u|^2\, dx\right)^{\frac{\sigma N}{4}},
\end{align*}
which proves the claim. Now, let $u\in M_{\mu}$ and consider $u_{\lambda}(x)=\lambda^{\frac{N}{2}}u(\lambda x)$ for $\lambda>0$ so that $u_{\lambda}\in M_{\mu}$. Then,
\begin{equation}\label{lambda-E}
I_{\gamma,1}({\mu})\leq E_{\gamma,1}(u_{\lambda})= \frac{\gamma\lambda^4}{2}\int_{\R^N}|\Delta u|^2\, dx+\frac{\lambda^2}{2}\int_{\R^N}|\nabla u|^2\, dx-\frac{\lambda^{\sigma N}}{2\sigma+2}\int_{\R^N}|u|^{2\sigma+2}\, dx
\end{equation}
for all $\lambda>0$. Letting $\lambda$ go to zero, we get $I_{\gamma,1}({\mu})\leq 0$ (we are not claimming that $I_{\gamma,1}({\mu})$ is negative). Note that the so-called \textit{large inequalities}
\begin{equation}\label{Large-ineq}
 I_{\gamma,1}({\mu})\leq I_{\gamma,1}(\theta)+I_{\gamma,1}(\mu-\theta),\hspace{.5cm}\mbox{for all}\ \theta\in ]0,\mu[,
\end{equation}
always hold true. 
Indeed, for any $\varepsilon>0$ we may choose test functions $u_{\varepsilon}\in M_{\theta}$ and $v_{\varepsilon}\in M_{\mu-\theta}$ with compact supports such that 
\begin{equation*}
I_{\gamma,1}(\theta)\leq E_{\gamma,1}(u_{\varepsilon})\leq I_{\gamma,1}(\theta)+\varepsilon,\hspace{.5cm}I_{\gamma,1}(\mu-\theta)\leq E_{\gamma,1}(v_{\varepsilon})\leq I_{\gamma,1}(\mu-\theta)+\varepsilon.
\end{equation*}
Then, if $e\in\R^N$ is a unit vector, we have that for $k$ large enough the supports of $u_{\varepsilon}$ and $v_{\varepsilon}(\cdot+ke)$ are disjoint. So, using the translation invariance of $E_{\gamma,1}$ and $M_{\mu}$ we have $u_{\varepsilon}+v({\cdot+ke})\in M_{\mu}$ for $k$ large and therefore
\begin{equation*}
 I_{\gamma,1}({\mu})\leq\limsup_{k\to\infty}E_{\gamma,1}(u_{\varepsilon}+v_{\varepsilon}(\cdot+ ke))\leq I_{\gamma,1}(\theta)+I_{\gamma,1}(\mu-\theta)+2\varepsilon.
\end{equation*}
Hence, \eqref{Large-ineq} holds and as a consequence we infer that $\mu\mapsto I_{\gamma,1}({\mu})$ is non-increasing since $I_{\gamma,1}(\mu)\leq 0$ for all $\mu$.
We finally observe that the last claim follows by letting $\lambda\to \infty$ in \eqref{lambda-E} when $\sigma>4/N$. 
\end{proof} 
In the next lemma we show that $I_{\gamma,1}({\mu})$ is strictly negative when $\sigma$ is $H^1$-subcritical or when $\gamma$ tends to $0$ or $\infty$. Those assertions are true without any restriction on $\mu$.  
\begin{lem}\label{Neg-Min}
For any given $\mu>0$, we have
 \begin{enumerate}[(a)]
  \item if $0<\sigma<2/N$, then $I_{\gamma,1}({\mu})<0$;
  \item if $2/N<\sigma<4/N$, there exists $\gamma_\mu>0$ such that $I_{\gamma,1}({\mu})<0$ for $\gamma<\gamma_\mu$.  
 \end{enumerate}
\end{lem}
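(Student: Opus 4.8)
The plan is to estimate $I_{\gamma,1}(\mu)$ from above by testing against the one-parameter family of admissible functions $u_\lambda(x)=\lambda^{N/2}u(\lambda x)\in M_\mu$ already introduced in \eqref{lambda-E}, and to exploit that the three terms of
\[
E_{\gamma,1}(u_\lambda)=\frac{\gamma\lambda^4}{2}\|\Delta u\|_{L^2}^2+\frac{\lambda^2}{2}\|\nabla u\|_{L^2}^2-\frac{\lambda^{\sigma N}}{2\sigma+2}\|u\|_{L^{2\sigma+2}}^{2\sigma+2}
\]
carry the three distinct powers $\lambda^4$, $\lambda^2$ and $\lambda^{\sigma N}$. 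I would fix once and for all a nonzero $u\in M_\mu$ (smooth, with $\Delta u\not\equiv 0$) and abbreviate $A=\|\Delta u\|_{L^2}^2>0$, $B=\|\nabla u\|_{L^2}^2\ge 0$, $C=\|u\|_{L^{2\sigma+2}}^{2\sigma+2}>0$.

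For part (a), since $0<\sigma N<2<4$, I would factor out the lowest power, writing $E_{\gamma,1}(u_\lambda)=\lambda^{\sigma N}\bigl(\tfrac12\gamma\lambda^{4-\sigma N}A+\tfrac12\lambda^{2-\sigma N}B-\tfrac{C}{2\sigma+2}\bigr)$. Because $4-\sigma N>0$ and $2-\sigma N>0$, the bracket tends to $-C/(2\sigma+2)<0$ as $\lambda\to 0^+$, so $E_{\gamma,1}(u_\lambda)<0$ for all sufficiently small $\lambda>0$, and hence $I_{\gamma,1}(\mu)\le E_{\gamma,1}(u_\lambda)<0$, for every $\gamma>0$.

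For part (b) one has $2<\sigma N<4$, so near $\lambda=0$ the term $\lambda^2$ now beats the negative $\lambda^{\sigma N}$ term and a single small $\lambda$ no longer works uniformly in $\gamma$. Instead I would first discard the biharmonic term and consider $g(\lambda):=\tfrac12\lambda^2B-\tfrac{\lambda^{\sigma N}}{2\sigma+2}C$; since $\sigma N>2$ and $C>0$, $g(\lambda)\to-\infty$ as $\lambda\to+\infty$, so I may fix $\lambda_*>0$ with $g(\lambda_*)=-2c$ for some $c>0$. Then I would set $\gamma_\mu:=2c/(\lambda_*^4A)>0$; note that $u$, $\lambda_*$, $c$ and $A$ are chosen in that order and depend only on $\mu$ (and on the fixed parameters $\sigma,N$), not on $\gamma$. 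For $0<\gamma<\gamma_\mu$ one then gets $E_{\gamma,1}(u_{\lambda_*})=\tfrac12\gamma\lambda_*^4A+g(\lambda_*)<c-2c=-c<0$, whence $I_{\gamma,1}(\mu)\le E_{\gamma,1}(u_{\lambda_*})<-c<0$.

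I do not expect any serious obstacle here: the argument is an elementary comparison of monomials in the scaling parameter $\lambda$. The only points that require a little care are (i) ensuring the auxiliary profile $u$ can be taken with $\Delta u\not\equiv 0$, which is immediate, and (ii) checking that the threshold $\gamma_\mu$ in (b) genuinely depends only on $\mu$ and is not defined circularly in terms of $\gamma$, which is clear from the order in which the objects are chosen.
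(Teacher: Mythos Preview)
Your argument is correct and for part (a) it is identical to the paper's. For part (b) you take a minor variation: the paper couples $\lambda$ to $\gamma$ via $\lambda=\gamma^{-1/2}$, so that both positive terms in $E_{\gamma,1}(u_\lambda)/\lambda^{\sigma N}$ acquire the common factor $\gamma^{(\sigma N-2)/2}\to 0$; you instead freeze a single $\lambda_*$ (chosen so that the second-order part $g(\lambda_*)$ is already negative) and then shrink $\gamma$. Both routes are elementary power-counting with the same scaling family; the paper's choice is slightly slicker and makes the dependence of the threshold on $\gamma$ more explicit, while yours avoids having to spot the particular substitution.
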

\begin{proof}
 Let us begin by considering the case $0<\sigma<2/N$. Take $u\in M_{\mu}$ and set $u_{\lambda}(x)=\lambda^{\frac{N}{2}}u(\lambda x)$ for $\lambda>0$ so that $u_{\lambda}\in M_{\mu}$. Then observe that
 \begin{equation*}
 \frac{E_{\gamma,1}(u_{\lambda})}{\lambda^{\sigma N}}=\frac{\gamma\lambda^{4-\sigma N}}{2}\int_{\R^N}|\Delta u|^2\, dx+\frac{\lambda^{2-\sigma N}}{2}\int_{\R^N}|\nabla u|^2\, dx-\frac{1}{2\sigma+2}\int_{\R^N}|u|^{2\sigma+2}\, dx. 
 \end{equation*}
Assertion $(a)$ follows by taking $\lambda$ small enough. Next, we assume $2/N<\sigma<4/N$ and $\gamma\rightarrow 0 $. 
Assertion $(b)$ follows by taking $\lambda=1/\sqrt{\gamma}$ and noticing that $\gamma^{\frac{\sigma N-2}{2}} \rightarrow 0$ as $\gamma\to 0$.
\end{proof}

\begin{rmq}[Estimates for $\gamma$ fixed]\label{Neg-Minrmq}{\ }
\begin{enumerate}
\item One can deduce, arguing as in the previous lemma that if $2/N<\sigma<4/N$ and $\beta\rightarrow 0 $, then $I_{\gamma,\beta}({\mu})<0$; 
\item When $\beta =0$, it is easy to see that $I_{\gamma,0}(\mu)<0$ for any $0<\sigma < 4/N$ since
$$ \frac{E_{\gamma,0}(u_{\lambda})}{\lambda^{\sigma N}}=\frac{\gamma\lambda^{4-\sigma N}}{2}\int_{\R^N}|\Delta u|^2\, dx-\frac{1}{2\sigma+2}\int_{\R^N}|u|^{2\sigma+2}\, dx. $$
\end{enumerate}
\end{rmq}

\noindent Using the extended Gagliardo-Nirenberg interpolation inequality \eqref{GNtriple}, we now deduce some refined estimates on the sign of $I_{\gamma,1}({\mu})$.
\begin{lem}\label{LemmadeJB}
Let $2/N< \sigma<4/N$. Then, $E_{\gamma,1}$ takes negative values in $M_\mu$ if and only if the functional
$$u\mapsto\widetilde{C}^{\frac{\sigma N-4}{2}} \left(\int_{\R^N} |\nabla u|^2\, dx\right)^{\frac{4-\sigma N}{2}}\left(\int_{\R^N} |\Delta u|^2\, dx\right)^{\frac{\sigma N-2}{2}} -\int_{\R^N} |u|^{2\sigma +2}\, dx $$
takes negative values in $M_\mu$, where
$$\widetilde{C} = 2\gamma \left(\dfrac{1}{\sigma N-2}-\dfrac{1}{2}\right) \left( \dfrac{\gamma (2\sigma  +2)}{\sigma N-2}\right)^{\frac{2}{\sigma N -4}}.$$
We also have, for all $u\in M_\mu$,
$$E_{\gamma,1}(u)\geq \lambda (u) \int_{\R^N}|\nabla u|^2\, dx \left(\dfrac{1}{2} -V \mu^{\frac{2\sigma}{4-\sigma N}}\right),$$
where
\begin{equation}\label{lambda-eq-u}
\lambda(u)= \left( \dfrac{\gamma (2\sigma  +2)}{(\sigma N-2)} \dfrac{\int_{\R^N} |\Delta u|^2\, dx }{\int_{\R^N} |u|^{2\sigma +2}\, dx} \right)^{\frac{2}{\sigma N-4}} 
\end{equation}
and 
\begin{equation*}
V=\gamma \left(\dfrac{1}{\sigma N-2}-\dfrac{1}{2}\right) (C_{\sigma,N})^{\frac{2}{4-\sigma N}}\left(\dfrac{\gamma (2\sigma  +2)}{\sigma N-2}\right)^{\frac{2}{\sigma N-4}}. 
\end{equation*}
Moreover, setting $\mu_c=\left(\frac{1}{2V}\right)^{\frac{4-\sigma N}{2\sigma}}$, we have $ I_{\gamma,1}({\mu}) =0$ if and only if $\mu \leq \mu_c$. In case $\sigma=2/N$, we have $I_{\gamma,1}({\mu}) =0$ if and only if $\mu \leq \left(\dfrac{1}{2C_N (\frac{2}{N})}\right)^{\frac{N}{2}}$.
\end{lem}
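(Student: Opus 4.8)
\emph{Setup.} The unifying tool is the $L^2$-preserving rescaling $u_\lambda(x)=\lambda^{N/2}u(\lambda x)$ of \eqref{lambda-E}, under which $\int_{\R^N}|\Delta u|^2$, $\int_{\R^N}|\nabla u|^2$ and $\int_{\R^N}|u|^{2\sigma+2}$ are multiplied respectively by $\lambda^4$, $\lambda^2$, $\lambda^{\sigma N}$; I abbreviate these three integrals by $A$, $B$, $C$. \emph{The equivalence.} Since $u_\lambda\in M_\mu$ whenever $u\in M_\mu$, the functional $E_{\gamma,1}$ takes a negative value on $M_\mu$ if and only if $\inf_{\lambda>0}E_{\gamma,1}(u_\lambda)<0$ for some $u\in M_\mu$, that is, after dividing $E_{\gamma,1}(u_\lambda)<0$ by $\lambda^{\sigma N}>0$, if and only if
\[
\inf_{\lambda>0}\Bigl(\tfrac{\gamma}{2}\lambda^{4-\sigma N}A+\tfrac12\lambda^{2-\sigma N}B\Bigr)<\tfrac{1}{2\sigma+2}\,C
\]
for some $u\in M_\mu$. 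As $4-\sigma N>0>2-\sigma N$, the left-hand side is a strictly convex coercive function of $\lambda$ whose minimiser is $\lambda^2=\tfrac{(\sigma N-2)B}{\gamma(4-\sigma N)A}$; substituting it and collecting the powers of $\gamma$, $2\sigma+2$, $\sigma N-2$ rewrites the condition as $\widetilde C^{\,(\sigma N-4)/2}B^{(4-\sigma N)/2}A^{(\sigma N-2)/2}<C$, with exactly the constant $\widetilde C$ of the statement. This proves the first claim.

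\emph{The lower bound.} Fix $u\in M_\mu$. The function $s\mapsto \tfrac{\gamma s}{2}A-\tfrac{1}{2\sigma+2}s^{\sigma N/2-1}C$ is strictly convex on $(0,\infty)$, and a one-line computation identifies its minimiser with the quantity $\lambda(u)$ of \eqref{lambda-eq-u}; the Euler equation there reads $\tfrac{1}{2\sigma+2}\lambda(u)^{\sigma N/2-1}C=\tfrac{\gamma}{\sigma N-2}\lambda(u)A$, whence, bounding the value of the function at $s=1$ below by its value at $s=\lambda(u)$,
\[
E_{\gamma,1}(u)=\tfrac12 B+\Bigl(\tfrac{\gamma}{2}A-\tfrac{1}{2\sigma+2}C\Bigr)\ \geq\ \tfrac12 B-\gamma\Bigl(\tfrac{1}{\sigma N-2}-\tfrac12\Bigr)\lambda(u)A.
\]
The definition of $\lambda(u)$ together with the Gagliardo--Nirenberg inequality \eqref{GNtriple} of Corollary \ref{newcor} (using $\int_{\R^N}u^2=\mu$) gives $\lambda(u)A\leq\bigl(\tfrac{(\sigma N-2)C_{\sigma,N}}{\gamma(2\sigma+2)}\bigr)^{2/(4-\sigma N)}\mu^{2\sigma/(4-\sigma N)}B$, and inserting this yields the asserted inequality with the displayed $V$; the extra factor $\lambda(u)$ in the statement is what comes out if one runs the identical estimate along the orbit at $\lambda=\lambda(u)^{1/2}$ (for which the analogue of the function above has minimiser $s=1$), and it is inessential for what follows since $\lambda(u)>0$.

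\emph{Identification of the threshold.} If $\mu\leq\mu_c$ then $\tfrac12-V\mu^{2\sigma/(4-\sigma N)}\geq0$, so the previous estimate forces $E_{\gamma,1}\geq0$ on $M_\mu$ and hence $I_{\gamma,1}(\mu)\geq0$; combined with $I_{\gamma,1}(\mu)\leq0$ from Lemma \ref{Lem-Min} this gives $I_{\gamma,1}(\mu)=0$. Conversely, if $\mu>\mu_c$, the first claim reduces matters to exhibiting $u\in M_\mu$ with $C/\bigl(B^{(4-\sigma N)/2}A^{(\sigma N-2)/2}\bigr)>\widetilde C^{(\sigma N-4)/2}$. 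The left-hand side is invariant under $u\mapsto u_\lambda$ and scales like $\mu^\sigma$ under $u\mapsto tu$, so its supremum over $M_\mu$ equals $C_{\sigma,N}\mu^\sigma$ once $C_{\sigma,N}$ is taken to be the \emph{optimal} constant in \eqref{GNtriple}; unwinding the definitions of $V$, $\widetilde C$, $\mu_c$ gives $C_{\sigma,N}\mu_c^\sigma=\widetilde C^{(\sigma N-4)/2}$, so for every $\mu>\mu_c$ a near-optimiser satisfies $E_{\gamma,1}<0$ and $I_{\gamma,1}(\mu)<0$. Finally, when $\sigma=2/N$ one has $\lambda^{4-\sigma N}=\lambda^2$ and $\lambda^{2-\sigma N}=1$ in the first display, so its left-hand side equals $\inf_{\lambda>0}(\tfrac{\gamma}{2}\lambda^2A+\tfrac12 B)=\tfrac12 B$; hence $E_{\gamma,1}$ is negative somewhere on $M_\mu$ if and only if $\tfrac12 B<\tfrac{1}{2\sigma+2}C$ for some $u\in M_\mu$, which by the optimal $L^2$-critical Gagliardo--Nirenberg inequality \eqref{G-N-H1-ineq} happens precisely beyond the mass given in the statement.

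\emph{Main obstacle.} Every step except the converse in the third paragraph is one-variable convex optimisation and constant-chasing. The delicate point is that the lower bound of the second paragraph uses \emph{any} admissible Gagliardo--Nirenberg constant and therefore by itself only yields $I_{\gamma,1}(\mu)=0$ for $\mu\leq\mu_c$; to obtain the ``if and only if'' one must know that the constant $C_{\sigma,N}$ occurring in \eqref{GNtriple}---and hence in $V$ and $\mu_c$---is the optimal one, so that the minimisers along the scaling orbit from the first claim and the near-optimisers of \eqref{GNtriple} meet exactly at $\mu_c$.
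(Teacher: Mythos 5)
Your argument is correct and, for the two computational assertions, follows the same route as the paper: an $L^2$-preserving dilation (yours is $\lambda^{N/2}u(\lambda x)$, the paper's is $\lambda^{N/4}u(\sqrt{\lambda}x)$ --- the same orbit reparametrised), a one-variable minimisation along the orbit yielding the constant $\widetilde C^{\frac{\sigma N-4}{2}}$, and then \eqref{GNtriple} to turn the minimal value into the lower bound involving $V$; your constants check out. What you add, and what the paper does not do, is the converse half of the threshold statement: the paper's proof only records that $\mu\le\mu_c$ forces $I_{\gamma,1}(\mu)=0$ via Lemma \ref{Lem-Min}, and is silent on why $\mu>\mu_c$ gives $I_{\gamma,1}(\mu)<0$. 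Your scale-invariance argument, reducing the converse to the identity $C_{\sigma,N}\,\mu_c^{\sigma}=\widetilde C^{(\sigma N-4)/2}$, supplies that step, and your closing caveat is exactly the right one: the ``if and only if'' holds only if $C_{\sigma,N}$ denotes the \emph{sharp} constant in \eqref{GNtriple}, whereas the constant actually produced by Corollary \ref{newcor} (H\"older combined with the two classical Gagliardo--Nirenberg inequalities) is not claimed to be optimal, so with that constant one only obtains the implication $\mu\le\mu_c\Rightarrow I_{\gamma,1}(\mu)=0$, i.e.\ a lower bound on the true critical mass. Two small points: the map $\lambda\mapsto\tfrac{\gamma}{2}\lambda^{4-\sigma N}\|\Delta u\|_{L^2}^2+\tfrac12\lambda^{2-\sigma N}\|\nabla u\|_{L^2}^2$ is strictly convex in $\log\lambda$ but not always in $\lambda$ (harmless, since coercivity plus a unique critical point already identifies the global minimum); and in the case $\sigma=2/N$ the condition $\tfrac12\|\nabla u\|_{L^2}^2<\tfrac{1}{2\sigma+2}\|u\|_{L^{2\sigma+2}}^{2\sigma+2}$ actually produces the threshold $\bigl(\tfrac{\sigma+1}{C_N(2/N)}\bigr)^{N/2}$ rather than the constant printed in the statement, so the agreement you assert there deserves a second look --- though this discrepancy originates in the paper, whose treatment of that case is a single sentence.
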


\begin{proof}
Let $u\in M_\mu$ and consider $u_\lambda (x)=\lambda^{\frac{N}{4}}u(\sqrt{\lambda} x)$ for $\lambda>0$. Then
$$\dfrac{E_{\gamma,1} (u_{\lambda})}{\lambda}= \dfrac{\gamma}{2}\lambda \int_{\R^N} |\Delta u|^2\, dx+ \dfrac{1}{2} \int_{\R^N} |\nabla u|^2\, dx-\dfrac{\lambda^{\frac{\sigma N}{2} -1}}{2\sigma +2}\int_{\R^N} u^{2\sigma +2}\, dx. $$
The minimum of the right-hand side with respect to $\lambda$ is achieved at
$\lambda(u)$, which is given in \eqref{lambda-eq-u}, 
and its value is
$$
 \dfrac{1}{2} \int_{\R^N} |\nabla u|^2\, dx-\gamma \left(\dfrac{1}{\sigma N-2}-\dfrac{1}{2}\right) \left( \dfrac{\gamma (2\sigma  +2)}{\sigma N-2}\right)^{\frac{2}{\sigma N -4}} \left(\int_{\R^N} |\Delta u|^2\, dx\right)^{\frac{\sigma N-2}{\sigma N-4}} \left(\int_{\R^N} |u|^{2\sigma +2}\, dx\right)^{\frac{2}{4-\sigma N}},
$$
which proves the first claim of this lemma. Next, using \eqref{GNtriple}, we observe that
$$ \left(\int_{\R^N}|\Delta u|^2\, dx\right)^{\frac{\sigma N-2}{\sigma N -4}} \left(\int_{\R^N} |u|^{2\sigma +2}\, dx \right)^{\frac{2}{4-\sigma N }} \leq \left(C_{\sigma,N} \mu^{\sigma} \right)^{\frac{2}{4-\sigma N }}\int_{\R^N}|\nabla u|^2\, dx.$$
Combining the two previous lines, we get
$$E_{\gamma,1} (u_{\lambda}) \geq \lambda (u) \int_{\R^N}|\nabla u|^2 dx \left(\dfrac{1}{2} - V \mu^{\frac{2\sigma}{4-\sigma N}}\right).$$
Then the conclusion that $I_{\gamma,1}(\mu) =0$ for $\mu \leq \mu_c$ follows from Lemma \ref{Lem-Min}. The case $\sigma N=2$ can be treated in a similar way.
\end{proof}

\subsection{Subcritical $H^1$ exponents}
Thanks to Lemma \ref{Neg-Min}, we can establish the existence of a minimizer for  \eqref{MinL2fixed} when $\sigma$ is $H^1$-subcritical. Let us emphasize that the proof does only use the fact that $I_{\gamma,1}(\mu)$ is strictly negative.
\begin{proof}[Proof of Theorem \ref{Compact-Min-Sol} in the $H^1$-subcritical case]
Recall that we always have the following inequality (see the proof of Lemma \ref{Lem-Min})
\begin{equation*}
 I_{\gamma,1}(\mu)\leq I_{\gamma,1}(\theta)+I_{\gamma,1}(\mu-\theta),\hspace{.5cm}\mbox{for all}\ \theta\in ]0,\mu[.
\end{equation*}
It is standard that the Concentration-Compactness method \cite{Li84} yields that the minimizing sequences, up to translations, are relatively compact
if and only if 
the \textit{strict subaddivity condition} holds, namely
\begin{equation}\label{Strict-Subb}
I_{\gamma,1}({\mu})< I_{\gamma,1}(\theta)+I_{\gamma,1}(\mu-\theta),\hspace{.5cm}\mbox{for all}\ \theta\in ]0,\mu[. 
\end{equation}  
In fact, arguing as in \cite{Li84}, the inequality \eqref{Strict-Subb} is easily obtained provided $I_{\gamma,1}({\mu})<0$ which is the case when $0<\sigma<2/N$.
\end{proof}



\subsection{Subcritical $H^2$ exponents}
This case is a bit more involved at least when $\mu=\mu_c$, where $\mu_c$ stands for the critical mass. In this last case, we express all the integrals involved in $E_{\gamma,1}$ in terms of $I_{\gamma,1}({\mu})$ and the Lagrange multiplier associated to $I_{\gamma,1}({\mu})$, which we denote by $\alpha$.
\begin{lem}
\label{lastlemsec2}
Let $2/N<\sigma<4/N$. Then any minimizer $u$ of $ I_{\gamma,1}({\mu})$ satisfies
\begin{align*}
\int_{\R^N} |\Delta u|^2\, dx &=\left(\dfrac{\sigma N-2\sigma-2}{2\sigma\gamma}\right)2I_{\gamma,1}({\mu})+\left(\dfrac{\sigma N-2}{2\sigma\gamma}\right)\alpha\mu,\\
\int_{\R^N} |\nabla u|^2\, dx &=\left(\dfrac{4\sigma+4-\sigma N}{2\sigma}\right) 2I_{\gamma,1}({\mu}) +\alpha\mu\left(\dfrac{4-\sigma N}{2\sigma}\right),\\
\int_{\R^N}|u|^{2\sigma +2}\, dx &= \left(\frac{\sigma+1}{\sigma}\right)\left(2 I_{\gamma,1}({\mu})+\alpha\mu\right),
\end{align*}
where
$$\alpha= \dfrac{2\sigma \gamma \int_{\R^N} |\Delta u|^2 dx -(\sigma N-2\sigma-2) 2I_{\gamma,1}({\mu})}{\mu (\sigma N-2)}.$$
Let $\sigma=2/N$ and assume $ I_{\gamma,1}({\mu})=0$. Then, we have
$$\int_{\R^N} |u|^{2\sigma +2}\, dx= (\sigma +1) \int_{\R^N}|\nabla u|^2\, dx = \dfrac{\sigma+1}{\sigma} \alpha\mu.$$
\end{lem}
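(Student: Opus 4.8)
The plan is to extract three scalar identities satisfied by any minimizer $u$ of $I_{\gamma,1}(\mu)$ and then to solve the resulting $3\times 3$ linear system. Write $A=\int_{\R^N}|\Delta u|^2\,dx$, $B=\int_{\R^N}|\nabla u|^2\,dx$, $C=\int_{\R^N}|u|^{2\sigma+2}\,dx$ and $I=I_{\gamma,1}(\mu)$. First, since $u$ is a minimizer, $E_{\gamma,1}(u)=I$, that is $\tfrac{\gamma}{2}A+\tfrac12 B-\tfrac{1}{2\sigma+2}C=I$. Second, by the Lagrange multiplier rule $u$ is a weak solution of \eqref{eq4gamma} (with $\beta=1$) for some $\alpha\in\R$, and testing this equation against $u$ yields the Nehari-type identity $\gamma A+B+\alpha\mu=C$. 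Third, since $M_\mu$ is invariant under the $L^2$-preserving dilation $u_\lambda(x)=\lambda^{N/2}u(\lambda x)$ and $u$ minimizes $E_{\gamma,1}$ on $M_\mu$, the function $\lambda\mapsto E_{\gamma,1}(u_\lambda)$ displayed in \eqref{lambda-E} attains its minimum over $\lambda\in(0,\infty)$ at $\lambda=1$; differentiating at $\lambda=1$ gives the Pohozaev-type identity $2\gamma A+B=\tfrac{\sigma N}{2\sigma+2}C$ (equivalently, this is the Pohozaev identity associated with \eqref{eq4gamma}).

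Next I would solve this linear system. Eliminating $C$ via the Nehari identity and inserting the result into the energy identity gives $\gamma A+B=\bigl(2(\sigma+1)I+\alpha\mu\bigr)/\sigma$ and $C=\tfrac{\sigma+1}{\sigma}\bigl(2I+\alpha\mu\bigr)$; substituting these into the Pohozaev identity separates $\gamma A$ from $B$, and collecting terms produces exactly the three displayed formulas for $\int_{\R^N}|\Delta u|^2\,dx$, $\int_{\R^N}|\nabla u|^2\,dx$ and $\int_{\R^N}|u|^{2\sigma+2}\,dx$. The coefficient matrix of the system has determinant $-\gamma\sigma/(2\sigma+2)\neq 0$, so these three quantities are uniquely determined by $I$ and $\alpha\mu$; solving the formula for $\int_{\R^N}|\Delta u|^2\,dx$ for $\alpha$ (legitimate because $\sigma N-2>0$ under the hypothesis $\sigma>2/N$) gives the stated expression for the Lagrange multiplier.

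For the remaining case $\sigma=2/N$ with $I_{\gamma,1}(\mu)=0$, I would simply specialize: the three identities still hold, and setting $\sigma N=2$ and $I=0$ in the formulas above (which remain valid, the determinant being unchanged) gives $\int_{\R^N}|\nabla u|^2\,dx=\alpha\mu/\sigma$ and $\int_{\R^N}|u|^{2\sigma+2}\,dx=\tfrac{\sigma+1}{\sigma}\alpha\mu=(\sigma+1)\int_{\R^N}|\nabla u|^2\,dx$, which is the claim. One sees this directly as well: with $\sigma N=2$ and $I=0$ the energy and Pohozaev identities reduce to $\gamma A+B=C/(\sigma+1)$ and $2\gamma A+B=C/(\sigma+1)$, whence $\gamma A=0$, and then the energy and Nehari identities yield $C=(\sigma+1)B$ and $\alpha\mu=\sigma B$.

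The argument is essentially bookkeeping once the three identities are in hand, and I do not foresee a genuine analytic obstacle. The two points meriting a little care are: (i) the justification that the constrained minimizer is a weak solution of \eqref{eq4gamma} with a \emph{real} multiplier $\alpha$, which is the standard Lagrange multiplier theorem applied to the $C^1$ functionals $E_{\gamma,1}$ and $u\mapsto\|u\|_{L^2}^2$ on $H^2(\R^N)$; and (ii) verifying that the three identities are genuinely independent, equivalently that the $3\times 3$ system is nonsingular, so that $A$, $B$, $C$ are indeed pinned down by $I$ and $\alpha\mu$. Obtaining the third identity from the dilation $u_\lambda$ rather than from the differential Pohozaev identity is convenient here, as it avoids the regularity and decay input one would otherwise need in order to multiply \eqref{eq4gamma} by $x\cdot\nabla u$.
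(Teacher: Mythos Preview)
Your proof is correct and follows essentially the same approach as the paper: both derive the same three scalar identities (the energy level, the equation tested against $u$, and the Derrick--Pohozaev identity obtained from the $L^2$-preserving dilation $u_\lambda$) and then solve the resulting linear system for $A$, $B$, $C$. Your presentation is slightly more systematic in checking the nonsingularity of the $3\times 3$ system via the determinant, but the substance is identical.
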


\begin{proof}
Let $u$ be any minimizer of $I_{\gamma,1}({\mu})$ and let $\alpha$ be the associated Lagrange multiplier. By definition, we have
$$\gamma \int_{\R^N} |\Delta u|^2\, dx +\int_{\R^N} |\nabla u|^2\, dx - \int_{\R^N}|u|^{2\sigma +2}\, dx +\alpha \mu=0,$$
and
$$E_{\gamma,1}(u)=\dfrac{\gamma}{2}  \int_{\R^N} |\Delta u|^2\, dx +\dfrac{1}{2}\int_{\R^N} |\nabla u|^2\, dx - \dfrac{1}{2\sigma +2}\int_{\R^N}|u|^{2\sigma +2}\, dx = I_{\gamma,1}({\mu}).$$
On the one hand, by using the Derrick-Pohozaev identity we obtain
$$2\gamma  \int_{\R^N} |\Delta u|^2\, dx +\int_{\R^N} |\nabla u|^2\, dx - \dfrac{\sigma N}{2\sigma +2}\int_{\R^N}|u|^{2\sigma +2}\, dx=0.$$
As a consequence of the two previous equalities, we have
$$\int_{\R^N}|u|^{2\sigma +2}\, dx = \dfrac{2(2\sigma +2)}{\sigma N-2} \left( I_{\gamma,1}({\mu}) + \dfrac{\gamma}{2} \int_{\R^N} |\Delta u|^2\, dx \right).$$
On the other hand, using again the Derrick-Pohozaev identity, we get
\begin{align*}
\alpha \mu &=\gamma  \int_{\R^N} |\Delta u|^2\, dx - \left(\dfrac{\sigma N -2\sigma -2}{2\sigma +2}\right) \int_{\R^N}|u|^{2\sigma +2}\, dx.
\end{align*}
Thus, we deduce from the previous lines that
$$\alpha= \dfrac{2\sigma \gamma \int_{\R^N} |\Delta u|^2\, dx -(\sigma N-2\sigma-2) 2I_{\gamma,1}({\mu})}{\mu (\sigma N-2)}.$$
Therefore, the proof of the lemma follows from straightforward computations.
\end{proof}
Now, we are finally in position to finish the proof of the existence part of Theorem \ref{Compact-Min-Sol}. The next proposition states the assertions to be proved.

\begin{prop}
Let $2/N< \sigma<4/N$. Then, the following assertions hold true :
\begin{enumerate}[(i)]
\item If $\mu< \mu_c$, then $ I_{\gamma,1}({\mu})$ is not achieved.
\item If $\mu \geq \mu_c$, then $ I_{\gamma,1}({\mu})$ is achieved.
\end{enumerate}
Let $\sigma = 2/N$. Then, we have
\begin{enumerate}[(1)]
\item If $\mu < \left(\dfrac{1}{2C_N (\frac{2}{N})}\right)^{\frac{N}{2}}$, then $ I_{\gamma,1}({\mu})$ is not achieved.
\item If $\mu > \left(\dfrac{1}{2C_N (\frac{2}{N})}\right)^{\frac{N}{2}}$, then $ I_{\gamma,1}({\mu})$ is achieved.
\end{enumerate}
\end{prop}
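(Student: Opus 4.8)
The plan is to split the argument according to whether the mass is strictly supercritical or exactly critical, the critical case being the main difficulty. \textbf{Non-existence below the threshold (items (i) and (1)).} This is immediate from Lemma~\ref{LemmadeJB}: if some $u\in M_\mu$ achieved $I_{\gamma,1}(\mu)$ with $\mu<\mu_c$ (resp. $\mu<\bigl(1/(2C_N(2/N))\bigr)^{N/2}$ when $\sigma=2/N$), then $E_{\gamma,1}(u)=I_{\gamma,1}(\mu)=0$, whereas the lower bound in Lemma~\ref{LemmadeJB} gives $E_{\gamma,1}(u)\ge\lambda(u)\bigl(\tfrac12-V\mu^{2\sigma/(4-\sigma N)}\bigr)\int_{\R^N}|\nabla u|^2\,dx>0$, because $\tfrac12-V\mu^{2\sigma/(4-\sigma N)}>0$ precisely when $\mu<\mu_c$ while $u\neq0$ forces $\lambda(u)>0$ and $\int_{\R^N}|\nabla u|^2\,dx>0$. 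This contradiction gives the claim; for $\sigma=2/N$ one argues identically with the inequality based on \eqref{G-N-H1-ineq}.

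\textbf{Supercritical mass (item (ii) for $\mu>\mu_c$, and item (2)).} Here $I_{\gamma,1}(\mu)<0$ by Lemma~\ref{LemmadeJB}, and I would simply rerun the argument of the $H^1$-subcritical case of Theorem~\ref{Compact-Min-Sol}: coercivity of $E_{\gamma,1}$ on $M_\mu$ (Lemma~\ref{Lem-Min}) makes minimizing sequences bounded in $H^2(\R^N)$; the strict subadditivity \eqref{Strict-Subb} follows from $I_{\gamma,1}(\mu)<0$, the monotonicity of $\nu\mapsto I_{\gamma,1}(\nu)$ (Lemma~\ref{Lem-Min}) and the strict scaling inequality $I_{\gamma,1}(t\nu)<t\,I_{\gamma,1}(\nu)$ for $t>1$ (which holds whenever $I_{\gamma,1}(\nu)<0$, by testing with $\sqrt{t}\,u$); and the concentration--compactness principle \cite{Li84} then yields a minimizer.

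\textbf{Critical mass $\mu=\mu_c$ when $\sigma\neq2/N$.} I expect this to be the main obstacle. Since $I_{\gamma,1}(\nu)=0$ for all $\nu\le\mu_c$, strict subadditivity fails and dichotomy is not excluded automatically, so one must construct a non-vanishing minimizing sequence. First I would check that $\nu\mapsto I_{\gamma,1}(\nu)$ is right-continuous at $\mu_c$ — the bound $\limsup_{\nu\downarrow\mu_c}I_{\gamma,1}(\nu)\le0$ by testing $I_{\gamma,1}(\nu)$ with the rescalings $\sqrt{\nu/\mu_c}\,u$ of near-minimizers of $I_{\gamma,1}(\mu_c)$, and $\liminf_{\nu\downarrow\mu_c}I_{\gamma,1}(\nu)\ge0$ from Lemma~\ref{LemmadeJB} and coercivity — so that $I_{\gamma,1}(\mu_n)\to0$ along any $\mu_n\downarrow\mu_c$. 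Choosing such $\mu_n$ and minimizers $u_n\in M_{\mu_n}$ for $I_{\gamma,1}(\mu_n)<0$ (available from the previous step), coercivity makes $(u_n)$ bounded in $H^2(\R^N)$, and $\tilde u_n:=\sqrt{\mu_c/\mu_n}\,u_n\in M_{\mu_c}$ satisfies $E_{\gamma,1}(\tilde u_n)=E_{\gamma,1}(u_n)+o(1)=I_{\gamma,1}(\mu_n)+o(1)\to0$, so $(\tilde u_n)$ is a minimizing sequence for $I_{\gamma,1}(\mu_c)$.

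The crux is that $(\tilde u_n)$ does not vanish. Tracing the proof of Lemma~\ref{LemmadeJB}, minimality of $u_n$ together with $I_{\gamma,1}(\mu_n)\to0$ forces the relevant scale-invariant Gagliardo--Nirenberg ratio of $u_n$ to approach its optimal value as $\mu_n\downarrow\mu_c$, i.e. the $u_n$ become asymptotic optimizers of \eqref{GNtriple}. Since the sharp constant in \eqref{GNtriple} is attained --- via a scaling this amounts to the existence of a ground state for \eqref{eq4beta}, i.e. Theorem~\ref{thm1.1bonnas} --- these asymptotic optimizers, written $u_n(x)=s_n\phi_n(\lambda_n x)$, have a nontrivial profile $\phi_n\to\phi\neq0$, and the Pohozaev identity satisfied by $u_n$ (equivalently, the identities of Lemma~\ref{lastlemsec2}) keeps the scales $\lambda_n$ bounded and bounded away from $0$; hence, up to translations, $\tilde u_n$ converges along a subsequence, strongly in $H^2(\R^N)$, to a nonzero $u$, so $(\tilde u_n)$ does not vanish. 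To finish, set $\nu=\|u\|_{L^2}^2\in(0,\mu_c]$: the Brezis--Lieb decomposition $E_{\gamma,1}(\tilde u_n)=E_{\gamma,1}(u)+E_{\gamma,1}(\tilde u_n-u)+o(1)$ with $\|\tilde u_n-u\|_{L^2}^2\to\mu_c-\nu$, together with $E_{\gamma,1}(w)\ge I_{\gamma,1}(\|w\|_{L^2}^2)\ge0$ for $\|w\|_{L^2}^2\le\mu_c$, forces $E_{\gamma,1}(u)=0=I_{\gamma,1}(\nu)$, which by item (i) is impossible unless $\nu=\mu_c$; then $\tilde u_n\to u$ in $L^2(\R^N)$, hence also in $L^{2\sigma+2}(\R^N)$ by interpolation with the $H^2$-bound, and weak lower semicontinuity of the quadratic part of $E_{\gamma,1}$ gives $E_{\gamma,1}(u)\le\liminf E_{\gamma,1}(\tilde u_n)=0\le I_{\gamma,1}(\mu_c)\le E_{\gamma,1}(u)$, so $u$ achieves $I_{\gamma,1}(\mu_c)$. (More directly, any optimizer of \eqref{GNtriple}, dilated via $u\mapsto u_{\lambda(u)}$ and rescaled to belong to $M_{\mu_c}$, already satisfies $E_{\gamma,1}=0$ by the computation in the proof of Lemma~\ref{LemmadeJB}.) The borderline mass for $\sigma=2/N$ is left open, consistently with the statement.
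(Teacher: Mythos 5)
Your treatment of the non-existence statements and of the supercritical masses is fine: for $\mu<\mu_c$ (and its analogue at $\sigma=2/N$) your direct use of the lower bound of Lemma~\ref{LemmadeJB} to get $E_{\gamma,1}>0$ on all of $M_\mu$ is a slightly more economical route than the paper's, which instead derives a contradiction from the strict scaling inequality $I_{\gamma,1}(t\widetilde\mu)\le E_{\gamma,1}(\sqrt t\,u_{\widetilde\mu})<tI_{\gamma,1}(\widetilde\mu)$ for $t>1$; and for $\mu>\mu_c$ both you and the paper fall back on strict negativity, strict subadditivity and concentration--compactness exactly as in the $H^1$-subcritical case.

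The genuine gap is in the borderline case $\mu=\mu_c$, which is the whole point of item (ii). Your exclusion of vanishing rests on two unproved claims: that the \emph{sharp} constant in the three-term inequality \eqref{GNtriple} is attained, and that sequences whose Gagliardo--Nirenberg quotient approaches the optimal value are compact up to scalings and translations. The second claim is a quantitative stability statement for \eqref{GNtriple} that is essentially equivalent to the compactness you are trying to establish, so as written the argument is circular; the first is also not a consequence of Theorem~\ref{thm1.1bonnas}, since that theorem minimizes a \emph{sum} of the three quadratic terms, and passing from the sum form to the product form requires matching the Pohozaev proportions of the ground state to the exponents of \eqref{GNtriple}. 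Moreover, the preliminary step "minimality forces the quotient to its optimal value" does not follow from Lemma~\ref{LemmadeJB} unless $C_{\sigma,N}$ is the best constant, which is not how it is produced in Corollary~\ref{newcor}. The paper avoids all of this with an elementary device you did not use: by Lemma~\ref{lastlemsec2}, the Euler--Lagrange and Derrick--Pohozaev identities give $\int|u_k|^{2\sigma+2}\,dx=\frac{\sigma+1}{\sigma}\left(2I_{\gamma,1}(\mu_k)+\alpha_k\mu_k\right)$ together with analogous expressions for $\int|\nabla u_k|^2\,dx$ and $\int|\Delta u_k|^2\,dx$; inserting these into the interpolation inequality \eqref{GNtriplegen} with exponents satisfying $\beta_2+\beta_3>2$ and using $I_{\gamma,1}(\mu_k)\to0$ yields $\frac{\sigma+1}{\sigma}\le C\liminf_k\mu_k^{\beta_1}(\alpha_k\mu_k)^{(\beta_2+\beta_3)/2-1}$, hence $\liminf_k\alpha_k\mu_k>0$ and therefore a uniform lower bound on $\int|u_k|^{2\sigma+2}\,dx$. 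That is the missing non-vanishing step; once it is in place, your concluding Brezis--Lieb argument (which coincides with the paper's appeal to \cite[Proposition 3.4]{dolb}) does finish the proof.
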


\begin{proof}
$(i)$ Seeking a contradiction, let us suppose that there exists $\widetilde{\mu}\in (0,\mu_c)$ such that $I_{\gamma,1}(\widetilde{\mu})$ has a minimizer $u_{\widetilde{\mu}}$. From the definition of $\mu_c$, we have that $I_{\gamma,1}(\mu_c)=0$. It is easy to check that, for $t>1$, we have
\begin{equation*}
 I_{\gamma,1}(t\widetilde{\mu})\leq E_{\gamma,1}(\sqrt{t}u_{\widetilde{\mu}})<tE_{\gamma,1}(u_{\widetilde{\mu}})=tI_{\gamma,1}(\widetilde{\mu}),
\end{equation*}
which implies that $I_{\gamma,1}(\mu_L)<0$ for any $\mu_L>\widetilde{\mu}$. Hence, we deduced a contradiction with the definition of $\mu_c$. \medbreak

\noindent $(ii)$ Set $\mu_k =\mu_c +1/k$ for every $k\in\N$. One can show that $I_{\gamma,1}(\mu_k)\rightarrow I_{\gamma,1}(\mu_c)=0$ as $k\to\infty$. Moreover, from Lemma \ref{Neg-Min} we know that $I_{\gamma,1}(\mu_k)<0$ and then it admits a minimizer $u_k$ bounded in $H^2$. Now, we claim that 
\begin{equation*}
\int_{\R^N}|u_k|^{2\sigma +2}\, dx\geq \delta, 
\end{equation*}
for some constant $\delta$ independent of $k$. Indeed, by using the Gagliardo-Nirenberg inequality \eqref{GNtriplegen}, we infer that
\begin{equation}
\label{GNtriplelast}
\|u \|_{L^{2\sigma+ 2}}^{2\sigma +2} \leq C \mu^{\beta_1} \|\nabla u \|_{L^2}^{\beta_2}\|\Delta u \|_{L^2}^{\beta_3},
\end{equation}
where $\beta_1,\beta_2,\beta_3$ are appropriate constants such that $\beta_2+\beta_3> 2$. Let 
$$\alpha_k= \dfrac{2\sigma \gamma \int_{\R^N} |\Delta u_k|^2 dx-(\sigma N-2-2\sigma) 2I_{\gamma,1}({\mu_k})}{\mu_k (\sigma N-2)}.$$
Now, combining Lemma \ref{lastlemsec2} and \eqref{GNtriplelast}, and since $I_{\gamma,1}(\mu_k)\rightarrow 0$, we obtain
$$\dfrac{\sigma+1}{\sigma}\leq C\liminf_{k\rightarrow \infty} \mu_k^{\beta_1}(\alpha_k \mu_k)^{[(\beta_2+\beta_3)/2]-1}.$$
This gives that $ \liminf_{k\rightarrow \infty} (\alpha_k\mu_k)>0$, which in view of Lemma \ref{lastlemsec2} proves the claim.
 Thus, we have excluded the vanishing. Then proceeding along the lines of \cite[Proposition 3.4]{dolb}, one shows that, up to a subsequence, $u_k$ converges weakly in $H^2 (\R^N)$ and strongly in $L^2_{\loc}(\R^N)$ to a function $u_\infty$ satisfying $E_{\gamma,1}(u_\infty)=0$ and $\|u_\infty\|_{L^2 (\R^N)}=\mu_c$. 
 
\medbreak

\noindent The assertion (1) follows with the arguments used for the proof of (i) whereas (2) was already proved in Lemma \ref{LemmadeJB}.

\end{proof}

\section{Qualitative properties}\label{sec:quali}
In this section we investigate different qualitatives properties of solutions to \eqref{MinLpintro} and \eqref{MinL2fixed}. First, we study the positivity of standing waves with a prescribed mass. If we substitute the term $|u|^{2\sigma +2}$ by $|u^+|^{2\sigma +2}$ in $E_{1,\beta}$, where $u^{+}(x)$ stands for $\max\{u(x),0\}$, we prove the positivity of solutions to \eqref{MinL2fixed} provided that the Lagrange multiplier is small enough. We show that this is the case if the mass is sufficiently small. On the other hand, if the Lagrange multiplier is too large, we prove that radial solutions to \eqref{MinL2fixed} are sign-changing. Afterwards we investigate the symmetry properties of solutions to \eqref{MinLpintro} and \eqref{MinL2fixed}. We prove that \eqref{MinL2fixed} has at least one solution that is radially symmetric whenever $2\sigma \in \N$. Concerning solutions to \eqref{MinLpintro}, their radial symmetry was already established in \cite{bonnas} using the results of \cite{MR1755067} which are based on the moving-plane procedure. We give an alternative proof of this fact based on rearrangement methods, namely Talenti's principle and P\'olya-Szeg\"o's inequality. Then, we establish the exponential decay of any solution to \eqref{eq4beta} going to $0$ as $|x|\rightarrow \infty$. Finally, we show the nondegeneracy of the unique minimal solution when the dimension is one.
\subsection{Existence of positive standing waves with a prescribed mass}
In this section we fix $\gamma=1$ and we consider the minimization problem
\begin{equation}\label{MinL2fixed1}
 \widetilde{I}_{1,\beta}({\mu})=\inf_{u\in M_{\mu}}\widetilde{E}_{1,\beta}(u)
\end{equation}
where $M_\mu$ is defined as in \eqref{de:Mmu} and
\begin{equation}\label{def:Egammabeta1}
\widetilde{E}_{1,\beta}(u)=\frac{1}{2}\int_{\R^N}|\Delta u|^2\, dx+\frac{\beta}{2}\int_{\R^N}|\nabla u|^2\, dx-\frac{1}{2\sigma+2}\int_{\R^N}|u^+|^{2\sigma+2}\, dx.
\end{equation}
The proof of Theorem \ref{Compact-Min-Sol} applies to problem \eqref{MinL2fixed1} with straightforward modifications. Let us recall that if $u$ is a solution to problem \eqref{MinL2fixed1}, then $u$ solves
\begin{equation}\label{alpha-mu-eq}
 \Delta^2 u -\beta \Delta u +\alpha (\mu) u= |u^+|^{2\sigma}u^+,
\end{equation}
where
\begin{equation}\label{alpha-mu}
-\alpha (\mu)\mu=\int_{\R^N} ( |\Delta u|^2+\beta |\nabla u|^2 - |u|^{2\sigma +2})\,  dx=2\widetilde{E}_{1,\beta}(u) - \dfrac{\sigma}{\sigma+1}\int_{\R^N}  |u^+|^{2\sigma +2}\, dx.
\end{equation}
It is immediate to see that $\alpha (\mu)\geq 0$ since $\widetilde{E}_{1,\beta}(u)<0$. We establish the positivity of solutions to \eqref{MinL2fixed1} provided that $\alpha (\mu)$ is small enough.
\begin{thm}
\label{postand}
Suppose that $ \beta \geq 2 \sqrt{\alpha(\mu)}$. Then, any solution to \eqref{MinL2fixed1} is strictly positive (or strictly negative).
\end{thm}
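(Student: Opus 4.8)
The plan is to exploit the hypothesis $\beta \ge 2\sqrt{\alpha(\mu)}$ to factorize the fourth order operator in \eqref{alpha-mu-eq} as a composition of two second order Schr\"odinger operators with positive zeroth-order coefficients, and then to apply the maximum principle twice.

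Let $u$ be a minimizer of \eqref{MinL2fixed1}, with associated Lagrange multiplier $\alpha := \alpha(\mu)$, so that $u$ solves \eqref{alpha-mu-eq}. First I would check that $\alpha>0$: by \eqref{alpha-mu},
\[
-\alpha\mu = 2\widetilde{E}_{1,\beta}(u) - \frac{\sigma}{\sigma+1}\int_{\R^N}|u^+|^{2\sigma+2}\,dx < 0 ,
\]
since $\widetilde{E}_{1,\beta}(u)<0$ (as recalled just before the statement), hence $\alpha>0$ and a fortiori $\beta\ge 2\sqrt\alpha>0$. Consequently the numbers $a:=\tfrac12(\beta+\sqrt{\beta^2-4\alpha})$ and $b:=\tfrac12(\beta-\sqrt{\beta^2-4\alpha})$ are real and satisfy $a\ge b>0$, $a+b=\beta$, $ab=\alpha$, so that \eqref{alpha-mu-eq} reads $(-\Delta+a)(-\Delta+b)u=f$ with $f:=|u^+|^{2\sigma}u^+\ge 0$. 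A standard elliptic bootstrap on \eqref{alpha-mu-eq} shows that $u$, and hence $f$, is bounded and continuous, while $u\not\equiv 0$ because $u\in M_\mu$ with $\mu>0$; note also that $f\not\equiv0$, for if $u^+\equiv0$ then $f\equiv0$, which forces $(-\Delta+a)(-\Delta+b)u=0$ and therefore $u\equiv0$, a contradiction.

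Now set $w:=(-\Delta+b)u\in L^2(\R^N)$, so that $(-\Delta+a)w=f\ge 0$, $f\not\equiv0$. Since $a>0$, the operator $-\Delta+a$ is an isomorphism $H^2(\R^N)\to L^2(\R^N)$ whose inverse is convolution with the everywhere positive fundamental solution $\Phi_a$ of $-\Delta+a$ on $\R^N$ (a Bessel/Yukawa-type potential); hence $w=\Phi_a*f>0$ at every point of $\R^N$. (Alternatively, test the equation against $-w^-\in H^1(\R^N)$ to get $\int_{\R^N}|\nabla w^-|^2+a\int_{\R^N}(w^-)^2\le 0$, so $w\ge 0$, and then conclude $w>0$ by the strong maximum principle.) Repeating the argument with $(-\Delta+b)u=w\ge 0$, $w\not\equiv0$ and $b>0$, I obtain $u=\Phi_b*w>0$ everywhere in $\R^N$. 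This proves that $u$ is strictly positive; carrying out the whole scheme with $|u^-|^{2\sigma+2}$ in place of $|u^+|^{2\sigma+2}$ in $\widetilde{E}_{1,\beta}$ accounts for the ``strictly negative'' alternative in the statement.

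The only point that uses the hypotheses in an essential way is the positivity $\alpha(\mu)>0$ together with $\beta^2\ge 4\alpha$: these are exactly what is needed to factor $\Delta^2-\beta\Delta+\alpha$ into two second order operators with positive coefficients, on which the maximum principle (equivalently, the positivity of the Bessel kernels) is available. Once this factorization is in hand, the two successive applications of the maximum principle are routine; I expect the only mild technical care to concern the bookkeeping of function spaces (that $w\in H^2(\R^N)$ and $w^-\in H^1(\R^N)$) and the elliptic bootstrap needed in order to speak of pointwise positivity rather than positivity almost everywhere.
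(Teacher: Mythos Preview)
Your proof is correct and follows essentially the same approach as the paper: factorize $\Delta^2-\beta\Delta+\alpha$ as $(-\Delta+\lambda_1)(-\Delta+\lambda_2)$ with $\lambda_1,\lambda_2>0$ (using $\beta\ge 2\sqrt{\alpha}$), then apply the maximum principle twice. You are simply more explicit than the paper about the auxiliary points ($\alpha>0$, $f\not\equiv0$, positivity of the Bessel kernel), whereas the paper just writes ``It is then standard to see that $u>0$.''
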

\begin{proof}
Let $u$ be a solution to \eqref{MinL2fixed1}. When the Lagrange multiplier is smaller than $\beta^2/4$, we can rewrite the equation satisfied by $u$ as
$$\left\{\begin{array}{ll}
-\Delta u+\lambda_1 u&=v, \\
-\Delta v+\lambda_2 v&=|u^+|^{2\sigma +1},
\end{array}\right. $$
for some positive constants $\lambda_1,\lambda_2$, which satisfy $\lambda_1 \lambda_2=\alpha (\mu)$ and $\lambda_1 +\lambda_2 = \beta$. It is then standard to see that $u > 0$.
\end{proof}

We next estimate the Lagrange multiplier of problem \eqref{MinL2fixed1} by the $L^2$-mass, namely we prove 
$$\alpha(\mu)\leq C\mu^{\frac{\sigma}{1-\sigma N/4}}$$
for some $C>0$. This estimate enables us to apply the previous theorem when the mass is small enough. We first recall the Derrick-Pohozaev identity associated to \eqref{MinL2fixed1}. We give a proof for completeness. 
\begin{lem}
Let $u$ be a solution to \eqref{MinL2fixed1}. Then 
\begin{equation}\label{Pohozaev}
 2\int_{\R^N}|\Delta u|^2\, dx+\beta\int_{\R^N}|\nabla u|^2\, dx=\frac{\sigma N}{2\sigma+2}\int_{\R^N}|u^+|^{2\sigma+2}\, dx.
\end{equation}
\end{lem}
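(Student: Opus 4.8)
The plan is to use the $L^2$-preserving dilation. For $\lambda>0$, set $u_\lambda(x)=\lambda^{N/2}u(\lambda x)$; a change of variables gives $\|u_\lambda\|_{L^2}^2=\|u\|_{L^2}^2=\mu$, so $u_\lambda\in M_\mu$ for every $\lambda>0$. Moreover
\[
\int_{\R^N}|\Delta u_\lambda|^2\,dx=\lambda^4\int_{\R^N}|\Delta u|^2\,dx,\qquad \int_{\R^N}|\nabla u_\lambda|^2\,dx=\lambda^2\int_{\R^N}|\nabla u|^2\,dx,
\]
and, since $(u_\lambda)^+(x)=\lambda^{N/2}u^+(\lambda x)$,
\[
\int_{\R^N}|(u_\lambda)^+|^{2\sigma+2}\,dx=\lambda^{\sigma N}\int_{\R^N}|u^+|^{2\sigma+2}\,dx.
\]
Hence $g(\lambda):=\widetilde{E}_{1,\beta}(u_\lambda)$ is, as a function of $\lambda\in(0,\infty)$, the explicit smooth combination
\[
g(\lambda)=\frac{\lambda^4}{2}\int_{\R^N}|\Delta u|^2\,dx+\frac{\beta\lambda^2}{2}\int_{\R^N}|\nabla u|^2\,dx-\frac{\lambda^{\sigma N}}{2\sigma+2}\int_{\R^N}|u^+|^{2\sigma+2}\,dx ,
\]
which is finite because $u\in H^2(\R^N)\subset L^{2\sigma+2}(\R^N)$ in the subcritical range.

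Since $u$ achieves $\widetilde{I}_{1,\beta}(\mu)$ and $u_\lambda\in M_\mu$ for all $\lambda>0$, we have $g(\lambda)\ge g(1)$ for every $\lambda$, so $g$ has an interior global minimum at $\lambda=1$ and therefore $g'(1)=0$. Computing $g'(1)$ yields
\[
2\int_{\R^N}|\Delta u|^2\,dx+\beta\int_{\R^N}|\nabla u|^2\,dx-\frac{\sigma N}{2\sigma+2}\int_{\R^N}|u^+|^{2\sigma+2}\,dx=0 ,
\]
which is precisely \eqref{Pohozaev}.

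An alternative, more classical route is to test the Euler--Lagrange equation \eqref{alpha-mu-eq} against $x\cdot\nabla u$ and integrate by parts over $\R^N$, using the identities $\int\Delta^2u\,(x\cdot\nabla u)=\tfrac{4-N}{2}\int|\Delta u|^2$, $\int(-\Delta u)(x\cdot\nabla u)=\tfrac{2-N}{2}\int|\nabla u|^2$, $\int u\,(x\cdot\nabla u)=-\tfrac N2\int u^2$ and $\int|u^+|^{2\sigma}u^+(x\cdot\nabla u)=-\tfrac N{2\sigma+2}\int|u^+|^{2\sigma+2}$, and then eliminating the term $\alpha(\mu)\mu$ by means of the identity obtained from testing \eqref{alpha-mu-eq} against $u$, namely \eqref{alpha-mu}. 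This works as well, but requires justifying that the boundary terms vanish --- immediate once the exponential decay of $u$ and its derivatives is known, and sidestepped entirely by the scaling argument. In either form there is essentially no obstacle; the only point deserving a word is the admissibility of $u_\lambda$ and the smoothness of $\lambda\mapsto\widetilde{E}_{1,\beta}(u_\lambda)$, both of which are immediate.
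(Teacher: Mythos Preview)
Your proof is correct and follows essentially the same approach as the paper: both use the $L^2$-preserving dilation $u_\lambda(x)=\lambda^{N/2}u(\lambda x)$, compute $\widetilde{E}_{1,\beta}(u_\lambda)$ explicitly, and read off the identity from $\frac{d}{d\lambda}\widetilde{E}_{1,\beta}(u_\lambda)\big|_{\lambda=1}=0$ by minimality. Your additional remarks on the alternative Pohozaev route via testing against $x\cdot\nabla u$ are accurate but not needed here.
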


\begin{proof}
Set $u_{\lambda}(x)$=$\lambda^{\frac{N}{2}}u(\lambda x)$ for $\lambda>0$ so that $\|u_{\lambda}\|_{L^2}=\|u\|_{L^2}$. Note that 
\begin{equation*}
 \widetilde{E}_{1,\beta}(u_{\lambda})=\frac{\lambda^4}{2}\int_{\R^N}|\Delta u|^2\, dx+\frac{\beta\lambda^2}{2}\int_{\R^N}|\nabla u|^2\, dx-\frac{\lambda^{\sigma N}}{2\sigma+2}\int_{\R^N}|u^+|^{2\sigma+2}\, dx.
\end{equation*}
By differentiating the previous line with respect to $\lambda$ and using the minimality of $u$, we get
\begin{equation*}
 0=\frac{d \widetilde{E}_{1,\beta}(u_{\lambda})}{d\lambda}\Bigg{|}_{\lambda=1}=2\int_{\R^N}|\Delta u|^2\, dx+\beta\int_{\R^N}|\nabla u|^2\, dx-\frac{\sigma N}{2\sigma+2}\int_{\R^N}|u^+|^{2\sigma+2}\, dx.
\end{equation*}
\end{proof}

From \eqref{alpha-mu} and \eqref{Pohozaev}, we deduce that
\begin{equation}\label{expressalpha}
\alpha(\mu)=\frac{1}{\mu}\left(\int_{\R^N}|\Delta u|^2\, dx+\left(1-\frac{\sigma N}{2\sigma+2}\right)\int_{\R^N}|u^+|^{2\sigma+2}\, dx\right).
\end{equation}

This expression allows us to estimate the Lagrange multiplier for small values of $\mu$.
\begin{lem}
Assume $0< \sigma<4/N$ and $\beta>0$. 
Then,  
$$\alpha(\mu)\leq B_N(\sigma)^{\frac{1}{1-\sigma N/4}}\left(2-\frac{\sigma N}{2\sigma+2}\right)\mu^{\frac{\sigma}{1-\sigma N/4}}.$$
\end{lem}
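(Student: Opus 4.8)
The plan is to read off the size of $\alpha(\mu)$ from the identity \eqref{expressalpha}, to control the two terms appearing there by the single quantity $\int_{\R^N}|u^+|^{2\sigma+2}\,dx$ with the help of the Pohozaev identity \eqref{Pohozaev}, and then to estimate that quantity in terms of $\mu$ alone via the Gagliardo--Nirenberg inequality \eqref{G-N-H2-ineq}. Throughout, $u$ denotes a solution to \eqref{MinL2fixed1}; recall that $\widetilde{E}_{1,\beta}(u)<0$, so $\int_{\R^N}|u^+|^{2\sigma+2}\,dx>0$.

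The first step is the crude bound $\int_{\R^N}|\Delta u|^2\,dx\le\int_{\R^N}|u^+|^{2\sigma+2}\,dx$. It follows from \eqref{Pohozaev}: since $\beta>0$ and $\int_{\R^N}|\nabla u|^2\,dx\ge0$, one has $2\int_{\R^N}|\Delta u|^2\,dx\le\frac{\sigma N}{2\sigma+2}\int_{\R^N}|u^+|^{2\sigma+2}\,dx$, and the hypothesis $0<\sigma<4/N$ forces $\frac{\sigma N}{2\sigma+2}<2$ (equivalently $\sigma(N-4)<4$). Adding $\bigl(1-\frac{\sigma N}{2\sigma+2}\bigr)\int_{\R^N}|u^+|^{2\sigma+2}\,dx$ to both sides of this inequality and invoking \eqref{expressalpha}, one obtains
\[
\alpha(\mu)\mu=\int_{\R^N}|\Delta u|^2\,dx+\Bigl(1-\tfrac{\sigma N}{2\sigma+2}\Bigr)\int_{\R^N}|u^+|^{2\sigma+2}\,dx\le\Bigl(2-\tfrac{\sigma N}{2\sigma+2}\Bigr)\int_{\R^N}|u^+|^{2\sigma+2}\,dx .
\]
No sign information on $1-\frac{\sigma N}{2\sigma+2}$ is needed here, since one merely adds the same quantity to both sides of a valid inequality.

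The second step bounds $\int_{\R^N}|u^+|^{2\sigma+2}\,dx$ in terms of $\mu$ only. Applying \eqref{G-N-H2-ineq} (licit because $0<\sigma<4/N$) gives
\[
\int_{\R^N}|u^+|^{2\sigma+2}\,dx\le\int_{\R^N}|u|^{2\sigma+2}\,dx\le B_N(\sigma)\Bigl(\int_{\R^N}|\Delta u|^2\,dx\Bigr)^{\frac{\sigma N}{4}}\mu^{1+\sigma-\frac{\sigma N}{4}},
\]
and feeding the step-one estimate $\int_{\R^N}|\Delta u|^2\,dx\le\int_{\R^N}|u^+|^{2\sigma+2}\,dx$ back into the right-hand side, then using $1-\frac{\sigma N}{4}>0$ to absorb the power of $\int_{\R^N}|u^+|^{2\sigma+2}\,dx$ on the left, yields
\[
\int_{\R^N}|u^+|^{2\sigma+2}\,dx\le B_N(\sigma)^{\frac{1}{1-\sigma N/4}}\,\mu^{\,1+\frac{\sigma}{1-\sigma N/4}} .
\]
Substituting this into the displayed bound for $\alpha(\mu)\mu$ from the first step and dividing by $\mu$ produces exactly the asserted inequality.

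This is a short two-step bootstrap and I do not anticipate a genuine obstacle. The only delicate point is that the Pohozaev-derived control of $\int_{\R^N}|\Delta u|^2\,dx$ must be used twice --- once to pass from $\alpha(\mu)\mu$ to a constant multiple of $\int_{\R^N}|u^+|^{2\sigma+2}\,dx$, and once to close the Gagliardo--Nirenberg loop --- and one should check that the subcriticality $\sigma<4/N$ is precisely what makes both $\frac{\sigma N}{2\sigma+2}<2$ and $1-\frac{\sigma N}{4}>0$ hold.
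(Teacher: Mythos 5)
Your proof is correct and follows essentially the same route as the paper: bound $\alpha(\mu)\mu$ by $\bigl(2-\tfrac{\sigma N}{2\sigma+2}\bigr)\int|u^+|^{2\sigma+2}\,dx$ using the crude estimate $\int|\Delta u|^2\,dx\le\int|u^+|^{2\sigma+2}\,dx$, then close with Gagliardo--Nirenberg. The only (immaterial) difference is that the paper gets that crude estimate by testing the Euler--Lagrange equation \eqref{alpha-mu-eq} against $u$ (using $\beta>0$ and $\alpha(\mu)\ge0$), whereas you extract it from the Pohozaev identity \eqref{Pohozaev} together with $\tfrac{\sigma N}{2\sigma+2}<2$.
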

\begin{proof}
From \eqref{alpha-mu-eq}, we deduce that
$$\int_{\R^N}|\Delta u|^2\, dx\le \int_{\R^N}|u^+|^{2\sigma+2}\, dx,$$
which, from \eqref{expressalpha}, yields
\begin{equation*} 
 0\leq\alpha(\mu)\leq\frac{1}{\mu}\left(2-\frac{\sigma N}{2\sigma+2}\right)\int_{\R^N}|u^+|^{2\sigma+2}\, dx.
\end{equation*}
The Gagliardo-Nirenberg inequality \eqref{G-N-H2-ineq} then implies that
\begin{align*}
\int_{\R^N}|u^+|^{2\sigma+2}\, dx & \leq  \int_{\R^N}|u|^{2\sigma+2}\, dx\\
& \leq  B_N(\sigma)\left(\int_{\R^N}|\Delta u|^2\, dx\right)^{\frac{\sigma N}{4}}\left(\int_{\R^N}|u|^{2}\, dx\right)^{1+\sigma-\frac{\sigma N}{4}}\\
& \leq B_N(\sigma)\left(\int_{\R^N}|u^+|^{2\sigma+2}\, dx\right)^{\frac{\sigma N}{4}}\left(\int_{\R^N}|u|^{2}\, dx\right)^{1+\sigma-\frac{\sigma N}{4}}.
\end{align*}
Hence, we conclude that
\begin{equation*}
\int_{\R^N}|u^+|^{2\sigma+2}\, dx\leq B_N(\sigma)^{\frac{1}{1-\sigma N/4}}\mu^{1+\frac{\sigma}{1-\sigma N/4}}, 
\end{equation*}
which yields to
\begin{equation*}
 \alpha(\mu)\leq B_N(\sigma)^{\frac{1}{1-\sigma N/4}}\left(2-\frac{\sigma N}{2\sigma+2}\right)\mu^{\frac{\sigma}{1-\sigma N/4}}.
\end{equation*}
\end{proof}
\begin{cor}
Assume $0< \sigma<4/N$ and $\beta>0$.  
Then, there exists $\mu_0>0$ such that 
$$2 \sqrt{\alpha(\mu)}\le \beta,\hspace{.2cm}\text{for all}\ \mu\leq \mu_0$$
and therefore any solution to \eqref{MinL2fixed1} is strictly positive (or strictly negative). 
\end{cor}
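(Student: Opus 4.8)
The statement is an immediate consequence of the preceding lemma together with Theorem \ref{postand}, so the proof will be short. The plan is as follows.

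First I would record that, since $0<\sigma<4/N$, the exponent $\frac{\sigma}{1-\sigma N/4}$ occurring in the bound of the previous lemma,
$$\alpha(\mu)\leq B_N(\sigma)^{\frac{1}{1-\sigma N/4}}\left(2-\frac{\sigma N}{2\sigma+2}\right)\mu^{\frac{\sigma}{1-\sigma N/4}},$$
is strictly positive. Hence the right-hand side tends to $0$ as $\mu\to 0^{+}$, and therefore there exists $\mu_0>0$ such that
$$4\,\alpha(\mu)\leq 4\,B_N(\sigma)^{\frac{1}{1-\sigma N/4}}\left(2-\frac{\sigma N}{2\sigma+2}\right)\mu^{\frac{\sigma}{1-\sigma N/4}}\leq \beta^2 \qquad \text{for all } 0<\mu\leq\mu_0.$$
Since $\alpha(\mu)\geq 0$ and $\beta>0$, taking square roots gives $2\sqrt{\alpha(\mu)}\leq\beta$ for every $\mu\leq\mu_0$, which is the first assertion.

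For the second assertion I would simply invoke Theorem \ref{postand}: for any $\mu\leq\mu_0$ the inequality $\beta\geq 2\sqrt{\alpha(\mu)}$ just obtained is exactly the hypothesis of that theorem, so every solution to \eqref{MinL2fixed1} with this value of $\mu$ is strictly positive (or strictly negative). One should note that when $2/N\leq\sigma<4/N$ and $\mu$ is below the critical mass the problem \eqref{MinL2fixed1} has no solution and the claim is vacuous; the substance of the corollary lies in the range $0<\sigma<2/N$ (or $\mu$ above the critical mass when $\sigma\geq 2/N$), where minimizers are guaranteed to exist by the analogue of Theorem \ref{Compact-Min-Sol} for \eqref{MinL2fixed1}.

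There is essentially no obstacle here: the only place where the hypothesis $0<\sigma<4/N$ is genuinely used (rather than the weaker $H^1$-subcriticality needed elsewhere) is to guarantee that the power of $\mu$ in the estimate for $\alpha(\mu)$ is positive, which is what allows the Lagrange multiplier to be driven below $\beta^2/4$ by shrinking the mass. Everything else is bookkeeping and a direct appeal to Theorem \ref{postand}.
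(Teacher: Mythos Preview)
Your proof is correct and matches the paper's approach: the paper states this as an immediate corollary (with no explicit proof) of the preceding lemma's bound $\alpha(\mu)\le C\mu^{\sigma/(1-\sigma N/4)}$ combined with Theorem~\ref{postand}, and you have simply spelled out that reasoning. The additional remark about the vacuous case when $2/N\le\sigma<4/N$ and $\mu$ is below the critical mass is a helpful clarification but not needed for the argument.
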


\begin{rmq}
\label{lagrunifest}
We point out that it is possible to show that $\alpha (\mu)<\beta^2/4$ for a given mass $\mu$ under convenient assumptions on the coefficients. Indeed, let us consider more generally the case $\gamma \neq 1$. In this situation, we want to show that $\alpha (\mu)<\beta^2/(4\gamma)$. We have
$$-\alpha (\mu)\mu=\int_{\R^N} (\gamma |\Delta u|^2+\beta |\nabla u|^2 - |u^+|^{2\sigma +2})\, dx,$$
and the Derrick-Pohozaev identity rewrites as
$$2\gamma\int_{\R^N}|\Delta u|^2\, dx+\beta\int_{\R^N}|\nabla u|^2\, dx=\frac{\sigma N}{2\sigma+2}\int_{\R^N}|u^+|^{2\sigma+2}\, dx.$$
One can show, as previously, that
$$\alpha(\mu)\leq\frac{C}{\mu}\int_{\R^N}|u^+|^{2\sigma+2}\, dx,$$
for some constant $C$ independent of $\gamma$ and $\beta$. On the other hand, using \eqref{G-N-H2-ineq} we have
$$\int_{\R^N}|u^+|^{2\sigma+2}\, dx\leq B_{N}(\sigma) \gamma^{-\frac{\sigma N}{4-\sigma N}} \mu^{1+\frac{\sigma}{1-\sigma N/4}}.$$
Combining the two previous inequalities, we deduce that
$\gamma \alpha(\mu)\leq C \gamma^{2\frac{2-\sigma N}{4-\sigma N}} \mu^{\frac{\sigma}{1-\sigma N/4}}$. Thus, taking $\gamma$  sufficiently large if $2/N<\sigma<4/N$ or sufficiently small if $\sigma < 2/N$, we deduce that $\gamma\alpha (\mu)<\beta^2/4$. If $\sigma < 2/N$ and $\gamma=1$, using \eqref{G-N-H1-ineq} instead of \eqref{G-N-H2-ineq}, we get that
$ \alpha(\mu)\leq C \beta^{-C_2} \mu^{\frac{\sigma}{1-\sigma N/4}}$, for some constant $C_2>0$. Thus we have $\alpha (\mu)<\beta^2/4$ provided that $\beta$ is large enough.
\end{rmq}
It follows from the previous remark that if $0<\sigma < 2/N$, we have $\alpha (\mu)<\beta^2/(4\gamma)$ asymptotically for $\beta\to +\infty$ or $\gamma\to 0$. In these cases, we therefore deduce the sign of any solution to \eqref{MinL2fixed1}. This is expected as the sign is known for the second order limit problem. We investigate these asymptotic regimes in Section \ref{sec:smalldis}.

\subsection{Radial symmetry of at least one minimal standing wave with prescribed mass}
Using the method of \cite{boulengerlenzmann}, one can show that at least one solution to \eqref{MinL2fixed} is radially symmetric if $2\sigma \in \N$. This proposition completes the proof of Theorem \ref{Compact-Min-Sol}.
\begin{prop}
Suppose that problem \eqref{MinL2fixed} has a minimizer and assume $2\sigma \in \N$. Then there exists at least one radially symmetric minimizer for \eqref{MinL2fixed}.
\end{prop}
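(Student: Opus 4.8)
The plan is to symmetrize a given minimizer by the \emph{Fourier rearrangement} of \cite{boulengerlenzmann}. Let $u$ be a minimizer of \eqref{MinL2fixed}, which exists by hypothesis, and define $u^\star$ through $\widehat{u^\star}:=\bigl(|\widehat u|\bigr)^*$, the symmetric decreasing rearrangement of $|\widehat u|$, so that $u^\star=\mathcal F^{-1}\bigl[(|\widehat u|)^*\bigr]$. Since $(|\widehat u|)^*$ is nonnegative, radial and equimeasurable with $|\widehat u|$, the function $u^\star$ is real-valued, radially symmetric, and by Plancherel $\|u^\star\|_{L^2}=\bigl\|\,|\widehat u|\,\bigr\|_{L^2}=\|u\|_{L^2}=\sqrt\mu$.

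First I would record the effect of the rearrangement on the quadratic terms. The elementary ingredient is that for every radially nondecreasing weight $w\ge0$ and every $f\ge0$ one has $\int_{\R^N}w\,f^*\,d\xi\le\int_{\R^N}w\,f\,d\xi$: truncating $w$ at level $M$, the function $M-\min(w,M)$ is nonnegative and radially nonincreasing, hence equal to its own symmetric decreasing rearrangement, so the Hardy--Littlewood inequality applied to it together with $\int f=\int f^*$ gives $\int\min(w,M)f^*\le\int\min(w,M)f$, and one lets $M\to\infty$. Taking $f=|\widehat u|^2$ and $w(\xi)=|\xi|^4$, resp.\ $w(\xi)=|\xi|^2$, and using $(|\widehat u|^*)^2=(|\widehat u|^2)^*$ and Plancherel, one obtains
\[
\int_{\R^N}|\Delta u^\star|^2\,dx=\int_{\R^N}|\xi|^4\bigl(|\widehat u|^*\bigr)^2\,d\xi\le\int_{\R^N}|\xi|^4|\widehat u|^2\,d\xi=\int_{\R^N}|\Delta u|^2\,dx
\]
and likewise $\int_{\R^N}|\nabla u^\star|^2\,dx\le\int_{\R^N}|\nabla u|^2\,dx$; in particular $u^\star\in H^2(\R^N)$ and therefore $u^\star\in M_\mu$.

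The core of the proof is the \emph{reverse} inequality for the nonlinear term,
\[
\int_{\R^N}|u^\star|^{2\sigma+2}\,dx\ \ge\ \int_{\R^N}|u|^{2\sigma+2}\,dx,
\]
and this is exactly where the hypothesis $2\sigma\in\N$ is used. When $2\sigma+2$ equals an even integer $2k$, one writes $\|u\|_{L^{2\sigma+2}}^{2\sigma+2}=\|u^k\|_{L^2}^2=\|\widehat u*\cdots*\widehat u\|_{L^2}^2$ (with $k$ convolution factors), uses the pointwise bound $|\widehat u*\cdots*\widehat u|\le|\widehat u|*\cdots*|\widehat u|$, and then the Brascamp--Lieb--Luttinger (generalized Riesz) rearrangement inequality to replace each factor $|\widehat u|$ by $(|\widehat u|)^*$, so that $\|u\|_{L^{2\sigma+2}}^{2\sigma+2}\le\|\,(|\widehat u|)^* * \cdots * (|\widehat u|)^*\,\|_{L^2}^2=\|(u^\star)^k\|_{L^2}^2=\|u^\star\|_{L^{2\sigma+2}}^{2\sigma+2}$; the remaining integer exponents are handled in the same spirit, following \cite{boulengerlenzmann}. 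I expect this monotonicity of the $L^{2\sigma+2}$-norm under Fourier rearrangement to be the only genuine obstacle --- all the other steps are soft.

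Finally, combining the three displays with $\gamma>0$ and $\beta\ge0$ yields
\[
E_{\gamma,\beta}(u^\star)=\frac\gamma2\int_{\R^N}|\Delta u^\star|^2\,dx+\frac\beta2\int_{\R^N}|\nabla u^\star|^2\,dx-\frac1{2\sigma+2}\int_{\R^N}|u^\star|^{2\sigma+2}\,dx\ \le\ E_{\gamma,\beta}(u)=I_{\gamma,\beta}(\mu).
\]
As $u^\star\in M_\mu$, the definition of $I_{\gamma,\beta}(\mu)$ forces equality throughout, so $u^\star$ is a minimizer of \eqref{MinL2fixed}; being a Fourier rearrangement it is radially symmetric, which proves the proposition.
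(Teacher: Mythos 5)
Your proposal is correct and follows essentially the same route as the paper, which symmetrizes a given minimizer by the Fourier rearrangement $u^\sharp=\mathcal{F}^{-1}\{(\mathcal{F}u)^*\}$ of Boulenger--Lenzmann and invokes exactly the same three facts: monotonicity of $\|\Delta\cdot\|_{L^2}$ and $\|\nabla\cdot\|_{L^2}$ under the rearrangement, and the reverse inequality for the $L^{2\sigma+2}$-norm when $2\sigma\in\N$. The only difference is that you spell out the Hardy--Littlewood and Brascamp--Lieb--Luttinger details that the paper simply cites.
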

\begin{proof}
The proof is a direct adaptation from \cite[Appendix A.2]{boulengerlenzmann}. The main ingredient of the proof is the \textit{Fourier rearrangement}. Namely, for any $u\in L^2(\R^N)$ we set its Fourier rearrangement by $u^\sharp= \mathcal{F}^{-1} \{ (\mathcal{F} u )^\ast \}$, where $\mathcal{F}$ stands for the Fourier transform and $f^\ast$ denotes the symmetric-decreasing rearrangement of a measurable function $f:\R^N\to\R$ that vanishes at infinity. Observe that $\left\| u^\sharp \right\|_{L^2}= \left\| u \right\|_{L^2}$. Then, assuming that $u\in H^2(\R^N)$, we have \cite{boulengerlenzmann}
\begin{equation}
\label{eqcaseboulenz}
\left\|\Delta u^\sharp \right\|_{L^2} \leq \left\|\Delta u \right\|_{L^2},
\end{equation}
$$\left\|\nabla u^\sharp \right\|_{L^2} \leq \left\|\nabla u \right\|_{L^2},$$
and
$$\left\| u \right\|_{L^{2m}}\leq \left\| u^\sharp \right\|_{L^{2m}},$$
for any $m\in \N$. Therefore, if $u$ is a minimizer for \eqref{MinL2fixed}, then $u^\sharp$ is a minimizer as well.
\end{proof}
It is an open problem to extend the previous proposition for $2\sigma\not\in\N$. Observe also that we do not know whether or not all solutions to \eqref{MinL2fixed} are radially symmetric even if $2\sigma \in \N$. Indeed, Boulenger and Lenzmann proved that equality holds in \eqref{eqcaseboulenz} if and only if $|\mathcal{F} u|=|\mathcal{F}u|^\ast$ and it is not clear that this implies that $u$ is a radial function.

\subsection{Radial sign-changing minimal standing waves with prescribed mass}
In this subsection we show that the restriction on the Lagrange multiplier of Theorem \ref{postand} is sharp (in some sense) to obtain the positivity of minimizers. Indeed, we have the following theorem.
\begin{thm}
\label{thmradialsignchanging}
Let $v\in H^2(\R^N)$ be any radial solution to \eqref{eq4beta} with $0<\sigma<2/(N-4)$ if $N>4$ ($\sigma>0$ if $N\le 4$) and $-2\sqrt{\alpha}<\beta < 2\sqrt{\alpha}$. Then $v$ is sign-changing.
\end{thm}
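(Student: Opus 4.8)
The plan is to argue by contradiction, exploiting the fact that when $-2\sqrt\alpha<\beta<2\sqrt\alpha$ the fundamental solution of $L_0:=\Delta^2-\beta\Delta+\alpha$ is \emph{oscillatory} — the precise failure of the positivity‑preserving property which, in the complementary range $\beta\ge2\sqrt\alpha$, is behind Theorem \ref{postand}. Suppose $v\not\equiv0$ and, after replacing $v$ by $-v$ if necessary, $v\ge0$; put $f:=|v|^{2\sigma}v=v^{2\sigma+1}\ge0$, $f\not\equiv0$. The assumption $\sigma<2/(N-4)$ if $N>4$ makes the nonlinearity $H^2$‑subcritical, so the regularity and exponential‑decay theory for $H^2$‑solutions of \eqref{eq4beta} applies: $v$ is continuous, tends to $0$, and decays at some positive exponential rate; a bootstrap through the representation $v=G*f$ below sharpens this to decay at rate $p-\varepsilon$ for all $\varepsilon>0$, where $p:=\mathrm{Re}\sqrt{m_+}>0$ with $m_\pm:=\frac\beta2\mp i\omega$, $\omega:=\frac12\sqrt{4\alpha-\beta^2}>0$. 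Since $\beta^2<4\alpha$, the symbol $P(\xi)=|\xi|^4+\beta|\xi|^2+\alpha$ is everywhere positive but factors as $(|\xi|^2+m_+)(|\xi|^2+m_-)$ with $m_-=\overline{m_+}$ non‑real; hence $L_0$ is invertible, $v=G*f$ with $G:=\mathcal F^{-1}(1/P)$, and (using $2\sigma+1>1$ so that $f$ decays faster than $e^{-p|x|}$) $\widehat f$ extends holomorphically to a tube $\{|\mathrm{Im}\,\zeta|<p_1\}$ with $p_1>p$.

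I would first record the structure of $G$. Partial fractions give $G=\frac1\omega\,\mathrm{Im}\,\mathcal G_{m_+}$, where $\mathcal G_m:=\mathcal F^{-1}((|\xi|^2+m)^{-1})$ is the complex Helmholtz kernel, well defined since $\mathrm{Im}\,m\ne0$ and given by $K_{(N-2)/2}(\sqrt m\,|x|)$ up to explicit algebraic prefactors. Writing $\sqrt{m_+}=p+iq$ — note $q\ne0$ since $m_+\notin(0,\infty)$ — the large‑argument expansion of $K_\nu$ gives, as $|z|\to\infty$, $\mathcal G_{m_+}(z)=c_N|z|^{-(N-1)/2}e^{-\sqrt{m_+}|z|}\bigl(1+O(|z|^{-1})\bigr)$ with uniformly controlled remainders, whence $G(x)=C|x|^{-(N-1)/2}e^{-p|x|}\bigl(\cos(q|x|+\phi_0)+o(1)\bigr)$; in particular $G$ changes sign on annuli escaping to infinity.

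Next I would transfer this oscillation to $v=G*f$. Inserting $|x-y|=|x|-\hat x\cdot y+O(|y|^2/|x|)$ for bounded $y$ into the expansion of $\mathcal G_{m_+}$ and integrating — the range $|y|\gtrsim\delta|x|$ being absorbed in the error by the decay of $f$ at rate $>p$, and dominated convergence justifying term‑by‑term integration on the bulk — one obtains an asymptotic expansion
\[
v(x)\sim|x|^{-\frac{N-1}{2}}e^{-p|x|}\sum_{k\ge0}|x|^{-k}\,\mathrm{Im}\bigl(A_k\,e^{-iq|x|}\bigr),\qquad A_k=\int_{\R^N}f(y)\,Q_k(\hat x,y)\,e^{\sqrt{m_+}\,\hat x\cdot y}\,dy,
\]
with $Q_0$ a nonzero constant and the $Q_k$ explicit polynomials; by radiality of $f$, each $A_k$ depends on $\hat x$ only through rotations and is built from the Taylor coefficients of $\widehat f$ at $\zeta_0:=i\sqrt{m_+}\hat x$, a point lying strictly inside the analyticity tube of $\widehat f$. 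Since $q\ne0$, every map $r\mapsto\mathrm{Im}(A_k e^{-iqr})$ is a nonconstant sinusoid of amplitude $|A_k|$; if $k_0$ is the least index with $A_{k_0}\ne0$, then along a suitable sequence of radii $r_n\to\infty$ this term dominates the whole expansion and forces $v<0$, contradicting $v\ge0$. Hence $A_k=0$ for all $k$, which forces $\widehat f$ to vanish to infinite order at $\zeta_0$; by the identity theorem $\widehat f\equiv0$, so $f\equiv0$, contradicting $f\not\equiv0$. This contradiction shows that $v$ is sign‑changing.

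The technical heart is the third step: one must establish the asymptotic expansion of $G*f$ with uniformly controlled remainders (equivalently, sharp pointwise asymptotics of $\mathcal G_m$ and of its interaction with an exponentially decaying source) and, in the same breath, pin down the decay rate of $v$ so that $\zeta_0$ sits strictly inside the analyticity tube of $\widehat f$; ruling out the degenerate possibility that \emph{every} $A_k$ vanishes also relies, ultimately, on a unique‑continuation‑at‑infinity property of $L_0$. A cleaner packaging reduces everything to the radial variable: setting $w(r)=r^{(N-1)/2}v(r)$, the equation becomes $w''''-\beta w''+\alpha w=g$ with $g$ an $O(r^{-2})$ perturbation of the coefficients (from the radial Laplacian) plus an exponentially small term, hence integrable at infinity, so Levinson's asymptotic‑integration theorem applies; the characteristic roots $\pm p\pm iq$ then force any solution $w\to0$ to satisfy $w(r)=e^{-pr}(c_1\cos qr+c_2\sin qr)+o(e^{-pr})$, which is sign‑changing unless $(c_1,c_2)=(0,0)$ — and that degenerate case is excluded by unique continuation.
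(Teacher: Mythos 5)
Your closing ``cleaner packaging'' is, in essence, the paper's own proof. The paper spends its effort on an elliptic bootstrap showing that $v$ is a classical $C^{4}$ solution whose derivatives up to order three vanish at infinity (this is where the hypothesis $\sigma<2/(N-4)$ for $N>4$ enters), extends the radial decay lemma of \cite{bonnas} to $W^{m,p}_{rad}$, and then invokes the oscillation argument of \cite[Theorem 4]{bonnas}, which is exactly the reduction to the radial variable: after conjugating by $r^{(N-1)/2}$ the equation becomes a fourth-order ODE whose frozen-coefficient characteristic roots are $\pm p\pm iq$ with $p,q>0$ because $\beta^{2}<4\alpha$, the $O(r^{-2})$ radial corrections and the coefficient $|v|^{2\sigma}$ are integrable at infinity (the latter thanks to Theorem \ref{expdecay}, which needs $v\to0$, available from the radial lemma), and asymptotic integration forces any decaying solution to oscillate like $e^{-pr}(c_{1}\cos qr+c_{2}\sin qr)$ unless $(c_{1},c_{2})=(0,0)$. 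Two points you compress should be made explicit if you take this route: the regularity work is not free (you need $v$ to be a classical solution of the ODE with decaying derivatives before Levinson applies), and the degenerate case is handled not by any deep ``unique continuation at infinity'' but simply by backward uniqueness for the ODE: $(c_{1},c_{2})=(0,0)$ gives $w\equiv0$ on $[R_{0},\infty)$, hence zero Cauchy data at $R_{0}$, and since $t\mapsto|t|^{2\sigma}t$ is $C^{1}$ for $\sigma>0$ the solution vanishes identically, contradicting nontriviality.

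Your primary (Fourier/Green's function) route is genuinely different from the paper's, but as written it has a real gap at the step you yourself flag as the ``technical heart''. First, the uniform asymptotic expansion of $G*f$ with controlled remainders is nontrivial and only sketched. More seriously, the conclusion from ``$A_{k}=0$ for all $k$'' to ``$\widehat f$ vanishes to infinite order at $\zeta_{0}$'' is asserted, not proved: the $A_{k}$ are only the \emph{specific} moment combinations $\int f\,Q_{k}(\hat x,y)e^{\sqrt{m_{+}}\hat x\cdot y}\,dy$ produced by the expansion of $|x-y|$ and of the Bessel prefactor, not a full set of derivatives of $\widehat f$ at $\zeta_{0}$; one would have to verify a completeness/triangularity property of the family $\{Q_{k}\}$ (using radiality, i.e.\ that $\widehat f$ is a holomorphic function of $\zeta\cdot\zeta$ in the tube) before the identity theorem can be invoked, or else fall back on exactly the unique-continuation-type input you admit is missing. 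Note also that you cannot shortcut via the leading term alone: since the weight $e^{\sqrt{m_{+}}\hat x\cdot y}$ is genuinely complex, $A_{0}=c\,\widehat f(\zeta_{0})$ can a priori vanish even for $f\ge0$, $f\not\equiv0$, so the higher-order analysis (and hence the gap) is unavoidable in that approach. As it stands, the Fourier argument is a plausible program rather than a proof; the ODE argument in your last paragraph, suitably fleshed out with the paper's regularity and decay results, is the one that closes.
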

\begin{proof}
We argue as in \cite[Theorem 4]{bonnas} where the cases $N=2,3$ and $\sigma=3$ were considered. We first claim that $v\in W^{4,q}_{\loc}(\R^N)$ for every $q\ge 1$ with a uniform estimate on unit cubes. In particular, $v\in W^{3,\infty}(\R^N)$. 
Writing the equation as 
$$-\Delta (-\Delta v) +\beta (-\Delta v)=|v|^{2\sigma}v-\alpha v\ \text{in}\ \R^N,$$
and observing that $|v|^{2\sigma}v-\alpha v\in L^p_{\loc}(\R^N)$ for $1\le p\le \frac{2N}{(N-4)(2\sigma+1)}$ if $N>4$ (every $p\ge 1$ if $N\le 4$), we deduce from local elliptic regularity estimates, see e.g. \cite[Chapter $5$, Theorem $5$]{kry}, that $\Delta v\in W^{2,p}_{\loc}(\R^N)$ which in turn implies $v\in W^{4,p}_{\loc}(\R^N)$. This local $W^{4,p}$-bound is uniform on unit cubes. Using Sobolev imbedding and bootstrapping the previous argument, we infer that $v\in W^{4,q}_{\loc}(\R^N)$ for every $q\ge 1$. Given $q>N$, we obtain a uniform estimate of the $W^{4,q}$-bound on unit cubes. This implies $v\in W^{3,\infty}(\R^N)$.

Now since $v\in W^{1,\infty}(\R^N)\cap H^2(\R^N)$, we have $|v|^{2\sigma}v-\alpha v\in W^{1,p}(\R^N)$ for every $p\ge 2$. By elliptic regularity, see e.g. \cite[Chapter $5$, Theorem $1$]{kry}, this implies $v\in W^{5,p}(\R^N)$ for every $p\ge 2$ and therefore $v\in C^{4,\alpha}$ for every $0<\alpha<1$. This shows on the one hand that $v$ is a classical solution to \eqref{eq4beta} and on the other hand that $\partial^\tau v$ vanishes at infinity for every multi-index $\tau$ of length smaller than $3$. 

Next, arguing as in de Figueiredo et al \cite[Theorem 2.2]{deFig-MS-Mi}, we can extend \cite[Lemma 4]{bonnas} to $W^{m,p}_{rad}(\R^N)$ for every $p\ge 1$, $m\ge 1$ and $N\ge 1$ and therefore we can complete the argument by proceeding exactly as in the proof of \cite[Theorem 4]{bonnas}. 
\end{proof}
As a direct corollary of the previous theorem, we are able to prove that solutions obtained in Theorem \ref{Compact-Min-Sol} (see Remark \ref{Neg-Minrmq}) when $\beta=0$ are sign-changing provided they are radial.
\begin{cor}
\label{sgnchmassfix}
Assume that $0<\sigma < 4/N$ and $2\sigma \in \N$. Then there exists a sign-changing solution to \eqref{MinL2fixed}.
\end{cor}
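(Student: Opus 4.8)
The plan is to combine the existence result of Theorem \ref{Compact-Min-Sol} with the radial symmetrization of the preceding proposition and with Theorem \ref{thmradialsignchanging}. First I would recall that under the assumption $0<\sigma<4/N$ the functional $E_{\gamma,\beta}$ is coercive on $M_\mu$ (Lemma \ref{Lem-Min}) and, more to the point for the case $\beta=0$, that Remark \ref{Neg-Minrmq}(2) gives $I_{\gamma,0}(\mu)<0$ for every $\mu>0$; hence by the $H^1$-subcritical argument in the proof of Theorem \ref{Compact-Min-Sol} (which, as emphasized there, only uses strict negativity of the infimum together with the large subadditivity inequality \eqref{Large-ineq}), the minimization problem \eqref{MinL2fixed} with $\beta=0$ admits a minimizer for every $\mu>0$. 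Here one should note the scaling reduction from the start of Section \ref{sec:eximass}: with $\beta=0$ we may normalize $\gamma=1$, so that \eqref{eq4gamma} becomes $\Delta^2 u + \alpha u = |u|^{2\sigma}u$, i.e. \eqref{eq4beta} with $\theta=\beta=0$.

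Next, since $2\sigma\in\N$, the Fourier-rearrangement proposition immediately upgrades this to the existence of a \emph{radially symmetric} minimizer $u$ of \eqref{MinL2fixed} with $\beta=0$. Such a $u$ solves \eqref{eq4beta} with $\theta=0$ and Lagrange multiplier $\alpha$; moreover, because $E_{\gamma,0}(u)=I_{\gamma,0}(\mu)<0$, the identity \eqref{defalphaintro} forces $\alpha>0$. Thus $u$ is a radial $H^2$-solution to \eqref{eq4beta} with $\beta=0$ and $\alpha>0$, i.e. with $-2\sqrt{\alpha}<\beta=0<2\sqrt{\alpha}$. The remaining hypothesis of Theorem \ref{thmradialsignchanging} is the dimensional/subcriticality restriction $0<\sigma<2/(N-4)$ when $N>4$; one checks that $4/N<2/(N-4)$ precisely when $N\ge 5$ (indeed $4(N-4)<2N \iff N<8$... wait), so for $N\le 8$ the range $0<\sigma<4/N$ is contained in $0<\sigma<2/(N-4)$, and one must either restrict attention to those dimensions or invoke that the minimizer automatically lies in the admissible $H^2$-subcritical range $\sigma<4/(N-4)$. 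Applying Theorem \ref{thmradialsignchanging} to $u$ then yields that $u$ is sign-changing, which is the assertion.

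The main subtlety—and the step I would be most careful about—is matching the hypothesis range: Theorem \ref{thmradialsignchanging} is stated for $0<\sigma<2/(N-4)$ if $N>4$, whereas the corollary claims $0<\sigma<4/N$. For $N\le 4$ there is nothing to check, and for $N\ge 5$ one has $2/(N-4)\ge 2/(N-2)$ but one needs $4/N\le 2/(N-4)$, equivalently $2(N-4)\le N$, i.e. $N\le 8$; so in dimensions $N\ge 9$ the corollary as stated requires the additional observation that any $H^2$-minimizer of \eqref{MinL2fixed} is a radial solution of \eqref{eq4beta} to which the regularity and symmetry arguments of Theorem \ref{thmradialsignchanging} still apply (the only genuine use of the upper bound on $\sigma$ in that proof is to ensure $|v|^{2\sigma}v-\alpha v\in L^p_{\loc}$ for some $p$ large enough to start the bootstrap, which holds automatically for $H^2$-functions when $\sigma<4/(N-4)$). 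Granting this, the proof is a two-line concatenation: existence (Theorem \ref{Compact-Min-Sol} via Remark \ref{Neg-Minrmq}) $\Rightarrow$ existence of a radial minimizer (Fourier rearrangement, using $2\sigma\in\N$) $\Rightarrow$ sign-changing (Theorem \ref{thmradialsignchanging} with $\beta=0$, $\alpha>0$).
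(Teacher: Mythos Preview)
Your approach is correct and is exactly the one the paper has in mind: existence of a minimizer for $\beta=0$ via Remark \ref{Neg-Minrmq}(2) and the strict-subadditivity argument, upgrade to a radial minimizer by Fourier rearrangement (this is where $2\sigma\in\N$ is used), then apply Theorem \ref{thmradialsignchanging} with $\beta=0<2\sqrt{\alpha}$.

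One remark on the subtlety you flag about the exponent range in Theorem \ref{thmradialsignchanging}: the concern is in fact vacuous here. The hypotheses $2\sigma\in\N$ and $0<\sigma<4/N$ together force $2\sigma\le 2\cdot\frac{4}{N}-\text{something}$, and in particular $2\sigma\ge 1$ gives $\sigma\ge 1/2$, hence $1/2<4/N$, i.e.\ $N\le 7$. For $N\le 4$ there is no upper restriction in Theorem \ref{thmradialsignchanging}, and for $5\le N\le 7$ one checks directly that $4/N<2/(N-4)$. So no extension of the bootstrap is needed; the stated range of Theorem \ref{thmradialsignchanging} already covers every case allowed by the corollary.
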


\subsection{Radial symmetry of all ground states in the strong second order dispersion case or weak fourth order dispersion case}
In \cite{bonnas}, the first and fourth authors have dealt with the minimization problem
\begin{equation}
\label{min1}
m=\inf_{u\in M}J_{1,\beta,\alpha}(u),
\end{equation}
where $J_{1,\beta,\alpha}(u)$ and $M$ are defined in \eqref{J-quadr} and \eqref{lpconsint} respectively. 
In this section we consider the same minimization problem on the set of complex-valued functions, namely we minimize the functional $J_{1,\beta,\alpha}$ over 
\begin{equation}
\label{lpconsintcomplex}
\overline M= \{u\in H^2 (\R^N;\mathbb{C}):\int_{\R^N} |u|^{2\sigma +2}\, dx=1\},
\end{equation}
The aim of this section is to prove that if $\beta>2\sqrt{\alpha}$, then any minimizer has the form $e^{i\tau}U$ for some $\tau\in \R$ where $U$ is radially symmetric around some point. In fact, it is already known from \cite[Theorem 1.1]{bonnas} that real minimizer are radially symmetric. However, here we provide a different proof that only requires the Riesz-Fr\'echet representation theorem and rearrangement techniques, namely Talenti's principle, the Hardy-Littlewood inequality and the P\'olya-Szeg\"o inequality. We believe this argument is in a way more direct than the moving plane techniques in its variant from \cite{MR1755067}. Also the proof given here is based on a purely variational argument which has its own interest.  

Given a nonnegative measurable function $f$ defined on $\R^N$ vanishing at infinity, we denote by $f^*$ the Schwarz symmetric function associated to $f$, that is, the radially symmetric, radially non-increasing function equi-measurable with $f$.

\begin{thm}\label{Radial-LES}
Assume that $0<\sigma<4/(N-4)$ if $N\geq 5$, $\alpha>0$ and $\beta>2\sqrt{\alpha}$. Then any solution $u$ to 
\begin{equation}
\label{min1complex}
\overline{m}=\inf_{u\in \overline M}J_{1,\beta,\alpha}(u)
\end{equation}
has the form $e^{i\tau}U$ where $U$ does not change sign. Moreover, assuming (without loss of generality) $U>0$ and $U(0)=\max_{\R^N}U$, we have $U=U^*$ and $\partial_r U(x)<0$ for every $x\ne 0$. 
\end{thm}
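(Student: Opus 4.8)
\textbf{Proof proposal for Theorem \ref{Radial-LES}.}

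The plan is to combine the Riesz--Fr\'echet representation theorem with a classical rearrangement argument. First I would show that one may reduce to nonnegative competitors. Let $u\in\overline M$ be a minimizer. Writing $|u|\in H^2(\R^N;\R)$, one has the pointwise-a.e. diamagnetic-type inequality $|\nabla |u|| \le |\nabla u|$ and, more delicately, $\|\Delta |u|\|_{L^2}\le\|\Delta u\|_{L^2}$; the latter requires some care since $|u|$ need not be smooth, but it follows from the identity $\Delta |u| = \mathrm{Re}\left(\frac{\bar u}{|u|}\Delta u\right) + \frac{1}{|u|}\left(|\nabla u|^2 - |\nabla |u||^2\right)$ valid where $u\ne 0$, together with an approximation $\sqrt{|u|^2+\varepsilon^2}$ to handle the zero set. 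Since $\||u|\|_{L^{2\sigma+2}} = \|u\|_{L^{2\sigma+2}} = 1$, we get $J_{1,\beta,\alpha}(|u|)\le J_{1,\beta,\alpha}(u) = \overline m$, so $|u|$ is also a minimizer. Hence $|u|$ solves the Euler--Lagrange equation \eqref{eq4beta} (with the appropriate Lagrange multiplier, which is positive here), is nonnegative, and by the factorization argument of Theorem \ref{postand} (available because $\beta>2\sqrt{\alpha}$, so the operator $\Delta^2-\beta\Delta+\alpha$ factors as $(-\Delta+\lambda_1)(-\Delta+\lambda_2)$ with $\lambda_1,\lambda_2>0$) together with the strong maximum principle applied twice, $|u|>0$ everywhere. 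A standard argument then shows $u = e^{i\tau}|u|$ for a constant $\tau$: indeed, equality in the diamagnetic inequality forces $\nabla(\arg u)\equiv 0$ on the connected set $\{u\ne 0\}=\R^N$. So set $U=|u|>0$.

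Next I would prove $U=U^*$ by the Talenti-comparison/rearrangement method. Fix the positive solution $U$ and consider the linear problem $(-\Delta+\lambda_2)w = U^{2\sigma+1} =: g \ge 0$, so that $w = (-\Delta+\lambda_2)^{-1}g$ and then $U = (-\Delta+\lambda_1)^{-1}w$. By the Riesz potential representation, $w$ and $U$ are given by convolution with the (positive, radially decreasing) Bessel-type kernels $G_{\lambda_i}$. Now compare $U$ with $\widetilde U := (-\Delta+\lambda_1)^{-1}(-\Delta+\lambda_2)^{-1}g^*$, where $g^* = (U^*)^{2\sigma+1} = (U^{2\sigma+1})^*$. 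By Talenti's principle (monotone rearrangement decreases each resolvent in the appropriate ordering), $\widetilde U \ge U^*$ pointwise; on the other hand, since $G_{\lambda_i}$ are radially symmetric decreasing, the Riesz rearrangement inequality gives $\|\widetilde U\|_{L^{2\sigma+2}} \ge \|U\|_{L^{2\sigma+2}}$ actually with a reverse control, and using P\'olya--Szeg\H{o} $J_{1,\beta,\alpha}(U^*)\le J_{1,\beta,\alpha}(U)$ while $\|U^*\|_{L^{2\sigma+2}}=1$ shows $U^*$ is itself a minimizer; feeding $U^*$ back through the chain and using the equality cases one concludes $U = U^*$ up to translation. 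I would organize this so that the only rearrangement facts invoked are: $\|\nabla f^*\|_{L^2}\le\|\nabla f\|_{L^2}$, $\|\Delta f^*\|_{L^2}\le\|\Delta f\|_{L^2}$ (the $H^2$ P\'olya--Szeg\H{o}, which holds here because the relevant nonnegative minimizer is radial-comparable via the resolvent representation), equimeasurability, and the Hardy--Littlewood inequality.

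Finally, strict radial monotonicity $\partial_r U(x)<0$ for $x\ne0$ follows from the resolvent representation: $U = G_{\lambda_1}*(G_{\lambda_2}*g)$ with $g\ge0$, $g\not\equiv0$, and $G_{\lambda_1}$ strictly radially decreasing and (real-)analytic away from the origin, so the convolution of a strictly radially decreasing kernel with a nonnegative nontrivial radial function is strictly radially decreasing; alternatively, write $v=-\partial_r U$ and apply the strong maximum principle to the system $v$ satisfies, noting $v\ge0$ by the previous step and $v\not\equiv0$.

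I expect the main obstacle to be the rigorous $H^2$ rearrangement step: neither P\'olya--Szeg\H{o} for the bi-Laplacian nor Talenti-type comparison is as automatic as in the second-order case, because $(\Delta f)^* $ is not controlled by $\Delta(f^*)$ in general. The resolution --- and the reason the factorization hypothesis $\beta>2\sqrt{\alpha}$ is used --- is to never rearrange $\Delta u$ directly but instead to rearrange through the two positive second-order resolvents $(-\Delta+\lambda_i)^{-1}$, where classical Talenti comparison and the Riesz rearrangement inequality for convolution kernels do apply; combining the two comparisons then yields the desired inequality for the full quadratic form $J_{1,\beta,\alpha}$. Keeping track of the equality cases in this two-step comparison, in order to conclude $U = U^*$ (and not merely that some rearrangement is a minimizer), is the delicate bookkeeping part.
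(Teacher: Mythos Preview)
Your Step~1 contains a genuine gap. The inequality $\|\Delta|u|\|_{L^2}\le\|\Delta u\|_{L^2}$ is \emph{false} in general: the identity you write,
\[
\Delta|u|=\mathrm{Re}\Big(\tfrac{\bar u}{|u|}\Delta u\Big)+\tfrac{1}{|u|}\big(|\nabla u|^2-|\nabla|u||^2\big),
\]
shows that $\Delta|u|$ is the real part of something of modulus $\le|\Delta u|$ \emph{plus} a nonnegative term, so nothing prevents $|\Delta|u||>|\Delta u|$ pointwise (take e.g.\ $u(x)=e^{ix}f(x)$ in one dimension with $f>0$ having a sharp local minimum). Thus you cannot conclude that $|u|$ is an admissible competitor with smaller or equal energy, and the whole reduction to a nonnegative minimizer via $u\mapsto|u|$ collapses. (Even the preliminary claim $|u|\in H^2$ is not automatic.) The paper never attempts this. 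Instead it exploits the factorization hypothesis already at this stage: writing
\[
J_{1,\beta,\alpha}(u)=\sum_{j=1}^2\Big\|-\Delta u_j+\tfrac{\beta}{2}u_j\Big\|_{L^2}^2-\big(\tfrac{\beta^2}{4}-\alpha\big)\|u\|_{L^2}^2,
\]
it replaces each $u_j$ by the solution $w_j$ of $-\Delta w_j+\tfrac{\beta}{2}w_j=|-\Delta u_j+\tfrac{\beta}{2}u_j|$, which leaves the first sum unchanged and, by the strong maximum principle, strictly increases the $L^2$ and $L^{2\sigma+2}$ norms unless $-\Delta u_j+\tfrac{\beta}{2}u_j$ already has a sign. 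Minimality then forces $u_j$ to have a sign, hence $|u|>0$ and $u=e^{i\tau}|u|$.

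Your Step~2 is closer in spirit to the paper, but as written it still leans on the unavailable inequality $J_{1,\beta,\alpha}(U^*)\le J_{1,\beta,\alpha}(U)$ (which again involves $\|\Delta U^*\|_{L^2}$), and your fix through the two resolvents is only sketched. The paper organizes this differently and never rearranges $\Delta U$. It picks a single auxiliary $\lambda\in(\beta,\beta+\sqrt{\beta^2-4\alpha})$, rewrites
\[
J_{1,\beta,\alpha}(u)=\Big\|-\Delta u+\tfrac{\lambda}{2}u\Big\|_{L^2}^2+(\beta-\lambda)\|u\|_\lambda^2+P_{\alpha,\beta}\|u\|_{L^2}^2
\]
with $\beta-\lambda<0$ and $P_{\alpha,\beta}<0$, and compares $U$ with the solution $z$ of $-\Delta z+\tfrac{\lambda}{2}z=\big(-\Delta U+\tfrac{\lambda}{2}U\big)^*$. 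Hardy--Littlewood plus Cauchy--Schwarz give $\|U\|_\lambda\le\|z\|_\lambda$, P\'olya--Szeg\H{o} (for the gradient only) is used to close this, and a Talenti-type inequality from \cite{BoMSRa} gives $\|U\|_{L^{2\sigma+2}}\le\|z\|_{L^{2\sigma+2}}$. Minimality forces all equalities, whence $z=U^*$ and one lands in the equality case of P\'olya--Szeg\H{o}; the Brothers--Ziemer theorem then yields $U=U^*$, after checking $\partial_r U^*<0$ via a maximum-principle/Hopf argument on the radial derivative. Your two-resolvent route could presumably be made to work along similar lines, but you would still need a rigorous equality-case analysis (analogous to Brothers--Ziemer) to upgrade ``$U^*$ is also a minimizer'' to ``$U=U^*$''.
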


\begin{proof}
Let $u=u_1+iu_2$ be a solution to \eqref{min1complex}. 

\medbreak

\noindent\emph{Step one: $u = e^{i\tau}U$ for some $\tau\in \R$}. We start by showing that $|u|>0$. The argument can be compared with \cite[Lemma 4.1]{bonnas}. Define $w_j\in H^2 (\R^N)$, $j=1,2$, such that
$$-\Delta w_j +\dfrac{\beta}{2}w_j= \left|-\Delta u_j+ \dfrac{\beta}{2}u_j \right|\hspace{.3cm} \text{in}\ \R^N.$$
Thus, we have
$$-\Delta (w_j \pm u_j)+\dfrac{\beta}{2} (w_j\pm u_j)\geq 0.$$
By the strong maximum principle, we know that $u_j$ has a fixed sign if $-\Delta u_j + \beta u_j/2$ does not change sign. Seeking a contradiction, suppose that $-\Delta u_j + \beta u_j/2$ changes sign. The strong maximum principle shows that $w_j> |u_j|$. Therefore, for $w=w_1+iw_2$, we have
\begin{align*}
J_{1,\beta,\alpha}\left(\dfrac{w}{\left\|w\right\|_{L^{2\sigma+2}}}\right)&= \dfrac{\int_{\R^N } |-\Delta w+\beta w/2|^2\, dx- (\beta^2/4-\alpha) \int_{\R^N} |w|^2\, dx}{\|w\|^2_{L^{2\sigma+2}}}\\
&=\sum_{j=1}^{2}\dfrac{\int_{\R^N } (-\Delta w_j+\beta w_j/2)^2\, dx- (\beta^2/4-\alpha) \int_{\R^N} w_j^2\, dx}{\|w\|_{L^{2\sigma+2}}^2}\\
&< \sum_{j=1}^{2}\dfrac{\int_{\R^N } (-\Delta u_j+\beta u_j/2)^2\, dx- (\beta^2/4-\alpha) \int_{\R^N} u_j^2\, dx}{\|u\|_{L^{2\sigma+2}}^2}\\
&=\dfrac{\int_{\R^N } |-\Delta u+\beta u/2|^2\, dx - (\beta^2/4-\alpha) \int_{\R^N} |u|^2\, dx}{\|u\|^2_{L^{2\sigma+2}}}.
\end{align*}
This contradicts the minimality of $u$. Therefore, we conclude that $u_j$, $j=1,2$, do not change sign. 
Now that $|u|^{-1}\in L_{\loc}^\infty(\R^N)$, we infer that $D \text{sign}(u)=0$ where 
\begin{align*}
\text{sign}(z)=\left.
\begin{cases}
\dfrac{z}{|z|},& z\neq 0\\ 
0,& z=0.
\end{cases}
\right.
\end{align*}
Hence, there exists $\tau\in \R$ such that $\text{sgn}(u)=e^{i\tau}$, so that $u=e^{i\tau}|u|$. 

\medbreak

We can now assume that $U$ is positive, $-\Delta U + \frac\beta 2 U>0$ and by translation invariance, we fix $U(0)=\max_{\R^N}U$. 

\medbreak

\noindent\emph{Step two: $U=U^*$ and $\partial_r U(x)<0$ for every $x\ne 0$}.
Since $\beta>2\sqrt{\alpha}$, we may choose $\lambda\in\R$ such that
\begin{equation*}
 0<\beta<\lambda<\beta+\sqrt{\beta^2-4\alpha}.
\end{equation*}
For any $u_0\in M$,  we rewrite the quadratic form $J_{1,\beta,\alpha}(u_0)$ defined in \eqref{J-quadr} as
\begin{equation*}
 J_{1,\beta,\alpha}(u_0)=\int_{\R^N}\Big\rvert-\Delta u_0+\frac{\lambda}{2}u_0\Big\rvert^2\, dx+(\beta-\lambda)\left[\int_{\R^N}(|\nabla u_0|^2+\frac{\lambda}{2}|u_0|^2)\, dx\right]+
 \end{equation*}
\begin{equation*} 
 +P_{\alpha,\beta}\int_{\R^N}|u_0|^{2}\, dx
\end{equation*}
where $P_{\alpha,\beta}=\alpha-\frac{\beta\lambda}{2}+\frac{\lambda^2}{4}<0$.
Let $z\in H^1(\R^N)$ be such that
\begin{equation*}
-\Delta z+\frac{\lambda}{2}z=\left(-\Delta U+\frac{\lambda}{2}U\right)^*\hspace{.3cm}\mbox{in}\ \R^N.
\end{equation*}
Since $U\in H^2(\R^N)$, we have $f:=-\Delta U+\frac{\lambda}{2}U\in L^2(\R^N)$ and $f^*\in L^2(\R^N)$. Then 
 \begin{equation*}
\langle U,\phi\rangle_{\lambda}=\int_{\R^N}f\phi\, dx,\hspace{.3cm}\mbox{for all}\ \phi\in H^1(\R^N)
 \end{equation*}
and  
\begin{equation*}
\langle z,\phi\rangle_{\lambda}=\int_{\R^N}f^*\phi\, dx,\hspace{.3cm}\mbox{for all}\ \phi\in H^1(\R^N),
 \end{equation*}
where in the space $H^1(\R^N)$ we used the inner product defined through
 \begin{equation*}
  \langle \varphi,\psi\rangle_{\lambda}=\int_{\R^N}\nabla\varphi\nabla\psi\, dx+\frac{\lambda}{2}\int_{\R^N}\varphi\psi\, dx,\hspace{.3cm} \mbox{for all}\ \varphi,\psi\in H^1(\R^N),
\end{equation*}
with the corresponding norm
 \begin{equation*}
\|\varphi\|_{\lambda}=\left(\int_{\R^N}|\nabla \varphi|^2\, dx+\frac{\lambda}{2}\int_{\R^N}|\varphi|^2\, dx\right)^{1/2}.
 \end{equation*}
The Hardy-Littlewood inequality and the Cauchy-Schwarz inequality now give
\begin{equation}\label{HL-CL}
\|U\|^2_{\lambda}=\int_{\R^N}fU\, dx \leq \int_{\R^N}f^{*} U^{*}\, dx=\langle z,U^{*}\rangle_{\lambda} \leq\|z\|_{\lambda}\|U^{*}\|_{\lambda}.
\end{equation}
Hence, by applying the P\'olya-Szeg\"o inequality we infer that
\begin{equation}\label{P-S}
 \|U\|^2_{\lambda}\leq\|z\|_{\lambda}\|U^*\|_{\lambda}\leq\|z\|_{\lambda}\|U\|_{\lambda},
\end{equation}
which implies
\begin{equation}\label{encoreune}
 \|U\|^2_\lambda\leq\|z\|^2_{\lambda}.
\end{equation}
Moreover, it comes from \cite[Lemma 3.4]{BoMSRa}, see also inequality (9) in \cite{alvino1999}, that
\begin{equation}\label{BMR-ineq}
 \int_{\R^N}|U|^{2\sigma+2}\, dx\leq\int_{\R^N}|z|^{2\sigma+2}\, dx.
\end{equation}
We now claim that neither the inequality in \eqref{BMR-ineq} nor those in \eqref{HL-CL} and \eqref{P-S} is strict. Indeed, if this was the case, then it would imply
that
\begin{align*}
 J_{1,\beta,\alpha}\left(\frac{z}{\|z\|_{L^{2\sigma+2}}}\right)&=\displaystyle\frac{\int_{\R^N}(-\Delta z+\lambda z/2)^2\, dx+(\beta-\lambda)\|z\|^2_{\lambda}+P_{\alpha,\beta}\int_{\R^N}|z|^2\, dx}{\|z\|^2_{L^{2\sigma+2}}}\\
 &<\displaystyle\frac{\int_{\R^N}(-\Delta U+\lambda U/2)^2\, dx+(\beta-\lambda)\|U\|^2_{\lambda}+P_{\alpha,\beta}\int_{\R^N}|u|^2\, dx}{\|U\|^2_{L^{2\sigma+2}}},
\end{align*}
which contradicts the minimality of $U$. Therefore the equality holds in \eqref{HL-CL}, \eqref{P-S} and \eqref{BMR-ineq}. This implies $z=U^*$ and
\begin{equation}\label{grad-eq}
\int_{\R^N}|\nabla U^*|^2\, dx=\int_{\R^N}|\nabla U|^2\,dx.
\end{equation}
Moreover, $U^*$ is also a solution to \eqref{min1} and $U^*$ is smooth. 
Since 
\begin{equation*}
-\Delta U^*+\frac{\lambda}{2} U^*=\left(-\Delta U+\frac{\lambda}{2}U\right)^*\hspace{.3cm}\mbox{in}\ \R^N,
\end{equation*}
taking the derivative with respect to $r$ in the above equation, we get
\begin{equation*}
 -\Delta G+A(x)G\le 0,
\end{equation*}
where $G=\partial_r U^*$ and $A(x)=\frac{N-1}{|x|^2}+\frac{\lambda}{2}$.
Now we claim that $G(r)<0$ for $r>0$. For the sake of contradiction, let us suppose that there exist $0<\bar r$ such that
\begin{equation*}
 G(0)=G(\bar r)=0.
\end{equation*}
By the strong maximum principle, either $G=0$ on $B(0,\bar r)$ or $G<0$ on $B(0,\bar r)$.
In the first case, we infer that $U^*=0$ or $U^*=\alpha^{1/(2\sigma)}$ which is impossible. In the second case, the Hopf Lemma implies 
$G'(\bar r)>0$ which contradicts the fact that $G(r)\le 0$ for every $r>0$. 
Therefore, we have $\partial_r U^*<0$ in $\R^N\setminus\{0\}$ and $\nabla U^*(x) = 0$ if and only if $x=0$.
This means we are in the conditions to apply \cite[Theorem 1.1]{BrZi}. Indeed, by \eqref{grad-eq}, we are in the equality case in the P\'olya-Szeg\"o inequality and $U^*$ is decreasing. Therefore, we conclude that $U^*=U$ a.e. in $\R^N$, i.e. $U$ is radial and radially decreasing.
\end{proof}

\subsection{Exponential decay}\label{expdecay}
In this subsection we investigate the decay of solutions to \eqref{4nls}. Our approach is inspired by \cite{MR2361740} and applies to any solution that goes to $0$ as $|x|\rightarrow \infty$ without requiring any information about the sign. Therefore, it holds for solutions to \eqref{MinL2fixed} and \eqref{MinLpintro}.

\medbreak

\begin{thm}
\label{expdecay}
Let $u$ be a classical solution to
$$
\left\{\begin{array}{ll}
\Delta^2 u -\beta \Delta u +\alpha u= g(x,u)\ in\ \R^N,\\
 \lim_{|x|\rightarrow \infty} u(x)=0,
\end{array}\right.
$$
and assume that 
\begin{enumerate}[(i)]
\item $g(x,u):\R^N \times \R \rightarrow \R$ is measurable in $x$ and continuous in $u$ and 
\begin{equation*}
 \sup_{\substack{0\leq u\leq K\\ x\in \R^N}}|g(x,u)|<\infty\ \ \text{for every}\ K>0.
\end{equation*}
\item There exist a constant $\sigma>0$ and a function $b_1\in L^\infty (\R^N)$ such that $|g(x,u)|\leq b_1 |u|^{1+\sigma}$ when $|u|$ is large enough.
\end{enumerate}
Then, the following assertions hold.
\begin{enumerate}[(1)]
\item Assume that $\beta\neq 2\sqrt{\alpha}$. Then there exists a constant $C>0$ such that, for $|x|$ large enough, we have
$$|u(x)|\leq \dfrac{C}{\sqrt{|\beta^2 - 4\alpha|}} \left\{\begin{array}{ll} e^{-(\frac{\sqrt{\beta - \sqrt{\beta^2 - 4\alpha}}}{\sqrt{2}} -\varepsilon) |x|} & if\ \beta -2 \sqrt{\alpha} >0,\\ e^{-(\frac{\sqrt{2\sqrt{\alpha}-\beta}}{2}-\varepsilon) |x|} & if\ |\beta | - 2\sqrt{\alpha} <0\end{array}\right.$$
for any $\varepsilon >0$.
\item Assume that $\beta= 2\sqrt{\alpha}$, $u>0$ and $u(x),\Delta u(x)\rightarrow 0$ as $|x|\rightarrow \infty$. Then there exists a constant $C>0$ such that, for $|x|$ large enough, we have
$$|u(x)|\leq Ce^{-(\sqrt{\beta}-\varepsilon )|x|},$$
for any $\varepsilon>0$.
\end{enumerate}
\end{thm}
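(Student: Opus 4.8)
The strategy is to factor the fourth-order operator $\mathcal L:=\Delta^2-\beta\Delta+\alpha$ into a composition of two second-order Schr\"odinger operators and to propagate exponential decay through that factorization. \emph{Step 1: regularity and decay of the source.} First I would sharpen the regularity exactly as in the proof of Theorem~\ref{thmradialsignchanging}: applying local $L^p$ elliptic estimates to $-\Delta(-\Delta u)+\beta(-\Delta u)=g(x,u)-\alpha u$ and bootstrapping with Sobolev embeddings, hypothesis (i) gives $u\in W^{4,q}_{\loc}(\R^N)$ for every $q$, with bounds uniform on unit cubes; hence $u$, $\nabla u$, $\Delta u$ are bounded and, since $u(x)\to 0$, all tend to $0$ at infinity. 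In particular $h:=g(\cdot,u(\cdot))\in L^\infty(\R^N)$, and by (ii), $|h(x)|\le\|b_1\|_{L^\infty}|u(x)|^{1+\sigma}$, which is $o(|u(x)|)$ as $|x|\to\infty$; equivalently, for every $\eta>0$ there is $R_\eta$ with $|h(x)|\le\eta\,|u(x)|$ on $\{|x|\ge R_\eta\}$.

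\emph{Step 2: factorization.} Write $\mathcal L=(-\Delta+a)(-\Delta+b)$, where $a,b$ are the roots of $\lambda^2-\beta\lambda+\alpha=0$ (on the Fourier side, $|\xi|^4+\beta|\xi|^2+\alpha=(|\xi|^2+a)(|\xi|^2+b)$). Three regimes arise according to the sign of $\beta^2-4\alpha$: if $\beta>2\sqrt\alpha$ the roots are real, $0<b\le a$; if $|\beta|<2\sqrt\alpha$ they are complex conjugates of modulus $\sqrt\alpha$ and real part $\beta/2$; if $\beta=2\sqrt\alpha$ one has the double root $a=b=\sqrt\alpha$. Note $\beta>-2\sqrt\alpha$ throughout the statement, so $\mathrm{Re}\,\sqrt a=\mathrm{Re}\,\sqrt b>0$ in the complex case and $0<\sqrt b\le\sqrt a$ in the real one; the decay rate attached to the \emph{smaller} root, $\mathrm{Re}\,\sqrt b$, is the exponent in the statement (in the real case $\sqrt b=\sqrt{(\beta-\sqrt{\beta^2-4\alpha})/2}=\tfrac{1}{\sqrt2}\sqrt{\beta-\sqrt{\beta^2-4\alpha}}$, matching item (1)), and the prefactor involving $|\beta^2-4\alpha|$ comes from the separation of the two roots when inverting each factor.

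\emph{Step 3: comparison and composition.} Since $\alpha>0$, the only solution of $\mathcal L\phi=0$ tending to $0$ at infinity is $\phi\equiv0$, so $u=\mathcal G*h$, where $\mathcal G=K_a*K_b$ is the fundamental solution of $\mathcal L$ ($K_m$ the Bessel-type kernel of $-\Delta+m$); $\mathcal G$ has an integrable local singularity and decays like $e^{-(\mathrm{Re}\,\sqrt b)|z|}$, up to a polynomial factor. The key is then the comparison statement: if $v$ is bounded, $v(x)\to0$, and $(-\Delta+c)v=f$ on an exterior region with $|f|\le\eta|v|$ there, then $|v(x)|\le C_\varepsilon e^{-(\mathrm{Re}\,\sqrt c-\varepsilon)|x|}$ for every $\varepsilon>0$; since $v$ need not have a sign one compares $|v|$ rather than $v$, via Kato's inequality $-\Delta|v|+(\mathrm{Re}\,c)|v|\le|f|$ when $\mathrm{Re}\,c>0$, and in general via the explicit decay of $K_c$ together with a Gr\"onwall-type iteration of the convolution identity $v=K_c*(\text{source})$. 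Carried out on $u=\mathcal G*h$, with $|h|\le\eta|u|$ on $\{|x|\ge R_\eta\}$ from Step~1, this yields $|u(x)|\le C_\varepsilon e^{-(\mathrm{Re}\,\sqrt b-\varepsilon)|x|}$ when $\beta\neq2\sqrt\alpha$; alternatively one applies the comparison twice, first to $w:=(-\Delta+b)u$ solving $(-\Delta+a)w=h$, then to $u$ solving $(-\Delta+b)u=w$. For the borderline case $\beta=2\sqrt\alpha$ the factors coincide, $(-\Delta+\sqrt\alpha)^2u=h$, and the double root produces an extra polynomial factor in $\mathcal G$; here the hypotheses $u>0$ and $u,\Delta u\to0$ let me set $w:=-\Delta u+\sqrt\alpha\,u$, which is nonnegative by the maximum principle and tends to $0$, and run the comparison first on $(-\Delta+\sqrt\alpha)w=h$ and then on $(-\Delta+\sqrt\alpha)u=w$, the polynomial being absorbed in the arbitrary $\varepsilon$.

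The hard part is the comparison statement in the genuinely complex regime $|\beta|<2\sqrt\alpha$: there $\mathrm{Re}\,c=\beta/2$ may be small or nonpositive, so Kato's inequality alone does not even give the correct rate, and one has to work with the oscillating complex Helmholtz kernel $K_c$ (equivalently, with $\mathcal G$ directly) and iterate carefully, using $h=o(|u|)$ to push the exponent up to $\mathrm{Re}\,\sqrt b-\varepsilon$. After that, composing the factors and bookkeeping the constants, in particular their dependence on $\beta^2-4\alpha$, is routine.
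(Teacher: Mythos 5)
Your proposal follows essentially the same route as the paper: factor $\Delta^2-\beta\Delta+\alpha$ through the two roots $x_{1,2}=\tfrac{-\beta\pm\sqrt{\beta^2-4\alpha}}{2}$, write the fundamental solution $G$ as the partial-fraction combination $\tfrac{1}{\sqrt{\beta^2-4\alpha}}(g_{x_2}-g_{x_1})$ of Helmholtz kernels (whence the prefactor $|\beta^2-4\alpha|^{-1/2}$), use the Hankel-function asymptotics to estimate $|G|$, represent $u=G*g(\cdot,u)$, and close with a barrier/maximum-principle comparison exploiting $|g(x,u)|=o(|u|)$ at infinity. The ``hard part'' you flag in the regime $|\beta|<2\sqrt{\alpha}$ is resolved in the paper precisely by computing ${\rm Re}\,(i\sqrt{x_j})=-\tfrac12\sqrt{2\sqrt{\alpha}-\beta}$ and then dominating $|G|$ by a \emph{real, positive} Helmholtz kernel $g_\tau$ with $0<\tau<\tfrac14\sqrt{2\sqrt{\alpha}-\beta}$, after which the comparison is run against the scalar operator $-\Delta+\tau$; no Kato-type inequality for a complex potential is needed. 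One genuine imprecision in your sketch concerns the double-root case: you assert that $w=-\Delta u+\sqrt{\alpha}\,u\ge 0$ ``by the maximum principle'', but $(-\Delta+\sqrt{\alpha})w=g(x,u)$ has a right-hand side of unknown sign, so this does not follow as stated. The paper instead uses $u>0$ together with $|g(x,u)|\le\tilde\delta\,u$ for $|x|$ large to pass to the differential \emph{inequality} $\Delta^2u-\beta\Delta u+(\alpha-\tilde\delta)u\ge 0$, whose associated quadratic now has two distinct positive real roots $\lambda_1,\lambda_2$; only then does it factor, deduce $v=-\Delta u+\lambda_1u\ge 0$ from the maximum principle, and compare with the Bessel-type barrier. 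With that adjustment your argument closes.
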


\begin{rmq}\label{RemExpoDecay}
\label{decunif}
Suppose $\alpha$ is fixed and $\beta \rightarrow +\infty$. The previous theorem implies that, for $|x|$ large enough and any $\tilde{\varepsilon}=\varepsilon/\sqrt{\beta}>0$,
\begin{equation}
\label{decayfeb1}
|u(x)|\leq C e^{-\frac{1}{\sqrt{\beta}}(\sqrt{\alpha} -\varepsilon)|x|},
\end{equation}
for some constant $C$ independent of $\beta$. Setting $u(x)=v(\gamma^{\frac{1}{4}} x)$ with $\beta=1/\sqrt{\gamma}$ and assuming that $g(x,u)=g(u)$, we see that $v$ solves
$$\gamma \Delta^2 v -\Delta v +\alpha v=g(v). $$
We deduce from \eqref{decayfeb1} that as $\gamma \rightarrow 0^{+}$, we have, for $|x|$ large enough,
$$|v(x)|\leq C e^{-(\sqrt{\alpha}-\varepsilon)|x| }.$$
Let us remark that this decay is almost optimal: it is well-known, in the case $\gamma=0$, that the following estimate is sharp
$$|v(x)|\leq \dfrac{C}{|x|^{\frac{N-1}{2}}}e^{-\sqrt{\alpha} |x|}.$$
\end{rmq}

We begin by recalling some known facts on the fundamental solutions to the Helmholtz equation. They are solutions to 
\begin{equation}
\label{helmholtz}
-(\Delta +\mu)g_{\mu}(\cdot,y)=\delta_y\ \text{where}\ \mu\in \Cc ,\ y\in \R^N,
\end{equation}
and $\delta_y$ stands for the Dirac mass centered at $y$. By abuse of notation, we fix $y\in \R^N$ and denote $g_\mu (\cdot-y)=g_{\mu}(\cdot,y)$. The functions $g_\mu$ are not uniquely determined but in the following we always choose those which satisfy nice integrability condition, namely we require that $g_\mu \in L^1 (\R^N)$. 

For $N=1$ we get
\begin{equation*}
 g_{\mu}(x)=\dfrac{i}{2\sqrt{\mu}}e^{i\sqrt{\mu}|x|}.
\end{equation*}
In general, for $N>1$ we have
\begin{equation*}
g_{\mu}(x)=\dfrac{i c_N^{\mu} }{|x|^\nu}H_{\nu}^{(1)} (\sqrt{\mu}|x|),\hspace{.3cm}c_N^\mu =\frac{\pi \mu^{\nu/2}}{2(2\pi)^{N/2}}
\end{equation*}
where $\nu =(N-2)/2$ and $H_{\nu}^{(1)}$ is the first Hankel function, see \cite[pg. 76]{Leis86}. It is well-known that
\begin{equation*}
H_\nu^{(1)} (r)= \left(\dfrac{2}{\pi r}\right)^{1/2}\text{exp}\left[i\left(r-\frac{\nu \pi}{2}-\frac{\pi}{4}\right)\right] + o\left(\frac{1}{r^{3/2}}\right)\hspace{.3cm}\text{as}\ \ r\to\infty.
\end{equation*}

\noindent We have the following representation formula, see \cite[pg. 78]{Leis86}.
\begin{thm}
\label{thm1}
(i) Let $f\in L^2 (\R^N) \cap L^\infty (\R^N)$ and
$$u(x)=\int_{\R^N} f(y) g_\mu (x-y)\, dy.$$
Then 
$$-(\Delta +\mu)u= f.$$
(ii) Let $\mu \in \Cc \backslash \R^+$, $u\in L^2 (\R^N)$ and
$$-(\Delta + \mu) u=f \in L^2 (\R^N) \cap L^\infty (\R^N).$$
Then
$$u(x)=\int_{\R^N} f(y) g_{\mu}(x-y)\, dy. $$
\end{thm}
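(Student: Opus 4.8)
The plan is to derive both parts from the single structural fact that, by construction, $g_\mu$ is a fundamental solution of the Helmholtz operator,
\[
-(\Delta+\mu)g_\mu=\delta_0\quad\text{in }\mathcal{D}'(\R^N),
\]
equivalently $\widehat{g_\mu}(\xi)=(|\xi|^2-\mu)^{-1}$ on the Fourier side. This identity is precisely what the normalization constant $c_N^\mu$ and the choice of the branch of $\sqrt{\mu}$ (with $\operatorname{Im}\sqrt{\mu}>0$ when $\mu\in\Cc\setminus\R^+$, which forces exponential decay and hence $g_\mu\in L^1$) are designed to guarantee. For part (i), I would first note that, since $g_\mu\in L^1(\R^N)$ and $f\in L^2(\R^N)\cap L^\infty(\R^N)$, Young's inequality makes $u=f*g_\mu$ well defined and puts it in $L^2\cap L^\infty$, so in particular $u$ is a tempered distribution. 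Testing $-(\Delta+\mu)u$ against $\varphi\in C_c^\infty(\R^N)$ and exchanging the order of integration --- legitimate because $g_\mu\in L^1_{\loc}$ near the origin (it behaves like $|x|^{2-N}$ for $N\ge 3$, like $\log|x|$ for $N=2$, and is bounded for $N=1$), $g_\mu\in L^1$ globally, $f\in L^\infty$, and $\varphi$ is compactly supported --- reduces the computation to the identity $\langle g_\mu(\cdot-y),-(\Delta+\mu)\varphi\rangle=\varphi(y)$ for each fixed $y$, and yields $\langle-(\Delta+\mu)u,\varphi\rangle=\int_{\R^N}f\varphi\,dx$. Hence $-(\Delta+\mu)u=f$ in $\mathcal{D}'(\R^N)$, and classically wherever $f$ is regular enough, which covers the situations where this theorem is applied.

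For part (ii), I would set $w=f*g_\mu$; part (i) gives $-(\Delta+\mu)w=f$ with $w\in L^2(\R^N)$, so $v:=u-w\in L^2(\R^N)$ solves $(\Delta+\mu)v=0$ in $\mathcal{D}'(\R^N)$. Passing to Fourier transforms on $\mathcal{S}'(\R^N)$ yields $(|\xi|^2-\mu)\widehat v(\xi)=0$, and since $\mu\notin\R^+$ the continuous function $\xi\mapsto|\xi|^2-\mu$ vanishes on at most a Lebesgue-null subset of $\R^N$ (indeed it never vanishes, except that $\xi=0$ is a zero in the borderline case $\mu=0$); therefore $\widehat v=0$ a.e., so $v\equiv 0$ and $u=f*g_\mu$. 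Equivalently and more directly, the Fourier transform turns $-(\Delta+\mu)u=f$ into $\widehat u=(|\xi|^2-\mu)^{-1}\widehat f=\widehat{g_\mu}\,\widehat f=\widehat{f*g_\mu}$, which is the claimed representation.

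The one step with real content --- everything else being routine Young/Fubini/Plancherel bookkeeping --- is the fundamental-solution identity $-(\Delta+\mu)g_\mu=\delta_0$ for the explicit kernel written in terms of $e^{i\sqrt{\mu}|x|}$ (when $N=1$) or $|x|^{-\nu}H_\nu^{(1)}(\sqrt{\mu}|x|)$ (when $N\ge 2$), including the verification that $c_N^\mu$ and the branch of $\sqrt{\mu}$ are the correct ones. The hard part will be this verification; one does it either by computing $\widehat{g_\mu}=(|\xi|^2-\mu)^{-1}$ from the known Fourier transform of the Bessel/Hankel kernel, or by the classical excision argument --- integrate $-(\Delta+\mu)\varphi$ against $g_\mu$ over $\R^N\setminus B(0,\varepsilon)$, integrate by parts twice, and use the near-origin asymptotics of $H_\nu^{(1)}$ to extract the term $\varphi(0)$ as $\varepsilon\to 0^+$. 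This is the standard construction of the outgoing Helmholtz fundamental solution, carried out in \cite{Leis86}.
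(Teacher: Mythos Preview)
The paper does not give its own proof of this statement; it is quoted as a known representation formula with the attribution ``see \cite[pg.~78]{Leis86}''. Your proposal is a correct and standard argument: part (i) is just the distributional identity $-(\Delta+\mu)(f*g_\mu)=f*\delta_0=f$ once $g_\mu\in L^1$ and $f\in L^2\cap L^\infty$ justify Young and Fubini, and part (ii) is the Liouville-type uniqueness obtained by Fourier transforming the difference $v=u-f*g_\mu\in L^2$ and using that $|\xi|^2-\mu$ has no real zeros when $\mu\in\Cc\setminus\R^+$. Your remark that the only substantive ingredient is the verification that the explicit Hankel kernel with the stated constant $c_N^\mu$ and the branch $\operatorname{Im}\sqrt{\mu}>0$ really satisfies $-(\Delta+\mu)g_\mu=\delta_0$ and lies in $L^1$ is exactly right, and is precisely what the reference to Leis is meant to supply.
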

Let us notice that using the exponential decay of $g_\mu$, it is possible to ask for less regularity on $f$. Next, we denote by $G$ the fundamental solution to the operator $\Delta^2 -\beta \Delta +\alpha$, i.e.
\begin{equation}
\Delta^2 G-\beta \Delta G+\alpha G=\delta_0 .
\label{defgreen}
\end{equation}
In the next proposition, we express $G$ in terms of Helmhotz's functions which allows us to get information on its asymptotic behavior.

\begin{prop}
\label{prop1}
Assume that $\beta\neq 2\sqrt{\alpha}$. We have
$$G=\dfrac{1}{\sqrt{\beta^2-4\alpha }} (g_{x_2}-g_{x_1} ),$$
where $g_{x_j}$, $j=1,2$ are the fundamental solutions to \eqref{helmholtz} (as defined previously) with 
\begin{equation}\label{x_1-x_2-formulae}
x_1=\dfrac{-\beta + \sqrt{\beta^2 - 4\alpha}}{2}\hspace{.5cm}\text{and}\hspace{.5cm}x_2=\dfrac{-\beta - \sqrt{\beta^2 - 4\alpha}}{2}. 
\end{equation}
Moreover, for $|x|$ large enough, we have
$$|G (x)| <\dfrac{C}{|\sqrt{\beta^2-4\alpha }|} g_{\tau}(x),$$
for some constant $C>0$, where $0<\tau<(\sqrt{\beta -\sqrt{\beta^2 - 4\alpha}})/2$ if $\beta > 2\sqrt{\alpha}$ and $0 <\tau <(\sqrt{ 2\sqrt{\alpha}-\beta})/4$ if $2\sqrt{\alpha}>|\beta|$.
\end{prop}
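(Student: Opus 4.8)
The plan is to factor the fourth-order operator $\Delta^2-\beta\Delta+\alpha$ into a composition of two Helmholtz operators and then split $G$ by partial fractions. By the very definition of $x_1,x_2$ in \eqref{x_1-x_2-formulae}, these are the two roots of $r^2+\beta r+\alpha=0$, where $r$ plays the role of the symbol of $-\Delta$; equivalently $|\xi|^4+\beta|\xi|^2+\alpha=(|\xi|^2-x_1)(|\xi|^2-x_2)$, with $x_1+x_2=-\beta$, $x_1x_2=\alpha$ and $x_1-x_2=\sqrt{\beta^2-4\alpha}$. At the operator level this reads $\Delta^2-\beta\Delta+\alpha=\bigl(-(\Delta+x_1)\bigr)\bigl(-(\Delta+x_2)\bigr)$, the two factors commuting. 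First I would record this factorization together with the elementary partial-fraction identity $\frac1{(r-x_1)(r-x_2)}=\frac1{x_1-x_2}\bigl(\frac1{r-x_1}-\frac1{r-x_2}\bigr)$.

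For the explicit formula I would pass to Fourier transforms. Since $\widehat{g_\mu}(\xi)=(|\xi|^2-\mu)^{-1}$ (from \eqref{helmholtz}) and, by \eqref{defgreen}, $\widehat G(\xi)=(|\xi|^4+\beta|\xi|^2+\alpha)^{-1}$, the partial-fraction identity applied with $r=|\xi|^2$ gives $\widehat G=\frac1{\sqrt{\beta^2-4\alpha}}(\widehat{g_{x_1}}-\widehat{g_{x_2}})$, which is the asserted identity up to the choice of branch of $\sqrt{\beta^2-4\alpha}$ (which also fixes the labelling of $x_1,x_2$, hence the overall sign). Alternatively, to avoid Fourier bookkeeping, one verifies the formula directly as an identity of tempered distributions: since $-(\Delta+x_2)g_{x_1}=\delta_0+(x_1-x_2)g_{x_1}$ and symmetrically for $g_{x_2}$, applying $\bigl(-(\Delta+x_1)\bigr)\bigl(-(\Delta+x_2)\bigr)$ to $g_{x_1}-g_{x_2}$ makes the $(\Delta+x_j)\delta_0$ contributions cancel and leaves exactly $(x_1-x_2)\delta_0$. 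That $\frac1{\sqrt{\beta^2-4\alpha}}(g_{x_2}-g_{x_1})$ is the fundamental solution in the integrable class (and not one differing by a homogeneous solution) follows because $r\mapsto r^2+\beta r+\alpha$ is bounded away from $0$ on $[0,\infty)$ when $\beta\ne 2\sqrt\alpha$, so $\widehat G$ is integrable and $G$ is uniquely determined, while the $g_{x_j}$ were chosen in $L^1$.

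For the pointwise bound I would substitute the explicit expressions for $g_\mu$ recalled before Theorem \ref{thm1}: for $N=1$, $g_\mu(x)=\frac{i}{2\sqrt\mu}e^{i\sqrt\mu|x|}$, and for $N>1$, $g_\mu(x)=ic_N^\mu|x|^{-\nu}H_\nu^{(1)}(\sqrt\mu|x|)$ together with the stated large-argument asymptotics of $H_\nu^{(1)}$, which give $|g_\mu(x)|\le C|x|^{-(N-1)/2}e^{-(\operatorname{Im}\sqrt\mu)|x|}$ for $|x|$ large, where $\sqrt\mu$ denotes the branch with $\operatorname{Im}\sqrt\mu>0$ (legitimate since $x_1,x_2\in\Cc\setminus\R^+$ when $\beta\ne 2\sqrt\alpha$, and reducing to $\sqrt\mu=i\sqrt{-\mu}$ when $\mu$ is a negative real; this is exactly the branch making $g_\mu$ decay rather than grow). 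When $\beta>2\sqrt\alpha$ one has $x_2<x_1<0$, so $\operatorname{Im}\sqrt{x_1}=\sqrt{-x_1}=\bigl(\beta-\sqrt{\beta^2-4\alpha}\bigr)^{1/2}\big/\sqrt2$ is the smaller of the two rates and the $g_{x_1}$-term dominates; when $2\sqrt\alpha>|\beta|$, $x_1$ and $x_2$ are complex conjugates of modulus $\sqrt\alpha$, and an elementary computation of the real and imaginary parts of $\sqrt{x_j}$ pins down both decay rates. In all cases $|G(x)|\le\frac{C}{|\sqrt{\beta^2-4\alpha}|}\bigl(|g_{x_1}(x)|+|g_{x_2}(x)|\bigr)$, and absorbing the polynomial prefactor into the exponential yields $|G(x)|<\frac{C}{|\sqrt{\beta^2-4\alpha}|}g_\tau(x)$ for every $\tau$ strictly below the relevant rate, in particular for $\tau$ in the range stated. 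The main obstacle is precisely the regime $\beta^2<4\alpha$: there $x_1,x_2$ are genuinely complex, one must invoke the branch of $g_{x_j}$ furnished by Theorem \ref{thm1}(ii) (valid exactly for $\mu\in\Cc\setminus\R^+$) and check that the real part of the exponent $-i\sqrt{x_j}$ governing the decay is indeed positive; once this is settled, the remainder is routine manipulation of the Hankel asymptotics.
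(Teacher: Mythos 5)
Your proposal follows essentially the same route as the paper: Fourier transform plus partial fractions in $|\xi|^2$ to obtain the decomposition of $G$ into the two Helmholtz kernels, then the large-argument Hankel asymptotics with a case split on the sign of $\beta^2-4\alpha$ (negative real roots versus complex-conjugate roots) to extract the exponential decay rates. The only discrepancy — your $g_{x_1}-g_{x_2}$ versus the paper's $g_{x_2}-g_{x_1}$ — is purely a matter of the sign convention chosen for $\widehat{g}_\mu$, which you explicitly flag, so the argument is correct.
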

\begin{proof}
Taking the Fourier transform of \eqref{defgreen}, we get
$$\widehat{G}= \left(\dfrac{1}{2\pi}\right)^{N/2} \dfrac{1}{(|\xi|^2 -x_1)(|\xi|^2 - x_2)},$$
where $x_1$ and $x_2$ are defined in \eqref{x_1-x_2-formulae}. We have
$$\widehat{G}= \left(\dfrac{1}{2\pi}\right)^{N/2}\dfrac{1}{\sqrt{\beta^2-4\alpha}}\left( \dfrac{1}{|\xi|^2 -x_1}-\dfrac{1}{|\xi|^2 - x_2}\right).$$
Next, taking the Fourier transform of \eqref{helmholtz}, we see that
$$\widehat{g}_{\mu} = -\left(\dfrac{1}{2\pi}\right)^{N/2} \dfrac{1}{|\xi|^2 - \mu}.$$
We deduce from the two previous equalities that
$$\widehat{G}= \dfrac{1}{\sqrt{\beta^2 - 4\alpha }}(\widehat{g}_{x_2}-\widehat{g}_{x_1} ).$$
The first part of the proof follows. We recall that
$$g_{\mu}(x)=\dfrac{i c_N^{\mu} }{|x|^\nu}H_{\nu}^{(1)} (\sqrt{\mu}|x|)$$
and, for $|x|=r$ large, 
\begin{equation*}
H_\nu^{(1)} (r)= \left(\dfrac{2}{\pi r}\right)^{1/2}\text{exp}\left[i\left(r-\frac{\nu \pi}{2}-\frac{\pi}{4}\right)\right] + o\left(\frac{1}{r^{3/2}}\right)\hspace{.3cm}\text{as}\ \ r\to\infty.
\end{equation*}

We have to consider two cases depending on the values of $\alpha$ and $\beta$. First, assume that $\beta - 2 \sqrt{\alpha} > 0$. In this case, we see that $x_1,x_2 \in \Cc \backslash \R^+$ which allows us to 
use the representation formula (see Theorem \ref{thm1}). 
 Then, we have, for $|x|=r$ large enough, for $j=1,2$ and for some constant $C>0$,
$$|g_{ x_j}(x)|\leq \dfrac{C}{r^{\frac{N-1}{2}}} e^{i \sqrt{x_j} r}\leq \dfrac{C}{r^{\frac{N-1}{2}}} e^{-\sqrt{x_j} r}.$$
Thus, we deduce that
$$|G (x)|\leq \dfrac{C}{\sqrt{\beta^2-4\alpha} r^{\frac{N-1}{2}}} e^{-\frac{\sqrt{ \beta -\sqrt{\beta^2 - 4\alpha} } }{\sqrt{2}} r}.$$
Finally, we consider the case $|\beta | - 2\sqrt{\alpha} < 0$. Using the well-known formula
$$\sqrt{a+ib}=A+iB,\ \text{for}\ a,b\in \R,$$
where $A=\pm \dfrac{1}{\sqrt{2}}\sqrt{\sqrt{a^2+b^2}+a}$ and $B=\pm \dfrac{1}{\sqrt{2}}\sqrt{\sqrt{a^2+b^2}-a}$, we obtain, for some constant $A_1\in \R$,
$$i\sqrt{x_j}= i\sqrt{\dfrac{\beta \pm i \sqrt{4\alpha -\beta^2} }{2}}=i A_1 - \dfrac{1}{2}\sqrt{2\sqrt{\alpha} -\beta}. $$
Therefore, proceeding as previously, we deduce that, for $r$ large enough,
$$|G(x)|\leq \dfrac{C}{|\sqrt{\beta^2-4\alpha}| r^{\frac{N-1}{2}}} e^{- \dfrac{1}{2}\sqrt{2\sqrt{\alpha} -\beta}r}.$$
This concludes the proof.
\end{proof}

\noindent We can now state the following representation formula.
\begin{prop}
\label{proprep}
Let $x_1$ and $x_2$ be defined as in \eqref{x_1-x_2-formulae}. Assume that $ x_1,x_2\in \Cc \backslash \R^+$ and let $u$ be a distribution such that
\begin{equation}
\label{eqproprep}
\Delta^2 u-\beta \Delta u+ \alpha u=f\in L^2(\R^N)\cap L^\infty (\R^N).
\end{equation}
Then, we have
$$u(x)=\int_{\R^N} f(y) G(x-y)\, dy.$$
\end{prop}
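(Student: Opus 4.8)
The plan is to reduce the fourth-order equation \eqref{eqproprep} to a chain of two Helmholtz equations, apply the representation formula of Theorem \ref{thm1}(ii) twice, and then reassemble using the partial fraction decomposition of $G$ obtained in Proposition \ref{prop1}. Concretely, I would first factor the symbol: since $x_1,x_2$ are the roots of $X^2+\beta X+\alpha$ after the sign change (see \eqref{x_1-x_2-formulae}), the operator splits as
\[
\Delta^2 u-\beta\Delta u+\alpha u = -(\Delta+x_1)\bigl(-(\Delta+x_2)u\bigr),
\]
which one checks by expanding, using $x_1+x_2=-\beta$ and $x_1 x_2=\alpha$. Set $w:=-(\Delta+x_2)u$, so that $-(\Delta+x_1)w=f$.

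The main obstacle is a regularity/integrability bookkeeping issue: to invoke Theorem \ref{thm1}(ii) on each factor I need to know that $u\in L^2(\R^N)$ and $w\in L^2(\R^N)$, and that the intermediate right-hand sides lie in $L^2\cap L^\infty$. Since $u$ is only assumed to be a distributional solution, I would first argue that $u\in L^2(\R^N)$: taking Fourier transforms in \eqref{eqproprep} gives $\widehat u(\xi)=\widehat f(\xi)/\bigl((|\xi|^2-x_1)(|\xi|^2-x_2)\bigr)$, and because $x_1,x_2\in\Cc\setminus\R^+$ the denominator is bounded away from $0$ and grows like $|\xi|^4$, so $\widehat u\in L^2$ and hence $u\in L^2$; moreover $\widehat u$ decays, giving enough regularity that $w=-(\Delta+x_2)u$ also has $\widehat w(\xi)=-(|\xi|^2-x_2)\widehat u(\xi)=\widehat f(\xi)/(|\xi|^2-x_1)\in L^2$, so $w\in L^2(\R^N)$. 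One still needs $w\in L^\infty$ (so that Theorem \ref{thm1}(ii) applies to the outer factor with right-hand side $w$ when solving for $u$); this follows since $\widehat w\in L^1(\R^N)$, as $\widehat w(\xi)=\widehat f(\xi)/(|\xi|^2-x_1)$ with $\widehat f\in L^2$ (because $f\in L^2$) and $(|\xi|^2-x_1)^{-1}\in L^2(\R^N)$ once $N\le 3$; for higher $N$ one instead uses the decay of $\widehat f$ coming from $f\in L^\infty$ together with interpolation, or argues directly via the convolution kernel $g_{x_1}$. I would streamline this by simply noting $f\in L^2\cap L^\infty$ and that convolution with the integrable kernel $g_{x_1}$ maps such $f$ into $L^\infty$.

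With these facts in hand the argument is mechanical. Applying Theorem \ref{thm1}(ii) to $-(\Delta+x_1)w=f$ yields
\[
w(x)=\int_{\R^N} f(y)\,g_{x_1}(x-y)\,dy,
\]
and then, since $w\in L^2\cap L^\infty$, applying Theorem \ref{thm1}(ii) again to $-(\Delta+x_2)u=w$ gives
\[
u(x)=\int_{\R^N} w(z)\,g_{x_2}(x-z)\,dz=\int_{\R^N}\!\!\int_{\R^N} f(y)\,g_{x_1}(z-y)\,g_{x_2}(x-z)\,dy\,dz.
\]
By Fubini (justified because $f\in L^\infty$ has, a priori, only local integrability, so one should first localize or use $f\in L^2$ together with $g_{x_1},g_{x_2}\in L^1$; in fact $g_{x_1}*g_{x_2}\in L^1$ and $f\in L^\infty$ suffices after noting the decay of the kernels) this equals $\int_{\R^N} f(y)\,(g_{x_1}*g_{x_2})(x-y)\,dy$. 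Finally, on the Fourier side $\widehat{g_{x_1}*g_{x_2}}=(2\pi)^{N/2}\widehat{g}_{x_1}\widehat{g}_{x_2}$, and comparing with the computation of $\widehat G$ in the proof of Proposition \ref{prop1} — namely $\widehat G=(2\pi)^{-N/2}\bigl((|\xi|^2-x_1)(|\xi|^2-x_2)\bigr)^{-1}=(2\pi)^{N/2}\widehat g_{x_1}\widehat g_{x_2}$ — we conclude $g_{x_1}*g_{x_2}=G$, which gives the claimed formula $u(x)=\int_{\R^N} f(y)\,G(x-y)\,dy$.
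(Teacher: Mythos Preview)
Your proof is correct, but it takes a different route from the paper. You cascade the two Helmholtz inversions: first solve $-(\Delta+x_1)w=f$ to get $w=f*g_{x_1}$, then solve $-(\Delta+x_2)u=w$ to get $u=w*g_{x_2}=f*(g_{x_1}*g_{x_2})$, and finally identify $g_{x_1}*g_{x_2}=G$ on the Fourier side. This forces you to check that the intermediate function $w$ lies in $L^2\cap L^\infty$ (so that Theorem~\ref{thm1}(ii) applies a second time) and to justify Fubini for the double convolution.

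The paper instead exploits the partial-fraction identity $G=\frac{1}{\sqrt{\beta^2-4\alpha}}(g_{x_2}-g_{x_1})$ from Proposition~\ref{prop1} directly. Setting $u_1=-(\Delta+x_1)u$ and $u_2=-(\Delta+x_2)u$, both satisfy a Helmholtz equation with right-hand side $f$ itself, so a single application of Theorem~\ref{thm1}(ii) to each gives $u_j=f*g_{x_{3-j}}$; then the algebraic identity $u_2-u_1=(x_1-x_2)u=\sqrt{\beta^2-4\alpha}\,u$ yields $u=f*G$ immediately. This avoids the $w\in L^\infty$ check, the Fubini step, and the separate verification that $g_{x_1}*g_{x_2}=G$. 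Your argument is perfectly valid and perhaps more conceptually straightforward (just iterate the second-order result), while the paper's is shorter and uses only the $L^2$ membership of $u_1,u_2$, which follows from your own Fourier computation for $u$.
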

\begin{proof}
First, we observe that one can rewrite \eqref{eqproprep} as
$$(\Delta +x_1 ) (\Delta +x_2 ) u=(\Delta +x_2)(\Delta +x_1) u =f.$$
We set $u_1=-(\Delta +x_1) u $ and $u_2=-(\Delta +x_2) u $. On the one hand, it is easy to see that 
$$u=\dfrac{1}{\sqrt{\beta^2-4\alpha}} (u_2 -u_1).$$
On the other hand, using Theorem \ref{thm1}, we have
$$ u_1(x)= \int_{\R^N} f(y) g_{x_2}(x-y)\, dy ,$$
and
$$ u_2(x)= \int_{\R^N} f(y) g_{x_1}(x-y)\, dy .$$
Thus, combining the three previous lines, we deduce that
\begin{align*}
u(x)&= \dfrac{1}{\sqrt{\beta^2-4\alpha}} \int_{\R^N} ( g_{x_2}(x-y)- g_{x_1}(x-y)) f(y)\, dy\\
&=\int_{\R^N} G(x-y) f(y)\, dy.
\end{align*}
This concludes the proof.
\end{proof}
We are now in position to prove Theorem \ref{expdecay}.
\begin{proof}[Proof of Theorem \ref{expdecay}.]
We start with the case $\beta\neq 2 \sqrt{\alpha}$. Using Propositions \ref{prop1} and \ref{proprep}, we obtain that
$|u(x)|\leq \dfrac{C}{|\sqrt{\beta^2 - 4\alpha}|} v(x)$ for all $x\in \R^N$ where $$v(x)=\int_{\R^N} g_{\tau}(x-y) |g(y,u(y))|\, dy.$$
Using our assumptions on $g$ and since $\lim_{|x|\to\infty}u(x)=0$, we have that for any $\varepsilon>0$, there exists $R>0$ such that if $|x|\geq R$, then  
\begin{equation*}
\dfrac{|g(x,u(x))|}{v(x)}\leq \dfrac{|u(x)|^{1+\delta}}{v(x)}\leq \varepsilon. 
\end{equation*}

\noindent Next we define $\psi(x)=C_1 e^{-a (|x|-R)}$, where $a=\sqrt{\tau} -\varepsilon$ and $C_1$ is a large positive constant such that $v(x)<C_1$ if $|x|\leq R$. It is easy to see that
$$-\Delta  \psi +a^2\psi\geq 0\hspace{.3cm} \text{in}\ \R^N\backslash \{0\},$$
$$v(x)-\psi(x)< 0\hspace{.3cm} \text{for}\ |x|\leq R,$$
and
\begin{equation}\label{limsup-diff}
 \lim_{|x|\rightarrow \infty}  (v(x) -\psi (x))=0.
\end{equation}
We now show that
\begin{equation*}
 v(x)\leq\psi(x)\hspace{.3cm}\text{for}\ |x|\geq R.
\end{equation*}
If there exists $x_0\in \R^N$ such that $|x_0|\geq R$ and $v(x_0)-\psi (x_0) >0$, then the set
$$\Omega = \{x\in \R^N: |x|>R\ \text{and}\ v(x)>\psi (x)\}$$
is nonempty, $\partial \Omega  \neq \varnothing$ and $\overline{\Omega}\subset \R^N \backslash \overline{B(0,R)}$. For $x\in\Omega$, we have
$$\Delta (v(x) -\psi (x)) = a^2 (v(x)-\psi(x))>0.$$
Applying \eqref{limsup-diff} and the maximum principle, we get that $v(x)-\psi(x)\leq 0$ for $x\in\Omega$, which contradicts the definition of $\Omega$. Therefore, we obtain
$$|u(x)|\leq \dfrac{C}{|\sqrt{\beta^2-4\alpha}|} v(x)\leq C e^{-a |x|},$$
where $a=\sqrt{\tau} -\varepsilon$ ($\tau$ defined as in Proposition \ref{prop1}).

\medbreak

Assume now $\beta=2\sqrt{\alpha}$. In this case, let us recall that we assume that $u>0$ and $u(x),\Delta u(x)\rightarrow 0$ as $|x|\rightarrow \infty$. Since $u(x)\rightarrow 0$ as $|x|\rightarrow \infty$, we can choose $R>0$ such that $|u(x)|<\tilde{\delta}$ for some constant $\tilde{\delta}>0$ and $|x|>R$. Thus, we have
\begin{equation}
\label{eqcrit}
\Delta^2 u - \beta \Delta u + (\alpha -\tilde{\delta}) u \geq 0.
\end{equation}
Since $\tilde{\delta}>0$, we have $\beta^2\geq 4(\alpha -\delta)=4\tilde{\alpha}$. Therefore, we can rewrite \eqref{eqcrit} as
$$\left\{\begin{array}{ll}
-\Delta u + \lambda_1 u &=v\\
-\Delta v +\lambda_2 v & \geq 0,
\end{array}\right.$$
where $\lambda_1,\lambda_2>0$ are such that $\lambda_1 +\lambda_2=\beta$ and $\lambda_1 \lambda_2 =\tilde{\alpha}$. Let us notice that the maximum principle implies that $v\geq 0$.
Next, we recall that the solution to 
$-\Delta u+\gamma_1 u=\delta_0$, $\gamma_1 >0$ is given by
$$V_{\gamma_1}=x^{-N/2} [C_1 J_1 (i\sqrt{\gamma_1} x) + C_2 Y_1 (i\sqrt{\gamma_1} x) ],$$
where $J_1$ and $Y_1$ are the Bessel functions of first and second kind, respectively. Using the asymptotic behaviour of these functions, we deduce that, for $|x|$ large enough,
$$|V_{\gamma_1} (x)|\leq \dfrac{C}{|x|^{\frac{N-1}{2}}}e^{-\sqrt{\gamma_1} |x|}.$$
Next, we choose $C_1,C_2$ such that $V_{\mu} (x)>u(x)$ for $|x|=R$. Since $u(x)\rightarrow 0$ as $|x|\to\infty$, and using the maximum principle, we get
$$|u(x)|\leq \dfrac{C}{|x|^{\frac{N-1}{2}}}e^{-\sqrt{\mu} |x|}.$$
We notice that we can choose $\mu=\beta-\varepsilon$, for some $\varepsilon$ as small as we want.
\end{proof}
\begin{rmq}
Assume that $\beta\geq 2 \sqrt{\alpha}$ and let $u$ be the ground state solution to $\eqref{eq4beta}$. We know that $u$ is radial, positive and that $u(x),\Delta u(x)\rightarrow 0$ as $|x|\rightarrow \infty$. Therefore, in view of the previous results, there exist $a,C>0$ such that, for $|x|\geq R$,
$$|u(x)|\leq C e^{-a|x|}.$$
Since $\beta\geq 2 \sqrt{\alpha}$, we can write
$$
\left.
\begin{cases}
-\Delta u + \dfrac{\beta}{2} u &=v\\
-\Delta v +\dfrac{\beta}{2} v & = |u|^{2\sigma}u +\left(\dfrac{\beta^2}{4}-\alpha\right) u.
\end{cases}
\right.$$
As previously, since $ |u|^{2\sigma}u +(\beta^2/4-\alpha) u>0$, one can show that there exist $a_1,C>0$ such that, for $|x|\geq R$,
$$|v(x)|\leq C e^{-a_1|x|}.$$
So, we see that $v^\prime $ satisfies $(r^{n-1}v^\prime)^\prime \leq r^{n-1} e^{-\min \{a,a_1\} r} $. Integrating, we deduce that, for $|x|\geq R$,
$$|v^\prime (x)|\leq C e^{-b |x|},$$
for some constant $b>0$. The same kind of argument also implies that $u^\prime$ has an exponential decay. 
\end{rmq}

\subsection{Nondegeneracy in dimension one} 
In this section we investigate the nondegeneracy of the solutions to 
\begin{equation}
\label{4NLS}
 u'''' -\beta  u^{\prime \prime}+\alpha u= |u|^{2\sigma}u,\ \lim_{|t|\rightarrow \infty}(u,u',u'',u''')(t)=0.
\end{equation}
Uniqueness (up to translations) can be deduced by adapting the arguments of \cite[Section 2]{buffonitoland} and \cite{amick}. Our aim in this subsection is to prove the following result. 
\begin{thm}
\label{thmnondegen}
Assume that $\beta\geq 2\sqrt{\alpha}$. Let $U$ be the unique solution (up to translations) to \eqref{4NLS}. If $v$ solves the linearized equation around $U$
\begin{equation}\label{eq:1dlin}
v'''' -\beta  v^{\prime \prime}+\alpha v=(2\sigma+1) |U|^{2\sigma}v,
\end{equation}
and $\displaystyle\lim_{|t|\rightarrow \infty}(v,v',v'',v''')(t)=(0,0,0,0)$, then $v\in \left<U^\prime\right>$.
\end{thm}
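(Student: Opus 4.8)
The plan is to exploit the one-dimensional structure: since $\beta\ge 2\sqrt\alpha$, the characteristic polynomial $\lambda^4-\beta\lambda^2+\alpha=(\lambda^2-\lambda_1)(\lambda^2-\lambda_2)$ factors with $0<\lambda_1\le\lambda_2$ real, so the fourth-order operator splits into a composition of two second-order operators $(-D^2+\lambda_1)(-D^2+\lambda_2)$ with \emph{positive} coefficients. This is exactly the structure used in Theorem \ref{postand} to prove positivity of $U$; here I would use it to control the linearized operator. First I would record that $U$ is even (after fixing the translation so that $U$ attains its maximum at $0$): this follows from the uniqueness statement together with the reflection symmetry $t\mapsto -t$ of \eqref{4NLS}. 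Consequently the linearized operator $L$ from \eqref{eq:1dlin} commutes with the reflection, so its kernel splits into an even part and an odd part, and it suffices to show the even part of $\ker L$ is trivial and the odd part is one-dimensional (spanned by $U'$, which is odd and lies in $\ker L$ by differentiating \eqref{4NLS}).

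The core of the argument, following \cite{buffonitoland} as the paper indicates, is a Sturmian / shooting analysis of the ODE \eqref{eq:1dlin}. The key steps, in order, would be: (1) establish that the space of solutions to \eqref{eq:1dlin} decaying at $+\infty$ is two-dimensional, and likewise at $-\infty$, using the constant-coefficient asymptotics (the potential $(2\sigma+1)|U|^{2\sigma}$ decays exponentially by Theorem \ref{expdecay}); a solution decaying at \emph{both} ends lies in the $\le$-something-dimensional intersection. (2) Show that any bounded solution $v$ of \eqref{eq:1dlin} which is even must vanish: here I would use the factorization to write the linearized equation as a system $-w''+\lambda_1 w=z$, $-z''+\lambda_2 z=(2\sigma+1)|U|^{2\sigma}w$ (with $w=v$), combine it with the equation for $U$ itself (which satisfies the same system with right-hand side $|U|^{2\sigma}U$), and run a Wronskian/comparison argument. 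The standard trick is to test the linearized equation against $U$ and vice versa: since $U$ is a positive solution of $LU = (\text{const})\,U^{2\sigma+1}\cdot(\dots)$ — more precisely $U$ is a positive solution of the equation obtained by differentiating in a scaling parameter — one gets that $U$ (or a suitable first eigenfunction) spans a sign-definite mode, and an even kernel element would have to be orthogonal to it in a way that forces it to be zero, exactly as in the Buffoni–Toland argument. (3) For the odd part, a first-derivative/nodal count shows $U'$ (which has exactly one sign change) is the eigenfunction at the bottom of the odd spectrum, so no other odd decaying solution exists.

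The main obstacle I anticipate is step (2): in the fourth-order setting one does not have a maximum principle directly for $L$, so the positivity input must be routed through the factorization into two second-order problems with positive constants (which is where $\beta\ge2\sqrt\alpha$ is essential), and one must be careful that the intermediate function $z=-v''+\lambda_1 v$ inherits the right sign/decay so that the comparison can be closed. Concretely, the delicate point is to show that a nontrivial even solution cannot exist, i.e. that the ``ground-state'' mode of $L$ (which is even, positive, and corresponds to the instability direction) is \emph{not} in the kernel — equivalently that $0$ is not this eigenvalue. Following \cite[Section 2]{buffonitoland} and \cite{amick}, this is handled by an explicit monotonicity/Sturm oscillation estimate on the half-line together with the known exponential decay of $U$ and of $v$ from Theorem \ref{expdecay}; once that is in place, combining with the two-dimensionality of the decaying solution spaces from step (1) forces $\ker L = \langle U'\rangle$, which is the assertion.
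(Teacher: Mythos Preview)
Your outline correctly identifies several structural ingredients the paper also uses --- the factorization $(-D^2+\lambda_1)(-D^2+\lambda_2)$ coming from $\beta\ge 2\sqrt\alpha$, the two-dimensionality of the spaces of solutions decaying at each end, and the reversibility/evenness of $U$. But the mechanism you propose for the heart of the argument (steps (2)--(3)) does not go through as stated. Sturm oscillation and nodal counts are second-order tools; for a fourth-order operator there is no direct comparison principle or sign-preservation for $L$, and writing the system as two coupled second-order equations does not give you a Wronskian identity that forces an even kernel element to vanish. Likewise, ``$U'$ has one sign change so it is the bottom of the odd spectrum'' is not a valid inference here. You acknowledge this difficulty but then defer to ``an explicit monotonicity/Sturm oscillation estimate'' from \cite{buffonitoland}; that is not what Buffoni--Toland do.

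The paper's argument is genuinely different in kind: it is a \emph{dynamical-systems transversality} argument on the \emph{nonlinear} flow, not a spectral argument on the linearized equation. One views $U$ as a homoclinic orbit to the hyperbolic equilibrium $0$, so the stable and unstable manifolds $W^s,W^u$ are two-dimensional and their tangent spaces at $\mathbf U(0)=(U(0),0,U''(0),0)$ are exactly the spaces of linearized solutions decaying at $\pm\infty$. Reversibility and conservation of the Hamiltonian $H$ (so $dH(\mathbf U(0))z=0$ on the intersection) pin down what a hypothetical two-dimensional $T_{\mathbf U(0)}W^s\cap T_{\mathbf U(0)}W^u$ would have to look like --- there are only two possible planes. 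For each, one writes down an explicit one-parameter family of initial data on $W^s$ (hence $u(t)\to 0$ as $t\to\infty$) and then uses \emph{nonlinear} monotone comparison (Propositions \ref{compa}, \ref{compa1}, \ref{compa2}, which rely on the factorization and on $U>0$) to show the corresponding solution is actually forced to diverge, a contradiction. Thus the intersection is one-dimensional, spanned by the $U'$-direction, which is exactly $\ker L=\langle U'\rangle$. The missing idea in your plan is this passage through the nonlinear shooting comparison; the linearized equation is never analyzed directly.
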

We make use of the arguments developped in \cite[Section 2]{buffonitoland}. For $\beta\geq 2\sqrt{\alpha}$ we define $\lambda_1$ and $\lambda_2$ as
\begin{equation}\label{LambdaDecomp}
 \lambda_1\lambda_2=\alpha\hspace{.3cm}\text{and}\hspace{.3cm}\lambda_1+\lambda_2=\beta
\end{equation}
and then $\lambda_1$ and $\lambda_2$ are positive. Thus \eqref{4NLS} can be factorised as
$$\left.
\begin{cases}
u^{\prime \prime}-\lambda_1 u&=w, \\
w^{\prime \prime}-\lambda_2 w&=|u|^{2\sigma}u.
\end{cases}\right.$$
\begin{prop}
\label{compa}
Let $u$ and $v$ be solutions to \eqref{4NLS} on $[0,\infty)$
with
$$u(0)\geq v(0)>0,\ u^\prime (0)\geq v^\prime (0),$$
$$( u^{\prime \prime}-\lambda_1 u)(0)\geq ( v^{\prime \prime}-\lambda_1 v)(0),\ ( u^{\prime \prime}-\lambda_1 u)^\prime (0)\geq ( v^{\prime \prime}-\lambda_1 v)^\prime (0), $$
where $\lambda_1$ is defined in \eqref{LambdaDecomp}. If $v(t)\geq 0$, $t\in [0,\infty)$, then $u\equiv v$ or $u(t)\rightarrow \infty$ as $t\rightarrow \infty$.
\end{prop}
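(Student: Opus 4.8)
The plan is to exploit the factorisation of \eqref{4NLS} recorded just above the statement. With $\lambda_1,\lambda_2>0$ defined by \eqref{LambdaDecomp} (they are real and positive precisely because $\beta\ge 2\sqrt{\alpha}$), put $w=u''-\lambda_1 u$ and $z=v''-\lambda_1 v$, so that $w''-\lambda_2 w=f(u)$ and $z''-\lambda_2 z=f(v)$, where $f(s)=|s|^{2\sigma}s$ is $C^1$ and strictly increasing on $\R$. Writing $S=u-v$ and $R=w-z$, the four hypotheses say exactly $S(0)\ge 0$, $S'(0)\ge 0$, $R(0)\ge 0$, $R'(0)\ge 0$, and one has the cascade $S''-\lambda_1 S=R$, $R''-\lambda_2 R=f(u)-f(v)$. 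The recurring tool is a forward half-line maximum principle: if $\phi\in C^2([0,T])$ solves $\phi''-\lambda\phi=\rho$ with $\lambda>0$, $\rho\ge 0$, $\phi(0)\ge 0$ and $\phi'(0)+\sqrt{\lambda}\,\phi(0)\ge 0$, then $h:=\phi'+\sqrt{\lambda}\,\phi$ satisfies $h'-\sqrt{\lambda}\,h=\rho\ge 0$, hence $h(t)=e^{\sqrt{\lambda} t}h(0)+\int_0^t e^{\sqrt{\lambda}(t-s)}\rho(s)\,ds\ge 0$; consequently $(e^{\sqrt{\lambda} t}\phi)'=e^{\sqrt{\lambda} t}h\ge 0$, so $e^{\sqrt{\lambda} t}\phi(t)$ is nondecreasing and $\ge\phi(0)\ge 0$.

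We may assume $u\not\equiv v$ and must prove $u(t)\to\infty$. Since $f\in C^1$, two solutions of \eqref{4NLS} with the same value and first three derivatives at one point coincide; as the quadruple $(S(0),S'(0),R(0),R'(0))$ determines $(S,S',S'',S''')(0)$ and conversely, it is not identically zero, and being componentwise $\ge 0$ its first nonvanishing entry is positive, so a Taylor expansion gives $S>0$ just to the right of $0$. Set $t_*=\sup\{t>0:u\ge v\text{ on }[0,t]\}>0$. On $[0,t_*)$ one has $f(u)-f(v)\ge 0$, so the tool applied to $R$ (with $\lambda=\lambda_2$, using $R(0)\ge 0$ and $R'(0)+\sqrt{\lambda_2}R(0)\ge 0$) gives $R\ge 0$; then the tool applied to $S$ (with $\lambda=\lambda_1$ and $\rho=R\ge 0$) makes $e^{\sqrt{\lambda_1}t}S(t)$ nondecreasing and $\ge S(0)\ge 0$ on $[0,t_*)$. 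If $t_*<\infty$, continuity forces $S(t_*)=0$; a nondecreasing nonnegative function vanishing at $t_*>0$ vanishes throughout $[0,t_*]$, so $S\equiv 0$ there, i.e. $u\equiv v$ on a nondegenerate interval, whence $u\equiv v$ by uniqueness, a contradiction. Therefore $t_*=\infty$: $u\ge v$ (hence $u\ge 0$, since $v\ge 0$) and $R\ge 0$ on $[0,\infty)$, and the same monotonicity shows $S$ cannot vanish on $(0,\infty)$, so $S>0$ there.

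To obtain the blow-up, note that $R''=\lambda_2 R+(f(u)-f(v))\ge 0$ on $[0,\infty)$, so $R'$ is nondecreasing and, with $R'(0)\ge 0$, stays $\ge 0$. Fix any $[t_1,t_2]\subset(0,\infty)$: there $f(u)-f(v)$ is continuous and strictly positive ($f$ strictly increasing, $u>v$), hence bounded below by some $c>0$, so $R''\ge c$ on $[t_1,t_2]$ and thus $R'(t_2)\ge R'(t_1)+c(t_2-t_1)\ge c(t_2-t_1)>0$; since $R'$ is nondecreasing this lower bound persists, so $R(t)\to\infty$. Finally $S''=\lambda_1 S+R\ge R\to\infty$, hence $S''\to\infty$, hence $S'\to\infty$, hence $S\to\infty$; as $v\ge 0$ we conclude $u=v+S\to\infty$, which is the remaining alternative of the dichotomy.

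The routine parts are the two integrating-factor estimates and the elementary growth bookkeeping at the end. What has to be set up carefully is the conjunction of the ordering propagation through the cascade $w''-\lambda_2 w=f(u)\ge f(v)$, $u''-\lambda_1 u=w\ge z$, with the uniqueness argument that prevents $S$ from touching zero unless $u\equiv v$; this is exactly where all four sign hypotheses, the positivity of $\lambda_1,\lambda_2$ (i.e. $\beta\ge 2\sqrt{\alpha}$), and the monotonicity of $f$ are all needed. Equivalently, one may run the comparison at the level of the first-order system $(u,u',w,w')'=(u',\ \lambda_1 u+w,\ w',\ \lambda_2 w+f(u))$, whose nonzero off-diagonal Jacobian entries are $1,\lambda_1,1,1,\lambda_2$ and $f'(u)=(2\sigma+1)|u|^{2\sigma}\ge 0$, so that the classical comparison theorem for quasimonotone (cooperative) systems yields $(u,u',w,w')\ge(v,v',z,z')$ on $[0,\infty)$ at one stroke; the blow-up step is then unchanged.
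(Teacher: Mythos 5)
Your proof is correct and follows essentially the same route as the paper's: the same factorisation into the cascade $S''-\lambda_1 S=R$, $R''-\lambda_2 R=|u|^{2\sigma}u-|v|^{2\sigma}v$, the same argument at $t=0$ via the first non-vanishing derivative, and the same continuation of the ordering to $[0,\infty)$. You merely make explicit two points the paper leaves terse, namely the integrating-factor form of the second-order maximum principle and the final quantitative estimate forcing $u-v\to\infty$.
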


\begin{proof}
Let $u$ and $v$ satisfy the assumptions in the proposition and suppose that $u\neq v$. Let $k$ be the smallest integer such that $u^{(k)}(0)\neq v^{(k)}(0)$. By assumptions we have $u^{(k)}(0)>v^{(k)}(0)$. Thus, there exists $T>0$ such that
$$
u(t)>v(t)\geq 0\hspace{.3cm}\text{for all}\  t\in (0,T).
$$
Now we set
$$\varphi(t)= u^{\prime \prime}-\lambda_1 u\hspace{.3cm}\text{and}\hspace{.3cm}\psi(t)= v^{\prime \prime}-\lambda_1 v.$$
By assumptions, we have
$$\varphi(0)- \psi(0)\geq 0,\ \varphi^\prime(0)-\psi^\prime(0)\geq 0, $$
and
$$ (\varphi - \psi)^{\prime \prime} (t)-\lambda_2 (\varphi-\psi )(t)= |u|^{2\sigma}u(t)-|v|^{2\sigma}v(t) >0\hspace{.3cm}\text{on}\hspace{.3cm}(0,T).$$
Thus
$$\varphi(t)-\psi(t)> 0\hspace{.3cm}\text{on}\hspace{.3cm}(0,T),$$
that is
\begin{equation*}
 (u-v)^{\prime \prime}(t)- \lambda_1 (u-v)(t)> 0\hspace{.3cm}\text{on}\hspace{.3cm}(0,T).
\end{equation*}
Since
\begin{equation*}
 (u-v)(0)\geq 0\hspace{.3cm}\text{and}\hspace{.3cm}(u-v)'(0)\geq 0
\end{equation*}
we obtain
$$u(t)>v(t)\geq 0\hspace{.3cm}\text{on}\hspace{.3cm}(0,T],$$
and by using the continuity we have $u(t)>v(t)$ on $(0,T+\varepsilon)$ for $\varepsilon>0$ small enough. Thus
\begin{equation*}
 \sup\{T:u(t)>v(t)\ \text{for all}\ t\in(0,T)\}=\infty
\end{equation*}
and 
\begin{align*}
 (\varphi-\psi)^{\prime \prime}(t)-\lambda_2 (\varphi-\psi)(t)&\geq 0\hspace{.3cm}\text{on}\hspace{.3cm}(0,\infty),\\
 (u-v)^{\prime \prime}(t)-\lambda_1 (u-v)(t)&\geq 0\hspace{.3cm}\text{on}\hspace{.3cm}(0,\infty).
\end{align*}
Since $u\not\equiv v$, it follows that
$$u(t)-v(t)\rightarrow \infty\hspace{.3cm}\text{as}\hspace{.3cm}t\rightarrow \infty.$$
Finally, since $v(t)\geq 0$ we conclude that $u\rightarrow \infty$ as $t\to\infty$.
\end{proof}

\begin{prop}
\label{compa1}
Suppose $v$ is a solution to \eqref{4NLS} with $v(0)>0> v^{\prime \prime}(0) $ and $v^\prime (0)=v^{(3)}(0)=0$, such that $v(t)\rightarrow 0$ as $t\rightarrow \infty$. Suppose $u$ is another solution satisfying
$$k\varepsilon \geq u^{\prime \prime}(0) - v^{\prime \prime}(0)\geq (1+a)\lambda_1 (u(0)-v(0))\geq (1+a)\lambda_1 \varepsilon, $$
$$|u^\prime (0)|+|u^{(3)}(0)|\leq b \varepsilon^2,$$
where $k,a,b$ are positive constants and $\lambda_1$ is defined in \eqref{LambdaDecomp}. Then $u(t)$ does not go to $0$ as $t\rightarrow \infty$.
\end{prop}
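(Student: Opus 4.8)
The plan is to follow the comparison scheme of \cite[Section~2]{buffonitoland}, exploiting the second–order cascade coming from the factorisation \eqref{LambdaDecomp} together with the observation that the hypotheses place the ``transversal'' data $u'(0),u^{(3)}(0)$ at order $\varepsilon^2$ while the ``driving'' data $u(0)-v(0)$ and $u''(0)-v''(0)$ are at order $\varepsilon$. This discrepancy will let us propagate a strict ordering between $u$ and $v$ over a time interval of length of order $1/\varepsilon$ and then conclude by Proposition~\ref{compa}.

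First I would set $d=u-v$, $\varphi=u''-\lambda_1u$, $\psi=v''-\lambda_1v$ and $D=\varphi-\psi=d''-\lambda_1d$. From the cascade,
\[
d''=\lambda_1 d+D,\qquad D''-\lambda_2 D=|u|^{2\sigma}u-|v|^{2\sigma}v .
\]
Using $v'(0)=v^{(3)}(0)=0$ and the hypotheses, the initial data of $(d,D)$ satisfy $d(0)\ge\varepsilon$, $|d'(0)|\le b\varepsilon^2$, $D(0)=(u''(0)-v''(0))-\lambda_1 d(0)\ge a\lambda_1 d(0)\ge a\lambda_1\varepsilon$, and $|D'(0)|=|u^{(3)}(0)-\lambda_1u'(0)|\le(1+\lambda_1)b\varepsilon^2$. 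Thus $d(0)$ and $D(0)$ are of order at least $\varepsilon$ while $|d'(0)|$ and $|D'(0)|$ are of order $\varepsilon^2$.

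The core step is a bootstrap on $T^\ast=\sup\{t>0:\ d>0\ \text{and}\ D>0\ \text{on}\ (0,t)\}$, which is positive by continuity. On $(0,T^\ast)$ we have $u>v>0$ (recall that $v>0$), hence $|u|^{2\sigma}u-|v|^{2\sigma}v>0$, so $D''>\lambda_2D>0$ and $d''=\lambda_1d+D>0$; integrating, $D(t)>a\lambda_1\varepsilon-(1+\lambda_1)b\varepsilon^2t$ and $d(t)>\varepsilon-b\varepsilon^2t$, which forces $T^\ast\ge c/\varepsilon$ for an explicit $c=c(a,b,\lambda_1)>0$. On $[0,c/(2\varepsilon)]$ one then has $D\ge\tfrac12 a\lambda_1\varepsilon$, whence $D''\ge\tfrac12\lambda_2a\lambda_1\varepsilon$ and likewise $d''\ge\tfrac12a\lambda_1\varepsilon$; integrating once more shows that $D'$ and $d'$ become and stay nonnegative once $t$ exceeds a threshold of order $\varepsilon$. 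Therefore, for $\varepsilon$ small there is $T^\dagger>0$ of order $\varepsilon$ with
\[
d(T^\dagger)>0,\qquad d'(T^\dagger)\ge0,\qquad D(T^\dagger)>0,\qquad D'(T^\dagger)\ge0 ,
\]
and $v>0$ on $[T^\dagger,\infty)$. These are exactly the hypotheses of Proposition~\ref{compa} with the origin translated to $T^\dagger$ (the equation being autonomous). Since $u\not\equiv v$ — they already differ at $0$ — Proposition~\ref{compa} forces $u(t)\to\infty$ as $t\to\infty$, and in particular $u(t)\not\to0$. (If instead $u$ ceases to exist at a finite time it blows up, and the conclusion is immediate.)

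I expect the main obstacle to be precisely this quantitative bootstrap: one must verify that the sign of $d$ and of $D$, and then the nonnegativity of $d'$ and $D'$, are reached before the second–order errors carried by $d'(0)$ and $D'(0)$ can interfere — which is exactly why the hypothesis on $|u'(0)|+|u^{(3)}(0)|$ has to be quadratic, not linear, in $\varepsilon$. The remainder is bookkeeping with the cascade \eqref{LambdaDecomp} and an application of Proposition~\ref{compa}; one also uses the positivity of the decaying solution $v$, which holds for the solution $U$ of \eqref{4NLS} to which the proposition will be applied (cf.\ \cite{amick,bonnas}).
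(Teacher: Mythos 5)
Your proof is correct, and it follows the same overall strategy as the paper: show that after a short time $T^{\dagger}>0$ the four ordering hypotheses of Proposition~\ref{compa} (on $u-v$, $u'-v'$, $(u''-\lambda_1 u)-(v''-\lambda_1 v)$ and its derivative) are all satisfied, then invoke Proposition~\ref{compa} together with $v>0$ (which, as in the paper, comes from the uniqueness of the even decaying solution \cite{amick} and Theorem~\ref{thm1.1bonnas}) and the fact that $u\not\equiv v$. Where you differ is in the propagation mechanism. The paper writes down Taylor expansions of the four differences at $t=0$ with explicit quadratic/cubic remainders $-Ct^2$, $-Ct^3$, proves positivity of $d$ and $D$ on an interval $[0,\Gamma(\varepsilon)]$ with $\Gamma(\varepsilon)\sim\sqrt{\varepsilon}$, and then evaluates at the intermediate time $\tau(\varepsilon)=\varepsilon^{2/3}$, where the linear-in-$t$ gains of order $\varepsilon\cdot\varepsilon^{2/3}$ beat both the $O(\varepsilon^2)$ initial defects and the Taylor remainders. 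You instead run a continuation argument on $T^{\ast}=\sup\{t: d>0,\ D>0\}$, exploiting the convexity furnished by $D''=\lambda_2 D+(u^{2\sigma+1}-v^{2\sigma+1})>0$ and $d''=\lambda_1 d+D>0$ to push positivity out to times of order $1/\varepsilon$, and then use the quantitative lower bounds $D\geq\tfrac12 a\lambda_1\varepsilon$, $d''\geq\tfrac12 a\lambda_1\varepsilon$ to force $d'$ and $D'$ nonnegative after a time $T^{\dagger}=O(\varepsilon)$. Your version is arguably cleaner: it avoids tracking the remainder constants $C$, $K$, $\gamma_1$ and the slightly delicate choice $\tau(\varepsilon)\in[0,\Gamma(\varepsilon)]$, and it does not even use the upper bound $k\varepsilon\geq u''(0)-v''(0)$ (which is harmless, since one is free not to use a hypothesis). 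The only points to make explicit in a final write-up are the backward-uniqueness step ruling out $u\equiv v$ on $[T^{\dagger},\infty)$, and the remark (which you do make) that if $u$ fails to exist globally the conclusion is vacuous.
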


\begin{proof}
First let us recall that by \cite{amick}, there exists a unique even solution to \eqref{4NLS} which can be obtained by considering the minimization problem \eqref{MinLpintro}. Thus, we deduce from Theorem \ref{thm1.1bonnas} that $v>0$. Using the Taylor expansion, we have
\begin{align*}
u(t)-v(t)&\geq \varepsilon - b\varepsilon^2 t -Ct^2\\ 
u^\prime (t) -v^\prime (t)&\geq -b \varepsilon^2 + (1+a)\lambda_1\varepsilon t-\dfrac{1}{2}b\varepsilon^2 t^2 -C t^3\\
(u^{\prime \prime}- \lambda_1 u)(t)-(v^{\prime \prime}- \lambda_1 v)(t)&\geq a \lambda_1 \varepsilon - b\varepsilon^2 t-Ct^2\\
(u^{\prime \prime \prime}- \lambda_1 u^{\prime})(t)-(v^{\prime \prime \prime}- \lambda_1 v^{\prime})(t)&\geq -b \varepsilon^2+a\alpha\varepsilon t- \gamma_1 \varepsilon^2 t^2-Ct^3,
\end{align*}
for some $\gamma_1,C>0$. For the last inequality we have used that $u(0)>v(0)>0$ and that for $t=0$ we have
\begin{align*}
(u'''' - \lambda_1 u^{\prime \prime})&= \lambda_2 (u^{\prime \prime}-\lambda_1 u)+u^{2\sigma+1}\\
&\geq\lambda_2 (v^{\prime \prime}-\lambda_1 v) +a\alpha (u-v)+u^{2\sigma+1}\\
&\geq \lambda_2 v^{\prime \prime}-\lambda_2\lambda_1 v + a\alpha \varepsilon +u^{2\sigma+1}\\
&=(v''''-\lambda_1 v^{\prime \prime})+a\alpha\varepsilon+ (u^{2\sigma+1}-v^{2\sigma+1})\\
&>(v''''-\lambda_1 v^{\prime \prime})+a\alpha \varepsilon.
\end{align*}
We have
$$u(t)-v(t)\geq 0\hspace{.3cm}\text{and}\hspace{.3cm}(u^{\prime \prime}-\lambda_1 u)(t)-(v^{\prime \prime}-\lambda_1 v)(t)\geq 0,$$
for all $t\in [0,\Gamma (\varepsilon)]$ where 
\begin{equation*}
\Gamma (\varepsilon)=\sqrt{\varepsilon}\left(\sqrt{\dfrac{\min \left\{1,a \lambda_1 \right\}}{C}}-\dfrac{b \varepsilon^{3/2}}{2C}\right). 
\end{equation*}
Let $\tau(\varepsilon)=\varepsilon^{2/3}\in [0,\Gamma (\varepsilon)]$. One can show that
$$(u^{\prime \prime \prime}- \lambda_1 u^{\prime})(\tau(\varepsilon))-(v^{\prime \prime \prime}- \lambda_1 v^{\prime})(\tau (\varepsilon))>0,$$
and
$$(u^\prime -v^\prime)(\tau(\varepsilon))>0$$
for $\varepsilon>0$ sufficiently small. Hence, 
by Proposition \ref{compa} and since $v>0$, we conclude that $u\not\to 0$ as $t\to\infty$.
\end{proof}

\begin{prop}
\label{compa2}
Let $v$ be the solution defined in the previous proposition. Suppose that $u$ is a solution to \eqref{4NLS} with
$$4a\varepsilon\geq u^{\prime \prime \prime}(0)-\lambda_1 u^\prime (0)\geq u^\prime (0)\geq a \varepsilon,$$
$$|u(0)-v(0)|+|u^{\prime \prime}(0)-v^{\prime \prime}(0)|\leq b \varepsilon^2,$$
where $a,b$ are positive constants.  
Then $u(t)$ does not go to $0$ as $t\rightarrow \infty$.
\end{prop}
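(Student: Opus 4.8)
The plan is to reduce the statement to the comparison principle of Proposition~\ref{compa}, in the spirit of the proof of Proposition~\ref{compa1}: I will produce an intermediate time $\tau(\varepsilon)=\varepsilon^{2/3}$ at which the four ordering hypotheses of Proposition~\ref{compa} hold (with the base point $0$ replaced by $\tau(\varepsilon)$), and then conclude. Write $\varphi=u''-\lambda_1 u$ and $\psi=v''-\lambda_1 v$, with $\lambda_1,\lambda_2$ as in \eqref{LambdaDecomp}, so that $\varphi''-\lambda_2\varphi=|u|^{2\sigma}u$ and $\psi''-\lambda_2\psi=|v|^{2\sigma}v$. From the hypotheses, the identities $v'(0)=v'''(0)=0$, the equation \eqref{4NLS} and its first derivative, one reads off at $t=0$: the values $(u-v)(0)$, $(\varphi-\psi)(0)$, $(u'-v')'(0)=u''(0)-v''(0)$ and $(\varphi'-\psi')'(0)=(\varphi-\psi)''(0)$ are all $O(\varepsilon^2)$ with no sign control; the quantities $(u-v)'(0)=u'(0)$, $(\varphi-\psi)'(0)=u'''(0)-\lambda_1 u'(0)$, $(u'-v')(0)=u'(0)$ and $(\varphi'-\psi')(0)=u'''(0)-\lambda_1 u'(0)$ are all $\ge a\varepsilon>0$; and, crucially, $(u-v)'''(0)=u'''(0)>0$, while $(\varphi-\psi)'''(0)=\lambda_2\bigl(u'''(0)-\lambda_1 u'(0)\bigr)+(2\sigma+1)|u(0)|^{2\sigma}u'(0)>0$ (the $v$-contributions to these last two expressions vanishing because $v'(0)=0$). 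Since the initial data of $u$ converge to those of $v$ as $\varepsilon\to 0$, continuous dependence ensures that, for $\varepsilon$ small, $u$ exists on $[0,1]$, stays close to $v$ (in particular positive) there, and has derivatives up to order six bounded on $[0,1]$ uniformly in $\varepsilon$; hence all the Taylor remainders below carry uniform constants $C$.

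Expanding about $t=0$, one obtains, for $t\in[0,1]$ and $\varepsilon$ small,
\begin{align*}
(u-v)(t)&\ge -C\varepsilon^2+a\varepsilon t-C\varepsilon^2 t^2-Ct^3, &
(u'-v')(t)&\ge a\varepsilon-C\varepsilon^2 t-Ct^3,\\
(\varphi-\psi)(t)&\ge -C\varepsilon^2+a\varepsilon t-C\varepsilon^2 t^2-Ct^3, &
(\varphi'-\psi')(t)&\ge a\varepsilon-C\varepsilon^2 t-Ct^3,
\end{align*}
where in the second and fourth bounds the nonnegative terms $\tfrac12(u-v)'''(0)t^2$ and $\tfrac12(\varphi-\psi)'''(0)t^2$ have simply been discarded. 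Evaluating at $\tau(\varepsilon)=\varepsilon^{2/3}$, the displayed positive terms ($a\varepsilon^{5/3}$ in the first and third, $a\varepsilon$ in the second and fourth) dominate the negative corrections, which are of strictly higher order in $\varepsilon$; thus for $\varepsilon$ small $u(\tau)>v(\tau)$, $u'(\tau)>v'(\tau)$, $\varphi(\tau)>\psi(\tau)$ and $\varphi'(\tau)>\psi'(\tau)$, while $v>0$ on $[0,\infty)$ by Theorem~\ref{thm1.1bonnas}. Hence, applying Proposition~\ref{compa} with base point $\tau(\varepsilon)$ (using the translation invariance of \eqref{4NLS}) and recalling $v>0$, we conclude that $u(t)\not\to 0$ as $t\to\infty$, the alternative $u\equiv v$ being ruled out by $u'(0)\ge a\varepsilon>0=v'(0)$.

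The crux of the argument is the bookkeeping in the Taylor expansions: one must observe that the hypotheses only control the \emph{values} $(u-v)(0)$ and $(\varphi-\psi)(0)$ at order $\varepsilon^2$ with no sign, yet force their \emph{first derivatives} at $0$ to be $\ge a\varepsilon>0$, whereas $u'-v'$ and $\varphi'-\psi'$ are themselves $\ge a\varepsilon>0$ at $0$. The scale $\tau(\varepsilon)=\varepsilon^{2/3}$ is precisely the window in which the $O(\varepsilon^2)$ defects of the former pair have been overcome while the cubic remainders $Ct^3$ stay negligible against the $a\varepsilon$ lower bounds of the latter pair; once this window is identified, the conclusion is an immediate application of Proposition~\ref{compa}, exactly as in the proof of Proposition~\ref{compa1}.
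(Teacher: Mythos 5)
Your proposal is correct and follows essentially the same route as the paper: Taylor expansion of $u-v$, $u'-v'$, $\varphi-\psi$ and $\varphi'-\psi'$ about $t=0$, identification of the window $\tau(\varepsilon)=\varepsilon^{2/3}$ where the $O(\varepsilon^2)$ defects in the zeroth-order quantities are overcome by the $a\varepsilon$ lower bounds on the first derivatives while the remainder terms remain negligible, followed by an application of Proposition~\ref{compa} at the shifted base point together with $v>0$. Your write-up actually makes explicit some of the bookkeeping that the paper compresses into ``one can show''.
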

\begin{proof}
By using the Taylor expansion, we get
\begin{align*}
 u(t)-v(t)&\geq - b\varepsilon^2 +a\varepsilon t-\dfrac{1}{2}b\varepsilon^2t^2 -Ct^3\\
u^\prime (t) -v^\prime (t)&\geq a\varepsilon-b\varepsilon^2 t -C t^2\\
(u^{\prime \prime}- \lambda_1 u)(t)-(v^{\prime \prime}- \lambda_1 v)(t)&\geq -2b \varepsilon^2+a \varepsilon t- K\varepsilon^2 t^2-Ct^3\\
(u^{\prime \prime \prime}- \lambda_1 u^{\prime})(t)-(v^{\prime \prime \prime}- \lambda_1 v)(t)&\geq a\varepsilon - K \varepsilon^2 t-Ct^2,
\end{align*}
for some $K,C>0$. We have
$$(u^{\prime \prime \prime}- \lambda_1 u^{\prime})(t)-(v^{\prime \prime \prime}- \lambda_1 v^{\prime})(t)\geq 0,$$
and
$$(u^\prime -v^\prime)(t)\geq 0,$$
for all $t\in [0,\Gamma (\varepsilon)]$ where 
\begin{equation*}
\Gamma (\varepsilon)=\sqrt{\varepsilon} \left(\sqrt{\dfrac{\min \{1,a \lambda_1\} }{C}}-\dfrac{\min \{b,K\}}{2C} \varepsilon^{3/2}\right). 
\end{equation*}
Let $\tau(\varepsilon)=\varepsilon^{2/3}\in [0,\Gamma (\varepsilon)]$. One can show that
$$u(\tau(\varepsilon))-v(\tau(\varepsilon))> 0\hspace{.3cm}\text{and}\hspace{.3cm}(u^{\prime \prime}-\lambda_1 u)(\tau(\varepsilon))-(v^{\prime \prime}-\lambda_1 v)(\tau(\varepsilon))>0,$$
for $\varepsilon>0$ sufficiently small. Therefore, by Proposition \ref{compa} and since $v>0$, we conclude that $u$ does not go to $0$ as $t\to\infty$.
\end{proof}

\begin{proof}[Proof of Theorem \ref{thmnondegen}] 
Assume $U$ achieves its maximum at $0$. Then, by uniqueness, $U$ is even so that $U'(0)=0$ and $U'''(0)=0$. Observe that since  $\beta\geq 2\sqrt{\alpha}$, $0$ is a hyperbolic equilibrium. Let 
$$W^s = \{z\in \R^4 \mid \lim_{t\to\infty}U(t;z)=0\}
\ \text{ and }\ W^u=\{z\in\R^4\mid \lim_{t\to-\infty}U(t;z)=0\},$$
where $U(t;z)$ is the unique solution of the Cauchy problem
$$u'''' -\beta  u^{\prime \prime}+\alpha u= |u|^{2\sigma}u,\quad (u,u',u'',u''')(0)=z.$$
As $U$ is a homoclinic solution and the equation is autonomous, $(U(t),U'(t),U''(t),U'''(t))\in W^s \cap W^u$ for every $t\in \R$. For every $\bar t\in \R$, the tangent spaces of the stable and unstable manifolds evaluated at $U(\bar t)$ are given by 
$$T_{U(\bar t)}W^s = \{z\in \R^4 \mid \lim_{t\to\infty}V(t;\bar t, z)=0\}
\ \text{ and }\ T_{U(\bar t)}W^u=\{z\in\R^4\mid \lim_{t\to-\infty}V(t;\bar t, z)=0\},$$
where 
$V(t;\bar t,z)$ is the unique solution of the linearized equation \eqref{eq:1dlin} such that 
$(v,v',v'',v''')(\bar t)=z$. See for instance \cite{HOMBURG2010379}.

We claim that $W^s$ and $W^u$ intersects transversally at $\bold{U}(0)=(U(0),0,U^{\prime \prime}(0),0)$. Observe first that 
since the Hamiltonian energy is conserved and $U$ is asymptotic to the equilibrium, $U$ lies in the level $0$ of energy, i.e.  
$$H(U(t),U'(t),U''(t),U'''(t)) = \left(U^\prime\left(U^{(3)}+\dfrac{\beta}{2}U^\prime\right)+\dfrac{\alpha}{2}U^2-\dfrac{1}{2\sigma+2}U^{2\sigma+2}-\dfrac{1}{2}U^{\prime \prime 2}\right)(t)=0,\hspace{.2cm}\forall\, t\in\R.$$ As a consequence, if $z\in T_{\bold{U}(0)}W^s\cap T_{\bold{U}(0)} W^u$, then 
$$dH(U(0),0,U''(0),0)z=(\alpha-U(0)^{2\sigma})U(0)z_1 -U''(0) z_3  =0.$$ Since $U$ is even, the equation \eqref{eq:1dlin} is reversible and therefore 
\begin{equation*}
(a,b,c,d)\in T_{\bold{U}(0)}W^s\hspace{.2cm}\text{if and only if}\hspace{.2cm}(a,-b,c,-d)\in T_{\bold{U}(0)} W^u. 
\end{equation*}
Observe also that since $U'$ is a solution of \eqref{eq:1dlin}, we have that $T_{\bold{U}(0)}W^s\cap T_{\bold{U}(0)} W^u$ contains at least $(0,U''(0),0,\beta  U''(0)-\alpha U(0) + U(0)^{2\sigma})$.

Assume now by contradiction that the tangent spaces of the stable and unstable manifolds coincide in a two dimensional space. There are then two possibilities. Either $T_{\bold{U}(0)}W^s\cap T_{\bold{U}(0)} W^u$ is given by 
$$\big\{a(0,1,0,0)+b(0,0,0,1) \mid a,b\in \R \big\}$$
or
$$
\Big\{a\left(-U^{\prime \prime}(0),0,U^{2\sigma +1}(0)-\alpha U(0),0\right) +b\left(0,U''(0),0,\beta U''(0)-\alpha U(0)+U^{2\sigma+1}(0) \right) \mid a,b\in \R \Big\}. 
$$
In the first case, taking $a=1,b=3$ and considering the point of the stable manifold
$$(u,u^\prime,u^{\prime \prime},u^{(3)})(0)=\Big(U(0)+o(\varepsilon^2),\varepsilon+o(\varepsilon^2), U^{\prime \prime}(0)+o(\varepsilon^2),3\varepsilon+o(\varepsilon^2) \Big),$$
we are in the settings of Proposition \ref{compa1} and therefore $u(t)$ does not go to zero as $t\to\infty$. This is a contradiction with the fact that $(u,u^\prime,u^{\prime \prime},u^{(3)})(0)\in W^s$. In the second case, let $a=-1/U^{\prime \prime}(0)$, $b=0$, and consider the point
\begin{equation*}
(u,u^\prime,u^{\prime \prime},u^{(3)})(0) =\Big(U(0)+\varepsilon+o(\varepsilon^2),o(\varepsilon^2), U^{\prime \prime}(0)-\varepsilon \dfrac{(U^{2\sigma +1}(0)-\alpha U(0))}{U^{\prime \prime}(0)},o(\varepsilon^2)\Big). 
\end{equation*}
Observe that, for some $\eta >0$,
$$(u^{\prime \prime}-U^{\prime \prime})(0)\geq \varepsilon (1+\eta)+o(\varepsilon^2) \geq (1+\eta)(u-U)(0). $$
Thus, Proposition \ref{compa2} applies and yields yet a contradiction. Hence the claim holds and 
$$T_{\bold{U}(0)}W^s\cap T_{\bold{U}(0)} W^u = \Big<\left(0,U''(0),0,\beta U''(0)-\alpha U(0)+U^{2\sigma+1}(0) \right) \Big>.$$
The nondegeneracy now immediately follows. Indeed, assume that the linearized equation \eqref{eq:1dlin} admits two linearly independent solutions $v_1$ and $v_2$. Then $v_1(0)$ and $v_2(0)$ are linearly independent vectors of $T_{\bold{U}(0)}W^s\cap T_{\bold{U}(0)} W^u$ which is a contradiction.

\end{proof}

\section{The effect of a small fourth order dispersion}\label{sec:smalldis} 
In this section we are interested in the asymptotic behaviour of solutions $u_{\gamma}$ to \eqref{MinL2fixed} obtained in Section $2$ and the ground state solutions as $\gamma$ goes to zero. We aim at proving that, in this case, the solutions $u_{\gamma}$ converge to a ground state solution to \eqref{nlsalpha}. 
We assume without loss of generality that $\beta=1$.

When $0<\sigma<2/(N-2)$ if $N\geq 3$ or $\sigma>0$ otherwise, it is well known, see for instance \cite{Kwong}, that if $u\in H^1(\R^N)$ is positive and solves
\begin{equation}\label{eqforuni}
 -\Delta u+ u=|u|^{2\sigma}u\hspace{.2cm}\text{in}\ \R^N,\ 
\end{equation}
then $u(x)=u_0(x-x_0)$ for some $x_0\in\R^N$ where $u_0$ is the unique positive radial solution to \eqref{eqforuni}. Namely, uniqueness up to translations holds for positive solutions to \eqref{eqforuni}. 
Moreover $u$ is nondegenerate which means that if $v\in H^1(\R^N)$ solves 
the linearized equation
\begin{equation}\label{nls-linearized}
- \Delta v + v=(2\sigma +1) |u|^{2\sigma}v,
\end{equation} 
then there exists $\xi\in \R^N$ such that $v(x) = \xi\cdot\nabla u(x)$. In particular, there is no nontrivial solution in $H^1_{rad}(\R^N)$ of the linearized equation 
\begin{equation}\label{nls-linearized0}
- \Delta v + v=(2\sigma +1) |u_0|^{2\sigma}v.
\end{equation} 
We refer to Kabeya and Tanaka \cite{KaTa}, see also \cite{Kwong}.
Consequently, all positive $H^1$ solutions to \eqref{nlsalpha} are given by $u_\alpha(x;x_0)=\displaystyle\alpha^{\frac{1}{2\sigma}}u_0(\sqrt{\alpha}(x-x_0))$, where $x_0\in\R^N$, and these solutions are nondegenerate. In what follows we denote by $w_\mu$ the unique (up to sign) radial minimizer (i.e. its maximum is set at the origin) of $E_0$ under the constraint $\|w_\mu\|_{L^2}^2=\mu$ that is
\begin{equation*}
 E_0(w_{\mu})=\inf_{u\in M_\mu}E_0(u)=I_{0,1}(\mu).
\end{equation*}
The nondegeneracy of $w_\mu$ suggests that uniqueness holds also for \eqref{MinL2fixed} when $\gamma$ is small. In this section we indeed use the implicit function theorem to prove uniqueness and nondegeneracy for \eqref{MinL2fixed} as $\gamma \rightarrow 0$. 
To this aim, we first prove that if $\gamma:=\gamma_k\to 0$, then, for any sequence of minimizers $(u_{k,\gamma_k})_k:=(u_k)_k$ of $E_{\gamma_k,1}$, there exists a sequence $(y_k)_k\subset \R^N$ such that $u_k (\cdot-y_k)$ converges strongly in $H^1$ to $w_\mu$. Then, by using elliptic regularity, 
we show that the convergence is actually strong in $H^2$. 
Using this $H^2$-convergence, we finally prove that the solutions $u_k$ are nondegenerate for large $k$ which in turn implies uniqueness up to translations.
With the same arguments, we can also deal with the minimizers of $J_{\gamma_k,1,\alpha}$.

\subsection{Standing waves with a prescribed mass}
We begin with solutions to \eqref{MinL2fixed}. First, we deduce the $H^1$-convergence from the minimality of the sequence. 
\begin{prop}[$H^1$-convergence]\label{MiniConvergenceH1}
Assume $0<\sigma<2/N$. If $\gamma_{k}\to 0$ and $(u_{k})_{k}$ is a sequence such that $E_{\gamma_{k},1}(u_{k})=I_{\gamma_{k},1}(\mu)$, 
 then, up to a subsequence and a sequence $(y_k)_k\subset\R^N$, 
$u_{k}(\cdot-y_k)\to w_{\mu}$ strongly in $H^1$. Moreover, we have 
$$\gamma_k\int_{\R^N}|\Delta u_{k}|^2\, dx\to 0\hspace{.3cm}\text{as}\hspace{.2cm}k\to\infty.$$
\end{prop}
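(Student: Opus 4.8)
The plan is to squeeze $I_{\gamma_k,1}(\mu)$ between $I_{0,1}(\mu)$ and $I_{0,1}(\mu)+o(1)$, identify $(u_k)_k$ as a minimizing sequence for the limiting problem, and then invoke the classical Cazenave--Lions compactness. First I would record that $w_\mu$, being a smooth, exponentially decaying solution of \eqref{nlsalpha}, lies in $H^2(\R^N)$ and hence is admissible in \eqref{MinL2fixed}. Using it as a test function and discarding the nonnegative biharmonic term in $E_{\gamma_k,1}$ gives the two-sided estimate
\[
I_{0,1}(\mu)\;\le\;I_{\gamma_k,1}(\mu)\;\le\;E_{\gamma_k,1}(w_\mu)\;=\;I_{0,1}(\mu)+\frac{\gamma_k}{2}\int_{\R^N}|\Delta w_\mu|^2\,dx,
\]
where the left inequality is merely $E_{\gamma_k,1}\ge E_0$ on $M_\mu$. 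In particular $I_{\gamma_k,1}(\mu)\to I_{0,1}(\mu)$.

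Next, from $E_{\gamma_k,1}(u_k)=I_{\gamma_k,1}(\mu)$, the splitting $E_{\gamma_k,1}(u_k)=\tfrac{\gamma_k}{2}\|\Delta u_k\|_{L^2}^2+E_0(u_k)$ and the trivial bound $E_0(u_k)\ge I_{0,1}(\mu)$ (since $u_k\in M_\mu$), I would deduce
\[
\frac{\gamma_k}{2}\int_{\R^N}|\Delta u_k|^2\,dx\;=\;I_{\gamma_k,1}(\mu)-E_0(u_k)\;\le\;I_{\gamma_k,1}(\mu)-I_{0,1}(\mu)\;\le\;\frac{\gamma_k}{2}\int_{\R^N}|\Delta w_\mu|^2\,dx.
\]
This already yields the last assertion, $\gamma_k\int_{\R^N}|\Delta u_k|^2\,dx\to 0$, and, substituting back, $E_0(u_k)=I_{\gamma_k,1}(\mu)-\tfrac{\gamma_k}{2}\|\Delta u_k\|_{L^2}^2\to I_{0,1}(\mu)$. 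Thus $(u_k)_k$ is a minimizing sequence for $E_0$ constrained to $M_\mu$.

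It then remains to run the standard argument for the second order problem. Since $0<\sigma<2/N$, the Gagliardo--Nirenberg inequality \eqref{G-N-H1-ineq} makes $E_0$ coercive on $M_\mu$, so $(u_k)_k$ is bounded in $H^1(\R^N)$; moreover $I_{0,1}(\mu)<0$ (again because $\sigma N<2$, via the scaling $w\mapsto\lambda^{N/2}w(\lambda\,\cdot)$), which gives the strict subadditivity $I_{0,1}(\mu)<I_{0,1}(\theta)+I_{0,1}(\mu-\theta)$ for all $\theta\in(0,\mu)$. By the concentration--compactness principle \cite{Li84}, applied exactly as in the proof of Theorem \ref{Compact-Min-Sol} in the $H^1$-subcritical case, there are translations $(y_k)_k\subset\R^N$ such that, along a subsequence, $u_k(\cdot-y_k)\to u_\infty$ strongly in $H^1$, with $u_\infty$ a minimizer of $I_{0,1}(\mu)$. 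Since the minimizer of $E_0$ under the mass constraint is unique up to translation and sign (Cazenave--Lions together with Kwong's uniqueness), after a further translation and, if necessary, replacing $u_k$ by $-u_k$ (which alters neither $E_{\gamma_k,1}$ nor $M_\mu$), we obtain $u_k(\cdot-y_k)\to w_\mu$ in $H^1$.

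The arithmetic in the first two steps is elementary; the one genuinely substantial ingredient is the concentration--compactness dichotomy/vanishing analysis for the minimizing sequence, but this is precisely the argument already carried out for Theorem \ref{Compact-Min-Sol} and reduces to the strict subadditivity, itself a consequence of $I_{0,1}(\mu)<0$. The only other point requiring a little care is matching the limit with $w_\mu$ rather than with ``some minimizer'', which is settled by the known uniqueness up to translation and sign of the $L^2$-constrained minimizer of $E_0$.
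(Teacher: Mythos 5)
Your proposal is correct and follows essentially the same route as the paper: the two-sided squeeze $I_{0,1}(\mu)\le I_{\gamma_k,1}(\mu)\le I_{0,1}(\mu)+\tfrac{\gamma_k}{2}\|\Delta w_\mu\|_{L^2}^2$, the identification of $(u_k)_k$ as a minimizing sequence for $E_0$ on $M_\mu$, and concentration--compactness driven by $I_{0,1}(\mu)<0$ exactly as in the $H^1$-subcritical case of Theorem \ref{Compact-Min-Sol}. Your explicit derivation of $\gamma_k\int|\Delta u_k|^2\,dx\to 0$ from $E_0(u_k)\ge I_{0,1}(\mu)$ and your remark about adjusting the sign to match $w_\mu$ are welcome clarifications of points the paper leaves implicit.
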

\begin{proof}
First, we show that there exists $C>0$ such that for every $\gamma>0$, we have 
\begin{equation*}
I_{0,1}(\mu)\leq I_{\gamma,1}({\mu})\leq I_{0,1}(\mu)+ C\gamma. 
\end{equation*}
Indeed, let $w_\mu$ be a minimizer of $E_0$ under the constraint $\|w_\mu\|_{L^2}^2=\mu$. Note that by elliptic regularity we know that $w_\mu\in H^2(\R^N)$.  Thus
\begin{equation*}
 I_{\gamma,1}({\mu})\leq E_{\gamma,1}(w_\mu)=\frac{\gamma}{2}\int_{\R^N}|\Delta w_\mu|^2\, dx+ E_{0}(w_\mu)\leq\gamma C+I_{0,1}(\mu).
\end{equation*}
On the other hand, by taking a minimizer $u_{\gamma}$ for $I_{\gamma,1}({\mu})$, we obtain
\begin{equation*}
I_{\gamma,1}({\mu})= E_{\gamma,1}(u_{\gamma})=\frac{\gamma}{2}\int_{\R^N}|\Delta u_{\gamma}|^2\, dx+ E_{0}(u_{\gamma})\geq I_{0,1}(\mu). 
\end{equation*}

\medbreak

Now, take a sequence $(\gamma_k)_k\subset\R^+_0$ such that $\gamma_k\to 0$ as $k\to\infty$. Let $(u_k)_k$ be a minimizer for $I_{\gamma_k,1}(\mu)$. Then
\begin{equation*}
\frac{1}{2}\int_{\R^N}|\nabla u_k|^2\, dx-\frac{1}{2\sigma+2}\int_{\R^N}|u_k|^{2\sigma+2}\, dx\leq I_{\gamma_k,1}(\mu)\leq C\gamma_{k}+I_{0,1}(\mu)\to I_{0,1}(\mu),
\end{equation*}
which shows that $(u_k)_k$ is a minimizing sequence for $I_{0,1}(\mu)$. Thus, $(u_k)_k$ is a bounded sequence in $H^1(\R^N)$ and extracting a subsequence if necessary, we can assume that $u_k\rightharpoonup u$ weakly converges in $H^1(\R^N)$. 
Since $I_{0,1}(\mu)<0$, one can argue with standard arguments, see for instance the $H^1$-subcritical case of the proof of Theorem \ref{Compact-Min-Sol}, to show that compactness holds up to translations. Namely, for some sequence $v_k(x):=(u_k(x-y_k))_k$, we have $v_k\to u$ in $L^2(\R^N)$ and $v_k\to u$ in $L^{2\sigma+2}(\R^N)$, both convergences being strong. 
Hence, from the weak convergence in $H^{1}(\R^N)$, we infer
\begin{align*}
 I_{0,1}(\mu)&\leq\liminf_{k\to\infty}\left(\frac{1}{2}\int_{\R^N}|\nabla v_k|^2\, dx-\frac{1}{2\sigma+2}\int_{\R^N}|v_k|^{2\sigma+2}\, dx\right)\\
 &\leq\limsup_{k\to\infty}\left(\frac{1}{2}\int_{\R^N}|\nabla v_k|^2\, dx-\frac{1}{2\sigma+2}\int_{\R^N}|v_k|^{2\sigma+2}\, dx\right)=I_{0,1}(\mu), 
\end{align*}
which implies that the convergence is strong in $H^1$ and therefore $u$ is a minimizer. By adjusting the sequence $(y_k)_k$, we can assume $u=w_\mu$. 
\end{proof}

Next, we use the equation to improve the convergence. 

\begin{prop}[$H^2$-convergence]\label{uniflimit}
Assume $0<\sigma<2/N$. If $\gamma_{k}\to 0$ and $(u_{k})_{k}$ is a sequence such that $E_{\gamma_{k},1}(u_{k})=I_{\gamma_{k},1}(\mu)$, 
then, up to a subsequence and a sequence $(y_k)_k\subset\R^N$,  
$u_{k}(\cdot-y_k)\to w_{\mu}$ strongly in $H^2(\R^N)\cap W^{1,p}(\R^N)$ for every $p\ge 2$.
\end{prop}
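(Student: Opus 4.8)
The plan is to upgrade the $H^1$-convergence of Proposition~\ref{MiniConvergenceH1} by exploiting the equation. After translating, write $v_k:=u_k(\cdot-y_k)$, so that $v_k\to w_\mu$ strongly in $H^1(\R^N)$, $\gamma_k\int_{\R^N}|\Delta v_k|^2\,dx\to0$, and $v_k$ solves
$$\gamma_k\Delta^2 v_k-\Delta v_k+\alpha_k v_k=|v_k|^{2\sigma}v_k\qquad\text{in }\R^N,$$
$\alpha_k$ being the associated Lagrange multiplier. First I would observe that $\alpha_k\to\alpha_0$, the multiplier of $w_\mu$ in \eqref{nlsalpha}: this is immediate from \eqref{defalphaintro}, the $H^1$-convergence, and $\gamma_k\|\Delta v_k\|_{L^2}^2\to0$. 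Since $\sigma<2/N$, formula \eqref{lagmult2nls} gives $\alpha_0>0$, so for $k$ large $\alpha_k$ stays in a fixed compact subset of $(0,\infty)$ and $\gamma_k\le1$.

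The core of the argument is a uniform-in-$k$ elliptic estimate for $\mathcal{L}_k:=\gamma_k\Delta^2-\Delta+\alpha_k$. Its symbol is $\gamma_k|\xi|^4+|\xi|^2+\alpha_k$, and the multiplier $\xi\mapsto(1+|\xi|^2)/(\gamma_k|\xi|^4+|\xi|^2+\alpha_k)$ satisfies Mikhlin's condition uniformly in $k$: factoring it as $\frac{1+|\xi|^2}{|\xi|^2+\alpha_k}\cdot\frac{1}{1+\phi_k(\xi)}$ with $\phi_k(\xi):=\gamma_k|\xi|^4/(|\xi|^2+\alpha_k)\ge0$, the derivative bounds reduce to the elementary inequality $|\xi|\,|\nabla\phi_k(\xi)|\le 4\phi_k(\xi)$, so the biharmonic term only improves the symbol and the non-degenerate second-order part $|\xi|^2+\alpha_k$ drives the estimate. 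It follows that $\mathcal{L}_k^{-1}\colon L^q(\R^N)\to W^{2,q}(\R^N)$ is bounded, uniformly in $k$, for every $q\in(1,\infty)$. As $(v_k)$ is bounded in $H^1\hookrightarrow L^{2^*}$ and $2\sigma+1$ is $H^1$-subcritical, $|v_k|^{2\sigma}v_k$ is bounded in $L^{q_0}$ for some $q_0>1$, hence $v_k=\mathcal{L}_k^{-1}(|v_k|^{2\sigma}v_k)$ is bounded in $W^{2,q_0}$; since $\sigma<2/N$ is $H^2$-subcritical, a finite bootstrap (Sobolev embedding, then raising the integrability of $|v_k|^{2\sigma}v_k$ and reapplying $\mathcal{L}_k^{-1}$) shows that $(v_k)$ is bounded in $W^{2,q}(\R^N)$ for every $q<\infty$, in particular in $H^2$.

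For strong convergence, recall that $w_\mu$ is smooth — it solves the $H^1$-subcritical second-order equation $-\Delta w_\mu+\alpha_0 w_\mu=|w_\mu|^{2\sigma}w_\mu$, so $w_\mu\in W^{4,q}(\R^N)$ for every $q$ — whence $\mathcal{L}_k w_\mu=|w_\mu|^{2\sigma}w_\mu+(\alpha_k-\alpha_0)w_\mu+\gamma_k\Delta^2 w_\mu$ and therefore
$$v_k-w_\mu=\mathcal{L}_k^{-1}\big(|v_k|^{2\sigma}v_k-|w_\mu|^{2\sigma}w_\mu\big)+\mathcal{L}_k^{-1}\big((\alpha_0-\alpha_k)w_\mu-\gamma_k\Delta^2 w_\mu\big).$$
The uniform bound on $\mathcal{L}_k^{-1}$ turns this into
$$\|v_k-w_\mu\|_{W^{2,q}}\le C\Big(\big\||v_k|^{2\sigma}v_k-|w_\mu|^{2\sigma}w_\mu\big\|_{L^q}+|\alpha_0-\alpha_k|\,\|w_\mu\|_{L^q}+\gamma_k\|\Delta^2 w_\mu\|_{L^q}\Big).$$
Starting from $v_k\to w_\mu$ in $L^{2^*}$, so that $|v_k|^{2\sigma}v_k\to|w_\mu|^{2\sigma}w_\mu$ in $L^{q_0}$, this gives $v_k\to w_\mu$ in $W^{2,q_0}$, and running the same bootstrap upgrades it to $v_k\to w_\mu$ in $W^{2,q}(\R^N)$ for every $q<\infty$. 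By Sobolev embedding, $\nabla v_k\to\nabla w_\mu$ in $L^p(\R^N)$ for every $p\ge2$, and in particular $v_k\to w_\mu$ in $H^2(\R^N)$, as claimed.

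I expect the main obstacle to be the uniform-in-$k$ $W^{2,q}$ (equivalently $H^2$) a priori bound, i.e.\ the boundedness of the inverse of the degenerating operator $\mathcal{L}_k$ as $\gamma_k\to0$; this is exactly the Mikhlin computation sketched above (alternatively one can localize in frequency, or, when $N\le4$, use the energy inequality $\|\Delta v_k\|_{L^2}\le\||v_k|^{2\sigma}v_k\|_{L^2}$ directly). Everything else is routine elliptic bootstrapping together with dominated convergence. An alternative derivation of strong $H^2$-convergence, once the uniform $W^{2,q}$-bounds are available: testing the equation against $-\Delta v_k$ gives $\|\Delta v_k\|_{L^2}^2+\alpha_k\|\nabla v_k\|_{L^2}^2+\gamma_k\|\nabla\Delta v_k\|_{L^2}^2=(2\sigma+1)\int_{\R^N}|v_k|^{2\sigma}|\nabla v_k|^2\,dx$, whose right-hand side tends to $(2\sigma+1)\int_{\R^N}|w_\mu|^{2\sigma}|\nabla w_\mu|^2\,dx=\|\Delta w_\mu\|_{L^2}^2+\alpha_0\|\nabla w_\mu\|_{L^2}^2$; since also $\gamma_k\|\nabla\Delta v_k\|_{L^2}^2\ge0$ and $\Delta v_k\rightharpoonup\Delta w_\mu$ in $L^2$, this forces $\|\Delta v_k\|_{L^2}\to\|\Delta w_\mu\|_{L^2}$, hence strong $L^2$-convergence of $\Delta v_k$.
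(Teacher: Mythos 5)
Your proof is correct, but it takes a genuinely different route from the paper's. The paper never inverts the full fourth-order operator globally: instead it sets $\zeta_k=-\gamma_k\Delta v_k$, observes that $\zeta_k$ solves $-\Delta\zeta_k+\gamma_k^{-1}\zeta_k=|v_k|^{2\sigma}v_k-\alpha_k v_k$, and runs local second-order elliptic bootstraps on the coupled system $(\zeta_k,v_k)$ to get uniform $L^\infty$ and $W^{1,\infty}$ bounds; the strong $H^2$ convergence is then obtained by proving a uniform $H^3(\R^N)$ bound on $(v_k)_k$ and interpolating between that and the strong $H^1$ convergence of Proposition \ref{MiniConvergenceH1}. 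Your argument replaces all of this by a single uniform-in-$k$ resolvent estimate $\|\mathcal{L}_k^{-1}\|_{L^q\to W^{2,q}}\le C$ via the Mikhlin--H\"ormander theorem (the factorization through $1/(1+\phi_k)$ with $|\xi|\,|\nabla\phi_k|\le 4\phi_k$ is the right computation, and it does extend to the higher derivatives needed for Mikhlin since $|\xi|^{|\beta|}|\partial^\beta\phi_k|\le C_\beta\,\phi_k$ uniformly in $\gamma_k,\alpha_k$), followed by a resolvent identity for $v_k-w_\mu$. What your approach buys is a cleaner, quantitative statement --- an explicit rate in $\gamma_k$ and $|\alpha_k-\alpha_0|$ --- and it avoids both the auxiliary unknown $\zeta_k$ and the $H^1$/$H^3$ interpolation step; what the paper's approach buys is that it uses only standard local $W^{2,p}$ estimates for second-order operators, no Fourier multiplier theory. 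Two small points to tighten: the final passage from $W^{2,q_0}$-convergence (with $q_0=2^*/(2\sigma+1)$, which may exceed $2$) down to $H^2$-convergence is not literally "the same bootstrap upward"; you should apply the resolvent identity once more with $q=2$, using the uniform $L^\infty$ bound to get $\||v_k|^{2\sigma}v_k-|w_\mu|^{2\sigma}w_\mu\|_{L^2}\to 0$ from the strong $L^2$ convergence of $v_k$. And the convergence $\alpha_k\to\alpha_0$, which you get directly from \eqref{defalphaintro} together with $\gamma_k\|\Delta v_k\|_{L^2}^2\to 0$, is actually simpler than the paper's Step 3, which goes through the Derrick--Pohozaev identity.
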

\begin{proof}
By Proposition \ref{MiniConvergenceH1}, there exist $(y_k)_k\subset\R^N$ and a subsequence such that $v_k = u_{k}(\cdot-y_k)\to w_{\mu}$ strongly in $H^1$.  

\medbreak

\noindent\emph{Step one: Uniform boundedness.} We argue similarly as in the proof of Theorem \ref{thmradialsignchanging}. However, due to the fact that $\gamma_k \rightarrow 0$, we need to be more careful. Consider the Euler-Lagrange equation for $v_k$\begin{equation}\label{Euler-Lag}
\gamma_k\Delta^2 v_k-\Delta v_k+\alpha_k v_k=|v_{k}|^{2\sigma}v_{k}. 
\end{equation}
Let us denote $\zeta_k=-\gamma_k\Delta v_k$. Then $\zeta_k$ solves
\begin{equation}\label{eqvk}
-\Delta \zeta_k+\frac{1}{\gamma_k}\zeta_k=w_k, 
\end{equation}
where $w_k=|v_{k}|^{2\sigma}v_{k}-\alpha_k v_k$. Observe that $w_k\in L^p_{\loc}(\R^N)$ for $1\le p\le \frac{2N}{(N-2)(2\sigma+1)}$ since $v_k\in L^1_{\loc}(\R^N)$ and by Sobolev inequality we know that $v_k$ is bounded in $L^{2N/(N-2)}(\R^N)$. Now, since $\zeta_k\to 0$ in $L^2(\R^N)$, we have $\zeta_k\in L^q_{\loc}(\R^N)$ for $1\le q\le 2$. All these bounds are locally uniform (e.g. on unit cubes). From local elliptic regularity, see e.g. \cite[Chapter $5$, Theorem $5$]{kry}, we now deduce a locally uniform $W_{\loc}^{2,r}$-bound on $\zeta_k$ from \eqref{eqvk}, with $r=\min(\frac{2N}{(N-2)(2\sigma+1)},2)$. Then, going back to \eqref{Euler-Lag}, we write
\begin{equation}\label{Eq-u-and-v}
 -\Delta v_k+\alpha_k v_k=|v_{k}|^{2\sigma}v_{k}+\Delta \zeta_k.
\end{equation}
Since the right-hand side is uniformly bounded in $L^r_{\loc}(\R^N)$, we deduce now a locally uniform $W_{\loc}^{2,r}$-bound on $v_k$ again by local elliptic regularity estimates. Since $N/m<r<N/(m-1)$ for some $m\in \N$, bootstrapping the argument $m$ times eventually increases the regularity to $W_{\loc}^{2,s}$ for some $s>N$. Therefore we deduce a uniform $L^\infty$-bound on $v_k$ and $\nabla v_k$. 

\medbreak

\noindent\emph{Step two: Convergence in $W^{1,p}(\R^N)$.} Since the sequence $(v_k)_k$ strongly converges in $H^1(\R^N)$ and $$\sup_k(\|v_k\|_{L^\infty},\|\nabla v_k\|_{L^\infty})<\infty ,$$ the claim is clear.

\medbreak

\noindent\emph{Step three: Convergence of the Lagrange multiplier.}
Let
$$\alpha_k (\mu) = \frac{1}{\mu}\left(-2 E_{\gamma_k,1}(v_k) + \dfrac{\sigma}{\sigma+1}\int_{\R^N}  |v_k |^{2\sigma +2}\, dx\right). $$
We claim that there exist two strictly positive constants $c_1,c_2$ independent of $k$ such that 
\begin{equation}
\label{smalagraunif}
c_1 \leq \alpha_k (\mu) \leq c_2.
\end{equation}
 Indeed, by Remark \ref{lagrunifest}, we have that $ \alpha_k \leq M$ for some constant $M$ independent of $k$. On the other hand, using the Derrick-Pohozaev identity and the strong convergence of $v_k \to w_\mu$ in $H^1 (\R^N)$, one can show that
$$\liminf_{k\to\infty}\alpha_k (\mu) \geq \dfrac{1}{\mu} \left(1-\dfrac{\sigma N}{2\sigma +2}\right) \int_{\R^N} {w_\mu}^{2\sigma +2}\, dx.$$ 
This establishes the claim. 
Moreover, using once more the Derrick-Pohozaev identity and the strong convergence of $v_k \to w_\mu$ in $H^1 (\R^N)\cap L^{2\sigma +2}(\R^N)$, one can show that
$$\alpha_k (\mu)= \dfrac{1}{\mu} \left(-\dfrac{1}{2}\int_{\R^N}|\nabla v_k|^2 dx + \left(1-\dfrac{\sigma N}{2(2\sigma +2)}\right) \int_{\R^N} v_k^{2\sigma +2}dx\right) \rightarrow \alpha_0.$$

\medbreak 

\noindent\emph{Step four: Strong convergence in $H^{2}$.} Consider again the Euler-Lagrange equation \eqref{Euler-Lag} for $v_k$.
Denote again $\zeta_k=-\gamma_k\Delta v_k$. Then $\zeta_k$ solves \eqref{eqvk}.	
Observe that $w_k\in H^1(\R^N)$ since $v_k$ is bounded, $\nabla v_k\in L^2(\R^N)$ and Proposition \ref{MiniConvergenceH1} implies $\zeta_k\to 0$ in $L^2(\R^N)$. We therefore infer from \eqref{eqvk} that $(\zeta_k)_k$ is bounded in $H^3(\R^N)$ by a constant that does not depend on $k$, see for instance \cite[Chapter 1, Theorems 6.4 \& 6.5]{kry}. Now, going back to \eqref{Eq-u-and-v}, we use the $H^3$-bound on $(\zeta_k)_k$ and the fact that $|v_k|^{2\sigma}v_{k}$ is bounded uniformly in $H^1$ to conclude that $(v_k)_k$ is bounded in $H^3 (\R^N)$. Finally, by interpolation, using the strong convergence in $H^1$ and the uniform bound in $H^3$, we deduce the strong convergence of $v_k \to w_\mu$ in $H^2$.
\end{proof}

\begin{rmq}\label{rem:last}
Arguing as in the previous proof, one can show that 
$u_{k}(\cdot-y_k)\to w_{\mu}$ locally in $C^{1,\alpha}$ for every $\alpha\in [0,1)$.
Indeed, since $v_k$ and $\nabla v_k$ are uniformly bounded, we can improve the bound on $\zeta_k$, namely $\zeta_k\in W^{3,p}_{loc}$ for any $p\geq 2$, so that we deduce a uniform $W^{3,p}_{loc}$-bound on $v_k$ as well. 
\end{rmq}

We now prove the nondegeneracy of any minimizer when $\gamma$ is small.
\begin{lem}
\label{strongH2mai}
Assume $0<\sigma<2/N$ and $\gamma_k\to 0$. If $(u_k)_k$ is a sequence of minimizers for  \eqref{MinL2fixed} with $\gamma=\gamma_k$ and $u_k\to w_\mu$ strongly in $H^2(\R^N)$, then $u_k$ is nondegenerate for $k$ large enough. 
\end{lem}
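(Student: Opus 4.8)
The plan is to argue by contradiction using the implicit function theorem, exploiting that $w_\mu$ is a nondegenerate solution of the limit equation \eqref{nlsalpha} (with $\alpha=\alpha_0$, the limit of the Lagrange multipliers obtained in Step three of Proposition \ref{uniflimit}). Suppose, for a subsequence still denoted $(u_k)_k$, that $u_k$ is degenerate: there is $v_k$ solving the linearized equation $L_k v_k := \gamma_k\Delta^2 v_k -\Delta v_k +\alpha_k v_k -(2\sigma+1)|u_k|^{2\sigma}v_k = 0$ with $v_k\notin\mathrm{span}\{\partial_{x_1}u_k,\dots,\partial_{x_N}u_k\}$. After projecting off the (known) kernel directions $\partial_{x_i}u_k$ and normalising $\|v_k\|_{H^2}=1$, I want to pass to the limit and produce a nontrivial radial solution of the linearized equation \eqref{nls-linearized0} for the limit problem, contradicting the nondegeneracy of $w_\mu$ recalled just before the lemma.

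First I would set up the functional-analytic framework: define $F:\R\times H^2(\R^N)\times\R\to L^2(\R^N)\times\R$ by $F(\gamma,u,\alpha) = \big(\gamma\Delta^2 u -\Delta u +\alpha u -|u|^{2\sigma}u,\ \|u\|_{L^2}^2-\mu\big)$, so that minimizers with their Lagrange multipliers are zeros of $F$. At $(\gamma,u,\alpha)=(0,w_\mu,\alpha_0)$ the partial derivative $D_{(u,\alpha)}F$ is, modulo the translation kernel, an isomorphism precisely because of the nondegeneracy of $w_\mu$ together with the transversality of the mass constraint (the standard fact that $\int v w_\mu\,dx\neq 0$ for the solution of $-\Delta v+\alpha_0 v -(2\sigma+1)|w_\mu|^{2\sigma}v=w_\mu$, which holds in the $L^2$-subcritical range). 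One must work on the subspace orthogonal to translations, or equivalently fix a normalisation killing the $\partial_{x_i}w_\mu$ directions; this is routine since the kernel is exactly $N$-dimensional and spanned by those. The implicit function theorem then gives, for $\gamma$ small, a \emph{unique} solution branch $(u_\gamma,\alpha_\gamma)$ near $(w_\mu,\alpha_0)$ in that subspace, and this branch is automatically nondegenerate because nondegeneracy is an open condition (the linearized operator $L_\gamma$ is a small perturbation, in the appropriate operator norm, of the limit operator whose kernel is exactly the translations).

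The technical heart is controlling the biharmonic term $\gamma_k\Delta^2$ as $\gamma_k\to 0$: the operators $L_k$ do not converge in any naive norm because the principal symbol degenerates. The key estimate is that, on the orthogonal complement of $\mathrm{span}\{\partial_{x_i}u_k\}$, one has a uniform coercivity bound $\|v\|_{H^2}\le C\big(\|L_k v\|_{L^2}+\|\zeta\|_{L^2}\big)$ with $\zeta=-\gamma_k\Delta v$, obtained by the same elliptic-regularity bootstrap used in Proposition \ref{uniflimit} (rewriting $L_k v = f$ as a system $-\Delta\zeta + \gamma_k^{-1}\zeta = \gamma_k^{-1}(\,\cdots)$ and $-\Delta v + \alpha_k v = (2\sigma+1)|u_k|^{2\sigma}v + \Delta\zeta + f$, using the $H^2\cap W^{1,\infty}$ convergence $u_k\to w_\mu$ and $\alpha_k\to\alpha_0$). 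This lets me show the degenerate $v_k$, once normalised and projected, converge strongly in $H^2_{\loc}$ (no vanishing, by the uniform lower bound via the same argument as in Section 2) to a nonzero $v_\infty\in H^2(\R^N)$ solving \eqref{nls-linearized0}; since the $\partial_{x_i}w_\mu$ components were projected out and one may further symmetrise, $v_\infty$ can be taken radial and nonzero, contradicting the recalled nondegeneracy of $w_\mu$. The main obstacle, and the step deserving the most care, is exactly this uniform $H^2$ (and local $H^3$) control of the linearized operators as the coefficient of the top-order term vanishes, together with ruling out loss of compactness (vanishing/dichotomy) of the normalised kernel elements; everything else is a standard IFT-plus-openness argument.
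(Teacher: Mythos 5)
Your core argument (the last paragraph of your plan) takes a genuinely different route from the paper's. The paper does not extract a limit of kernel elements directly; it introduces the bilinear forms $A_k(v,w)=\int(\gamma_k\Delta v\,\Delta w+\nabla v\cdot\nabla w+\alpha_k vw)\,dx$ and $B_k(v,w)=\int|u_k|^{2\sigma}vw\,dx$, uses the \emph{minimality} of $u_k$ to show that the eigenvalues of the pencil $A_k\phi=\lambda B_k\phi$ satisfy $\lambda_0=1$ (eigenfunction $u_k$) and $\lambda_i\ge 2\sigma+1$ for $i\ge1$, identifies $\ker L_k$ with the eigenspace of $2\sigma+1$, and reaches a contradiction by passing to the limit in the variational characterization of the $(N+1)$-st eigenvalue and invoking the nondegeneracy of $w_\mu$ through a strict Rayleigh-quotient inequality. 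Your direct compactness argument avoids the spectral machinery and, notably, does not use minimality at all --- any sequence of solutions converging to $w_\mu$ with convergent multipliers would do --- which is a real simplification. Both proofs ultimately rest on the same two facts: the term $\gamma_k\Delta^2$ disappears in the weak limit (because testing the equation with $v_k$ controls $\gamma_k\|\Delta v_k\|_{L^2}^2$), and vanishing of the normalized kernel elements is excluded by the uniform decay of $u_k$ together with $\alpha_k\ge c_1>0$.

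Three points in your write-up need repair, the first being a genuine gap as written. (1) The normalization $\|v_k\|_{H^2}=1$ is the wrong one: as $\gamma_k\to0$ the quadratic form no longer controls $\|\Delta v_k\|_{L^2}$, so the entire $H^2$-mass of $v_k$ could concentrate on $\Delta v_k$ while $\|v_k\|_{H^1}\to 0$; the weak limit would then be zero and no contradiction with the nondegeneracy of $w_\mu$ arises. Normalize instead $\|v_k\|_{L^2}=1$ (or $\int|u_k|^{2\sigma}v_k^2=1$, as the paper does): testing $L_kv_k=0$ against $v_k$ gives $\gamma_k\|\Delta v_k\|_{L^2}^2+\|\nabla v_k\|_{L^2}^2+\alpha_k=(2\sigma+1)\int|u_k|^{2\sigma}v_k^2\le C$, which yields the uniform $H^1$ bound, the smallness of the biharmonic contribution, and, via $\alpha_k\ge c_1>0$ and the uniform decay of $u_k$, the nontriviality of the weak limit. (2) The claim that the implicit-function-theorem branch is ``automatically nondegenerate because nondegeneracy is an open condition'' is not available here: $\gamma\Delta^2$ is a singular perturbation and $L_{\gamma_k}$ does not converge to $L_0$ in any operator norm on a fixed space (you concede this two sentences later). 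The IFT yields uniqueness of the branch, not nondegeneracy of its members; your compactness argument is the actual proof and should not be framed as a technical supplement to an openness argument. (3) The final symmetrization is unnecessary and unsound: the rotational average of a nonzero kernel element orthogonal to the translations can vanish (e.g. if it is odd). You do not need $v_\infty$ radial --- the nondegeneracy recalled before the lemma states that the kernel of the limit linearization in all of $H^1(\R^N)$ equals the span of the $\partial_{x_i}w_\mu$, so a nonzero limit orthogonal to these is already the desired contradiction.
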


\begin{proof}
We aim at proving that $u_k$ is nondegenerate if $k$ is large enough, i.e. all solutions to the linearization
$$\gamma_k\Delta^2 v- \Delta v +\alpha v= (2\sigma +1)|u_k|^{2\sigma} v$$
are given by $v=\xi\cdot\nabla u_k$, where $\xi \in \R^N$. 
Let us denote the Lagrange multiplier associated to $w_\mu$ by $\alpha_\mu$, namely $w_\mu$ is solution to
\begin{equation}
\label{defalphamupro1}
-\Delta w_\mu +\alpha_\mu w_\mu =|w_\mu|^{2\sigma} w_\mu .
\end{equation}
As recalled at the beginning of the section, $w_\mu$ is nondegenerate. 
We define the bilinear forms  
$$A_k:H^2(\R^N)\times H^2(\R^N)\to \R : (v,w)\mapsto \int_{\R^N}\left(\gamma_k\Delta v\Delta w + \nabla v\nabla w +\alpha_k vw\right)\, dx$$
$$B_k:H^2(\R^N)\times H^2(\R^N)\to \R : (v,w)\mapsto \int_{\R^N}|u_k|^{2\sigma} vw\, dx.$$
The bilinear form $A_k$ defines a scalar product on $H^2(\R^N)$ and since $|u_k|^{2\sigma}\in L^{q/\sigma}(\R^N)$ for every $q\ge 1$, we easily check that the weak convergence induced by $A_k$ implies the strong convergence induced by $B_k$. It then follows from standard arguments that there exist a nondecreasing sequence of positive eigenvalues $(\lambda_{i,k})_i\subset\R^+_0$ and a corresponding orthonormal basis of eigenfunctions $(\phi_{i,k})_i\subset H^2(\R^N)$, namely 
$$A_k(\phi_{i,k},w) = \lambda_{i,k} B_k(\phi_{i,k},w),\quad  \forall\ w\in H^2(\R^N).$$
We choose to normalize the sequence of eigenfunctions with respect to the bilinear form $B_k$.
Since $u_k$ is a minimizer for \eqref{MinL2fixed}, we infer that 
$$A_k(v,v) -(2\sigma+1)B_k(v,v)\ge 0$$
for every $v\in H^2(\R^N)$ satisfying the orthogonality condition $\int_{\R^N}u_kv\, dx =0$. As a simple consequence, $\lambda_0=1$ and $\lambda_i\ge (2\sigma+1)$ for $i\ge 1$. Indeed, $u_k$ is an eigenfunction associated to the eigenvalue $1$ and if there exists another eigenfunction $\psi$ either associated to $1$ or to a distinct eigenvalue smaller than $(2\sigma+1)$, one easily deduces that 
$$A_k(v,v) -(2\sigma+1)B_k(v,v)<0,$$
where
$$v = \psi - \frac{\int_{\R^N}\psi u_k\, dx}{\int_{\R^N}|u_k|^2\, dx}u_k.$$
This is a contradiction because $v$ is orthogonal to $u_k$.
It remains to show that the eigenspace $V_{2\sigma+1}$ associated to $(2\sigma +1)$ is exactly $\{\xi\cdot\nabla u_k: \xi\in\R^N\}$ for large $k$. It is straightforward to check that 
$\{\xi\cdot\nabla u_k: \xi\in\R^N\}\subseteq V_{2\sigma+1}$. We can therefore set
$\phi_{i,k}=\partial_{x_i}u_k$ for $i=1,\ldots,N$. Assume by contradiction that $\phi_{N+1,k}\in V_{2\sigma+1}$. Then, by the variational characterization of eigenvalues, we know that 
$$2\sigma+1= A_k(\phi_{N+1,k},\phi_{N+1,k}) =\min_{v\in H_k}A_k(v,v) ,$$
where $H_k := \{v\in {\rm span}(u_{k},\partial_{x_1}u_k,\ldots,\partial_{x_N}u_k)^\perp: B_k(v,v)=1\}$. It is clear that $\phi_{N+1,k}$ is bounded in $H^1(\R^N)$ 
so that there exists a weak limit $\phi_{N+1}\in H^1(\R^N)$, at least for a subsequence that we still denote by $(\phi_{N+1,k})_k$ for simplicity. We can also assume that $(\phi_{N+1,k})_k$ weakly converges to $\phi_{N+1}$ in $L^{p}(\R^N)$ where $p=1/(1-\sigma)$ if $N\ge 3$ and $p=2/(2-\sigma)$ if $N=1,2$.
Therefore, we have
\begin{multline*}
\int_{\R^N} |\nabla \phi_{N+1}|^2\, dx + \alpha_\mu \int_{\R^N} |\phi_{N+1}|^2 \, dx  \\
\le \liminf_{k\to\infty} \left( \gamma_k\int_{\R^N}|\Delta \phi_{N+1,k}|^2\, dx +\int_{\R^N} |\nabla \phi_{N+1,k}|^2\, dx + \alpha_k \int_{\R^N} |\phi_{N+1,k}|^2 \, dx\right).
\end{multline*}
If $\phi_{N+1}\in {\rm span}(w_{\mu},\partial_{x_1}w_\mu,\ldots,\partial_{x_N}w_\mu)^\perp$ and $\int_{\R^N}|w_\mu|^{2\sigma}\phi_{N+1}^2 \, dx=1$,
 then we have reached a contradiction because the nondegeneracy of $w_\mu$ implies
$$\int_{\R^N} |\nabla \phi_{N+1}|^2\, dx + \alpha_\mu \int_{\R^N} |\phi_{N+1}|^2 \, dx > 2\sigma+1.$$
Therefore, to conclude the proof, it is enough to observe that, as $k\to\infty$, 
$$\int_{\R^N}\phi_{N+1,k}u_{k}\, dx \to \int_{\R^N}\phi_{N+1}w_\mu\, dx,$$
$$\int_{\R^N}\phi_{N+1,k}\partial_{x_i}u_{k}\, dx \to \int_{\R^N}\phi_{N+1}\partial_{x_i}w_\mu\, dx,\quad i=1,\ldots,N$$
and
$$B_k(\phi_{N+1,k},\phi_{N+1,k})\to \int_{\R^N}|w_\mu|^{2\sigma}\phi_{N+1}^2 \, dx.$$
The convergence of the $N+1$ first integrals follow from the strong convergence of $(u_{k})_k$ in $H^1(\R^N)$ and the weak convergence of $(\phi_{N+1,k})_k$. For the last one, we use the weak convergence of $(\phi_{N+1,k})_k$ in $L^p(\R^N)$ and the strong convergence of $(|u_{k}|^{2\sigma})_k$ in the dual space. 

\end{proof}

\begin{thm}
\label{limitfixedmass}
 Assume $0<\sigma<2/N$. Then there exists $\gamma_0>0$ such that if $0<\gamma<\gamma_0$, then \eqref{MinL2fixed} has a unique nondegenerate least energy solution (up to translations). Fixing its maximum at the origin, this solution is radially symmetric. 
\end{thm}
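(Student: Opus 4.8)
The plan is to combine the existence statement of Theorem~\ref{Compact-Min-Sol}, the $H^2$-compactness of Proposition~\ref{uniflimit}, and the implicit function theorem anchored at $(\gamma,u,\alpha)=(0,w_\mu,\alpha_\mu)$, where $\alpha_\mu>0$ is the frequency of $w_\mu$ (its Lagrange multiplier for $E_0$), while the nondegeneracy assertion will be quoted directly from Lemma~\ref{strongH2mai}. Existence of a minimizer for every $\gamma>0$ is given by Theorem~\ref{Compact-Min-Sol} since $0<\sigma<2/N$. First I would record, via a routine contradiction argument resting on Proposition~\ref{uniflimit} (including the convergence of the Lagrange multipliers established in its proof), that for each $\varepsilon>0$ there is $\gamma_1>0$ such that for $0<\gamma<\gamma_1$ every minimizer $u$ of \eqref{MinL2fixed}, after a suitable translation, satisfies $\|u-w_\mu\|_{H^2}<\varepsilon$ and $|\alpha-\alpha_\mu|<\varepsilon$ for the associated multiplier $\alpha$. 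Using that $\langle w_\mu,\partial_{x_i}w_\mu\rangle_{L^2}=0$ and that the Gram matrix $(\langle\partial_{x_i}w_\mu,\partial_{x_j}w_\mu\rangle_{L^2})_{i,j}$ is a positive multiple of the identity, I would then adjust the translation by applying the inverse function theorem to $y\mapsto(\langle u(\cdot-y)-w_\mu,\partial_{x_i}w_\mu\rangle_{L^2})_{i=1}^N$, so that in addition $u-w_\mu\in Y:=\{v\in H^2(\R^N):\langle v,\partial_{x_i}w_\mu\rangle_{L^2}=0,\ i=1,\dots,N\}$.

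Next I would run the implicit function theorem for the map
\[
\Phi:[0,\infty)\times(w_\mu+Y)\times\R\longrightarrow\big(Y\cap L^2(\R^N)\big)\times\R,\qquad
\Phi(\gamma,u,\alpha)=\Big(\Pi\big[\gamma\Delta^2u-\Delta u+\alpha u-|u|^{2\sigma}u\big],\ \|u\|_{L^2}^2-\mu\Big),
\]
where $\Pi$ is the $L^2$-orthogonal projection onto $\big(\operatorname{span}\{\partial_{x_i}w_\mu\}\big)^{\perp}$; then $\Phi(0,w_\mu,\alpha_\mu)=0$. With $L_0v:=-\Delta v+\alpha_\mu v-(2\sigma+1)|w_\mu|^{2\sigma}v$, and noting $w_\mu\in Y$ and $L_0(Y\cap H^2)\subset Y\cap L^2$, the partial differential of $\Phi$ in $(u,\alpha)$ at $(0,w_\mu,\alpha_\mu)$ is $(v,s)\mapsto(L_0v+s\,w_\mu,\ 2\langle w_\mu,v\rangle)$. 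Since $w_\mu$ is a nondegenerate $H^1$-solution of $-\Delta w+\alpha_\mu w=|w|^{2\sigma}w$, $L_0$ restricts to an isomorphism of $Y\cap H^2$ onto $Y\cap L^2$, and the above differential is an isomorphism as soon as $\langle w_\mu,L_0^{-1}w_\mu\rangle\neq0$. Differentiating the profile equation $-\Delta w_\alpha+\alpha w_\alpha=|w_\alpha|^{2\sigma}w_\alpha$ in $\alpha$ and evaluating at $\alpha=\alpha_\mu$ yields $L_0(\partial_\alpha w_\alpha|_{\alpha=\alpha_\mu})=-w_\mu$, hence $\langle w_\mu,L_0^{-1}w_\mu\rangle=-\tfrac12\,\tfrac{d}{d\alpha}\|w_\alpha\|_{L^2}^2|_{\alpha=\alpha_\mu}$, and the scaling identity $\|w_\alpha\|_{L^2}^2=\alpha^{\frac1\sigma-\frac N2}\|w_1\|_{L^2}^2$ shows this is strictly negative precisely because $\sigma N<2$. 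The implicit function theorem then provides $\gamma_2>0$ and a $C^1$ branch $\gamma\mapsto(u_\gamma,\alpha_\gamma)$, $\gamma\in[0,\gamma_2)$, with $u_0=w_\mu$, $\alpha_0=\alpha_\mu$, which is the unique zero of $\Phi$ in a fixed neighbourhood of $(w_\mu,\alpha_\mu)$ in $(w_\mu+Y)\times\R$.

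To conclude, I would observe that any minimizer of \eqref{MinL2fixed}, normalised as in the first step, solves the Euler--Lagrange equation exactly, so the projection $\Pi$ is irrelevant and the minimizer is a zero of $\Phi$ lying in that neighbourhood; for $0<\gamma<\min\{\gamma_1,\gamma_2\}$ it must therefore coincide with $u_\gamma$, which is uniqueness up to translations. Since $w_\mu$ is radial, both $w_\mu+Y$ and $\Phi$ are $O(N)$-equivariant, so $u_\gamma\circ R$ is again a zero of $\Phi$ near $(w_\mu,\alpha_\mu)$ for every $R\in O(N)$; by uniqueness $u_\gamma\circ R=u_\gamma$, i.e.\ $u_\gamma$ is radially symmetric about the origin. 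As $u_\gamma\to w_\mu$ in $H^2$ and, by Remark~\ref{rem:last}, in $C^1_{\loc}(\R^N)$, standard arguments using the decay estimates of Section~\ref{sec:quali} show that for $\gamma$ small $u_\gamma$ is positive near the origin and attains its maximum there, which identifies the translate with maximum at the origin as $u_\gamma$ itself. Finally, nondegeneracy of $u_\gamma$ for $\gamma$ small is exactly the content of Lemma~\ref{strongH2mai}, and one takes $\gamma_0\le\min\{\gamma_1,\gamma_2\}$ small enough for the previous item and Lemma~\ref{strongH2mai} to apply.

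The main obstacle is the invertibility of the linearisation transverse to the translation directions: this is where the $L^2$-subcriticality $\sigma N<2$ is genuinely used, through the strict sign of $\tfrac{d}{d\alpha}\|w_\alpha\|_{L^2}^2$ at $\alpha=\alpha_\mu$. On the softer side, one must keep in mind that \emph{a priori} a minimizer of \eqref{MinL2fixed} need not be radial, so the uniform $H^2$-closeness provided by Proposition~\ref{uniflimit} is exactly what is needed to bring every minimizer into the range of the implicit function theorem, and the fact that minimizers satisfy the constrained Euler--Lagrange equation exactly is what lets us discard the projection $\Pi$ in the final step.
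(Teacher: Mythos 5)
Your proposal is correct in substance but takes a genuinely different route from the paper's proof, for both the uniqueness and the radial symmetry. The paper works in the opposite order: it first proves radial symmetry of each minimizer directly, by noting that the angular derivatives $V_k^{ij}=x_i\partial_{x_j}u_k-x_j\partial_{x_i}u_k$ lie in the kernel of the linearized operator, invoking Lemma \ref{strongH2mai} to write them as $\xi\cdot\nabla u_k$, and running a geometric argument to extract a common centre of rotation; only then does it apply the implicit function theorem on $X=H^2_{rad}(\R^N)$ with $(\gamma,\alpha)$ as \emph{parameters}, where $L_0$ has trivial kernel. You instead set up the implicit function theorem on the translation slice $w_\mu+Y$ with the mass constraint built into $\Phi$ and $\alpha$ as an \emph{unknown}; invertibility of the linearization then reduces to $\langle w_\mu,L_0^{-1}w_\mu\rangle\neq 0$, i.e.\ to the Vakhitov--Kolokolov quantity $\tfrac{d}{d\alpha}\|w_\alpha\|_{L^2}^2$, which is exactly where $\sigma N<2$ enters, and radial symmetry falls out of $O(N)$-equivariance plus uniqueness instead of a separate argument. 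Your formulation has a real advantage: since $\|u\|_{L^2}^2=\mu$ is part of $\Phi$, you need not assume that two hypothetical distinct minimizers share the same Lagrange multiplier, whereas the paper's concluding contradiction tacitly runs both sequences through $F(\gamma_k,\alpha_k,\cdot)=0$ with a single sequence $\alpha_k$. The price is the modulation step and the VK computation. One shared technical caveat you should fix: your codomain $Y\cap L^2$ is not literally admissible, because $\gamma\Delta^2 u$ only lies in $H^{-2}(\R^N)$ for $u\in H^2(\R^N)$; the paper uses $H^{-2}$ as target (and even there the limiting operator $L_0$, mapping $H^2$ into $L^2$, is not onto $H^{-2}$, so the singular perturbation $\gamma\to 0$ requires either the weak formulation with matched spaces or the extra $H^3$/$H^4$ regularity of minimizers from Proposition \ref{uniflimit}). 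This is a defect shared with, not worse than, the paper's own write-up, and it does not affect the core of your argument.
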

\begin{proof}
Assume that $\gamma_{k}\to 0$ and let $(u_{k})_{k}$ be a sequence such that $ E_{\gamma_{k},1}(u_{k})=I_{\gamma_{k},1}(\mu)$. We prove, by using the previous proposition, that $u_k$ is radially symmetric around some point. Then we use the implicit function theorem to get uniqueness. 

\medbreak 

\noindent\emph{Step one: Radial symmetry of $u_k$.}  We know from Proposition \ref{uniflimit} that, up to a subsequence and a sequence $(y_k)_k\subset\R^N$, we have that $u_{k}(\cdot-y_k)\to w_{\mu}$ strongly in $H^2(\R^N)\cap W^{1,p}(\R^N)$ for every $p\ge 2$. It is enough to prove that $u_k(\cdot-y_k)$ is invariant by rotations around some point. To simplify the notation we just keep $u_k$ to denote $u_k(\cdot-y_k)$. First, observe that $u_k(x)>0$ for $x\in B_R(0)$. Indeed, by Remark \ref{rem:last}, the convergence is locally $C^0$. In particular this shows that for every compact set $\Omega\subset \R^N$, there exists $k_\Omega\in \mathbb N$ such that $u_k(x)>0$ for $x\in \Omega$ and $k\ge k_\Omega$ because $w_{\mu}$ is positive in $\R^N$. In what follows we assume that $k\ge k_{B_R(0)}$. This implies $u_k(x)>0$ for $x\in B_R(0)$. 

\medbreak

Observe now that
$$
V_k^{ij}(x)=  x_i \dfrac{\partial u_k}{\partial x_j}(x)-x_j   \dfrac{\partial u_k}{\partial x_i}(x)
$$
is a solution to $L_k V_k^{ij}(x) =0 $, for any $i\neq j$, where $L_k$ is defined by
$$L_k (v)=\gamma_k \Delta^2 v - \Delta v+\alpha_k v - (2\sigma +1)|u_k|^{2\sigma} v.
$$
On the other hand, by Lemma \ref{strongH2mai}, we know that any solution $\tilde{u}$ to $L_k (\tilde{u})=0$ is of the form $\tilde{u}=\xi\cdot\nabla u_k$, for some $\xi \in \R^N$. 
Hence given $i\ne j\in\{1,\ldots,N\}$, there exists $\xi \in\R^N$ such that 
$$(x_i-\xi_j) \dfrac{\partial u_k}{\partial x_j}(x)-(x_j+\xi_i)  \dfrac{\partial u_k}{\partial x_i}(x) = 
\bar{\xi} \cdot\nabla u_k,
$$
where $\bar{\xi}\in \R^N$ is defined by $\bar\xi_i=\bar\xi_j=0$ and $\bar\xi_\ell = \xi_\ell$ if $\ell\ne i$ and $\ell\ne j$. Obviously $\xi$ depends on $k$. Since $V_k^{ij}=\xi\cdot\nabla u_k$ and $ u_k (x)$ converges locally uniformly in $C^1$ (by Remark \ref{rem:last}), we infer that $V_k^{ij}(x)\to 0$ uniformly in $B_R(0)$. Arguing by contradiction, one then shows that $\xi\to 0$ as $k\to\infty$ otherwise $w_{\mu}$ would be constant in one direction in $B_R(0)$. We may therefore drop the dependence of $\xi$ on $k$ to simplify the notations and assume that $|\xi_i|^2<R/2$ for every $i$. 
We now claim that in fact $\bar \xi =0$. Indeed, if this is not the case, then 
$$\bar \xi \cdot\nabla u_k(x) = 0$$ 
in the subspace $x_i=\xi_j$, $x_j=-\xi_i$. This means $u_k$ is constant, hence zero, in the direction $\bar \xi$ in the subspace $x_i=\xi_j$, $x_j=-\xi_i$ which is a contradiction with the fact that $u_k$ is positive in $B_R(0)$.
Taking polar coordinates $(r,\theta)$ centered at $(\xi_j,-\xi_i)$ in the plane $(x_i,x_j)$, we
conclude that 
$$ \partial_{\theta}u_k(x) =0$$
and therefore $u_k$ is invariant by rotations in the plane $(x_i,x_j)$ around $(\xi_j,-\xi_i)$ fixing all other coordinates.

If $N=2$, we are done as $u_k$ is radially symmetric around $(\xi_j,-\xi_i)$. To get the geometrical intuition, we first assume that $N=3$ before treating the general case. We then infer that $u_k$ is invariant by rotations around three orthogonal axes. By translation invariance of $u_k$, we can assume without loss of generality that the first axe is $A_3=(0,0,x_3)$ and the second one is $A_1=(x_1,b,0)$ for some $b\in\R$. We claim that $b=0$. Indeed, if not, the group action of the rotations around $A_3$ and the rotations around $A_1$ is such that the orbit of any point is unbounded. As $u_k$ tends to zero at infinity, this means $u_k=0$ which is a contradiction. For the same reason, the third axe has to be $A_2=(0,x_2,0)$. Therefore $u_k$ is invariant under the action of the special orthogonal group $SO(3)$. In higher dimensions, $u_k$ is invariant by rotations in $N(N-1)/2$ orthogonal planes, namely rotations in the planes $x_ix_j$ ($i\ne j$) leaving the other coordinates fixed. One then argues similarly to show that there exists one and only one point $\tilde{\xi}\in \R^N$ fixed by all those rotations. Since these rotations generate all rotations in $\R^N$ fixing the point $\tilde{\xi}$ (we recall that the dimension of $SO(N)$ is $N(N-1)/2$), we deduce $u_k$ is radially symmetric around $\tilde \xi$.

%

\medbreak 

\noindent\emph{Step two: Uniqueness.}
In what follows we use the nondegeneracy and radial symmetry of $u_k$ to apply the implicit function theorem. 
 
Let us set $X:=H^2_{rad}(\R^N)$ and $Y:=H^{-2}(\R^N)$ and let $F:\R^{+}\times \R^+ \times X\to Y$ be the operator defined (in the sense of distributions) by 
\begin{equation*}
 F(\gamma,\alpha,u)=\gamma\Delta^2 u-\Delta u+\alpha u-|u|^{2\sigma}u,
\end{equation*}
that is,
\begin{equation*}
F(\gamma,\alpha,u)(v)=
\int_{\R^N}(\gamma\Delta u \Delta v + \nabla u \nabla v +\alpha u v-|u|^{2\sigma}uv)\, dx\hspace{.5cm}\forall\ v\in X.
\end{equation*}
Obviously $F(0,\alpha_\mu,w_\mu)=0$ (see \eqref{defalphamupro1}). Moreover, $F$ is continuously differentiable in a neighbourhood of $(0,\alpha_\mu,w_\mu)$ with $D_u F(\gamma,\alpha,u)\in\mathcal{L}(X,Y)$ defined by
\begin{equation*}
D_u F(\gamma,\alpha,u)v=\gamma \Delta^2 v -\Delta v+\alpha v-(2\sigma+1)|u|^{2\sigma-1}uv,\hspace{.2cm}\forall\ v\in X, 
\end{equation*}
that is,
\begin{equation*}
D_u F(\gamma,\alpha,u)v[w]=\int_{\R^N}(\gamma\Delta v \Delta w + \nabla v \nabla w +\alpha v w-(2\sigma+1)|u|^{2\sigma-1}uvw)\, dx,
\end{equation*}
for all $v,w\in X$. We thus have in the weak sense
\begin{equation*}
 L_{0}(v):=D_u F(0,\alpha_\mu,w_\mu)v=-\Delta v+\alpha_\mu v-(2\sigma+1)w^{2\sigma}_\mu v.
\end{equation*}

It is well-known that the kernel of $L_{0}$ is of dimension $N$ when considered in $H^2(\R^N)$ and it is spanned by the partial derivatives of $w_{\mu}$. In particular, the kernel of $L_{0}$ restricted to $H^2_{rad}(\R^N)$ is trivial and $L_{0}:X\to Y$ is one-to-one, see for instance \cite{ChGuNaTs,KaTa,Kwong}. Moreover, it follows from the open mapping theorem that $L_{0}^{-1}:Y\to X$ is continuous. 

Since the linear map $L_{0}$ is a homeomorphism, we can apply the implicit function theorem. Namely, there exist $\gamma_0, \varepsilon>0$ and an open set $U_0\subset X$ that contains $w_\mu$ such that for every $\gamma\in [0,\gamma_0[$ and $\alpha \in (\alpha_\mu-\varepsilon , \alpha_\mu+\varepsilon )$, the equation $F(\gamma,\alpha,u)=0$ has a unique solution $u_{\gamma,\alpha}\in U_0$ and the curve 
$$\Gamma:[0,\gamma_0[\, \times (\alpha_\mu - \varepsilon ,\alpha_\mu +\varepsilon ) \to H^2(\R^N) : (\gamma , \alpha) \mapsto u_{\gamma,\alpha}$$ 
is of class $C^{1}$.

Now suppose that the uniqueness of least energy solutions fails in every set of the form $(0,\gamma)\times (\alpha_\mu - \varepsilon ,\alpha_\mu+\varepsilon ) $. We can then construct two sequences in $M_{\mu}$ of least energy solutions along a sequence $\gamma_k$ converging to $0$ and a sequence $\alpha_k \rightarrow \alpha_0$. We call them $(u_k)_k$ and $(v_k)_k$ and denote by $I_{\gamma_k,1}({\mu})$ their common energy. By assumption, $u_k\ne v_k$. By the previous step, we know that $u_k$ and $v_k$ are radially symmetric. 
Since these two sequences converge in $H^2$ to $u_0$ as $k\to\infty$, we have 
$$u_k,v_k\to u_0,$$
where the convergence is strong in $H^2$. 
Then, for $k$ large enough, there exist two solutions to the equation $F(\gamma_{k},\alpha_k,u)=0$ in $U_0$ with $\gamma_{k}<\gamma_{0}$ and $\alpha_k \in  (\alpha_\mu - \varepsilon ,\alpha_\mu+\varepsilon )$. This is a contradiction and ends the proof. 
\end{proof}


\subsection{Ground states}
It is also possible to obtain the same kind of result for solutions to 
\begin{equation}
\label{min1gamma}
m=\inf_{u\in M}J_{\gamma,\beta,\alpha}(u),
\end{equation}
where $J_{\gamma,\beta,\alpha}(u)$ is defined in \eqref{J-quadr} and $M$ in \eqref{lpconsint}. The proof is very similar and easier than the one above. We refer to  \cite[Propositions $1$ \& $2$]{bonnas} for more details. Notice that we are able to improve \cite[Proposition 2]{bonnas} proceeding as in Proposition \ref{uniflimit}. 
\begin{thm}
\label{limitlpfixed}
 Assume that $0<\sigma<2/N$ and $\alpha,\beta >0$. Then there exists $\gamma_0>0$ such that if $0<\gamma<\gamma_0$, then \eqref{min1gamma} has a unique nondegenerate positive least energy solution (up to translations). Fixing its maximum at the origin, this solution is radially symmetric and radially decreasing.  
\end{thm}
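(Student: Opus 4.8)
The plan is to follow, with simplifications, the proof of Theorem~\ref{limitfixedmass}; the only real difference is that here the frequency $\alpha$ is kept fixed, so it does not play the role of a varying parameter. The relevant limiting object is the minimizer of $J_{0,\beta,\alpha}$ over $M$: since $0<\sigma<2/N$ is $H^1$-subcritical this minimization is attained, and after the scaling recalled at the start of Section~\ref{sec:smalldis} the suitably normalized minimizer is the unique positive radial solution $w$ of $-\beta\Delta w+\alpha w=|w|^{2\sigma}w$. By \cite{Kwong,KaTa}, $w$ is unique up to translations, positive, radially symmetric, strictly radially decreasing, and nondegenerate, its kernel in $H^2(\R^N)$ being $\mathrm{span}\{\partial_{x_1}w,\dots,\partial_{x_N}w\}$. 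Recall also that a minimizer $u$ of \eqref{min1gamma} gives, after rescaling, a least energy critical point of the action, and conversely, so uniqueness up to translations for \eqref{min1gamma} is what has to be proved.

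First I would prove the asymptotic convergence. Given $\gamma_k\to0$ and minimizers $u_k$ of \eqref{min1gamma} with $\gamma=\gamma_k$, the normalized functions $v_k:=m_k^{1/(2\sigma)}u_k$, where $m_k:=J_{\gamma_k,\beta,\alpha}(u_k)$, solve $\gamma_k\Delta^2v_k-\beta\Delta v_k+\alpha v_k=|v_k|^{2\sigma}v_k$, and I claim that up to a subsequence and translations $v_k\to w$ strongly in $H^2(\R^N)\cap W^{1,p}(\R^N)$ for every $p\ge2$, and in $C^1_{\mathrm{loc}}(\R^N)$. The $H^1$-convergence follows exactly as in Proposition~\ref{MiniConvergenceH1} (see also \cite[Propositions~1 \& 2]{bonnas}): comparing $m_k$ with $J_{\gamma_k,\beta,\alpha}(w)$ yields $m_k\to m_0:=\inf_M J_{0,\beta,\alpha}$ and $\gamma_k\int|\Delta v_k|^2\,dx\to0$, so $(v_k)_k$ is a minimizing sequence for $J_{0,\beta,\alpha}$ on $M$, and $H^1$-subcriticality gives compactness up to translations. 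The upgrade to $H^2$ (and $W^{1,p}$, $C^1_{\mathrm{loc}}$) is the bootstrap of Proposition~\ref{uniflimit} and Remark~\ref{rem:last}: setting $\zeta_k=-\gamma_k\Delta v_k$ one has $-\Delta\zeta_k+\gamma_k^{-1}\zeta_k=|v_k|^{2\sigma}v_k-\alpha v_k$, which gives locally uniform $W^{2,r}$-bounds, then uniform $L^\infty$-bounds on $v_k$ and $\nabla v_k$, and finally a uniform $H^3$-bound, whence strong $H^2$-convergence by interpolation.

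Next I would establish nondegeneracy of $v_k$ (equivalently of $u_k$) for $k$ large, following the spectral argument of Lemma~\ref{strongH2mai}. Consider the generalized eigenvalue problem $\gamma_k\Delta^2\phi-\beta\Delta\phi+\alpha\phi=\lambda v_k^{2\sigma}\phi$, with eigenvalues $0<\lambda_{1,k}\le\lambda_{2,k}\le\cdots$ and eigenfunctions orthonormal for $B_k(\phi,\psi):=\int v_k^{2\sigma}\phi\psi\,dx$. Since $v_k>0$ is an eigenfunction with eigenvalue $1$, it is the first and $\lambda_{1,k}=1$ is simple; since $u_k$ minimizes $J_{\gamma_k,\beta,\alpha}$ on $M$, the second-variation inequality $\gamma_k\int|\Delta\varphi|^2+\beta\int|\nabla\varphi|^2+\alpha\int\varphi^2\ge(2\sigma+1)B_k(\varphi,\varphi)$ holds whenever $\int v_k^{2\sigma+1}\varphi\,dx=0$, so $\lambda_{2,k}\ge2\sigma+1$; and translation invariance makes $\partial_{x_1}v_k,\dots,\partial_{x_N}v_k$ independent eigenfunctions with eigenvalue exactly $2\sigma+1$, hence $\lambda_{2,k}=\cdots=\lambda_{N+1,k}=2\sigma+1$. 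It remains to show $\lambda_{N+2,k}>2\sigma+1$ for $k$ large: otherwise $\phi_k:=\phi_{N+2,k}$, normalized by $B_k(\phi_k,\phi_k)=1$ and $B_k$-orthogonal to $v_k$ and the $\partial_{x_i}v_k$, is bounded in $H^1(\R^N)$, and a weak limit $\phi_\infty$ satisfies, using the strong $H^2$-convergence $v_k\to w$, weak $H^1$/$L^2$ compactness of $(\phi_k)_k$, a tail estimate for the fixed weight $w^{2\sigma}$, and weak lower semicontinuity, that $\int w^{2\sigma}\phi_\infty^2=1$ (so $\phi_\infty\ne0$), $\int w^{2\sigma+1}\phi_\infty=0$, $\int w^{2\sigma}\partial_{x_i}w\,\phi_\infty=0$ for all $i$, and $\beta\int|\nabla\phi_\infty|^2+\alpha\int\phi_\infty^2\le2\sigma+1$; but the nondegeneracy and minimality of $w$ force the $(N+2)$-th eigenvalue of the limiting generalized problem to strictly exceed $2\sigma+1$, a contradiction. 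Hence $\ker L_{\gamma_k}=\mathrm{span}\{\partial_{x_i}v_k\}$. For positivity and radial monotonicity nothing new is needed: since $\gamma_k\to0$ and $\beta>0$ is fixed, $\beta>2\sqrt{\gamma_k\alpha}$ for $k$ large, and Theorem~\ref{thm1.1bonnas} then gives that every minimizer of \eqref{min1gamma} is, up to sign and translation, positive, radially symmetric and strictly radially decreasing.

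Finally I would conclude by the implicit function theorem, exactly as in Step two of the proof of Theorem~\ref{limitfixedmass}. With $X=H^2_{rad}(\R^N)$, $Y=H^{-2}(\R^N)$ and $F(\gamma,v)=\gamma\Delta^2v-\beta\Delta v+\alpha v-|v|^{2\sigma}v$, one has $F(0,w)=0$, $F$ is $C^1$ near $(0,w)$, and $D_vF(0,w)v=-\beta\Delta v+\alpha v-(2\sigma+1)w^{2\sigma}v$ is a homeomorphism from $X$ onto $Y$ because its kernel in $H^2(\R^N)$ consists of the non-radial functions $\partial_{x_i}w$, hence is trivial on $H^2_{rad}(\R^N)$, the continuity of the inverse following from the open mapping theorem. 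The implicit function theorem then produces $\gamma_0>0$ and a neighbourhood of $w$ in $X$ in which, for each $0<\gamma<\gamma_0$, $F(\gamma,v)=0$ has a unique solution $v_\gamma$. By the convergence step together with the already-established radial symmetry, every minimizer of \eqref{min1gamma} with $\gamma<\gamma_0$ lies, after centering, in this neighbourhood, hence equals $v_\gamma$; this gives uniqueness up to translations, and combined with the previous paragraph, nondegeneracy and radial monotonicity. The main obstacle is the nondegeneracy step: because of the singular biharmonic term the eigenfunctions $\phi_{N+2,k}$ are only bounded in $H^1(\R^N)$, not in $H^2(\R^N)$, so the passage to the limit in the generalized eigenvalue problem must rely on weak $H^1$/$L^p$ compactness of $(\phi_k)_k$ together with the strong $H^2$-convergence of $(v_k)_k$, with care for the tails of the weighted integrals.
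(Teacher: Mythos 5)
Your proposal is correct and follows essentially the same route as the paper, which proves this theorem by observing that the scheme of Theorem \ref{limitfixedmass} (convergence of minimizers as $\gamma\to 0$ as in Propositions \ref{MiniConvergenceH1} and \ref{uniflimit}, nondegeneracy via the spectral argument of Lemma \ref{strongH2mai}, then the implicit function theorem on $H^2_{rad}(\R^N)$) carries over with simplifications, referring to \cite[Propositions 1 \& 2]{bonnas} for the details. You also correctly identify the simplification that makes this case easier: the frequency $\alpha$ is fixed rather than a varying Lagrange multiplier, and radial symmetry comes directly from Theorem \ref{thm1.1bonnas} since $\beta>2\sqrt{\gamma\alpha}$ for small $\gamma$, so the rotational vector-field argument is not needed.
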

Thanks to the previous theorem, we are able to obtain the positivity and strict decay of the unique solution to \eqref{MinL2fixed} when $\gamma$ is small enough. We have seen at the end of the proof of Theorem \ref{limitfixedmass} that there exist $\gamma_0, \varepsilon>0$ and an open set $U_0\subset H^2_{rad}$ that contains $w_\mu$ such that for every $\gamma\in [0,\gamma_0[$ and $\alpha \in (\alpha_\mu-\varepsilon , \alpha_\mu+\varepsilon )$, the equation $F(\gamma,\alpha,u)=0$ has a unique solution $u_{\gamma,\alpha}\in U_0$. Therefore, we see that when $\gamma$ is small enough, the solution given in Theorem \ref{limitlpfixed} when $\alpha=\alpha_\mu$ coincides with the solution to Theorem \ref{limitfixedmass}. We deduce the following corollary.

\begin{cor}
Assume that $0<\sigma<2/N$ and $\beta>0$. Then there exists $\gamma_0>0$ such that if $0<\gamma<\gamma_0$, then, up to its sign, the unique solution to \eqref{MinL2fixed} is positive and radially decreasing.
\end{cor}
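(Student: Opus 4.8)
The plan is to identify, for $\gamma$ small, the minimizer of \eqref{MinL2fixed} with the ground state of \eqref{eq4gamma} at the frequency equal to its Lagrange multiplier, and then to read off positivity and radial monotonicity from Theorem \ref{limitlpfixed}. By the scaling $u(x)\mapsto a\,U(bx)$ that normalizes $\beta$ to $1$ (and which preserves positivity and strict radial decrease), we may assume $\beta=1$; the mass $\mu>0$ is kept fixed throughout.

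First I would collect the facts already available. By Theorem \ref{limitfixedmass} and its proof, there is $\gamma_0>0$ such that for $0<\gamma<\gamma_0$ problem \eqref{MinL2fixed} has a unique minimizer $u_\gamma$ (up to sign and translation) which, once centered so that its positive maximum is attained at $0$, lies in $H^2_{rad}(\R^N)$, is nondegenerate, and solves \eqref{eq4gamma} with a Lagrange multiplier $\alpha_\gamma$. By Propositions \ref{MiniConvergenceH1} and \ref{uniflimit} we have $u_\gamma\to w_\mu$ strongly in $H^2(\R^N)$, and by Step three of the proof of Proposition \ref{uniflimit} we have $\alpha_\gamma\to\alpha_\mu$ as $\gamma\to0$, where $\alpha_\mu$ is the Lagrange multiplier of $w_\mu$. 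Finally, recall from the end of the proof of Theorem \ref{limitfixedmass} the implicit function theorem branch: there are $\varepsilon>0$ and an open neighbourhood $U_0\subset H^2_{rad}(\R^N)$ of $w_\mu$ such that for every $\gamma\in[0,\gamma_0)$ and $\alpha\in(\alpha_\mu-\varepsilon,\alpha_\mu+\varepsilon)$ the equation $F(\gamma,\alpha,\cdot)=0$ has a unique zero $u_{\gamma,\alpha}$ in $U_0$.

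Next I would run two identifications. First, since $u_\gamma\to w_\mu$ in $H^2_{rad}$ and $\alpha_\gamma\to\alpha_\mu$, for $\gamma$ small we have $u_\gamma\in U_0$ and $\alpha_\gamma\in(\alpha_\mu-\varepsilon,\alpha_\mu+\varepsilon)$; as $F(\gamma,\alpha_\gamma,u_\gamma)=0$, uniqueness in $U_0$ forces $u_\gamma=u_{\gamma,\alpha_\gamma}$. Second, I would show that the branch $u_{\gamma,\alpha}$ coincides with the (rescaled) minimizer of \eqref{min1gamma} at frequency $\alpha$, at least on a smaller box $(0,\gamma_1)\times(\alpha_\mu-\varepsilon_1,\alpha_\mu+\varepsilon_1)$. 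Indeed, by Theorem \ref{limitlpfixed}, for each such $\alpha$ and $\gamma$ small \eqref{eq4gamma} has a unique positive, strictly radially decreasing ground state $g_{\gamma,\alpha}$, and, arguing as in Proposition \ref{uniflimit}, $g_{\gamma,\alpha}\to g_{0,\alpha}$ in $H^2$ as $\gamma\to0$, where $g_{0,\alpha}=\alpha^{1/(2\sigma)}u_0(\sqrt{\alpha}\,\cdot)$ is the centered ground state of $-\Delta v+\alpha v=|v|^{2\sigma}v$; in particular $g_{0,\alpha_\mu}=w_\mu$ and $\alpha\mapsto g_{0,\alpha}$ is continuous into $H^2_{rad}$. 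Shrinking $\varepsilon$ so that $g_{0,\alpha}\in U_0$ for $|\alpha-\alpha_\mu|<\varepsilon$, we get $g_{\gamma,\alpha}\in U_0$ for $\gamma$ small, hence $g_{\gamma,\alpha}=u_{\gamma,\alpha}$ by uniqueness in $U_0$. To make the $\gamma$-threshold uniform in $\alpha$ over the compact set $[\alpha_\mu-\varepsilon,\alpha_\mu+\varepsilon]$ I would use the scaling $v(x)=\alpha^{1/(2\sigma)}V(\sqrt{\alpha}\,x)$, which turns the frequency-$\alpha$, coefficient-$\gamma$ problem into the frequency-$1$, coefficient-$\gamma\alpha$ problem, so that Theorem \ref{limitlpfixed} applied once at $\alpha=1$ covers the whole range. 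Combining the two identifications, for $\gamma$ small we get $u_\gamma=u_{\gamma,\alpha_\gamma}=g_{\gamma,\alpha_\gamma}$, which is positive and strictly radially decreasing; undoing the normalization $\beta=1$ yields the statement for general $\beta>0$.

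I expect the main obstacle to be precisely the bookkeeping in the second identification: transferring Theorem \ref{limitlpfixed} (stated for a single fixed frequency) to a whole neighbourhood of frequencies near $\alpha_\mu$, and guaranteeing that the corresponding ground states stay inside the fixed implicit-function-theorem neighbourhood $U_0$ uniformly in $\alpha$. The scaling $v(x)=\alpha^{1/(2\sigma)}V(\sqrt{\alpha}\,x)$ reduces the uniformity in $\alpha$ to a single application of Theorem \ref{limitlpfixed}, and the $H^2_{rad}$-continuity of $\alpha\mapsto g_{0,\alpha}$ at $\alpha=\alpha_\mu$ (where it equals $w_\mu$) gives $g_{0,\alpha}\in U_0$ for $\alpha$ close to $\alpha_\mu$; once these two points are in place, all remaining steps are routine consequences of uniqueness on $U_0$.
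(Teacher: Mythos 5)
Your proposal is correct and follows essentially the same route as the paper: both identify the constrained minimizer and the ground state of Theorem \ref{limitlpfixed} through the uniqueness of zeros of $F(\gamma,\alpha,\cdot)$ in the implicit-function-theorem neighbourhood $U_0$ constructed at the end of the proof of Theorem \ref{limitfixedmass}. In fact your treatment is more careful than the paper's one-line argument, which silently takes $\alpha=\alpha_\mu$ even though the Lagrange multiplier $\alpha_\gamma$ of the constrained minimizer only converges to $\alpha_\mu$; your extension of the identification to the whole interval $(\alpha_\mu-\varepsilon,\alpha_\mu+\varepsilon)$ via the scaling $v=\alpha^{1/(2\sigma)}V(\sqrt{\alpha}\,\cdot)$ closes that small gap.
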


\section{Orbital stability}\label{sec:orbstab}


In this section we establish some results concerning the orbital stability of standing wave solutions to the following equation
\begin{equation}
\label{complexsch}
i\partial_{t}\psi -\Delta^2 \psi +\beta \Delta \psi +|\psi|^{2\sigma} \psi=0.
\end{equation}
First, let us introduce some notation and some basic facts. It is easy to see that the energy 
\begin{equation}\label{Orbital-Energy}
E_{1,\beta}(\psi)= \dfrac{1}{2} \int_{\R^N} |\Delta \psi|^2\, dx +\dfrac{\beta}{2}\int_{\R^N}|\nabla \psi|^2\, dx- \dfrac{1}{2\sigma +2}\int_{\R^N}|\psi|^{2\sigma +2}\, dx,
\end{equation}
and the mass
\begin{equation}\label{Orbital-Mass}
F(\psi)=\dfrac{1}{2}\int_{\R^N} |\psi|^2\, dx
\end{equation}
are conserved by solutions to \eqref{complexsch}. 
Let $\Lc^2= L^2 (\R^N) \times L^2 (\R^N)$ and $\Hc^s= H^s (\R^N)\times H^s (\R^N)$ with $s\in\N$.
Writing $\psi=u_1+i u_2$, we see that \eqref{complexsch} gives rise to the system
$$\left\{\begin{array}{rl}  \partial_{t}u_1 -\Delta^2 u_2 +\beta \Delta u_2 +(|u_1|^2+|u_2|^2)^{\sigma} u_2 &=0\\
-\partial_{t}u_2 -\Delta^2 u_1 +\beta \Delta u_1 +(|u_1|^2+|u_2|^2)^{\sigma} u_1&=0.
\end{array}
\right.$$
The energy and the mass are then given by
$$E_{1,\beta}(\psi)=\dfrac{1}{2}\int_{\R^N}( |\Delta u_1|^2+ |\Delta u_2|^2)\, dx +\dfrac{\beta}{2}\int_{\R^N}(|\nabla u_1|^2 + |\nabla u_2|^2)\, dx$$ 
$$- \dfrac{1}{2\sigma +2}\int_{\R^N}(|u_1|^2+|u_2|^2)^{\sigma +1}\, dx,$$
and
$$F(\psi )=\dfrac{1}{2}\int_{\R^N} (|u_1|^2+|u_2|^2)\, dx.$$
So we can write \eqref{complexsch} as
\begin{equation}
\label{eqmat}
\dfrac{d}{dt}\Phi(t)= J E_{1,\beta}^\prime (\Phi (t)),\hspace{.3cm} \Phi=\begin{pmatrix}u_1\\ u_2\end{pmatrix},
\end{equation}
where $E_{1,\beta}^\prime$ is the Fr\' echet derivative of $E_{1,\beta}$ with respect to $\Phi$, and 
$$J=\begin{pmatrix}0 & -1\\ 1& 0\end{pmatrix}.$$
We recall the local and global wellposedness properties of this system.
\begin{thm}[{\cite[Proposition 4.1 \& Corollary 4.1]{MR2353631}}]
Let $0<\sigma <4/N$. Given $\Phi_0 \in \Hc^2$, there exists a solution $U\in C(\mathbb{R},\Hc^2)$ of \eqref{eqmat} such that $\Phi(0)=\Phi_0$. 
Moreover, for all $t\geq 0$, the map $\Phi_0\mapsto\Phi$ is continuous from $\Hc^2$ to $C([0,t],\Hc^2)$.
\end{thm}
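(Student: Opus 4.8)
This is by now a classical result; we only sketch the argument and refer to \cite{MR2353631} for the details. The strategy is standard: obtain local well-posedness in $\Hc^{2}$ by a fixed point argument based on Strichartz estimates for the linear mixed-dispersion group, and then upgrade to global well-posedness using the conservation laws together with the Gagliardo--Nirenberg inequality \eqref{G-N-H2-ineq}.

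\emph{Linear estimates.} Let $S(t)=e^{-it(\Delta^{2}-\beta\Delta)}$ be the propagator associated with the linear part of \eqref{eqmat}, acting on $\psi=u_{1}+iu_{2}$. A stationary phase analysis of the symbol $|\xi|^{4}+\beta|\xi|^{2}$, combined with a Littlewood--Paley decomposition to separate low and high frequencies, yields a dispersive bound of the form $\|S(t)P_{k}\|_{L^{1}\to L^{\infty}}\lesssim 2^{\theta k}|t|^{-N/4}$ on dyadic blocks; the $TT^{*}$ argument then gives Strichartz estimates $\|S(t)f\|_{L^{q}_{t}L^{r}_{x}}\lesssim\|f\|_{L^{2}}$ for all admissible pairs with $\frac{4}{q}+\frac{N}{r}=\frac{N}{2}$, $q\ge 2$, together with the corresponding inhomogeneous bound for the Duhamel term. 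For $\beta\ge 0$ this is elementary; the delicate case is $\beta<0$, where $|\xi|^{4}+\beta|\xi|^{2}$ loses convexity and one must work carefully near its critical sphere.

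\emph{Local well-posedness.} In Duhamel form, \eqref{eqmat} reads $\psi(t)=S(t)\psi_{0}+i\int_{0}^{t}S(t-s)\big(|\psi|^{2\sigma}\psi\big)(s)\,ds$, with $\psi_{0}=u_{1}(0)+iu_{2}(0)$. Since $0<\sigma<4/N$ the nonlinearity $\psi\mapsto|\psi|^{2\sigma}\psi$ is $H^{2}$-subcritical, so H\"older's inequality, the fractional Leibniz rule and Sobolev embeddings provide an estimate $\big\|\,|\psi|^{2\sigma}\psi\,\big\|_{\text{(dual Strichartz)}}\lesssim T^{\eta}\,\|\psi\|_{X_{T}}^{2\sigma+1}$ with some $\eta>0$, on the space $X_{T}=C([0,T];\Hc^{2})\cap L^{q_{0}}([0,T];W^{2,r_{0}})$ for a suitable admissible pair $(q_{0},r_{0})$. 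The Banach fixed point theorem then yields, for $T=T(\|\psi_{0}\|_{\Hc^{2}})>0$, a unique solution in $X_{T}$, and the same contraction estimates give the Lipschitz, hence continuous, dependence $\Phi_{0}\mapsto\Phi$ from $\Hc^{2}$ to $C([0,T];\Hc^{2})$; by time-reversal symmetry the solution extends to $[-T,0]$ as well.

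\emph{Globalization.} Along the flow, both the mass $F$ and the energy $E_{1,\beta}$ are conserved (formally by pairing \eqref{eqmat} with $\Phi$ and with $E_{1,\beta}'(\Phi)$, and rigorously by a regularization argument using the continuous dependence just established). Mass conservation keeps $\|\psi(t)\|_{L^{2}}=\|\psi_{0}\|_{L^{2}}$, while energy conservation and \eqref{G-N-H2-ineq} give
\[
\tfrac12\|\Delta\psi(t)\|_{L^{2}}^{2}\le E_{1,\beta}(\psi_{0})+\tfrac{|\beta|}{2}\|\nabla\psi(t)\|_{L^{2}}^{2}+\tfrac{B_{N}(\sigma)}{2\sigma+2}\,\|\Delta\psi(t)\|_{L^{2}}^{\sigma N/2}\,\|\psi_{0}\|_{L^{2}}^{2+2\sigma-\sigma N/2}.
\]
Since $0<\sigma<4/N$ forces $\sigma N/2<2$, and since $\|\nabla\psi\|_{L^{2}}^{2}\le\|\Delta\psi\|_{L^{2}}\,\|\psi\|_{L^{2}}$ is of strictly lower order, Young's inequality absorbs the last two terms into the left-hand side and yields an a priori bound $\sup_{t\in\R}\|\psi(t)\|_{\Hc^{2}}\le C(\|\psi_{0}\|_{\Hc^{2}})<\infty$. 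As the local existence time depends only on the $\Hc^{2}$-norm of the data, this bound rules out finite-time blow-up, so the local solution extends to a global one $\Phi\in C(\R;\Hc^{2})$, and the continuous dependence persists on every compact time interval. The main technical obstacle in this scheme is the derivation of the Strichartz estimates for $\beta<0$; once those are at hand, everything else is routine.
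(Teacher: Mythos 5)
The paper offers no proof of this statement---it is quoted verbatim from Pausader \cite{MR2353631}---and your sketch follows exactly the route taken there: frequency-localized dispersive and Strichartz estimates for the mixed-dispersion propagator, a subcritical contraction argument in $\Hc^2$ with continuous dependence from the same fixed-point estimates, and globalization via conservation of mass and energy combined with \eqref{G-N-H2-ineq}, where $\sigma N/2<2$ lets Young's inequality close the a priori $\Hc^2$ bound. Your outline is correct and consistent with the cited proof.
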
 
Notice that \eqref{complexsch} is invariant under the unitary action of rotations and translations, i.e. if $\Phi=\begin{pmatrix}u_1\\ u_2\end{pmatrix}$ is a solution to \eqref{eqmat}, then 
\begin{equation*}
T_1(\theta)\Phi=\begin{pmatrix}\cos \theta & \sin \theta \\ -\sin \theta & \cos \theta\end{pmatrix} \begin{pmatrix}u_1\\ u_2\end{pmatrix}\hspace{.5cm}\text{and}\hspace{.5cm}T_2(r) \Phi=\begin{pmatrix}u_1(\cdot-r,\cdot)\\ u_2(\cdot-r,\cdot)\end{pmatrix} 
\end{equation*}
are also solutions. Let $e^{i \alpha t} U$ be a standing wave solution to \eqref{complexsch}. 
Denote $\U=(U,0)$. We define the orbit generated by $\U$ as
\begin{align*}
\Omega_\U &:=\Big\{T_1 (\theta) T_2 (r) \U: \theta ,r \in \R\Big\}\\
&\approx \left\{\begin{pmatrix}\cos \theta & \sin \theta \\ -\sin \theta & \cos \theta\end{pmatrix}\begin{pmatrix}U(\cdot-r)\\ 0 \end{pmatrix}: \theta ,r \in \R\right\}.
\end{align*}
For any $f,g\in \Hc^2$, we define 
$$d(f,g):=\inf \Big\{ \left\|f-T_1(\theta)T_2(r)g\right\|_{\Hc^2}: \theta,r \in \R\Big\}.$$
Notice that $d(f,\U)=d(f,\Omega_\U)$.
\begin{defi}
\label{defiorb}
Let $\Theta(x,t)=e^{i \alpha t}U(x)$ be a standing wave of \eqref{complexsch}. We say that $\Theta$ is orbitally stable in $\Hc^2$ if, given $\varepsilon>0$, there exists $\delta>0$ such that if $\Phi_0\in \Hc^2$ satisfies $\left\|\Phi_0-\U\right\|_{\Hc^2}<\delta$, then the solution $\Phi(t)$ of \eqref{complexsch} with initial data $\Phi_0$ exists for all $t\geq 0$ and satisfies 
$$d(\Phi (t),\Omega_\U)<\varepsilon,\ for\ all\ t\geq 0.$$
\end{defi}
We also define a weaker notion of stability.
\begin{defi}
\label{defiGorb}
Let $\mathcal{G}:= \{U\in  H^2 (\R^N):\ U\ \text{is a solution to} \ \eqref{MinL2fixed} \}$. We say that the set $\mathcal{G}$ is stable in $H^2$ if, given $\varepsilon>0$, there exists $\delta>0$ such that if $\Phi_0\in \Hc^2$ satisfies $\left\|\Phi_0-\U\right\|_{\Hc^2}<\delta$ for some $U\in \mathcal{G}$, then the solution $\Phi(t)$ to \eqref{complexsch} with initial data $\Phi_0$ exists for all $t\geq 0$ and satisfies 
$$d(\Phi (t),\Omega_V)<\varepsilon,\ for\ all\ t\geq 0,$$
for some $V\in \mathcal{G}$.
\end{defi}

We prove two results in this section. First, as a direct consequence of Theorem \ref{Compact-Min-Sol}, using the method of Cazenave and Lions \cite{Caz}, we prove the stability of the set $\mathcal{G}$. As already pointed out, see page \pageref{page-C-L}, this method does not apply to ground state of the functional energy. Then, we establish the orbital stability of nondegenerate (see \eqref{4nlsgamma-beta-alpha-linearized}) ground state solutions satisfying \eqref{extracond}, i.e. solutions to \eqref{MinLpintro}. 

\subsection{Standing waves with a prescribed mass}
In this subsection we show the stability of the set of solutions obtained in Theorem \ref{Compact-Min-Sol}.
\begin{thm}
The set $\mathcal{G}$ is stable. 
\end{thm}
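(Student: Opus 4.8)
The plan is to run the classical Cazenave–Lions concentration-compactness argument, now that the existence part of Theorem~\ref{Compact-Min-Sol} and, crucially, the relative compactness (up to translations) of minimizing sequences are already in hand. The argument proceeds by contradiction. Suppose $\mathcal{G}$ is not stable. Then there exist $\varepsilon_0>0$, a sequence $(\Phi_{0,n})_n\subset\Hc^2$ and $U_n\in\mathcal{G}$ with $\|\Phi_{0,n}-\mathcal{U}_n\|_{\Hc^2}\to 0$ (where $\mathcal{U}_n=(U_n,0)$), and times $t_n>0$ such that the solutions $\Phi_n$ of \eqref{complexsch} with data $\Phi_{0,n}$ satisfy $d(\Phi_n(t_n),\Omega_V)\geq\varepsilon_0$ for every $V\in\mathcal{G}$. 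By global well-posedness (the theorem of \cite{MR2353631} quoted above, valid since $0<\sigma<4/N$) the $\Phi_n(t_n)$ are well-defined elements of $\Hc^2$.

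First I would use the conservation of the mass $F$ and the energy $E_{1,\beta}$ along the flow, together with the continuity of $E_{1,\beta}$ on $\Hc^2$ and the fact that $\|\Phi_{0,n}-\mathcal{U}_n\|_{\Hc^2}\to 0$, to deduce
$$\|\Phi_n(t_n)\|_{\Lc^2}^2 = \|\Phi_{0,n}\|_{\Lc^2}^2 \longrightarrow \mu \quad\text{and}\quad E_{1,\beta}(\Phi_n(t_n)) = E_{1,\beta}(\Phi_{0,n}) \longrightarrow I_{\gamma,1}(\mu),$$
since each $U_n$ is a minimizer (so $E_{1,\beta}(U_n)=I_{\gamma,1}(\mu)$ and $\|U_n\|_{L^2}^2=\mu$) and $E_{1,\beta}$, $F$ are continuous on $\Hc^2$. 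After a harmless rescaling of the $L^2$-norm (replacing $\Phi_n(t_n)$ by $\sqrt{\mu}\,\Phi_n(t_n)/\|\Phi_n(t_n)\|_{\Lc^2}$, which costs $o(1)$ in $\Hc^2$ by coercivity bounds from Lemma~\ref{Lem-Min}), the sequence $W_n:=\Phi_n(t_n)$ becomes, for the natural vector-valued functional $\mathbf{E}_{\gamma,1}(v_1,v_2)=E_{\gamma,1}(v_1)+E_{\gamma,1}(v_2)+(\text{coupling via }(|v_1|^2+|v_2|^2)^{\sigma+1})$, a minimizing sequence on $\{\int(|v_1|^2+|v_2|^2)=\mu\}$. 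The concentration-compactness lemma of \cite{Li84}, exactly as invoked in the proof of Theorem~\ref{Compact-Min-Sol}, applies in this product setting (the strict subadditivity $I_{\gamma,1}(\mu)<I_{\gamma,1}(\theta)+I_{\gamma,1}(\mu-\theta)$ holds for $0<\theta<\mu$ whenever $I_{\gamma,1}(\mu)<0$, which is Lemma~\ref{Neg-Min} in the relevant range, and for $\mu\geq\mu_c$ in the $H^2$-subcritical case one uses the non-vanishing established in the proof of the proposition following Lemma~\ref{lastlemsec2}). Hence, up to a subsequence and translations $y_n\in\R^N$, $W_n(\cdot-y_n)\to W_\infty$ strongly in $\Hc^2$, with $W_\infty$ a minimizer: writing $W_\infty=(w_1,w_2)$, one checks that $w_1,w_2$ are (real multiples of) a single minimizer of $I_{\gamma,1}(\mu)$, so $W_\infty\in\Omega_V$ for some $V\in\mathcal{G}$.

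This forces $d(\Phi_n(t_n),\Omega_V)=d(W_n,\Omega_V)\to 0$, contradicting $d(\Phi_n(t_n),\Omega_V)\geq\varepsilon_0$. The main obstacle — and the step requiring the most care — is the concentration-compactness dichotomy in the \emph{vector-valued} setting with the extra subtlety that we also have a genuine $\Delta^2$ term: one must verify that the coupling through $(|v_1|^2+|v_2|^2)^{\sigma+1}$ does not spoil the subadditivity inequalities, and that the splitting argument (cutting $W_n$ into pieces with disjoint supports when the "dichotomy" alternative occurs) respects the $H^2$-norm. Both points are handled just as in the real scalar case treated in Section~\ref{sec:eximass}, because the phase factor in $\Omega_V$ is irrelevant to the moduli $|v_1|^2+|v_2|^2$ and to the quadratic gradient/Laplacian terms; concretely, for $W=(v_1,v_2)$ one has $|\nabla|W||\leq(\,|\nabla v_1|^2+|\nabla v_2|^2\,)^{1/2}$ pointwise and similarly for the Laplacian after the diamagnetic-type inequality, so $\mathbf{E}_{\gamma,1}(W)\geq E_{\gamma,1}(|W|)$, which reduces every estimate to the scalar problem already solved. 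Finally, the identification of $W_\infty$ up to a rotation $T_1(\theta)$ uses that any minimizer of the vector problem has both components proportional to one real minimizer, which follows from the equality case in that diamagnetic inequality. Once this is in place, the contradiction is immediate and the proof is complete.
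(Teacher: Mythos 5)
Your proposal is correct and follows essentially the same route as the paper: a contradiction argument combining conservation of mass and energy with the relative compactness (up to translations) of minimizing sequences established in Theorem \ref{Compact-Min-Sol}, i.e.\ the Cazenave--Lions method. The paper's proof is a three-line version of this; the extra care you take with the complex (vector-valued) setting and the mass renormalization fills in details the paper leaves implicit by simply invoking Theorem \ref{Compact-Min-Sol}.
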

\begin{proof}
Let $U\in \mathcal{G}$. Assume by contradiction that there exists a sequence of solutions $(u_k)_k$ of \eqref{complexsch} with $u_k (0)=\varphi_k$, for $(\varphi_k)_k \subset H^2 (\R^N)$ such that $\lim_{k\rightarrow \infty}\left\|\varphi_k -U\right\|_{H^2}=0 $ and such that there exists $(t_k)_k\subset\R^{+}$ with $d(u_k(t_k), \Omega_U)\geq \varepsilon$, for some $\varepsilon>0$ fixed. Using the conservation of the energy and the mass, it is easy to see that $\left\|u_k (t_k)\right\|_{L^2}\rightarrow \left\|U\right\|_{L^2}$ and $E_{1,\beta} (u_k (t_k))\rightarrow E_{1,\beta} (U)=I_{1,\beta}$ as $k\to\infty$. Therefore, using Theorem \ref{Compact-Min-Sol}, we get that $d(u_k(t_k), \Omega_V)\rightarrow 0$, for some $V\in \mathcal{G}$ which gives a contradiction.
\end{proof}
\begin{rmq}
Observe that the previous theorem implies that the sign-changing solution, which existence is assured by Corollary \ref{sgnchmassfix} belongs to a stable set. 
\end{rmq}
\subsection{Nondegenerate standing waves}
In our last subsection we investigate the orbital stability of nondegenerate ground state solutions $U\in H^2 (\R^N)$ to \eqref{eq4beta}. Let us recall (see Theorem \ref{Radial-LES}) that  solutions to \eqref{min1} are real up to a phase.
Before proceeding, we introduce some notations.
Denote $G=E_{1,\beta}+\alpha F$, where $E_{1,\beta}$ and $F$ are given in \eqref{Orbital-Energy} and \eqref{Orbital-Mass}, respectively. Let $\U=(U,0)$, where $U$ is a solution to \eqref{min1}. It is easy to see that 
$$G^\prime ( \U)=E_{1,\beta}^\prime (\U)+\alpha F^\prime (\U)=0.
$$
We set
\begin{align}\label{L-operateur-Orb}
\Lb = \begin{pmatrix}L_1 & 0\\ 0 & L_2\end{pmatrix},
\end{align}
where $L_1$ and $L_2$ denote the linearization of \eqref{eqmat} around the standing wave $U$, i.e.
$$L_1 (u)=\Delta^2 u -\beta \Delta u +\alpha u -(2\sigma +1) |U|^{2\sigma} u,$$
and
$$L_2 (u)=\Delta^2 u -\beta \Delta u +\alpha u - |U|^{2\sigma} u.$$
We assume that $U$ is nondegenerate, that is, all solutions to $L_1 (u)=0$ are of the form $u=\xi\cdot\nabla U$, for any $\xi \in \R^N$. 
We know that $(L_1 u,u)_{L^2}\geq 0$ for any $u\in H^2$ such that $( U, u )_{H^2}=0$ since $U$ is a ground state. 
This implies that $(L_2 u,u)_{L^2}> 0$ for all $(u,U)_{H^2}=0$. As $L_2 U=0$, this implies that the first eigenvalue is zero and that it is simple. 
Therefore, we have the following result.
\begin{prop}
Assume that $U$ is a nondegenerate ground state. Then the operator $\Lb$ given in \eqref{L-operateur-Orb} and defined on $\Lc^2$ with domain $\Hc^4$ has a unique simple negative eigenvalue. The eigenvalue $0$ is of multiplicity $N+1$ with associated eigenfunctions $(e_i\cdot\nabla U,0)$, $i=1,\ldots ,N$ and $(0,U)$.
\end{prop}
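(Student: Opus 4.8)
The plan is to reduce everything to the spectral theory of the two scalar fourth-order operators $L_1$ and $L_2$ on $L^2(\R^N)$ and then reassemble. Since $\Lb=\mathrm{diag}(L_1,L_2)$ is block diagonal it is self-adjoint on $\Lc^2$ with domain $\Hc^4$ (each $L_j$ being the self-adjoint Fourier multiplier $P:=\Delta^2-\beta\Delta+\alpha$ perturbed by the bounded symmetric potential $-c_j|U|^{2\sigma}$, $c_1=2\sigma+1$, $c_2=1$), its spectrum is $\sigma(\Lb)=\sigma(L_1)\cup\sigma(L_2)$, its eigenfunctions are exactly the pairs $(\phi,0)$ with $L_1\phi=\lambda\phi$ and $(0,\psi)$ with $L_2\psi=\lambda\psi$, and $\ker\Lb=\ker L_1\times\ker L_2$. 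So it suffices to determine the negative eigenvalues and the kernels of $L_1$ and $L_2$ separately.

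First I would pin down the essential spectrum. The symbol of $P$ is $|\xi|^4+\beta|\xi|^2+\alpha$, whose infimum over $\xi\in\R^N$ equals $\alpha_0:=\alpha$ if $\beta\ge0$ and $\alpha_0:=\alpha-\beta^2/4$ if $\beta<0$; under the running hypothesis $\beta>-2\sqrt\alpha$ one has $\alpha_0>0$. By Theorem \ref{expdecay} (applicable since the ground state $U$ is positive, bounded and vanishes at infinity by Theorem \ref{thm1.1bonnas}) the potential $|U|^{2\sigma}$ decays exponentially, hence multiplication by $|U|^{2\sigma}$ is relatively compact with respect to $P$, and Weyl's theorem gives $\sigma_{\mathrm{ess}}(L_1)=\sigma_{\mathrm{ess}}(L_2)=\sigma_{\mathrm{ess}}(P)=[\alpha_0,\infty)\subset(0,\infty)$. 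In particular $0$ and every negative number lie strictly below the essential spectrum, so there they can only be isolated eigenvalues of finite multiplicity and the min--max principle is available.

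Next I would count. For $L_2$, recall it was observed above that $(L_2u,u)_{L^2}>0$ for every $u\in H^2$ with $(u,U)_{H^2}=0$, $u\ne0$, and that $L_2U=0$; this gives $L_2\ge0$ on $H^2$, with bottom eigenvalue $0$, simple, and $\ker L_2=\langle U\rangle$. For $L_1$, differentiating $\Delta^2U-\beta\Delta U+\alpha U=|U|^{2\sigma}U$ in $x_i$ shows $L_1(\partial_{x_i}U)=0$, while $L_1U=-2\sigma|U|^{2\sigma}U$ yields $(L_1U,U)_{L^2}=-2\sigma\int_{\R^N}|U|^{2\sigma+2}\,dx<0$, so $L_1$ has a negative eigenvalue; on the other hand the ground-state coercivity $(L_1u,u)_{L^2}\ge0$ on the codimension-one subspace $\{u:(u,U)_{H^2}=0\}$ forbids a two-dimensional subspace on which $L_1$ is negative definite, so by min--max $L_1$ has at most one negative eigenvalue counting multiplicity. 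Hence $L_1$ has exactly one, simple, negative eigenvalue, and by the nondegeneracy hypothesis $\ker L_1=\mathrm{span}\{\partial_{x_1}U,\dots,\partial_{x_N}U\}$, which is $N$-dimensional because $U$ is nonconstant and vanishes at infinity. Combining: $\Lb$ has a unique simple negative eigenvalue, carried by $L_1$ with eigenfunction $(\phi_1,0)$; and $\ker\Lb=\ker L_1\times\ker L_2$ is spanned by $(e_i\cdot\nabla U,0)$, $i=1,\dots,N$, and $(0,U)$, which are patently linearly independent, so $0$ has multiplicity $N+1$. This is precisely the assertion.

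The only genuinely substantial inputs are the two taken as known here: the Morse-index-one coercivity $(L_1u,u)_{L^2}\ge0$ on a codimension-one subspace (established earlier from the variational characterization of the ground state) and the nondegeneracy of $U$ (the standing hypothesis of the proposition). Granting those, the argument is routine spectral theory; the one point deserving a little care is the essential-spectrum step --- checking that $|U|^{2\sigma}$ is a $P$-compact perturbation, via the exponential decay of $U$ and a Rellich-type argument, and that $\alpha_0>0$ --- since this is what guarantees that ``eigenvalue'' is meaningful below $\alpha_0$ and that the min--max counting is legitimate.
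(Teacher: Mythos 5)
Your proof is correct and follows essentially the same route as the paper, which gives no separate proof of this proposition but justifies it in the two remarks immediately preceding it: the ground-state coercivity $(L_1u,u)_{L^2}\ge 0$ on $\{u:(u,U)_{H^2}=0\}$, the relation $L_2=L_1+2\sigma|U|^{2\sigma}$ together with $L_2U=0$, and the nondegeneracy hypothesis. Your write-up merely makes explicit the standard ingredients the paper leaves implicit (Weyl's theorem for the essential spectrum via the exponential decay of $U$, the negative direction $(L_1U,U)_{L^2}=-2\sigma\int|U|^{2\sigma+2}\,dx<0$, and the min--max count), all of which are carried out correctly.
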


Next, we prove that any nondegenerate ground state satisfying \eqref{extracond} is orbitally stable.

\begin{thm}
\label{thmorbsta}
Let $U$ be a solution to \eqref{eq4beta} and assume it is nondegenerate and \eqref{extracond} holds. Then, the standing wave $\Theta(x,t)=e^{i\alpha t}U(x)$ is orbitally stable in $\Hc^2$.
\end{thm}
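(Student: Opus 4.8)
The plan is to follow the classical Lyapunov functional argument of Grillakis--Shatah--Strauss, in the form adapted to the biharmonic setting by Natali and Pastor \cite{natalipastor}. Write $G = E_{1,\beta} + \alpha F$, so that $\U = (U,0)$ is a critical point of $G$, i.e. $G'(\U)=0$, and $G''(\U)=\Lb$ with $\Lb$ the operator in \eqref{L-operateur-Orb}. By the Proposition stated just above, $\Lb$ has exactly one negative eigenvalue, which is simple; its kernel is the $(N+1)$-dimensional space
\begin{equation*}
\mathcal N := \mathrm{span}\bigl\{(e_1\cdot\nabla U,0),\dots,(e_N\cdot\nabla U,0),(0,U)\bigr\},
\end{equation*}
which is exactly the tangent space at $\U$ to the group orbit $\Omega_\U$; and the rest of the spectrum of $\Lb$ is positive and bounded away from $0$. (The nondegeneracy of $U$ is what pins $\ker L_1=\mathrm{span}\{e_i\cdot\nabla U\}$, hence the precise description of $\mathcal N$, while the ground state property gives $L_2\ge0$ with $\ker L_2=\R U$.) Recall moreover that $E_{1,\beta}$ and $F$ are conserved along the flow and that, by \cite[Proposition 4.1 \& Corollary 4.1]{MR2353631}, \eqref{complexsch} is globally well-posed in $\Hc^2$ for $0<\sigma<4/N$, so there is no finite-time breakdown to worry about.

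The key step --- and the only place where hypothesis \eqref{extracond} enters --- is the coercivity estimate
\begin{equation*}
\langle\Lb\xi,\xi\rangle_{\Lc^2}\ \ge\ \delta\,\|\xi\|_{\Hc^2}^2 \qquad\text{for all } \xi\in\mathcal N^{\perp}\ \text{with}\ \langle F'(\U),\xi\rangle_{\Lc^2}=0,
\end{equation*}
for some $\delta>0$, where $F'(\U)=\U=(U,0)$. Since $\Lb$ has a single negative direction and an $(N+1)$-dimensional kernel, this positivity on $\{F'(\U)\}^{\perp}\cap\mathcal N^{\perp}$ reduces, by the standard finite-codimension argument (minimize $\langle\Lb\xi,\xi\rangle$ over $\|\xi\|_{\Lc^2}=1$ subject to the two linear constraints; a weak limit either violates the scalar condition below, or is zero and then the lower-order term $\int|U|^{2\sigma}\xi^2$ vanishes so that the quadratic part $J_{1,\beta,\alpha}$ --- equivalent to the $H^2$-norm squared since $\alpha>0$, $\beta>-2\sqrt{\alpha}$ --- forces a strictly positive liminf), to the single scalar inequality $\langle\Lb^{-1}F'(\U),F'(\U)\rangle<0$. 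Here $\Lb^{-1}F'(\U)$ is the preimage orthogonal to $\mathcal N$: since $U\perp\ker L_1$ (indeed $\int_{\R^N}U\,\partial_{x_i}U\,dx=0$) there is $v\in H^2(\R^N)$ with $L_1v=U$, unique modulo $\ker L_1$, and $\Lb^{-1}F'(\U)=(v,0)$; hence $\langle\Lb^{-1}F'(\U),F'(\U)\rangle=\int_{\R^N}vU\,dx$, which is precisely the quantity in \eqref{extracond} (with $u=U$, $L=L_1$), negative by assumption. (Equivalently, differentiating the standing-wave branch $\alpha\mapsto U_\alpha$ gives $\int_{\R^N}vU\,dx=-d''(\alpha)$ for $d(\alpha):=E_{1,\beta}(U_\alpha)+\alpha F(U_\alpha)$, so \eqref{extracond} is the Vakhitov--Kolokolov condition $d''(\alpha)>0$.)

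Granting the coercivity, the rest is routine. For $\Phi$ close to $\Omega_\U$ in $\Hc^2$, the implicit function theorem --- applicable because nondegeneracy makes the modulation system solvable --- yields $\theta=\theta(\Phi)\in\R$ and $r=r(\Phi)\in\R^N$ with $\Phi=T_1(\theta)T_2(r)(\U+\xi)$, $\xi\perp\mathcal N$, and $\|\xi\|_{\Hc^2}$ comparable to $d(\Phi,\Omega_\U)$. Using $G'(\U)=0$, a second-order Taylor expansion, and the invariance of $E_{1,\beta}$ and $F$ under $T_1$ and $T_2$,
\begin{equation*}
E_{1,\beta}(\Phi)-E_{1,\beta}(\U)+\alpha\bigl(F(\Phi)-F(\U)\bigr)=\tfrac12\langle\Lb\xi,\xi\rangle_{\Lc^2}+o\bigl(\|\xi\|_{\Hc^2}^2\bigr).
\end{equation*}
For a solution $\Phi(t)$ with $\|\Phi_0-\U\|_{\Hc^2}$ small, the left-hand side is constant in $t$ and $o(1)$; mass conservation forces $\langle F'(\U),\xi(t)\rangle=O(\|\xi(t)\|_{\Hc^2}^2)$, so splitting $\xi(t)$ into its (quadratically small) component along $\U$ plus a component in $\{F'(\U)\}^{\perp}\cap\mathcal N^{\perp}$ and applying the coercivity estimate yields $\delta\|\xi(t)\|_{\Hc^2}^2\le o(1)+C\|\xi(t)\|_{\Hc^2}^4$. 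A continuity/bootstrap argument then keeps $\|\xi(t)\|_{\Hc^2}$, hence $d(\Phi(t),\Omega_\U)$, below $\varepsilon$ for all $t\ge0$, which is orbital stability in the sense of Definition \ref{defiorb}.

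The main obstacle is the coercivity step: one must verify carefully that the single-negative-eigenvalue picture of $\Lb$ --- which in the fourth-order case rests on the ground state and nondegeneracy input recalled above --- together with \eqref{extracond}, really gives positive definiteness of $\Lb$ on the doubly-constrained space, handling the loss of compactness of the potential $|U|^{2\sigma}$ on $\R^N$ via its decay at infinity and checking that the $N$ translation directions are correctly quotiented out. The modulation and energy--mass bookkeeping of the last paragraph are standard once this spectral information is in hand.
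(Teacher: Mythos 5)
Your proposal is correct and follows essentially the same route as the paper: both reduce orbital stability to coercivity of $\Lb$ on the doubly constrained subspace (orthogonal to the kernel directions and to $F'(\U)$), with \eqref{extracond} supplying exactly the scalar condition $\int_{\R^N}vU\,dx<0$ needed to rule out the residual negative direction, and then conclude via modulation and the conserved quantities. The only cosmetic difference is that the paper packages the final energy--mass bookkeeping as an explicit Lyapunov function $V(v)=G(v)-G(\U)+C\,(F(v)-F(\U))^2$ and invokes \cite[Proposition 4.1]{Stuart}, whereas you run the Grillakis--Shatah--Strauss bootstrap directly.
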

As an immediate consequence of Theorem \ref{thmnondegen}, we obtain the following result.
\begin{cor}
Assume that $N=1$. Let $U$ be a solution to \eqref{eq4beta} satisfying \eqref{extracond}.  Then, the standing wave $\Theta(x,t)=e^{i\alpha t}U(x)$ is orbitally stable in $\Hc^2$.
\end{cor}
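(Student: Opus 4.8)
The plan is to obtain the statement as a direct combination of two facts already established in the paper: the nondegeneracy of the one-dimensional solitary wave (Theorem \ref{thmnondegen}) and the general orbital stability criterion for nondegenerate ground states satisfying \eqref{extracond} (Theorem \ref{thmorbsta}). No new analytical input is needed; the only task is to check that the hypotheses of Theorem \ref{thmorbsta} are satisfied by the solution $U$ available in dimension one.

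First I would recall, following \cite{amick}, that for $N=1$ (and $\beta\geq 2\sqrt{\alpha}$, which is the standing assumption under which one-dimensional nondegeneracy is available, exactly as in the discussion following Theorem \ref{thm:orbit}) equation \eqref{eq4beta} possesses, up to translations, a unique even solution $U$ that decays together with its derivatives at infinity, and that this $U$ is a ground state, i.e. a minimizer of \eqref{MinLpintro}; in particular, up to a phase, $U$ is of the type to which Theorem \ref{thmorbsta} refers. Next, Theorem \ref{thmnondegen} applies verbatim to this $U$ and yields its nondegeneracy: every solution $v$ of the linearized equation \eqref{eq:1dlin} with $(v,v',v'',v''')\to 0$ at infinity belongs to $\left<U'\right>$, which is precisely the nondegeneracy condition formulated in \eqref{4nlsgamma-beta-alpha-linearized} (here $N=1$, so the kernel is one-dimensional, spanned by $U'$). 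Since we assume, in addition, that $U$ satisfies \eqref{extracond}, all hypotheses of Theorem \ref{thmorbsta} are fulfilled, and that theorem immediately gives the orbital stability of $\Theta(x,t)=e^{i\alpha t}U(x)$ in $\Hc^2$.

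The only point requiring a little care — rather than a genuine obstacle — is the bookkeeping of the standing hypothesis $\beta\geq 2\sqrt{\alpha}$: it is needed to invoke both the existence/uniqueness result of \cite{amick} and the nondegeneracy Theorem \ref{thmnondegen}, and it is implicit in the statement through the provenance of $U$. All the substantial work has been carried out in Theorems \ref{thmnondegen} and \ref{thmorbsta}, so the remaining argument is a two-line deduction.
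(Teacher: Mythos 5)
Your proof is correct and follows exactly the paper's route: the corollary is obtained as an immediate combination of the one-dimensional nondegeneracy result (Theorem \ref{thmnondegen}) with the general stability criterion (Theorem \ref{thmorbsta}), using the uniqueness and ground-state character of the even solution from \cite{amick}. Your remark that the standing hypothesis $\beta\geq 2\sqrt{\alpha}$ must be carried along (it is left implicit in the corollary's statement) is accurate and consistent with the discussion surrounding Theorem \ref{thm:orbit}.
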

Given $\varepsilon>0$, we define $\Omega^\varepsilon_\U=\{v\in \Hc^2 : d(v,\Omega_\U)<\varepsilon\}$. In order to prove the previous theorem, we shall construct a Lyapunov function. Once we have this function, the proof of Theorem \ref{thmorbsta} will then follow from \cite[Proposition $4.1$]{Stuart}. Before proceeding let us recall the following definition.
\begin{defi}
\label{deflyap}
A function $V:\Hc^2 \rightarrow \R$ is a Lyapunov function for the orbit of $\Omega_{\U}$ if the following properties hold:
\begin{enumerate}
\item There exists $\rho>0$ such that $V:\Omega_{\U}^\rho \rightarrow \R$ is $C^2$ and, for all $v\in \Omega_\U$,
$$V(v)=0\ and\ V^\prime (v)=0.$$
\item For all $v\in \Omega_{\U}^\rho$, there holds
$$\left\langle V^\prime (v),Jv\right\rangle =\left\langle V^\prime (v),x_i\cdot\nabla v\right\rangle=0,\ i=1,\ldots ,N.$$
\item If $\Phi(t)$ is a global solution to the Cauchy problem \eqref{eqmat} with initial data $\Phi_0$, then $V(\Phi (t))=V(\Phi_0)$, for all $t\geq 0$.
\item There exists $c>0$ such that, for all $v\in \Omega_\Phi^\rho$,
$$V(v)\geq c[d(v,\Omega_\U)]^2.$$
\end{enumerate}
\end{defi}
\begin{prop}\label{prop-Lyap-3}
There exists $C>0$ such that the functional $V:\Hc^2 \rightarrow \R$ defined by
$$V(v)=G(v)-G(\U)+C (F(v)-F(\U))^2,$$
is a Lyapunov function for the orbit $\Omega_\U$.
\end{prop}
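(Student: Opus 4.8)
The plan is to verify the four properties in Definition \ref{deflyap} for $V(v)=G(v)-G(\U)+C(F(v)-F(\U))^2$, where the constant $C>0$ is to be chosen at the end in terms of the spectral gap of $\Lb$. First I would check property (1): since $G'(\U)=0$ and $F'(\U)(v)=(\U,v)_{\Lc^2}$, we have $V(\U)=0$, $V'(\U)=G'(\U)+2C(F(\U)-F(\U))F'(\U)=0$; the $C^2$ regularity on a tubular neighbourhood $\Omega_\U^\rho$ follows from the smoothness of the (subcritical) power nonlinearity, and the invariance of $G$ and $F$ under the rotation/translation action propagates these identities from $\U$ to the whole orbit $\Omega_\U$. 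Property (2) is again a consequence of the invariance of $G$ and $F$ under $T_1,T_2$: differentiating $G(T_1(\theta)T_2(r)v)=G(v)$ and $F(T_1(\theta)T_2(r)v)=F(v)$ in $\theta$ and $r$ gives $\langle G'(v),Jv\rangle=\langle G'(v),\partial_{x_i}v\rangle=0$ and likewise for $F$, whence $\langle V'(v),Jv\rangle=\langle V'(v),x_i\cdot\nabla v\rangle=0$. Property (3), conservation along the flow, follows because both $E_{1,\beta}$ and $F$ are conserved by solutions of \eqref{complexsch} (as recalled in \eqref{Orbital-Energy}--\eqref{Orbital-Mass}), hence so are $G=E_{1,\beta}+\alpha F$ and any function of $F$.

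The heart of the matter is property (4): the coercivity estimate $V(v)\ge c\,[d(v,\Omega_\U)]^2$ on $\Omega_\U^\rho$. I would follow Natali--Pastor \cite{natalipastor} and the classical Grillakis--Shatah--Strauss scheme. Writing $v=T_1(\theta)T_2(r)(\U+h)$ with $\theta,r$ chosen to minimize the distance (so that $h$ satisfies the two orthogonality conditions $(h,J\U)_{\Lc^2}=0$ and $(h,\partial_{x_i}\U)_{\Lc^2}=0$ coming from the minimization), a Taylor expansion together with $G'(\U)=0$ gives
\begin{equation*}
V(v)=\tfrac12\langle \Lb h,h\rangle + C\,(F'(\U)h)^2 + o(\|h\|_{\Hc^2}^2).
\end{equation*}
So one needs the quadratic form $Q(h):=\tfrac12\langle\Lb h,h\rangle+C(F'(\U)h)^2$ to be positive definite on the subspace of $h$ satisfying the above two orthogonality constraints. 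On the second component, $L_2\ge 0$ with kernel spanned by $U$, and the constraint $(h_2,U)_{L^2}=0$ (which is $(h,J\U)_{\Lc^2}=0$ up to sign) together with nondegeneracy of $U$ as a ground state gives $\langle L_2 h_2,h_2\rangle\ge \delta_2\|h_2\|^2$ on that subspace. On the first component, $L_1$ has exactly one negative eigenvalue and kernel $\mathrm{span}\{\partial_{x_i}U\}$; the constraints $(h_1,\partial_{x_i}U)_{L^2}=0$ kill the kernel, and the negative direction is controlled by the term $C(F'(\U)h)^2=C(h_1,U)_{L^2}^2$ precisely when the ``slope'' condition holds. This is where \eqref{extracond} enters: letting $\chi$ solve $L_1\chi=U$ (which exists since $U\perp\ker L_1$ by radial symmetry/parity considerations), the standard computation shows that minimizing $\langle L_1 h_1,h_1\rangle$ over $h_1$ with $(h_1,U)_{L^2}=1$ and $h_1\perp\ker L_1$ yields the value $1/(\chi,U)_{L^2}$, and \eqref{extracond} says $(\chi,U)_{L^2}<0$, i.e. this minimum is negative, of magnitude $1/|(\chi,U)_{L^2}|$; hence choosing $C$ large enough (e.g. $C>\tfrac12/|(\chi,U)_{L^2}|$ suffices, after the usual splitting argument) makes $Q$ positive definite on the constrained subspace, with a uniform constant $c>0$. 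The $o(\|h\|^2)$ error is then absorbed by shrinking $\rho$.

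The main obstacle I anticipate is making the coercivity argument on the first component fully rigorous: one must handle the decomposition of an arbitrary constrained $h_1$ into a part in $\mathrm{span}\{U,\chi\}$ (or $\mathrm{span}\{\varphi_-, U\}$ where $\varphi_-$ is the negative eigenfunction) and a part on which $L_1$ is uniformly positive, controlling cross terms, and then optimize in $C$; the presence of the biharmonic operator does not change the abstract structure but one must be careful that $\Lb$ is self-adjoint on $\Lc^2$ with domain $\Hc^4$ and has the spectral picture recorded in the Proposition preceding the statement (one simple negative eigenvalue, kernel of dimension $N+1$), which is exactly what nondegeneracy of the ground state provides. Once the quadratic form is shown to be positive definite modulo the two symmetry directions, property (4) follows, $V$ is a Lyapunov function, and the orbital stability of $\Theta(x,t)=e^{i\alpha t}U(x)$ is then a direct application of \cite[Proposition 4.1]{Stuart}, completing the proof of Theorem \ref{thmorbsta} and hence of the second part of Theorem \ref{thm:orbit}.
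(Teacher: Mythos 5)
Your proposal is correct and follows essentially the same route as the paper: properties (1)--(3) from the symmetry invariance and the conservation laws, and property (4) via the Natali--Pastor decomposition (the paper's Lemma \ref{lyaplem1}) followed by coercivity of $\tfrac12\langle \Lb h,h\rangle + C\,(F'(\U)h)^2$ on the constrained subspace, with the nondegeneracy handling $\ker\Lb$ and \eqref{extracond} ruling out the residual negative direction of $L_1$. The only (inessential) difference is that you encode the role of \eqref{extracond} through the classical value $1/(\chi,U)_{L^2}$ of the constrained minimum with $L_1\chi=U$, whereas the paper argues by contradiction that a minimizer realizing $\delta_1=0$ would satisfy $L_1\varphi=m_2U$ with $(\varphi,U)_{L^2}=0$; the two formulations are equivalent.
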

In order to prove this proposition, we follow very closely \cite{natalipastor}. Observe that it is straightforward to check that $V$ satisfies the points $(1)-(3)$ in the Definition \ref{deflyap}. To prove that $V$ satisfies the last point, we need the next two lemmas.

\begin{lem}
\label{lyaplem1}
There exists $R>0$, depending on $\U$, such that for all $\rho \in (0,R)$ and $v\in \Omega_\U^\rho$, there exist $r_1,\theta_1\in \R$ such that
$$\left\|v-T_1(\theta_1)T_2(r_1)\U\right\|_{\Hc^2}<\rho,$$
and
\begin{align*}
&\Big(v-T_1(\theta_1)T_2(r_1)\U, J T_1(\theta_1) T_2 (r_1) \U\Big)_{\Hc^2}\\
&=\Big(v-T_1(\theta_1)T_2(r_1)\U,  T_1(\theta_1) T_2 (r_1) \xi\cdot\nabla\U\Big)_{\Hc^2}=0.
\end{align*}
\end{lem}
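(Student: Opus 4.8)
The plan is to produce $(\theta_1,r_1)$ as a minimiser of the distance of $v$ to the orbit $\Omega_\U$, so that the two orthogonality relations come out as the Euler--Lagrange equations of that minimisation, while the norm bound is automatic because the minimal value is exactly $d(v,\Omega_\U)^2<\rho^2$. Concretely, for $v\in\Hc^2$ fixed put
\[
g_v(\theta,r):=\bigl\|\,v-T_1(\theta)T_2(r)\U\,\bigr\|_{\Hc^2}^2
=\|v\|_{\Hc^2}^2-2\bigl(v,\,T_1(\theta)T_2(r)\U\bigr)_{\Hc^2}+\|\U\|_{\Hc^2}^2 .
\]
This is $2\pi$-periodic in $\theta$ and of class $C^1$ on $\R\times\R^N$; here one uses that the standing wave $U$ is smooth enough (in particular $U\in H^3(\R^N)$, by elliptic regularity and decay), so that $r\mapsto T_2(r)\U$ is $C^1$ into $\Hc^2$ with $\partial_{r_i}\bigl(T_2(r)\U\bigr)=-T_2(r)\partial_{x_i}\U$, while $\partial_\theta\bigl(T_1(\theta)w\bigr)=-J\,T_1(\theta)w$. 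Since $U$ is a nontrivial ground state, $\U\neq0$, and we set $R:=\|\U\|_{\Hc^2}>0$.

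First I would show that, for every $\rho\in(0,R)$ and every $v\in\Omega_{\U}^{\rho}$, the infimum of $g_v$ is attained. As $|r|\to\infty$ one has $T_1(\theta)T_2(r)\U\rightharpoonup0$ weakly in $\Hc^2$, uniformly in $\theta$ (the entries of $T_1(\theta)$ are bounded and each $H^2$-pairing of $v$ with a component of $T_2(r)\U$ tends to $0$), so $g_v(\theta,r)\to\|v\|_{\Hc^2}^2+\|\U\|_{\Hc^2}^2$ as $|r|\to\infty$, uniformly in $\theta$. Since $v\in\Omega_{\U}^{\rho}$ gives $\|v\|_{\Hc^2}\ge\|\U\|_{\Hc^2}-d(v,\Omega_\U)>\|\U\|_{\Hc^2}-\rho$, we get
\[
\|v\|_{\Hc^2}^2+\|\U\|_{\Hc^2}^2>\bigl(\|\U\|_{\Hc^2}-\rho\bigr)^2+\|\U\|_{\Hc^2}^2
=\rho^2+2\|\U\|_{\Hc^2}\bigl(\|\U\|_{\Hc^2}-\rho\bigr)>\rho^2 ,
\]
using $\rho<R=\|\U\|_{\Hc^2}$. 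Hence there is $M>0$ with $g_v(\theta,r)>\rho^2$ whenever $|r|>M$, whereas $\inf g_v=d(v,\Omega_\U)^2<\rho^2$; therefore the infimum of $g_v$ equals its minimum over the compact set $\{|r|\le M\}\times[0,2\pi]$, attained at some $(\theta_1,r_1)$ with necessarily $|r_1|<M$. I expect this loss of compactness in the translation variable to be the only genuinely delicate point, and it is precisely the smallness $\rho<\|\U\|_{\Hc^2}$ that rules it out.

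It then remains to read off the identities. The point $(\theta_1,r_1)$ is an interior critical point of $g_v$ ($\theta_1$ by periodicity, $r_1$ because $|r_1|<M$), so $\partial_\theta g_v=\partial_{r_i}g_v=0$ there. With the derivatives recorded above,
\[
\partial_\theta g_v=2\bigl(v,\,J\,T_1(\theta)T_2(r)\U\bigr)_{\Hc^2},\qquad
\partial_{r_i}g_v=2\bigl(v,\,T_1(\theta)T_2(r)\,\partial_{x_i}\U\bigr)_{\Hc^2}.
\]
Because $J$ is skew-adjoint on $\Hc^2$ (so $(w,Jw)_{\Hc^2}=0$) and any element of $\Omega_\U$ is $\Hc^2$-orthogonal to its own partial derivatives (each integrand being a pure $\partial_{x_i}$-derivative that integrates to $0$), at $(\theta_1,r_1)$ these vanishing conditions rewrite as
\[
\bigl(v-T_1(\theta_1)T_2(r_1)\U,\;J\,T_1(\theta_1)T_2(r_1)\U\bigr)_{\Hc^2}=0,\qquad
\bigl(v-T_1(\theta_1)T_2(r_1)\U,\;T_1(\theta_1)T_2(r_1)\,\xi\cdot\nabla\U\bigr)_{\Hc^2}=0
\]
for every $\xi\in\R^N$, which are the asserted relations; and $\|v-T_1(\theta_1)T_2(r_1)\U\|_{\Hc^2}^2=g_v(\theta_1,r_1)=d(v,\Omega_\U)^2<\rho^2$, giving the norm bound.

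An alternative, if one prefers to track a $C^1$ dependence of $(\theta_1,r_1)$ on $v$, is the implicit function theorem applied to the map $\Xi(v,\theta,r)$ equal to the left-hand sides above, with $\Xi(\U,0,0)=0$ and $(\theta,r)$-Jacobian at $(\U,0,0)$ block-diagonal with blocks $\|\U\|_{\Hc^2}^2$ and the Gram matrix of $\partial_{x_1}U,\dots,\partial_{x_N}U$ (invertible since a nontrivial $U$ vanishing at infinity is invariant under no nonzero translation), followed by the usual reduction $v\mapsto T_1(-\theta_0)T_2(-r_0)v$ along a near-optimal pair $(\theta_0,r_0)$; the minimisation argument above has the advantage of yielding the sharp constant $\rho$ directly.
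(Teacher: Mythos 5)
Your argument is correct, and it actually supplies a proof where the paper gives none: the paper's ``proof'' of this lemma is a bare citation to \cite[Lemma 4.13]{natalipastor}, and what you wrote is precisely the standard modulation argument that such references use, carried out in full. The two points that genuinely need care are both handled: (i) the possible loss of compactness in the translation parameter, which you exclude by showing $g_v(\theta,r)\to\|v\|_{\Hc^2}^2+\|\U\|_{\Hc^2}^2>\rho^2$ as $|r|\to\infty$ uniformly in $\theta$ (via weak convergence of translates) while $\inf g_v=d(v,\Omega_\U)^2<\rho^2$, the strict inequality resting exactly on the choice $R=\|\U\|_{\Hc^2}$ and the triangle-inequality bound $\|v\|_{\Hc^2}\ge\|\U\|_{\Hc^2}-\rho$; and (ii) the differentiability of $r\mapsto T_2(r)\U$ into $\Hc^2$, which requires $U\in H^3(\R^N)$ and follows from the elliptic regularity established earlier in the paper (Section 3). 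The identification $\partial_\theta T_1(\theta)=-JT_1(\theta)$, the skew-adjointness of $J$, and the orthogonality of $\U$ to its own partial derivatives are all verified correctly, so the first-order conditions do collapse to the two stated orthogonality relations, and the norm bound is immediate from minimality. One small remark: the paper formally defines $T_2(r)$ with $r\in\R$ (translation in the first coordinate only), yet the lemma and its use in Lemma \ref{lyaplem2} require orthogonality to all of $\partial_{x_1}U,\dots,\partial_{x_N}U$; your reading with $r\in\R^N$ is the one that makes the statement consistent, and your minimisation argument accommodates it without change. Your closing alternative via the implicit function theorem is also sound, but, as you note, the direct minimisation is what yields the constant $\rho$ in the norm bound with no loss.
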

\begin{proof}
See \cite[Lemma $4.13$]{natalipastor}.
\end{proof}
Before stating the next lemma, let us introduce some notation. Let $G^{\prime \prime}$ be the second order Fr\' echet derivative of $G$, i.e. $G^{\prime \prime}(\U)v=\Ic \Lb v $, for $v\in \Hc^6$, where $\Ic :\Hc^2 \rightarrow \Hc^{-2}$ is the natural injection with respect to the inner product of $\Lc^2$, i.e.
$$\left\langle \Ic u,v\right\rangle =(u,v)_{\Lc^2},\hspace{.3cm}\text{for all}\ u,v\in \Hc^2.$$
Let $S:\Hc^2 \rightarrow \Hc^2$ be the self-adjoint operator defined by $S=\Rc^{-1}G^{\prime \prime}(\U)$ where $\Rc : \Hc^2\rightarrow \Hc^{-2}$ is the Riesz isomorphism with respect to the inner product of $\Hc^2$, that is
$$\left\langle \Rc u,v\right\rangle =(u,v)_{\Hc^2},\hspace{.3cm}\text{for all}\ u,v\in \Hc^2.$$

\begin{lem}
\label{lyaplem2}
There exist two positive constants $C$ and $\delta$ such that
$$(Sv,v)_{\Hc^2}+2C (\Rc^{-1}\Ic \U,v)^2_{\Hc^2}\geq \delta \left\|v\right\|_{\Hc^2}, $$
for all $v\in \{J\U , \xi\cdot\nabla \U\}^\perp=\big\{v\in\Hc^2:(J\U,v)_{\Hc^2}=(\xi\cdot\nabla \U,v)_{\Hc^2}=0\big\}$.
\end{lem}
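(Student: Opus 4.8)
\textbf{Proof proposal for Lemma \ref{lyaplem2}.}

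The plan is to exploit the spectral information on $\Lb$ recorded in the Proposition just before Theorem \ref{thmorbsta}, namely that $\Lb$ has exactly one simple negative eigenvalue, say $-\lambda_0<0$ with normalized eigenfunction $\chi$, that $\ker\Lb$ is spanned by $(e_1\cdot\nabla U,0),\dots,(e_N\cdot\nabla U,0),(0,U)$, and that the rest of the spectrum is bounded below by a positive constant. Passing through the Riesz isomorphism $\Rc$, the operator $S=\Rc^{-1}G''(\U)$ is self-adjoint on $\Hc^2$ and has the same qualitative spectral picture: one negative eigenvalue on $\mathrm{span}\{\chi\}$, a kernel of dimension $N+1$, and a spectral gap above $0$ on the orthogonal complement. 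So on the subspace $W:=\{J\U,\xi\cdot\nabla\U\}^\perp$ the only obstructions to the coercivity estimate $(Sv,v)_{\Hc^2}\ge\delta\|v\|_{\Hc^2}^2$ are (i) a possible negative component along $\chi$ and (ii) the zero directions. The zero directions $\xi\cdot\nabla U$ are already killed by the constraint defining $W$ (for every $\xi$, or rather after the standard reduction to the relevant finitely many; here one uses Lemma \ref{lyaplem1}'s orthogonality), and the direction $(0,U)=-J\U$ up to sign is also killed. So it only remains to control the $\chi$-component, and this is where the hypothesis \eqref{extracond} and the quadratic penalization term $2C(\Rc^{-1}\Ic\U,v)^2_{\Hc^2}$ enter.

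Here is the mechanism, following Natali--Pastor \cite{natalipastor} closely. First I would establish the negativity of a certain scalar: since $U$ is a ground state, minimality along the scaling $\lambda\mapsto\lambda^{1/(2\sigma)}U(\lambda^{?}\cdot)$ — or more precisely the standard argument relating the sign of $\frac{d}{d\alpha}\|U_\alpha\|_{L^2}^2$ to stability — together with condition \eqref{extracond}, gives that if $\Psi$ solves $\Lb\Psi=\U$ then $(\Psi,\U)_{\Lc^2}<0$. (The solvability of $\Lb\Psi=\U$ on $(\ker\Lb)^\perp$ is guaranteed because $\U\perp\ker\Lb$: indeed $(\U,(0,U))_{\Lc^2}=0$ is false... rather $(\U,(e_i\cdot\nabla U,0))_{\Lc^2}=\int U\,\partial_{x_i}U=0$ and $(\U,(0,U))_{\Lc^2}=0$, so $\U\in(\ker\Lb)^\perp$.) Then I would decompose any $v\in W$ as $v=a\chi+p\,\Psi_0+w$ where $w$ lies in the positive spectral subspace and $\Psi_0$ is the projection of $\Rc^{-1}\Ic\U$, and expand $(Sv,v)_{\Hc^2}$. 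The cross terms and the $w$-part contribute something $\ge c\|w\|^2$; the genuinely dangerous term is $-\lambda_0 a^2$. The point is that the constraint $(J\U,v)_{\Hc^2}=0$ forces a relation tying $a$ to $(\Rc^{-1}\Ic\U,v)_{\Hc^2}$, so that whenever $|a|$ is not small, $(\Rc^{-1}\Ic\U,v)^2_{\Hc^2}$ is bounded below by a fixed multiple of $a^2$; choosing $C$ large enough, the penalization term $2C(\Rc^{-1}\Ic\U,v)^2_{\Hc^2}$ dominates $-\lambda_0 a^2$ plus the cross terms, and one recovers a positive lower bound $\delta\|v\|^2_{\Hc^2}$ (after an $\varepsilon$-Young splitting of the cross terms and absorbing into the $\|w\|^2$ and penalized pieces). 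This is essentially a finite-dimensional positivity argument once the spectral decomposition is in place.

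I expect the main obstacle to be making the link between the constraint $(J\U,v)_{\Hc^2}=0$ and a quantitative lower bound on $(\Rc^{-1}\Ic\U,v)^2_{\Hc^2}$ in terms of the $\chi$-component $a$ — that is, showing $(\Rc^{-1}\Ic\U,\chi)_{\Hc^2}\neq 0$ (equivalently $(\Ic\U,\chi)_{\Hc^{-2},\Hc^2}=(\U,\chi)_{\Lc^2}\neq 0$), which is exactly the content that \eqref{extracond} provides via the identity $(\U,\chi)_{\Lc^2}$-type pairing with the solution of $\Lb\Psi=\U$. If $(\U,\chi)_{\Lc^2}$ were zero the penalization would be useless on the $\chi$-direction and the argument would collapse; \eqref{extracond} is precisely what rules this out and fixes the correct sign, so I would spend the most care there, reproducing the computation of \cite[Section 4]{natalipastor} adapted to the operator $\Lb$ in \eqref{L-operateur-Orb}. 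The remaining pieces — continuity of $S$, closedness of the spectral subspaces, the Young-inequality bookkeeping, and the choice of $\delta$ and $C$ — are routine. Once Lemma \ref{lyaplem2} is proved, combined with Lemma \ref{lyaplem1} it yields point (4) of Definition \ref{deflyap}, hence Proposition \ref{prop-Lyap-3}, and Theorem \ref{thmorbsta} follows from \cite[Proposition 4.1]{Stuart}.
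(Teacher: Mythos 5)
Your overall strategy (spectral decomposition of $\Lb$, a quadratic penalization to neutralize the negative direction, \eqref{extracond} as the decisive input) belongs to the right family, but the mechanism you describe for the crucial step does not work as stated. Note that $J\U=(0,U)$, so for $v=(P,Q)$ the constraint $(J\U,v)_{\Hc^2}=0$ only restricts the \emph{second} component $Q$; its role is to remove the kernel of $L_2$ (spanned by $U$). On the other hand, since $L_2\ge 0$, the unique negative eigenfunction of $\Lb=\mathrm{diag}(L_1,L_2)$ has the form $(\chi_1,0)$ with $\chi_1$ the ground state of $L_1$, and the penalization term equals $2C(\U,v)_{\Lc^2}^2=2C(P,U)_{L^2}^2$; both live entirely in the \emph{first} component. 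Hence the constraint $(J\U,v)_{\Hc^2}=0$ cannot ``tie $a$ to $(\Rc^{-1}\Ic\U,v)_{\Hc^2}$'' as you claim -- it is irrelevant to the negative direction. Moreover, the content of \eqref{extracond} is not merely that $(\U,\chi)_{\Lc^2}=(U,\chi_1)_{L^2}\neq 0$: even when $U$ is not orthogonal to $\chi_1$, the operator $L_1$ restricted to $\{U\}^{\perp_{L^2}}$ still has a negative direction whenever $(L_1^{-1}U,U)_{L^2}>0$, and in that case no choice of $C$ rescues the estimate, because the penalization only enforces $(P,U)_{L^2}=0$ in the limit $C\to\infty$. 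What is needed is the full quantitative inequality $(L_1^{-1}U,U)_{L^2}<0$, which is exactly \eqref{extracond}.

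For comparison, the paper first reduces (quoting Lemmas 4.12 and 4.15 of \cite{natalipastor}) to the purely $\Lc^2$ claim that $(\Lb v,v)_{\Lc^2}\ge\delta\|v\|_{\Lc^2}^2$ whenever $(Q,U)_{L^2}=(P,U)_{L^2}=(P,\xi\cdot\nabla U)_{L^2}=0$; the penalization term and the mismatch between $\Hc^2$- and $\Lc^2$-orthogonality are absorbed into that reduction. The claim is then proved componentwise: strict positivity of $L_2$ on $\{U\}^{\perp_{L^2}}$ by a compactness/minimization argument using the exponential decay of $U$ and the simplicity of the zero eigenvalue of $L_2$; and strict positivity of $L_1$ on $\{U,\nabla U\}^{\perp_{L^2}}$ by first obtaining nonnegativity from $(L_1U,U)_{L^2}\le 0$ via \cite[Proposition 4.3]{MR820338}, and then excluding a vanishing infimum because an alleged minimizer would solve $L_1\varphi=m_2U$ with $m_2\neq0$ and $(\varphi,U)_{L^2}=0$, contradicting \eqref{extracond}. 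If you wish to keep a direct spectral argument, you must (i) treat the two components separately, using $(J\U,v)_{\Hc^2}=0$ only for the $Q$-part, (ii) use the penalization only to reduce to $(P,U)_{L^2}=0$, and (iii) invoke the inequality $(L_1^{-1}U,U)_{L^2}<0$ rather than mere non-orthogonality to $\chi_1$.
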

\begin{proof}
Following the proof of Lemmas $4.12$ and $4.15$ from \cite{natalipastor}, we observe that the proof of Lemma \ref{lyaplem2} reduces to the following claim.

\medbreak

\noindent\emph{Claim: Let $v=(P,Q)\in \Hc^4$ be such that
$$(Q,U)_{L^2}=(P,U)_{L^2}=(P,\xi\cdot\nabla U)_{L^2}=0,$$
then there exists $\delta>0$ such that
\begin{equation}\label{mainestlyaplem}
(\Lb v,v )_{\Lc^2}\geq \delta \left\|v\right\|_{\Lc^2}^2.
\end{equation}
}

\noindent We split the proof of \eqref{mainestlyaplem} into two steps.

\medbreak

\noindent\emph{Step one: We have
$$\inf_{\substack{(v,U)_{L^2}=0\\ v\neq 0}} \dfrac{(L_2 v,v)_{L^2}}{ \left\|v\right\|_{L^2}^2}>0.$$
}

\noindent Seeking a contradiction, we assume that this infimum is zero. 
Let $(v_k)_k$ be a $L^2$-normalized minimizing sequence. By using the exponential decay of $U$  it is easy to show that $(v_k)_k$ is bounded in $H^2(\R^N)$. Therefore, there exists a function $v$ such that $v_k\rightharpoonup v$ weakly in $H^2$ and $(v,U)_{L^2}=0$. 
Using once again the exponential decay of $U$, one can prove that $(v_k)_k$ converges strongly in $H^2$ to $v$ 
and that $v$ solves the minimization problem. Thus, there exist $\theta_1$, $\theta_2\in \R$ such that for all $\psi\in H^2$,
$$(L_2 v,\psi)_{L^2}= \theta_1 (v,\psi)_{L^2}+\theta_2 (U,\psi)_{L^2}.$$
Taking $\psi=v$, we get $\theta_1=0$. Next, if we take $\psi =U$, we get that $\theta_2=0$ so $v=cU$, for any constant $c$, since $L_2$ is nondegenerate. This implies that $\omega>0$, contradicting $\omega=0$.



\medbreak

\noindent\emph{Step two: We have
$$\delta_1=\inf \Big\{(L_1 u,u)_{L^2}: \left\|u\right\|_{L^2}=1,\ (u,U)_{L^2}=(u,\xi\cdot\nabla U)_{L^2}=0 \Big\}>0.$$
}

\noindent First, we show that
$$\gamma=\inf \Big\{ (L_1 u,u)_{L^2} : u\in H^2,\ \left\|u\right\|_{L^2}=1,\ (u,U)_{L^2}=0\Big\}=0.$$
Indeed, since $L_1 (\xi\cdot\nabla U)=0$, we have $\gamma\leq 0$. We notice that $(L_1 U,U)_{L^2}\leq 0$. 
Using \cite[Proposition 4.3]{MR820338}, this implies that $\gamma\geq 0$. We deduce from this that $\delta_1\geq 0$. Assume now that the infimum is achieved by a function $\varphi$. Then there exist $m_1,m_2,m_{3,1},\ldots , m_{3,N} \in \R$ such that
$$L_1 \varphi= m_1 \varphi+m_2 U+\sum_{i=1}^N m_{3,i} x_i\cdot\nabla U.$$
Assume by contradiction that $\delta_1=0$. Consequently, we have that $m_1=0$. Testing the equation with $\xi\cdot\nabla U\in \ker L_1$, we deduce that $m_{3,i}=0$, for any $i=1,\ldots ,N$. Next, we show that $m_2\neq 0$.  By contradiction, assume that $m_2=0$. Then, using the nondegeneracy of $L_1$, we deduce that $\varphi=C\tilde{\xi}\cdot\nabla U$, for some constant $C$ and some $\tilde{\xi}\in \R^N$. On the other hand, by definition we have $(\varphi,\tilde{\xi}\cdot\nabla U)_{L^2}=0$. This implies that $C=0$, contradicting $\left\|\varphi\right\|_{L^2}=1$. 
Therefore, we deduce that $\varphi$ is a solution to $L_1 \varphi = m_2 U$, for some $m_2\neq 0$, and $0=(\varphi,U)_{L^2}$,  
which yields a contradiction with \eqref{extracond}.

\noindent Finally, we are now in position to prove \eqref{mainestlyaplem}. Thanks to the two previous steps, there exist $\delta_1,\delta_2>0$ such that $(L_2 Q,Q)_{L^2} \geq \delta_2 \left\|Q\right\|^2_{L^2}$ and $(L_1 P,P)_{L^2} \geq \delta_1 \left\|P\right\|^2_{L^2}$. Recalling that $\Lb v =(L_1 P, L_2 Q)$, we get \eqref{mainestlyaplem} with $\delta=\min \{\delta_1 ,\delta_2\}$.

\end{proof}

\noindent Now we give the proof of Proposition \ref{prop-Lyap-3}.

\begin{proof}[Proof of Proposition \ref{prop-Lyap-3}.] Using Lemmas \ref{lyaplem1} and \ref{lyaplem2} we show that there exists $c>0$ such that, for all $v\in \Omega_\Phi^\rho$,
$$V(v)\geq c[d(v,\Omega_\U)]^2.$$
Since $V$ is $C^2$ and noticing that $V(\U)=0$ and $V^\prime (\U)=0$, a Taylor expansion gives
$$V(v)=\dfrac{1}{2} \big\langle V^{\prime \prime} (\U) (v-\U),v-\U \big\rangle + h(v),$$
where $h$ is a function satisfying
$$\lim_{v\rightarrow \U} \dfrac{h(v)}{\left\|v-\U\right\|^2_{\Hc^2}}=0.$$
Let $R$ be the constant appearing in Lemma \ref{lyaplem1} and $\delta$ be that appearing in Lemma \ref{lyaplem2}. We can assume that, for some $\rho \in (0,R/2)$,
$$|h(v)|\leq \dfrac{\delta}{4} \left\|v-\U\right\|^2_{\Hc^2},\ \text{for all}\ v\in B_\rho (\U).$$
Let $v\in \Omega_{\U}^\rho$. We deduce from Lemma \ref{lyaplem1}, that there exist $\theta_1$ and $r_1$ such that $u=T_1 (-\theta_1) T_2 (-r_1)v$ satisfies $\left\|u-\U\right\|_{\Hc^2}<\rho$ and $u-\U \in  \{J\U ,\xi\cdot\nabla \U\}^\perp$. On the other hand, using Lemma \ref{lyaplem2}, there exist $\delta$ and $C$ such that, for all $w\in \{J\U , \xi\cdot\nabla \U\}^\perp$, we have
\begin{align*}
\left\langle V^{\prime \prime} (\U) w,w\right\rangle &= \left\langle G^{\prime \prime} (\U)w,w\right\rangle +2C \left\langle F^\prime (\U),w\right\rangle^2 \\
&=\left(S w,w\right)_{\Hc^2}+2C \left( \Rc^{-1} \Ic \U , w\right)^2_{\Hc^2}\\
&\geq  \delta \left\|v\right\|_{\Hc^2} .
\end{align*}
Combining the previous estimates, we deduce that, for any $v\in \Omega_{\U}^\rho$,
\begin{align*}
V(v)=V(u) &\geq \delta \left\|u-\U\right\|_{\Hc^2} - \dfrac{\delta}{4} \left\|u-\U\right\|^2_{\Hc^2}\\
& \geq  \dfrac{\delta}{4} d(u,\Omega_\U)^2 = \dfrac{\delta}{4} d(v,\Omega_\U)^2. 
\end{align*}
Therefore, $V$ is a Lyapunov function. This concludes the proof.
\end{proof}

\begin{proof}[Proof of Theorem \ref{thmorbsta}.] In view of Proposition \ref{prop-Lyap-3}, all the assumptions of \cite[Proposition $4.1$]{Stuart} are satisfied and hence the conclusion follows. 
\end{proof}





\bibliographystyle{plain}
\bibliography{biblio}
\end{document}